\documentclass[12pt]{article}




\usepackage{amsmath}
\usepackage{amsthm}

\usepackage{amssymb}

\usepackage{mathptmx}

\usepackage{caption, subcaption}%

\usepackage{xparse} 
\usepackage{stmaryrd} 

\usepackage{mathrsfs}

\usepackage{multirow}

\usepackage{geometry}
\usepackage{bm}

\usepackage{enumitem}

\usepackage{color}
\usepackage{graphicx}
\usepackage{epstopdf}

\makeatletter
\def\widebreve{\mathpalette\wide@breve}
\def\wide@breve#1#2{\sbox\z@{$#1#2$}%
	\mathop{\vbox{\m@th\ialign{##\crcr
				\kern0.08em\brevefill#1{0.8\wd\z@}\crcr\noalign{\nointerlineskip}%
				$\hss#1#2\hss$\crcr}}}\limits}
\def\brevefill#1#2{$\m@th\sbox\tw@{$#1($}%
	\hss\resizebox{#2}{\wd\tw@}{\rotatebox[origin=c]{90}{\upshape(}}\hss$}
\makeatletter

\usepackage[colorlinks=true]{hyperref}

\allowdisplaybreaks[3]
\raggedbottom




\def\vec#1{{\bf #1}}

\DeclareMathOperator*{\essinf}{ess\,inf}

\renewcommand{\qed}{\hfill $\blacksquare$} 

\newtheorem{theorem}{Theorem}[section]
\newtheorem{proposition}{Proposition}[section]
\newtheorem{lemma}{Lemma}[section]

\newtheorem{remark}{Remark}[section]
\newtheorem{corollary}{Corollary}[section]


\providecommand{\keywords}[1]
{
	\small	
	\textbf{\textit{Keywords:}} #1
}

\geometry{left=2.57cm,right=2.57cm,top=2.6cm,bottom=2.78cm}

\title{{\large \bf Minimum Principle on Specific Entropy 
		and   
		 High-Order Accurate 
		 Invariant Region Preserving
		Numerical  
		Methods  
		for  
		Relativistic Hydrodynamics}}

\author{
	Kailiang Wu\thanks{Department of Mathematics, Southern University of Science and Technology, Shenzhen 518055, P.R.~China.  ({\tt wukl@sustech.edu.cn}). 
}   
}

\date{\today}

\begin{document}
	
	\maketitle

	\vspace{-7mm}

	\begin{abstract}
	This paper first explores Tadmor's minimum entropy principle  
for the special relativistic hydrodynamics (RHD) equations and   
incorporates this 
principle into
the design of 
robust 
high-order discontinuous Galerkin (DG) 
and finite volume schemes  
for RHD on general meshes. 
The proposed schemes are rigorously proven to preserve 
numerical solutions in a global {\em invariant region}  
constituted by all the known intrinsic constraints: 
minimum entropy principle,   
the subluminal constraint on fluid velocity,  
the positivity of pressure, and the positivity of rest-mass density.  
Relativistic effects lead to some essential difficulties in the present study, 
which are not encountered in 
the non-relativistic case. 
Most notably, in the RHD case the specific entropy is a highly nonlinear {\em implicit} function of the conservative variables, 
and, moreover, there is also no explicit formula of the flux in terms of the conservative variables. 
In order to overcome the resulting challenges, we first propose a novel {\bf equivalent} form of the invariant region, by skillfully 
introducing two auxiliary variables. 
As a notable feature, 
all the constraints in the novel form are {\bf explicit} and {\bf linear} with respect to the conservative variables. 
This provides a highly effective approach to  
theoretically analyze the invariant-region-preserving (IRP) property of numerical schemes 
for RHD, {\em without any assumption on the IRP property of the exact Riemann solver}. 
Based on this, we prove the convexity of the invariant region and establish the 
generalized Lax--Friedrichs splitting properties via technical estimates, lying the foundation for our rigorous IRP analyses. 
It is rigorously shown that the first-order Lax--Friedrichs type scheme for the RHD equations satisfies a local minimum entropy principle and is IRP under a CFL condition. 
Provably IRP high-order accurate DG and finite volume methods are then developed for the RHD with the help of 
a simple scaling limiter, which is designed by following the bound-preserving type limiters in the literature. 
Several numerical examples demonstrate the effectiveness and robustness 
of the proposed schemes.
		
		\vspace{2mm}
		\noindent	
		\keywords{minimum entropy principle, invariant region, bound-preserving, 
			relativistic hydrodynamics, 
			discontinuous Galerkin, finite volume, high-order accuracy} 
		%
		

	\end{abstract}

	\newpage

	\section{Introduction} 
	\label{sec:intro}
	
In the study of the fluid dynamics, when the fluid
flow moves close to the speed of light or when sufficiently strong gravitational fields are involved, 
the special or general relativistic effect has to be taken into account accordingly. 
Relativistic hydrodynamics (RHD) plays an important role in a wide range of applications, 
such as astrophysics and high energy physics, and has been applied to investigate   
astrophysical scenarios from stellar to galactic scales.

In the framework of special relativity, 
the motion of ideal 
relativistic fluid is governed by the conservation of mass density $D$, momentum ${\bf m}$, and energy $E$. 
In the laboratory frame, the $d$--dimensional special RHD equations can be written into the following nonlinear hyperbolic system 
\begin{equation}\label{eq:RHD}
	{\frac{\partial {\bf U}}{\partial t}} + \sum_{i=1}^d \frac{ {\bf F}_i ({\bf U}) }{ \partial x_i } = {\bf 0}.
\end{equation}
with the conservative vector $\vec U$ and the flux ${\bf F}_i$ defined by 
\begin{align}\label{eq:dDU}
	& {\bf U} = (D,~{\bm m}^\top,~E)^\top = ( \rho W,~\rho h W^2 {\bm v}^\top,~\rho h W^2 - p )^\top,
	\\ \label{eq:dDF}
	& {\bf F}_i = ( D v_i,~v_i {\bm m}^\top  + p {\bf e}_i^\top,~m_i )^\top = ( \rho W v_i,~\rho h W^2 v_i  {\bm v}^\top + p {\bf e}_i^\top,~ \rho h W^2v_i )^\top,
\end{align}
where and hereafter the geometrized unit system is employed so that the speed of light $c=1$ in vacuum. 
In \eqref{eq:dDU}--\eqref{eq:dDF}, $\rho$ denotes the rest-mass density, 
$p$ is the thermal pressure, the column vector 
${\bm v}=(v_1,\dots, v_d)^\top$ represents the velocity field of 
the fluid, $W=1/\sqrt{1- \|{\bm v}\|^2}$ denotes the Lorentz factor with $\|\cdot\|$ denoting the vector 2-norm, 
$h = 1 + e + \frac{p}{\rho}$ stands for the specific enthalpy with $e$ being the specific internal energy, and 
the row vector ${\bf e}_i$ denotes the $i$-th column of the identity matrix of size $d$. 
To close the system \eqref{eq:RHD}, an equation
of state (EOS) is needed. 
We focus on the ideal EOS:
\begin{equation}\label{eq:iEOS}
	p = (\Gamma - 1) \rho e. 
\end{equation}
Here the constant $\Gamma \in (1,2]$ denotes the adiabatic index, for which the restriction $\Gamma \le 2$ is required by the compressibility assumptions and the relativistic causality (see, e.g., \cite{WuTang2015}). 

As we can see from \eqref{eq:dDU}--\eqref{eq:dDF}, the conservative vector $\vec U$ and 
the flux ${\bf F}_i$ are explicitly expressed in terms of  
the primitive quantities ${\bf V} := ( \rho, {\bm v}^\top, p )^\top$. 
However, unlike the non-relativistic case, 
for RHD there are no explicit formulas for either
the flux ${\bf F}_i$ or the primitive vector ${\bf V}$ in terms of ${\bf U}$. 
Therefore, in order to update the flux ${\bf F}_i({\bf U})$ in the computation, one has to first 
perform the inverse 
transformation of \eqref{eq:dDU} from the conservative vector ${\bf U}$ to the primitive vector ${\bf V}$. 
Given a conservative vector ${\bf U}=(D, {\bm m}^\top, E)^\top$, we can compute the values of the corresponding primitive quantities $\{p({\bf U}), {\bm v} ({\bf U}), \rho ({\bf U}) \}$ as follows: first solve a nonlinear algebraic equation 
\begin{equation}\label{eq:Eqforp}
	\frac{ \| {\bm m} \|^2 }{E+p} 
	+ D \sqrt{  1 - \frac{ \| {\bm m} \|^2 }{ (E+p)^2 }  } + \frac{p}{\Gamma - 1} - E=0, \qquad p \in [0, + \infty),
\end{equation}
by utilizing certain root-finding algorithm to get the pressure $p({\bf U})$; then calculate the velocity and rest-mass density by 
\begin{equation}\label{eq:vUdU}
	{\bm v} ({\bf U}) = 
	\frac{ \bm m }{E + p ({\bf U})}, \qquad \rho( {\bf U} ) = D \sqrt{ 1 - \left \| {\bm v} ({\bf U}) \right \|^2 }.
\end{equation}
Both the physical significance and the
hyperbolicity of \eqref{eq:RHD} require that the following constraints 
\begin{equation}\label{eq:PHYSconstraints}
	\rho > 0,~~ p > 0,~~ \|{\bm v}\| < c=1, 
\end{equation}
always hold. 
In other words, the conservative vector ${\bf U}$ must stay in the admissible state set 
\begin{equation}\label{eq:RHD:definitionG}
	{\mathcal G} := \left\{ \vec U=(D,{\bm m}^\top,E)^{\top} \in \mathbb R^{d+2}:~\rho(\vec U)>0,~p(\vec U)>0,~\|{\bm v}(\vec U)\| <1  \right\},
\end{equation}
where the functions $\rho(\vec U)$, $p(\vec U)$, and ${\bm v}(\vec U)$ are highly nonlinear and have no explicit formulas, as defined above. It was observed in \cite{mignone2005hllc} and rigorously proven in \cite[Lemma 2.1]{WuTang2015} that 
the set ${\mathcal G}$ is convex and is exactly {\em equivalent} to the following set 
\begin{equation}\label{eq:RHD:definitionG2}
	{\mathcal G}_1 := \left\{ \vec U=(D,{\bm m}^\top,E)^{\top}\in \mathbb R^{d+2}:~D>0,~E>\sqrt{D^2 + \|{\bm m}\|^2}  \right\}.
\end{equation}
Moreover, if ${\bf U} \in {\mathcal G}_1$, then the nonlinear equation \eqref{eq:Eqforp} has a unique positive solution \cite{WuTang2015}.

Due to the nonlinear hyperbolic nature of the equations \eqref{eq:RHD}, 
discontinuous solutions can develop from even smooth initial data, 
and weak solutions must therefore be considered. 
As well-known, weak solutions are not uniquely defined in general; 
the following inequality, known as the entropy condition, is usually imposed as an admissibility criterion to select the ``physically relevant'' solution among all weak solutions: 
\begin{equation}\label{EntropyIEQ}
	\frac{\partial  {\mathscr E} }{\partial t} + \sum_{i=1}^d \frac{\partial {\mathscr F}_i} {\partial x_i} \le 0,   
\end{equation}
which is interpreted in the sense of distribution.
Here ${\mathscr E}({\bf U})$ is a strictly convex function of ${\bf U}$ and called an entropy function, 
and ${\mathscr F}_i({\bf U})$ is the associated entropy fluxes such that the relation 
$
\frac{\partial {\mathscr E}}{\partial {\bf U}} \frac{ \partial {\bf F}_i }{\partial {\bf U}} 
= \frac{ \partial {\mathscr F}_i }{ \partial {\bf U} } 
$  holds. 
{\em Entropy solutions} are defined to be 
weak solutions which in addition satisfy \eqref{EntropyIEQ} for {\em all entropy pairs}  
$({\mathscr E},{\mathscr F}_i)$. 
For the (non-relativistic) gas dynamics equations, 
Tadmor \cite{TADMOR1986211} proved that entropy solutions satisfy a local minimum principle on the specific entropy 
$S({\bm x},t) = \log \big( {p}{\rho^{-\Gamma}} \big)$:  
\begin{equation}\label{eq:minEntropy}
	S({\bm x},t+\tau) \ge \min \left\{ S({\bm y},t):~ \|{\bm y} - {\bm x} \| \le \tau  v_{\rm max}  \right\},
\end{equation}
where $\tau > 0$ and $v_{\rm max}$ denotes the maximal wave speed. 
This implies the spatial minimum of the specific entropy, $\min_{ \bm x} S({\bm x},t)$, 
is a {\em nondecreasing} function of time $t$, and $S({\bm x},t) \ge \min_{ \bm x} S({\bm x},0)$. 
(Entropy principles were also shown by Guermond and Popov \cite{Guermond2014}    
with viscous regularization of the non-relativistic Euler equations.) 
In this paper, we will explore   
such a minimum entropy principle in the RHD case and prove that it 
also holds for 
entropy solutions of \eqref{eq:RHD}. 
The entropy principle, along with the intrinsic physical constraints in \eqref{eq:RHD:definitionG}, 
imply a global invariant region for the solution of the RHD equations \eqref{eq:RHD} with initial data ${\bf U}_0({\bm x})$, that is, 
\begin{equation}\label{eq:IR}
	\Omega_{S_0} := \left\{ \vec U=(D,{\bm m}^\top,E)^{\top}\in \mathbb R^{d+2}:~\rho(\vec U)>0,~p(\vec U)>0,~\|{\bm v}(\vec U)\| <1,~
	S({\bf U}) \ge S_0  \right\},
\end{equation}
where $S_0 := \essinf_{ {\bm x}} S ( {\bf U}_0 ({\bm x}) ) $.

It is natural and interesting to explore robust 
numerical RHD schemes, whose solutions always stay in the invariant region $\Omega_{S_0}$, 
i.e., satisfy the minimum entropy principle at the discrete level and also  
preserve the intrinsic physical constraints \eqref{eq:PHYSconstraints}. 
Note that, to obtain a well-defined specific entropy for the numerical solution, it is necessary to first guarantee 
the positivity of pressure and rest-mass density. 
The subluminal constraint on the fluid velocity is also crucial 
for the relativistic causality, because its violation would yield imaginary Lorentz factor. 
In fact, violating any of the constraints \eqref{eq:PHYSconstraints} 
will cause numerical instability and 
the break down of the computation.
Therefore, the preservation of the minimum entropy principle should be considered together with the constraints \eqref{eq:PHYSconstraints}. 
Recent years have witnessed some advances in developing 
high-order numerical schemes, which provably preserve the constraints \eqref{eq:PHYSconstraints}, for the special RHD 
\cite{WuTang2015,QinShu2016,WuTang2017ApJS}  
and the general RHD \cite{Wu2017}. 
Those works were motivated by \cite{zhang2010,zhang2010b,Xu2014,Hu2013} 
on designing bound-preserving  
high-order schemes for scalar conservation laws and the non-relativistic Euler equations.  
More recently, bound-preserving numerical schemes were also developed  
for the special relativistic magnetohydrodynamics (RMHD) in one and multiple dimensions \cite{WuTangM3AS,WuShu2020NumMath}, 
as extension of  
the 
positivity-preserving MHD schemes \cite{Wu2017a,WuShu2018,WuShu2019}. 
In addition, a flux-limiting approach which preserves the positivity of the rest-mass density was designed in 
\cite{Radice2014}. A reconstruction technique was proposed in \cite{BalsaraKim2016} 
to enforce the subluminal constraint on the fluid velocity. 
A flux-limiting entropy-viscosity approach was developed in \cite{guercilena2017entropy}
for RHD, based on a measure of the entropy generated by the solution. 
Systematic review of numerical RHD schemes is beyond the scope of the present paper; we 
refer interested readers to the review articles 
\cite{marti2003numerical,font2008numerical,marti2015grid} and a limited list of some recent works  \cite{he2012adaptive,zhao2013runge,wu2016direct,duan2019high,bhoriya2020entropy,WuShu2019SISC,mewes2020numerical} as well as references therein. 
%
Yet, up to now, there is still no work that studied   
the minimum entropy principle for the RHD equations \eqref{eq:RHD} at either the PDE level or the numerical level, 
and high-order schemes which provably preserve the invariant region \eqref{eq:IR} have {\it not} yet been developed for RHD. 

For the non-relativistic counterparts such as the compressible Euler system, 
the minimum entropy principle and  
invariant-region-preserving (IRP) numerical schemes have been well studied in the literature. 
Tadmor \cite{TADMOR1986211} discovered \eqref{eq:minEntropy} and proved, for the
compressible Euler equations of the gas dynamics, that first-order approximations such as the Godunov and Lax-Friedrichs schemes satisfy a minimum entropy principle. 
Using a slope reconstruction with limiter, Khobalatte and Perthame \cite{khobalatte1994maximum} 
developed second-order kinetic schemes that preserve a discrete minimum principle for the specific entropy. 
It was also observed in \cite{khobalatte1994maximum} that enforcing the discrete minimum entropy principle 
could help to damp numerical oscillations near the discontinuities. 
Zhang and Shu \cite{zhang2012minimum} proposed a framework of enforcing the minimum entropy principle
for high-order accurate finite volume and discontinuous Galerkin (DG) schemes, by extending their positivity-preserving high-order schemes \cite{zhang2010b,zhang2012maximum}, for the (non-relativistic) gas dynamics equations. 
The resulting high-order schemes in \cite{zhang2012minimum} were proven to 
preserve a discrete minimum entropy principle 
and the positivity of density and pressure, under a condition accessible by a simple bound-preserving limiter without destroying the high-order accuracy.  
Lv and Ihme \cite{LV2015715} proposed an entropy-bounded DG scheme
for the Euler equations on arbitrary meshes. 
Guermond, Popov, and their collaborators (cf.~\cite{guermond2016invariant,guermond2017invariant,guermond2018second,guermond2017invariant2,guermond2019invariant}) developed the IRP approximations 
in the context of continuous finite elements with convex limiting for solving  
general hyperbolic systems including the compressible Euler equations. 
Jiang and Liu proposed new IRP limiters for the DG schemes to the isentropic Euler equations 
\cite{jiang2019invariant}, the compressible Euler equations \cite{jiang2016invariant}, and general multi-dimensional hyperbolic conservation laws \cite{jiang2018invariant}. 
Gouasmi {\it et al.} \cite{gouasmi2020minimum}
proved a minimum entropy principle on entropy solutions to the compressible multicomponent Euler equations at the smooth and discrete levels. 

The aim of this paper is twofold. The first is to show that the minimum entropy principle \eqref{eq:minEntropy}, which was 
originally demonstrated by Tadmor \cite{TADMOR1986211} for the (non-relativistic) gas dynamics, 
is also valid for the RHD equations \eqref{eq:RHD} with the ideal EOS \eqref{eq:iEOS}. 
A key point in the present study is to prove a 
condition on smooth function ${\mathcal H}(S)$ such that the entropy function
${\mathscr E} ({\bf U}) = -D{\mathcal H}(S)$ is strictly convex. 
The second goal is to develop high-order accurate IRP DG and finite volume methods which 
provably preserve the numerical solutions in the invariant region $\Omega_{S_0}$, 
i.e., preserve a discrete minimum entropy principle and the intrinsic physical constraints \eqref{eq:PHYSconstraints}. 
In fact, achieving these two goals is nontrivial. 
Due to the nonlinearity and the implicit form of the function $S({\bf U})$, 
it is not easy to study the convexity of the entropy function
${\mathscr E} ({\bf U})$ in the RHD case; see Proposition \ref{prop:H}. 
Also, analytically judging whether an arbitrarily given state ${\bf U}$ belongs to $\Omega_{S_0}$ is already a difficult task; it is more challenging to design and analyze numerical schemes that provably preserve the solutions in  $\Omega_{S_0}$. 
We will address the difficulties via 
a novel {\em equivalent} form of the invariant region; see Theorem \ref{thm:EqOmega}. 
As a notable feature, 
all the constraints in the novel form are {\em explicit} and {\em linear} with respect to the conservative variables. 
This provides a highly effective approach to  
theoretically analyze the IRP property of RHD schemes. 
Based on this, we will prove the convexity of the invariant region (Section \ref{sec:convex}) and establish the 
generalized Lax--Friedrichs splitting properties via highly technical estimates (Section \ref{sec:gLF}), which lie the foundation for analyzing our IRP schemes in Sections \ref{sec:1Dscheme}--\ref{sec:2Dschemes}. 
The high-order accurate IRP schemes are constructed  
with the aid of 
a simple scaling limiter, which is designed by following the bound-preserving type limiters and frameworks in the literature  \cite{zhang2010b,zhang2012minimum,QinShu2016,jiang2018invariant,WuTang2017ApJS}. 

It is worth noting that the proposed IRP analysis approach has some essential and significant differences from those in the literature (cf.~\cite{zhang2012minimum,guermond2016invariant,guermond2018second,JIANG2018}). 
For example, some standard analyses were often based on the IRP property of the exact Riemann solver for the studied equations, 
while our IRP analyses do {\em not} require any assumption on 
the IRP property of the exact (or any approximate) Riemann solver.\footnote{It is certainly reasonable to assume that 
	the exact Riemann solver preserves the invariant domain.   
	In fact, this is provenly true for a number of other systems, 
	but has not yet been  
	proven for the RHD equations \eqref{eq:RHD}. 
	Rigorous analysis on the IRP property of the exact Riemann solver, including the preservation of the constraints \eqref{eq:PHYSconstraints},    
	is highly nontrivial.} 
This makes our analysis approach potentially extensible to some other complicated 
physical systems for which the exact Riemann solver is not easily available.

This paper is organized as follows. We study the 
minimum entropy principle at the PDE level in Section 
\ref{sec:PDE}. After establishing some auxiliary theories for IRP analysis in Section \ref{sec:theory}, 
we present the IRP schemes in Sections \ref{sec:1Dscheme}--\ref{sec:2Dschemes} for one- and multi-dimensional RHD equations, respectively. 
Numerical examples are provided in Section \ref{sec:examples} 
 and will confirm that 
incorporating the minimum entropy principle into a scheme  
could be helpful for damping some undesirable numerical oscillations, as 
observed in e.g.~\cite{khobalatte1994maximum,zhang2012minimum,JIANG2018} for 
some other systems. 
Section \ref{sec:con} concludes the paper.

\section{Minimum entropy principle at the PDE level}\label{sec:PDE}

\subsection{Convex entropy functions}\label{sec:entropyfun} 
We first explore the space of admissible entropy functions. 
Let ${\mathcal H}(S)$ be a function of the specific entropy $S$. As shown 
in 
\cite{bhoriya2020entropy}
for $d=1$, for any smooth function ${\mathcal H}(S)$, the smooth solutions of 
the RHD equations \eqref{eq:RHD} satisfy 
\begin{equation}\label{HScon}
\frac{\partial }{ \partial t } \Big( -\rho W {\mathcal H}(S)  \Big) + \sum_{i=1}^d \frac{\partial }{ \partial {x_i} } \Big( -\rho W v_i {\mathcal H}(S)  \Big) = 0. 
\end{equation}
This implies that $({\mathscr E},{\mathscr F}_i) = \big( -D{\mathcal H}(S),-D v_i {\mathcal H}(S) \big)$ 
is an entropy--entropy flux pair, if ${\mathscr E} ({\bf U}) = -D {\mathcal H}(S)$ is a strictly convex function of the conservative variables ${\bf U} \in {\mathcal G}$. 

It is well-known that, for a special choice ${\mathcal H}(S) =  S $ or ${\mathcal H}(S) =  \frac{ S}{\Gamma -1} $,  the corresponding ${\mathscr E} ({\bf U})$ is a valid entropy function for the RHD; see, for example, 
\cite{guercilena2017entropy,bhoriya2020entropy,duan2019high,WuShu2019SISC}. 
However, it is unclear, for a general ${\mathcal H}(S)$,  
what is the condition on ${\mathcal H}(S)$ such that the corresponding ${\mathscr E} ({\bf U})$ is strictly convex.  
This has not been addressed for the RHD case in the literature and is now explored in the following proposition.

\begin{proposition}\label{prop:H}
For a smooth function ${\mathcal H}(S)$, the corresponding ${\mathscr E} ({\bf U}) = -D{\mathcal H}(S)$ is a strictly convex entropy function if and only if 
\begin{equation}\label{EntropyFunHcond}
{\mathcal H}'(S) > 0, \qquad  {\mathcal H}'(S) - \Gamma {\mathcal H}''(S) > 0.
\end{equation}
\end{proposition}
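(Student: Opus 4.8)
The plan is to reduce the claim to the positive definiteness of the Hessian $\partial^2\mathscr{E}/\partial{\bf U}^2$ on the open set $\mathcal{G}$, and to evaluate this Hessian through the primitive variables ${\bf V}=(\rho,{\bm v}^\top,p)^\top$. The obstacle at the outset is that $\mathscr{E}=-D\mathcal{H}(S)$ is a natural function of ${\bf V}$ (since $D=\rho W$ and $S=\log p-\Gamma\log\rho$ are explicit in ${\bf V}$), whereas convexity must be tested in the conservative variables ${\bf U}$, for which no closed-form inverse ${\bf V}({\bf U})$ of \eqref{eq:dDU}--\eqref{eq:Eqforp} is available. I would bypass this by working in ${\bf V}$: on $\mathcal{G}$ the map ${\bf V}\mapsto{\bf U}$ defined by \eqref{eq:dDU} is a smooth diffeomorphism with nonsingular Jacobian $A:=\partial{\bf U}/\partial{\bf V}$.

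The key reduction is Sylvester's law of inertia. Denoting the entropy variables by ${\bf w}:=(\partial\mathscr{E}/\partial{\bf U})^\top$ and the Hessian by $H:=\partial^2\mathscr{E}/\partial{\bf U}^2$, the congruent matrix $M:=A^\top H A$ shares the same inertia, and differentiating $\widetilde{\mathscr{E}}({\bf V}):=\mathscr{E}({\bf U}({\bf V}))$ twice gives the fully explicit expression
\[
M=\frac{\partial^2\widetilde{\mathscr{E}}}{\partial{\bf V}^2}-\sum_{k=1}^{d+2} w_k\,\frac{\partial^2 U_k}{\partial{\bf V}^2},
\]
in which $\widetilde{\mathscr{E}}$ and the components $U_k$ are explicit in ${\bf V}$, and the entropy variables ${\bf w}$ are recovered by solving the linear system $A^\top{\bf w}=\partial\widetilde{\mathscr{E}}/\partial{\bf V}$. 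Thus strict convexity of $\mathscr{E}$ is equivalent to positive definiteness of $M$, a finite computation that never requires inverting \eqref{eq:Eqforp} and, in particular, needs no property of any Riemann solver.

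To evaluate $M$ I would carry out the differentiation concretely, starting from $\mathrm{d}\mathscr{E}=-\mathcal{H}(S)\,\mathrm{d}D-D\mathcal{H}'(S)\,\mathrm{d}S$ and the relation $p=\rho^{\Gamma}e^{S}$, which lets me trade $p$ for $S$ and adopt the intermediate variables $(\rho,{\bm v},S)$; here $S$ enters $\mathscr{E}$ only through $\mathcal{H}$, so the dependence on $\mathcal{H}'$ and $\mathcal{H}''$ is transparent. The relativistic kinematics---the Lorentz factor $W=(1-\|{\bm v}\|^2)^{-1/2}$ and the enthalpy $h$---enter through the second derivatives $\partial^2 U_k/\partial{\bf V}^2$ of \eqref{eq:dDU}. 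I would further exploit the rotational invariance of \eqref{eq:RHD} to align ${\bm v}$ with a single coordinate axis, which decouples the transverse momentum directions into a simple block and leaves a small core block coupling $\rho$, the longitudinal velocity, and $S$.

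I expect $M$ to decompose so that its positive definiteness splits into two independent requirements: a ``kinematic'' contribution controlled by $\mathcal{H}'(S)$, yielding $\mathcal{H}'(S)>0$, and a scalar entropy direction whose positivity amounts to $\mathcal{H}'(S)-\Gamma\mathcal{H}''(S)>0$; checking both directions (necessity) and verifying that they make every leading principal minor positive (sufficiency) would then establish the equivalence \eqref{EntropyFunHcond}. The main obstacle is this final algebraic step: because $W$ and $h$ render $A$ and the second derivatives of \eqref{eq:dDU} far more intricate than in the Newtonian case, organizing the $(d+2)\times(d+2)$ computation so that precisely the two clean scalar conditions emerge---rather than a tangle of $\rho$-, $W$-, and $h$-dependent factors---will demand careful bookkeeping and repeated use of the kinematic identities among $W$, $h$, and ${\bm v}$.
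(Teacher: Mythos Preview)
Your strategy is sound and differs from the paper's in a genuine way. Both arguments ultimately test positive definiteness via a congruence, but you pull back to ${\bf V}$-coordinates through $M=A^\top H A$ and the identity $M=\partial^2\widetilde{\mathscr E}/\partial{\bf V}^2-\sum_k w_k\,\partial^2 U_k/\partial{\bf V}^2$, whereas the paper works directly with $H=\mathscr E_{{\bf u}{\bf u}}$: it computes $S_{\bf u}$ and $S_{\bf uu}$ by explicitly inverting $\partial{\bf U}/\partial{\bf V}$, writes $H=-\mathcal H'(S)\,{\bf A}_1+\tfrac{D}{\Gamma}\big(\mathcal H'(S)-\Gamma\mathcal H''(S)\big)S_{\bf u}S_{\bf u}^\top$, and then applies a hand-built congruence ${\bf P}_1$ that reduces ${\bf A}_1$ to a block involving the rank-$d$ matrix ${\bf A}_4=\begin{pmatrix}(1-\|{\bm v}\|^2){\bf I}_d+{\bm v}{\bm v}^\top & -{\bm v}\\ -{\bm v}^\top & \|{\bm v}\|^2\end{pmatrix}$. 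Your route trades the messy explicit inverse $\partial{\bf V}/\partial{\bf U}$ for the (also nontrivial) second derivatives $\partial^2 U_k/\partial{\bf V}^2$ plus one linear solve $A^\top{\bf w}=\partial\widetilde{\mathscr E}/\partial{\bf V}$; the rotational reduction to a single longitudinal velocity is a simplification the paper does not exploit.

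Two cautions. First, the entropy-variable step is not quite inversion-free: solving $A^\top{\bf w}=\partial\widetilde{\mathscr E}/\partial{\bf V}$ is effectively inverting $A^\top$, so the gain over the paper is organizational rather than fundamental. Second, your plan to certify sufficiency by leading principal minors may be awkward here: the paper finds that the $\mathcal H'$-block is only positive \emph{semi}-definite (rank $d+1$ in $\mathbb R^{d+2}$), with the missing direction supplied by the rank-one $\mathcal H'-\Gamma\mathcal H''$ term. In that structure, Sylvester's criterion on a natural ordering can produce a vanishing minor, and the paper instead argues by showing that the only vector annihilated simultaneously by both pieces is zero. You will likely need the same null-space argument rather than a minor-by-minor check.
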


\begin{proof}
We study the convexity of ${\mathscr E} ({\bf U})$ by investigating the positive definiteness of the associated Hessian matrix 
$${\mathscr E}_{{\bf u}{\bf u}} := \left( \frac{ \partial^2 {\mathscr E}} { \partial u_i \partial u_j }  \right)_{ 1\le i,j \le d+2 },$$
where $u_i$ denotes the $i$th component of ${\bf U}$. A straightforward computation gives 
$$
\frac{\partial^2 {\mathscr E}} { \partial u_i \partial u_j } = - {\mathcal H}'(S) 
\left( 
\frac{ \partial D}{ \partial u_i} \frac{\partial S }{\partial u_j} 
+ \frac{ \partial D}{ \partial u_j} \frac{\partial S }{\partial u_i} 
\right) 
- D {\mathcal H}''(S) \frac{\partial S} {\partial u_i } \frac{\partial S} {\partial u_j }
- D {\mathcal H}'(S) \frac{\partial^2 S} { \partial u_i \partial u_j } ,
$$
which implies that 
	\begin{align} 
	\nonumber
	{\mathscr E}_{{\bf u}{\bf u}} &= - {\mathcal H}'(S) \left( {\bf e}_1 S_{\bf u}^\top 
	+ S_{\bf u} {\bf e}_1^\top + D  S_{\bf uu} \right) - D {\mathcal H}''(S)  S_{\bf u} S_{\bf u}^\top
	\\ \label{proof223}
	&
	= - {\mathcal H}'(S) {\bf A}_1 +  \frac{D}{\Gamma} \left(  {\mathcal H}'(S) - \Gamma {\mathcal H}''(S) \right)  S_{\bf u} S_{\bf u}^\top,
\end{align}
where  
${\bf e}_1=(1,{\bf 0}_{d+1}^\top)^\top$, ${\bf 0}_{d+1}$ denotes the zero vector of length $d+1$, 
and 
$$
{\bf A}_1:= {\bf e}_1 S_{\bf u}^\top 
+ S_{\bf u} {\bf e}_1^\top + D  S_{\bf uu} 
+ \frac{1}{\Gamma} D S_{\bf u} S_{\bf u}^\top.$$  
Since $S$ cannot be explicitly formulated in terms of ${\bf U}$, direct derivation of $S_{\bf u}$ and $S_{\bf uu}$ is difficult. Let us consider the primitive variables ${\bf V} = ( \rho, {\bm v}^\top, p)^\top$. Note that both $S$ and $\bf U$ can be explicitly formulated in terms of $\bf V$, then it is easy to derive 
$$
\frac{\partial S}{\partial {\bf V}} = \left( - \Gamma / \rho, {\bf 0}_d^\top, 1/p \right), 
\qquad 
\frac{\partial {\bf U}}{\partial {\bf V}} = \begin{pmatrix}
W & \rho W^3 {\bm v}^\top  & 0\\
W^2 {\bm v} & \rho h W^2 {\bf I}_d + 2 \rho h W^4 {\bm v} {\bm v}^\top & \frac{\Gamma W^2}{\Gamma -1} {\bm v}
\\
W^2 & 2\rho h W^4 {\bm v}^\top & \frac{\Gamma W^2}{\Gamma -1} - 1
\end{pmatrix},
$$
where ${\bf I}_d$ denotes the identity matrix of size $d$. The inverse of the matrix  $\frac{\partial {\bf U}}{\partial {\bf V}}$ gives 
$$
\frac{\partial {\bf V}}{\partial {\bf U}} = 
\frac1{\rho h ( 1 - c_s^2 \| {\bm v} \|^2) }
\begin{pmatrix}
\rho h ( 1 -(\Gamma-1)\| {\bm v} \|^2 ) W^{-1} & - \rho ( 1 + (\Gamma -1) \| {\bm v} \|^2  ) {\bm v}^\top & \rho \Gamma \| {\bm v} \|^2
\\
(\Gamma -1) W^{-3} {\bm v} & 
{\bf A}_2
& \Gamma \left(  \| {\bm v} \|^2 - 1 \right) {\bm v}
\\
- ( \Gamma p + (\Gamma -1 )\rho ) W^{-1}
	& - ( 2\Gamma p + (\Gamma -1 )\rho ) {\bm v}^\top & \Gamma p \left( 1 + \| {\bm v} \|^2 \right) + (\Gamma -1 )\rho
\end{pmatrix}
$$
with $c_s=\sqrt{ \frac{\Gamma p}{\rho h}} $ denoting the acoustic wave speed in the RHD case (note that $0<c_s<1$), and 
$${\bf A}_2:= \left( 1 - \| {\bm v} \|^2 \right) \left[ \left( 1-  c_s^2 \| {\bm v} \|^2    \right)  {\bf I}_d + 
\left( \Gamma -1 + c_s^2 \right) {\bm v} {\bm v}^\top \right].$$ 
It follows that 
\begin{align*}
& S_{\bf u}^\top = \frac{\partial S}{\partial {\bf U}} = \frac{\partial S}{\partial {\bf V}}  \frac{\partial {\bf V}}{\partial {\bf U}} 
= \frac{\Gamma-1}{p}   \left( -h W^{-1},~ - {\bm v}^\top,~ 1  \right).
\end{align*}
The derivative of $S_{\bf u}^\top$ with respect to ${\bf V}$ gives
$$
S_{\bf u v} = \begin{pmatrix}
\frac{ \Gamma }{\rho^2} \sqrt{ 1 - \| {\bm v} \|^2 } & \frac{\Gamma-1}{p} h W {\bm v}^\top & \frac{ \Gamma - 1 }{p^2} \sqrt{ 1 - \| {\bm v} \|^2 }
\\
{\bf 0}_d & -\frac{\Gamma - 1}{p} {\bf I}_d & \frac{\Gamma-1}{p^2} {\bm v}
\\
0 & {\bf 0}_d^\top & -\frac{\Gamma-1}{p^2}
\end{pmatrix}.
$$
Then we obtain 
\begin{align*}
{\bf A}_1 &= {\bf e}_1 S_{\bf u}^\top 
+ S_{\bf u} {\bf e}_1^\top + D  S_{\bf u v}   \frac{\partial {\bf V}}{\partial {\bf U}} 
+ \frac{1}{\Gamma} D S_{\bf u} S_{\bf u}^\top
\\
 & =  \frac{ 1-\Gamma  }{p h ( h-1 ) ( 1- c_s^2 \| {\bm v} \|^2  )}
\begin{pmatrix}
a_1
& a_2 {\bm v}^\top  
& a_3
\\
a_2 {\bm v}
& 
{\bf A}_3
& a_4 {\bm v}
\\
a_3
& 
a_4 {\bm v}^\top
& 
a_5
\end{pmatrix}
\end{align*}
with 
\begin{align*}
&a_1 := {  h(\Gamma-1)W^{-1} } > 0,
\qquad a_2 := {  (2h-1)(\Gamma -1) }, 
\qquad a_3 := - {  (\Gamma-1) ( h + (h-1) \| {\bm v} \|^2 ) },
\\
& {\bf A}_3 := \frac{ { (h-1) ( 1- c_s^2 \| {\bm v} \|^2  )  } }{W} {\bf I}_d + 
W
\left( 
{ (h-1) ( 1- c_s^2 \| {\bm v} \|^2  )  } +
{    \frac{1}{h} (\Gamma -1)(2h-1)^2  }
\right)  {\bm v}{\bm v}^\top,
\\
& a_4 := W 
\left(    h ( 1- c_s^2 \| {\bm v} \|^2 ) - { 
	 \Gamma (2h-1) }
\right),
\qquad a_5 := {  W \left( (h-1)(2\Gamma -1) \| {\bm v} \|^2 + ( \Gamma -1 )h \right) }.
\end{align*}
Let us define the invertible matrix
$$
{\bf P}_1 = 
\begin{pmatrix}
1 & {\bf 0}_d^\top & 0 
\\
-\frac{a_2}{a_1} {\bm v} & {\bf I}_d & {\bf 0}_d
\\
-\frac{a_3}{a_1} & {\bf 0}_d^\top & 1
\end{pmatrix}.
$$
A straightforward computation gives 
\begin{equation}\label{proof224}
{\bf P}_1 {\bf A}_1 {\bf P}_1^\top 
=  \frac{  1-\Gamma  }{ph ( h-1 ) ( 1- c_s^2 \| {\bm v} \|^2  )}
\begin{pmatrix}
a_1
& {\bf 0}_{d+1}^\top
\\
{\bf 0}_{d+1}
& a_6 {\bf A}_4
\end{pmatrix}
\end{equation}
with $a_6:= (h-1) { W \left(1 - c_s^2 \| {\bm v} \|^2 \right) } > 0$, and 
\begin{equation}\label{eq:A4}
{\bf A}_4 := 
\begin{pmatrix}
(1-\|{\bm v}\|^2) {\bf I}_d + {\bm v} {\bm v}^\top~ & ~-{\bm v}\\
-{\bm v}^\top~ & ~\|{\bm v}\|^2
\end{pmatrix}.
\end{equation}
Note that 
\begin{equation}\label{proof225}
{\bf P}_1 S_{\bf u} = \frac{\Gamma-1}{p} \Big( -hW^{-1}, 2(h-1) {\bm v}^\top, (1-h)(1+\|{\bm v}\|^2)  \Big)^\top 
=: \frac{\Gamma-1}{p} {\bm b}_1.
\end{equation}
Combining equations \eqref{proof223}, \eqref{proof224} and \eqref{proof225} gives 
\begin{equation}\label{proof123e}
{\bf P}_1 {\mathscr E}_{{\bf u}{\bf u}} {\bf P}_1^\top 
=  a_7 {\mathcal H}'(S) {\bf A}_5 +  a_8 \big(  {\mathcal H}'(S) - \Gamma {\mathcal H}''(S) \big) {\bm b}_1 {\bm b}_1^\top 
\end{equation}
with $a_7:=\frac{\Gamma -1}{ph( h-1 ) ( 1- c_s^2 \| {\bm v} \|^2  )}>0$, $a_8:=\frac{D(\Gamma -1)^2}{p^2\Gamma }>0$, and 
$$
{\bf A}_5 := 
\begin{pmatrix}
a_1
& {\bf 0}_{d+1}^\top 
\\
{\bf 0}_{d+1}
& a_6 {\bf A}_4
\end{pmatrix}.
$$
Let us study the property of ${\bf A}_4$ defined in \eqref{eq:A4}. 
The matrix $(1-\|{\bm v}\|^2) {\bf I}_d + {\bm v}{\bm v}^\top$ is symmetric and 
its eigenvalues consist of $1$ and $1-\|{\bm v}\|^2$, which are all positive, implying that 
$(1-\|{\bm v}\|^2) {\bf I}_d + {\bm v} {\bm v}^\top$ is positive definite. 
Furthermore, a straightforward calculation shows $\det ( {\bf A}_4 )=0$. Therefore, ${\bf A}_4$ 
is positive semi-definite, and ${\rm rank}({\bf A}_4) = d$. 
Since $a_1>0$ and $a_6>0$, it follows that 
${\bf A}_5$ is positive semi-definite, and ${\rm rank}({\bf A}_5) = d+1$. 
Hence, there exists a rank-$(d+1)$ matrix ${\bf A}_6 \in \mathbb R^{(d+1)\times (d+2)}$ such that 
\begin{equation}\label{proofwdfd}
{\bf A}_6^\top {\bf A}_6={\bf A}_5.
\end{equation}

Because ${\mathscr E}_{{\bf u}{\bf u}}$ and  ${\bf P}_1 {\mathscr E}_{{\bf u}{\bf u}} {\bf P}_1^\top $ are congruent, it suffices to prove that the matrix 
${\bf P}_1 {\mathscr E}_{{\bf u}{\bf u}} {\bf P}_1^\top $ is positive definite if and only if ${\mathcal H}(S)$ satisfies the condition \eqref{EntropyFunHcond}. 

{\tt (\romannumeral1)}. First prove the condition \eqref{EntropyFunHcond} 
	is sufficient for the positive definiteness of ${\bf P}_1 {\mathscr E}_{{\bf u}{\bf u}} {\bf P}_1^\top $.   
Because ${\bf A}_5$ and ${\bm b}_1 {\bm b}_1^\top$ are both positive semi-definite, 
by \eqref{proof123e} we know that ${\bf P}_1 {\mathscr E}_{{\bf u}{\bf u}} {\bf P}_1^\top$ 
is positive semi-definite under the condition \eqref{EntropyFunHcond}. It means 
\begin{equation}\label{eq:proof21}
{\bm z}^\top {\bf P}_1 {\mathscr E}_{{\bf u}{\bf u}} {\bf P}_1^\top {\bm z} \ge 0, \qquad \forall {\bm z} 
\in \mathbb R^{d+2}. 
\end{equation}
Hence, it suffices to show ${\bm z}={\bf 0}$ when ${\bm z}^\top {\bf P}_1 {\mathscr E}_{{\bf u}{\bf u}} {\bf P}_1^\top {\bm z} = 0$. Using \eqref{proof123e} and \eqref{proofwdfd}, we have 
$$
{\bm z}^\top {\bf P}_1 {\mathscr E}_{{\bf u}{\bf u}} {\bf P}_1^\top {\bm z} = 
a_7 {\mathcal H}'(S) \| {\bf A}_6 {\bm z} \|^2 + a_8 \big(  {\mathcal H}'(S) - \Gamma {\mathcal H}''(S) \big) 
\left| {\bm b}_1^\top {\bm z} \right|^2 = 0,
$$
which implies ${\bf A}_6 {\bm z}={\bf 0}_{d+1}$ and ${\bm b}_1^\top {\bm z} =0$. Then ${\bf A}_5 {\bm z}={\bf A}_6^\top {\bf A}_6 {\bm z}={\bf 0}$. Let ${\bm z}=:(z^{(1)},{\bm z}^{(2)},z^{(3)})^\top$ with 
${\bm z}^{(2)}\in \mathbb R^d$. From ${\bf A}_5 {\bm z}={\bf 0}$ we can deduce that 
$a_1 z^{(1)} = 0$ and $a_6{\bf A}_4(  {\bm z}^{(2)},z^{(3)})^\top = {\bf 0}$.  
It further yields $z^{(1)} = 0$ and 
\begin{align}\label{proof1112}
& (1-\|{\bm v}\|^2) {\bm z}^{(2)}  +  {\bm v}{\bm v}^\top {\bm z}^{(2)} -  z^{(3)} {\bm v}  = {\bf 0}_d
\\ \label{proof11142}
& -{\bm v}^\top {\bm z}^{(2)} + \|{\bm v} \|^2 z^{(3)} = 0.
\end{align}
Combining $z^{(1)} =0$ and ${\bm b}_1^\top {\bm z} =0$ gives 
$$
2(h-1) {\bm v}^\top {\bm z}^{(2)}  + (1-h) ( 1 + \|{\bm v} \|^2 ) z^{(3)} = 0,
$$
which, together with \eqref{proof11142}, imply 
${\bm v}^\top {\bm z}^{(2)} = z^{(3)}=0$. Substituting it into \eqref{proof1112} gives 
${\bm z}^{(2)}={\bf 0}_d$. Therefore, we have ${\bm z}={\bf 0}$ when ${\bm z}^\top {\bf P}_1 {\mathscr E}_{{\bf u}{\bf u}} {\bf P}_1^\top {\bm z} = 0$. 
This along with \eqref{eq:proof21} yield that ${\bf P}_1 {\mathscr E}_{{\bf u}{\bf u}} {\bf P}_1^\top$ 
is positive definite under the condition \eqref{EntropyFunHcond}. 
This completes the proof of sufficiency.

{\tt (\romannumeral2)}. Then prove the condition \eqref{EntropyFunHcond} 
is necessary for the positive definiteness of ${\bf P}_1 {\mathscr E}_{{\bf u}{\bf u}} {\bf P}_1^\top $.  
Assume that ${\bf P}_1 {\mathscr E}_{{\bf u}{\bf u}} {\bf P}_1^\top $ is positive definite, then 
\begin{equation}\label{eq:proof2}
{\bm z}^\top {\bf P}_1 {\mathscr E}_{{\bf u}{\bf u}} {\bf P}_1^\top {\bm z} 
= a_7 {\mathcal H}'(S) {\bm z}^\top {\bf A}_5 {\bm z} + a_8 \big(  {\mathcal H}'(S) - \Gamma {\mathcal H}''(S) \big) 
\left| {\bm b}_1^\top {\bm z} \right|^2
>0, ~~ \forall {\bm z} 
\in \mathbb R^{d+2} \setminus \{{\bf 0}\}. 
\end{equation}
Note that the matrix ${\bf A}_5$ does not have full rank. There exist two vectors ${\bm z}_1, {\bm z}_2 \in \mathbb R^{d+2}\setminus \{{\bf 0}\}$ such that ${\bf A}_5 {\bm z}_1 = {\bf 0}$ and ${\bm b}_1^\top {\bm z}_2 = 0$, respectively. 
It follows from \eqref{eq:proof2} that 
\begin{align*}
& 0< {\bm z}_2^\top {\bf P}_1 {\mathscr E}_{{\bf u}{\bf u}} {\bf P}_1^\top {\bm z}_2 = 
a_7 {\mathcal H}'(S) {\bm z}_2^\top {\bf A}_5 {\bm z}_2= a_7 {\mathcal H}'(S) \| {\bf A}_6 {\bm z}_2 \|^2 ,
\\
& 0< {\bm z}_1^\top {\bf P}_1 {\mathscr E}_{{\bf u}{\bf u}} {\bf P}_1^\top {\bm z}_1=
a_8 \big(  {\mathcal H}'(S) - \Gamma {\mathcal H}''(S) \big) 
\left| {\bm b}_1^\top {\bm z}_1 \right|^2,
\end{align*}
which implies ${\mathcal H}'(S)>0$ and ${\mathcal H}'(S) - \Gamma {\mathcal H}''(S)>0$, respectively. 
This completes the proof of necessity. 
\end{proof}

\begin{remark}
	Proposition \ref{prop:H} and equation \eqref{HScon} imply that there exists a family of (generalized) entropy pairs $({\mathscr E},{\mathscr F}_i)$ associated with the $d$-dimensional ($1\le d \le 3$) RHD equations \eqref{eq:RHD}, 
	\begin{equation}\label{eq:entropys}
		{\mathscr E}({\bf U}) = - D {\mathcal H} (S), \qquad {\mathscr F}_i ( {\bf U} ) = - D v_i {\mathcal H}(S), \quad i=1,\dots,d,
	\end{equation}
	generated by the smooth functions ${\mathcal H}(S)$ satisfying \eqref{EntropyFunHcond}. 
	Our found condition \eqref{EntropyFunHcond} is consistent with the one derived by Harten 
	\cite[Section 2]{HARTEN1983151} for the 2D non-relativistic Euler equations.  However, the analysis in the RHD case does not directly follow from \cite{HARTEN1983151} and is more difficult. 
	Due to the complicated structures of the matrix ${\mathscr E}_{{\bf u}{\bf u}}$, 
	some standard approaches for investigating its positive definiteness, e.g., checking the  
	positivity of its leading principal minors, can be intractable in our RHD case. 
\end{remark}

\subsection{Minimum principle of the specific entropy}\label{sec:ME}
We are now in a position to 
verify that 
Tadmor's minimum entropy principle \eqref{eq:minEntropy} also hold for   
the RHD system \eqref{eq:RHD}. 
We consider the convex entropy ${\mathscr E}({\bf U})=-D {\mathcal H}(S)$ established in Section \ref{sec:entropyfun}   
for all smooth functions ${\mathcal H}(S)$ satisfying \eqref{EntropyFunHcond}.

Assume that ${\bf U}({\bm x},t)$ is an entropy solution of the RHD equations \eqref{eq:RHD}. 
According to  
\cite[Theorem 4.1]{tadmor1984skew} and following \cite[Lemma 3.1]{TADMOR1986211}, we have, 
for all smooth functions ${\mathcal H}(S)$ satisfying \eqref{EntropyFunHcond}, 
\begin{equation}\label{eq:enEQ}
	\int_{ \|  {\bm x} -{\bm x}_0 \| \le R } D({\bm x},t+\tau) {\mathcal H} ( S ( {\bm x}, t+\tau ) ) 
	{\rm d} {\bm x} \ge \int_{ \|  {\bm x} -{\bm x}_0 \| \le R + \tau v_{\rm max} } 
	D({\bm x},t) {\mathcal H} ( S ( {\bm x}, t  ) )  {\rm d} {\bm x}, \quad \forall R >0,
\end{equation}
where $v_{\max}$ denotes the maximal wave speed in the domain; we can take $v_{\max} $ as  the speed of light $c = 1$, a simple upper bound of all  wave speeds in the RHD case. 
Note that the density involved in \eqref{eq:enEQ} is $D=\rho W$, instead of the rest-mass density $\rho$. 

Consider a special function ${\mathcal H}_0(S)$ \cite{TADMOR1986211} defined by 
$$
{\mathcal H}_0 ( S ) := \min \{ S-S_0, 0 \}, \qquad S_0 =  \min \left\{ S({\bm y},t):~ \|{\bm y} - {\bm x}_0 \| \le R+ \tau  v_{\rm max}  \right\}.
$$
As observed in \cite{TADMOR1986211}, the function ${\mathcal H}_0(S)$, although not smooth, can be written 
as the limit of a sequence of smooth functions satisfying \eqref{EntropyFunHcond}; see also \cite[Section 3.1]{gouasmi2020minimum} for a detailed review. 
Therefore, by passing to the limit, the inequality \eqref{eq:enEQ} holds for ${\mathcal H} = {\mathcal H}_0 $, which gives 
\begin{align*}
	\int_{ \|  {\bm x} -{\bm x}_0 \| \le R } D({\bm x},t+\tau) \min\{  S ( {\bm x}, t+\tau ) - S_0, 0 \}
	{\rm d} {\bm x} 
	\ge \int_{ \|  {\bm x} -{\bm x}_0 \| \le R + \tau v_{\rm max} } 
	D({\bm x},t) {\mathcal H}_0 ( S ( {\bm x}, t ) )   
	{\rm d} {\bm x} = 0. 
\end{align*}
Because $D({\bm x},t+\tau) >0$, we obtain $\min\{  S ( {\bm x}, t+\tau ) - S_0, 0 \}=0$ for 
$\| {\bm x} - {\bm x}_0 \|\le R$. It leads to 
\begin{equation}\label{key342}
	S ( {\bm x}, t+\tau ) \ge  S_0 = \min \left\{ S({\bm y},t):~ \|{\bm y} - {\bm x}_0 \| \le R+ \tau  v_{\rm max}  \right\}, \qquad \forall \| {\bm x} -{\bm x}_0 \| \le R, 
\end{equation}
which yields the local minimum entropy principle \eqref{eq:minEntropy}. 
In particular, it implies that the spatial minimum of the specific entropy, $\min_{ \bm x} S({\bm x},t)$, 
is a nondecreasing function of time $t$, yielding 
\begin{equation}\label{key3421}
	S ( {\bm x}, t) \ge \min_{ \bm x} S( {\bm x}, 0 ), \qquad \forall t\ge 0. 
\end{equation}

In the above derivation it is implicitly assumed that ${\bf U}({\bm x},t)$ always satisfies the physical constraints \eqref{eq:PHYSconstraints}. 
The entropy principle \eqref{key3421} and the constraints  \eqref{eq:PHYSconstraints}  
constitute the global invariant region 
$\Omega_{S_0}$, defined in \eqref{eq:IR}, 
for entropy solutions of the RHD equations \eqref{eq:RHD}.

\section{Auxiliary theories for numerical analysis}\label{sec:theory}

In order to analyze the local minimum entropy principle of 
numerical schemes, we introduce a (more general) ``local'' invariant region for an arbitrarily  given $\sigma$:  
\begin{equation}\label{eq:IRgen}
	\Omega_{\sigma} := \left\{ \vec U=(D,{\bm m}^\top,E)^{\top}\in \mathbb R^{d+2}:~\rho(\vec U)>0,~p(\vec U)>0,~\|{\bm v}(\vec U)\| <1,~
	S({\bf U}) \ge \sigma  \right\}.
\end{equation}
The special choice $\sigma = S_0 = \essinf_{ {\bm x}} S ( {\bm x}, 0 ) $ corresponds to the global invariant region 
$\Omega_{S_0}$ in \eqref{eq:IR}. 
It is evident that the following ``monotonicity'' holds for $\Omega_{\sigma}$.
\begin{lemma}[{\bf Monotonic decreasing}]\label{thm:Mono}
	If $\sigma_1 \ge \sigma_2$, then $\Omega_{\sigma_1} \subseteq  \Omega_{\sigma_2}$.
\end{lemma}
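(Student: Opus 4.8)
The plan is to prove the inclusion by a direct element-chasing argument, which is all that is needed since the two sets differ only in their last defining constraint. The essential observation is that the definition of $\Omega_{\sigma}$ in \eqref{eq:IRgen} consists of four conditions, of which the first three---$\rho(\vec U)>0$, $p(\vec U)>0$, and $\|{\bm v}(\vec U)\| < 1$---do not involve $\sigma$ at all and are therefore \emph{identical} in $\Omega_{\sigma_1}$ and $\Omega_{\sigma_2}$. Only the entropy constraint $S(\vec U) \ge \sigma$ carries the parameter $\sigma$, so the entire matter reduces to comparing this single inequality.

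First I would take an arbitrary state $\vec U \in \Omega_{\sigma_1}$ and record that, by definition, it satisfies the three physical constraints together with $S(\vec U) \ge \sigma_1$. Since the three physical constraints are the same in the definition of $\Omega_{\sigma_2}$, the point $\vec U$ automatically verifies them for membership in $\Omega_{\sigma_2}$ as well. It then remains only to check the entropy constraint at level $\sigma_2$.

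For the entropy constraint I would invoke the hypothesis $\sigma_1 \ge \sigma_2$ and transitivity of the inequality: from $S(\vec U) \ge \sigma_1$ and $\sigma_1 \ge \sigma_2$ we obtain $S(\vec U) \ge \sigma_2$. Hence $\vec U$ satisfies all four conditions defining $\Omega_{\sigma_2}$, so $\vec U \in \Omega_{\sigma_2}$. As $\vec U$ was arbitrary, this establishes $\Omega_{\sigma_1} \subseteq \Omega_{\sigma_2}$.

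There is no real obstacle in this proof; the statement is genuinely elementary, as the word ``evident'' preceding the lemma already signals. The only point meriting a moment's care is the conceptual one of noticing that the parameter $\sigma$ enters \eqref{eq:IRgen} solely through the entropy bound, so that relaxing this bound (by decreasing $\sigma$) can only enlarge the set; the positivity and subluminal constraints play no role in the monotonicity and are simply inherited unchanged.
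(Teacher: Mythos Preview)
Your proof is correct and matches the paper's treatment: the paper declares the lemma ``evident'' and gives no argument, and your element-chasing via transitivity of the entropy inequality is exactly the obvious verification that underlies this.
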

Thanks to ${\mathcal G}={\mathcal G}_1$ proved in \cite[Lemma 2.1]{WuTang2015}, we immediately have 
\begin{lemma}[{\bf First equivalent form}]\label{thm:EqOmega1}
	The invariant region set $\Omega_{\sigma}$ 
	is equivalent to 
	\begin{equation}\label{eq:IRgen2}
		\Omega_{\sigma}^{(1)} = \left\{ \vec U=(D,{\bm m}^\top,E)^{\top}\in \mathbb R^{d+2}:~D>0,~E>\sqrt{D^2 + \|{\bm m}\|^2},~
		S({\bf U}) \ge \sigma  \right\}.
	\end{equation}
\end{lemma}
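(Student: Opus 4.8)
The plan is to reduce the claim to the already-established set equivalence $\mathcal{G} = \mathcal{G}_1$ from \cite[Lemma 2.1]{WuTang2015}, which is precisely the fact the lemma statement invokes. First I would observe that both regions are obtained by intersecting a purely kinematic admissibility set with the single entropy constraint $S(\vec U) \ge \sigma$. Concretely, comparing \eqref{eq:IRgen} with \eqref{eq:RHD:definitionG} shows $\Omega_{\sigma} = \mathcal{G} \cap \{\vec U : S(\vec U) \ge \sigma\}$, while comparing \eqref{eq:IRgen2} with \eqref{eq:RHD:definitionG2} shows $\Omega_{\sigma}^{(1)} = \mathcal{G}_1 \cap \{\vec U : S(\vec U) \ge \sigma\}$.

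Next I would simply invoke $\mathcal{G} = \mathcal{G}_1$. Since the two admissibility sets coincide, intersecting each with the common half-space $\{\vec U : S(\vec U) \ge \sigma\}$ immediately yields $\Omega_{\sigma} = \Omega_{\sigma}^{(1)}$. This is a one-line set-theoretic consequence once both regions are written in the intersection form above.

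The one point deserving a word of care, rather than a real argument, is that the specific entropy $S(\vec U) = \log\big(p(\vec U)\,\rho(\vec U)^{-\Gamma}\big)$ must be well-defined wherever the constraint $S(\vec U) \ge \sigma$ is imposed. This is automatic here: on $\mathcal{G}_1$ (equivalently $\mathcal{G}$) one has $D > 0$ and $E > \sqrt{D^2 + \|\bm m\|^2}$, which by \cite{WuTang2015} forces $\rho(\vec U) > 0$ and $p(\vec U) > 0$, so $S(\vec U)$ is defined and finite on exactly the set over which the entropy bound is evaluated in both formulations. Hence the entropy constraint selects the same subset in either description, and there is no genuine obstacle: the entire content is packaged in the cited equivalence $\mathcal{G} = \mathcal{G}_1$, with the remainder being the routine verification that $S$ is well-posed on the shared underlying set.
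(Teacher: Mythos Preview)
Your proposal is correct and matches the paper's approach exactly: the paper simply states that, thanks to $\mathcal{G}=\mathcal{G}_1$ proved in \cite[Lemma~2.1]{WuTang2015}, the lemma follows immediately, which is precisely the intersection argument $\Omega_\sigma = \mathcal{G}\cap\{S(\vec U)\ge\sigma\} = \mathcal{G}_1\cap\{S(\vec U)\ge\sigma\} = \Omega_\sigma^{(1)}$ you spelled out. Your additional remark on the well-definedness of $S(\vec U)$ on $\mathcal{G}_1$ is a welcome clarification but not something the paper bothers to make explicit.
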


Note that the specific entropy $S=\log(p \rho^{-\Gamma})$ is a nonlinear function of $(\rho,p)$, 
and, as mentioned in Section \ref{sec:intro}, the functions $\rho({\bf U})$ and $p({\bf U})$ are  already highly nonlinear and without explicit formulas. 
The combination of these nonlinear functions leads to 
$S({\bf U})$, which is certainly a 
highly nonlinear function and also cannot be explicitly formulated in terms of ${\bf U}$. 
This 
causes it difficult to study the minimum entropy principle at the numerical level and explore IRP schemes for RHD. 
In order to overcome the challenges, several important properties of the invariant region $\Omega_{\sigma}$ 
will be derived in this section.

\subsection{An explicit and linear equivalent form of invariant region}

To address the difficulties arising from the nonlinearity of  $S({\bf U})$, we discover the following novel equivalent form of $\Omega_{\sigma}$. 

\begin{theorem}[{\bf Second equivalent form}]\label{thm:EqOmega}
	The invariant region set $\Omega_{\sigma}$  
	is equivalent to 
	\begin{equation}\label{eq:EqOmega_sigma}
		\Omega_{\sigma}^{(2)} = \left\{ \vec U=(D,{\bm m}^\top,E)^{\top}\in \mathbb R^{d+2}:~D>0,~~\varphi_\sigma ( {\bf U}; {\bm v}_*, \rho_* )\ge 0,~\forall  {\bm v}_* \in \mathbb B_1({\bf 0}),~\forall \rho_* \in \mathbb R^+ \right\},
	\end{equation}
	where $\mathbb B_1({\bf 0}):= \{ {\bm x} \in \mathbb R^d: |{\bm x}|<1  \}$ denotes the open unit ball centered at ${\bf 0}$ in $\mathbb R^d$, and 
	\begin{equation}\label{varphi_def}
		\varphi_\sigma ( {\bf U}; {\bm v}_*, \rho_* ) : = E-{\bm m}\cdot {\bm v}_*-D \sqrt{1-\|{\bm v}_* \|^2} + {\mathrm e}^{\sigma} \left(  \rho_*^\Gamma - \frac{\Gamma}{\Gamma -1}   
		D \rho_*^{\Gamma -1}  \sqrt{1-\|{\bm v}_* \|^2} \right). 
	\end{equation}
\end{theorem}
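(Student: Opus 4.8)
The plan is to invoke the first equivalent form (Lemma~\ref{thm:EqOmega1}) to reduce the claim to proving $\Omega_\sigma^{(1)}=\Omega_\sigma^{(2)}$, and then to exploit the separable structure of $\varphi_\sigma$ in the two auxiliary variables ${\bm v}_*$ and $\rho_*$. The guiding idea is that $\varphi_\sigma$ is engineered so that its infimum over $({\bm v}_*,\rho_*)$ simultaneously encodes the admissibility constraint $E>\sqrt{D^2+\|{\bm m}\|^2}$ and the entropy bound $S\ge\sigma$. Two elementary one-variable minimizations will do most of the work. First, for fixed ${\bm v}_*$ the map $\rho_*\mapsto \rho_*^\Gamma-\frac{\Gamma}{\Gamma-1}D\rho_*^{\Gamma-1}\sqrt{1-\|{\bm v}_*\|^2}$ attains its minimum over $\mathbb R^+$ at $\rho_*=D\sqrt{1-\|{\bm v}_*\|^2}$, so that ``$\varphi_\sigma\ge0$ for all $\rho_*\in\mathbb R^+$'' is equivalent to nonnegativity of the reduced profile $\psi({\bm v}_*):=E-{\bm m}\cdot{\bm v}_*-Ds-\frac{{\rm e}^{\sigma}}{\Gamma-1}(Ds)^\Gamma$ with $s:=\sqrt{1-\|{\bm v}_*\|^2}$. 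Second, a short critical-point computation gives $\sup_{{\bm v}_*\in\mathbb B_1({\bf 0})}\big({\bm m}\cdot{\bm v}_*+D\sqrt{1-\|{\bm v}_*\|^2}\big)=\sqrt{D^2+\|{\bm m}\|^2}$, attained at the interior point ${\bm v}_*={\bm m}/\sqrt{D^2+\|{\bm m}\|^2}$.

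For the inclusion $\Omega_\sigma^{(2)}\subseteq\Omega_\sigma^{(1)}$ I would start from $D>0$ together with $\varphi_\sigma\ge0$ for all admissible $({\bm v}_*,\rho_*)$. Letting $\rho_*\to0^+$ yields $E-{\bm m}\cdot{\bm v}_*-D\sqrt{1-\|{\bm v}_*\|^2}\ge0$ for every ${\bm v}_*$, and hence $E\ge\sqrt{D^2+\|{\bm m}\|^2}$ by the supremum above. Equality is ruled out because at the maximizing ${\bm v}_*$ the first three terms of $\varphi_\sigma$ vanish while the $\rho_*$-bracket is strictly negative for small $\rho_*>0$; thus $E>\sqrt{D^2+\|{\bm m}\|^2}$, so ${\bf U}$ is admissible by Lemma~\ref{thm:EqOmega1}. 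Evaluating $\varphi_\sigma$ at the genuine physical state ${\bm v}_*={\bm v}({\bf U})$, $\rho_*=\rho({\bf U})$ and simplifying with $D\sqrt{1-\|{\bm v}\|^2}=\rho$, $W^2(1-\|{\bm v}\|^2)=1$ and $p={\rm e}^{S}\rho^\Gamma$ then collapses $\varphi_\sigma$ to $\frac{\rho^\Gamma}{\Gamma-1}({\rm e}^{S}-{\rm e}^{\sigma})\ge0$, which forces $S\ge\sigma$.

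The reverse inclusion $\Omega_\sigma^{(1)}\subseteq\Omega_\sigma^{(2)}$ is the harder direction, and there I would prove $\psi({\bm v}_*)\ge0$ on $\mathbb B_1({\bf 0})$. Since $S\ge\sigma$ gives ${\rm e}^{\sigma}\le {\rm e}^{S}=p\rho^{-\Gamma}$ and the coefficient of ${\rm e}^{\sigma}$ in $\psi$ is nonpositive, it suffices to establish nonnegativity of the majorant $\tilde\psi({\bm v}_*):=E-{\bm m}\cdot{\bm v}_*-Ds-\frac{{\rm e}^{S}}{\Gamma-1}(Ds)^\Gamma$. The crucial structural fact is that $\tilde\psi$ is convex on $\mathbb B_1({\bf 0})$: the map $u\mapsto u^{q}$ is increasing and concave for $q\in(0,1]$, so composing with the concave function $1-\|{\bm v}_*\|^2$ shows that both $s=(1-\|{\bm v}_*\|^2)^{1/2}$ and $s^\Gamma=(1-\|{\bm v}_*\|^2)^{\Gamma/2}$ are concave, the latter using precisely $\Gamma\le2$; multiplying by the nonpositive constants $-D$ and $-\frac{{\rm e}^{S}D^\Gamma}{\Gamma-1}$ and adding the affine part $E-{\bm m}\cdot{\bm v}_*$ yields convexity. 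A gradient computation then identifies ${\bm v}_*={\bm v}({\bf U})$ as a critical point with $\tilde\psi({\bm v})=0$, so convexity makes it the global minimum and $\tilde\psi\ge0$, whence $\psi\ge0$ and $\varphi_\sigma\ge0$.

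The main obstacle I expect is exactly this convexity of $\tilde\psi$: the term $(1-\|{\bm v}_*\|^2)^{\Gamma/2}$ is concave only when $\Gamma/2\le1$, so the whole argument hinges on the physical restriction $\Gamma\in(1,2]$ imposed in \eqref{eq:iEOS}, and the equivalence should genuinely break down outside this range. A secondary, more mechanical difficulty is the careful bookkeeping that reduces the two-parameter condition to the scalar profile $\psi$ via the explicit minimizations and the simplifications collapsing $\varphi_\sigma$ at the physical state to $\frac{\rho^\Gamma}{\Gamma-1}({\rm e}^{S}-{\rm e}^{\sigma})$.
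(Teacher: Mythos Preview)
Your proposal is correct. For the inclusion $\Omega_\sigma^{(2)}\subseteq\Omega_\sigma$ your argument is essentially the same as the paper's: both choose specific test values of $({\bm v}_*,\rho_*)$ (you pass to the limit $\rho_*\to0^+$ and take the supremum over ${\bm v}_*$; the paper instead plugs in the single point $({\bm v}_*,\rho_*)=\big({\bm m}/\sqrt{D^2+\|{\bm m}\|^2},\,D^2/\sqrt{D^2+\|{\bm m}\|^2}\big)$, but the content is identical), and then both evaluate at the physical state $({\bm v},\rho)$ to obtain $S\ge\sigma$.

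For the harder inclusion $\Omega_\sigma\subseteq\Omega_\sigma^{(2)}$ your route is genuinely different and, in my view, more conceptual. The paper does not minimize over $\rho_*$ first; instead it writes $\varphi_\sigma=\Pi_1\,p+\Pi_2$ with $\Pi_1>0$, uses $p\ge{\rm e}^\sigma\rho^\Gamma$ and Cauchy--Schwarz to reduce to showing a residual term $\Pi_3\ge0$, and then dispatches $\Pi_3\ge0$ via a one-variable monotonicity argument together with the technical inequality of Lemma~\ref{app:lem1} (itself resting on Lemma~\ref{app:lem0}). Your approach collapses the two-parameter inequality to the single profile $\psi({\bm v}_*)$ by the explicit $\rho_*$-minimization, replaces ${\rm e}^\sigma$ by ${\rm e}^S$ (one small slip: you call $\tilde\psi$ a ``majorant'' where it is in fact a minorant of $\psi$, though the logic $\psi\ge\tilde\psi\ge0$ is exactly what you intend), and then finishes with convexity plus the verification that ${\bm v}_*={\bm v}({\bf U})$ is a critical point with value zero. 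Both proofs use $\Gamma\le2$ at the same essential place---yours through concavity of $(1-\|{\bm v}_*\|^2)^{\Gamma/2}$, the paper's through Lemma~\ref{app:lem0}. Your argument is shorter and makes the role of $\Gamma\le2$ more transparent; the paper's hands-on estimates have the advantage of producing intermediate inequalities (e.g.\ the bound on $\Pi_1$) that are reused later in the generalized Lax--Friedrichs splitting analysis (Theorem~\ref{lem:LLFsplitIEQ}).
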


Before the proof of Theorem \ref{thm:EqOmega}, we mention a crucial feature of the above equivalent form $\Omega_{\sigma}^{(2)}$. 
Note that all the ``nonlinear'' constraints in $\Omega_{\sigma}^{(1)}$ or $\Omega_{\sigma}$ 
are {\em equivalently} transformed into a {\bf \em linear}  constraint 
$\varphi_\sigma ( {\bf U}; {\bm v}_*, \rho_* )\ge 0$ in \eqref{eq:EqOmega_sigma}. 
As a result, all the constraints in $\Omega_{\sigma}^{(2)}$  are {\bf \em explicit} 
and {\bf \em linear} with respect to 
$\bf U$, although two (additional) auxiliary variables ${\bm v}_*$ and $\rho_*$ are introduced here. 
Such linearity makes $\Omega_{\sigma}^{(2)}$ very useful for 
analytically
verifying the IRP property of RHD schemes. 
This becomes a key to our IRP analysis, which is significantly different from the standard bound-preserving and IRP analysis techniques in e.g.~\cite{zhang2010b,zhang2012minimum,jiang2018invariant}.

We first give two lemmas, which will be used in the proof of Theorem \ref{thm:EqOmega}.

\begin{lemma}\label{app:lem0}
	For any $\eta >-\frac12$,  it holds 
	$$
	(\eta \Gamma +1)^{\frac1{\Gamma}} \ge (2 \eta  +1)^{\frac1{2}},
	$$
	where the (constant) adiabatic index $\Gamma \in (1,2]$.
\end{lemma}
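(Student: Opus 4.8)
The plan is to reduce the inequality to a monotonicity statement in the parameter $\Gamma$, exploiting the fact that equality holds at the endpoint $\Gamma = 2$. First I would check that both bases are strictly positive: since $\eta > -\frac12$ we have $2\eta + 1 > 0$, and for $-\frac12 < \eta < 0$ the bound $\Gamma \le 2$ gives $\eta\Gamma \ge 2\eta$, hence $\eta\Gamma + 1 \ge 2\eta + 1 > 0$ (this is trivial when $\eta \ge 0$). With both bases positive, taking logarithms shows the claim is equivalent to $g(\Gamma) \ge 0$, where
$$g(\Gamma) := \frac{1}{\Gamma}\log(\eta\Gamma + 1) - \frac{1}{2}\log(2\eta + 1).$$

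The key observation is that the second term is independent of $\Gamma$ and that $g(2) = 0$ identically in $\eta$. It therefore suffices to prove that $g$ is nonincreasing on $(1,2]$, since then $g(\Gamma) \ge g(2) = 0$ for every $\Gamma \in (1,2]$. Differentiating yields
$$g'(\Gamma) = \frac{1}{\Gamma^2}\left[ \frac{\Gamma\eta}{\eta\Gamma + 1} - \log(\eta\Gamma + 1) \right] = \frac{1}{\Gamma^2}\,\phi(\eta\Gamma),$$
where $\phi(u) := \frac{u}{u+1} - \log(u+1)$ is defined for $u > -1$, and $\eta\Gamma > -1$ by the positivity already established.

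The crux is then the elementary sign analysis of $\phi$. I would note $\phi(0) = 0$ and compute $\phi'(u) = \frac{1}{(u+1)^2} - \frac{1}{u+1} = -\frac{u}{(u+1)^2}$, which is positive on $(-1,0)$ and negative on $(0,\infty)$. Hence $\phi$ attains its global maximum at $u = 0$ with value $0$, so $\phi(u) \le 0$ for all $u > -1$. Consequently $g'(\Gamma) \le 0$ on $(1,2]$, $g$ is nonincreasing, and the desired inequality follows.

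The main obstacle is recognizing the correct reduction, namely treating $\Gamma$ rather than $\eta$ as the free variable and anchoring the comparison at the equality case $\Gamma = 2$. A direct analysis in $\eta$ is less transparent, because the difference $F(\eta)$ of the two sides satisfies $F(0) = F'(0) = 0$, so $\eta = 0$ is a degenerate critical point requiring higher-order information; the monotonicity-in-$\Gamma$ approach sidesteps this entirely and isolates the whole difficulty into the one-line sign analysis of $\phi$.
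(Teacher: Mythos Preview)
Your proof is correct and follows essentially the same route as the paper: both fix $\eta$ and show that the function $\Gamma \mapsto (\eta\Gamma+1)^{1/\Gamma}$ (equivalently, its logarithm, your $g$) is nonincreasing on $(1,2]$, with the monotonicity reducing to the sign of $\frac{\eta\Gamma}{\eta\Gamma+1} - \log(\eta\Gamma+1)$. The only cosmetic difference is that the paper dispatches this last sign via the one-line inequality $\log(1+z)\le z$, whereas you do the equivalent max analysis of $\phi(u)=\frac{u}{u+1}-\log(u+1)$.
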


\begin{proof}
	Consider the function 
	$
	f(x) = ( \eta x + 1 )^{\frac 1 x}
	$ with $x \in (1,2]$. 
	Note that $\eta x+1>0$, $f(x)>0$ and the derivative with respect to $x$ satisfies 
	\begin{equation*}
	\frac{ x^2 }{f(x)} f'(x) = -\log( \eta x + 1 ) + \frac{\eta x}{\eta x + 1}  
	= \log \left( 1 + \frac{-\eta x}{\eta x+1} \right) + \frac{\eta x}{\eta x + 1}  \le 0,
	\end{equation*}
	where we have used the elementary inequality $\log(1+z)\le z$ for $z > -1$. It follows that 
	$f'(x)\le 0$ for all $x \in (1,2]$. 
	This implies  
$
	f(\Gamma) = (\eta \Gamma +1)^{\frac1{\Gamma}} = f(2) - \int_{\Gamma}^2 f'(x) {\rm d} x \ge f(2) = (2 \eta +1)^{\frac1{2}}. 
$
\end{proof}

\begin{lemma}\label{app:lem1}
	For any ${\bm v},{\bm v}_* \in \mathbb B_1({\bf 0})$, it holds 
	\begin{equation}\label{keyIEQ31}
	\frac{ \Gamma ( 1 - {\bm v} \cdot {\bm v}_* ) }{  1 - \| {\bm v} \|^2   }  - \Gamma + 1 \ge \left(  \frac{  \sqrt{ 1- \| {\bm v} \|^2 } }{ 
		\sqrt{ 1- \| {\bm v}_* \|^2 } }  \right)^{-\Gamma}.
	\end{equation}
\end{lemma}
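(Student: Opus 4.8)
The plan is to reduce this inequality to the elementary estimate of Lemma \ref{app:lem0} through a single well-chosen parameter. First I would rewrite both sides using the Lorentz-type factors: since $\sqrt{1-\|{\bm v}\|^2}/\sqrt{1-\|{\bm v}_*\|^2} = (W_*/W)^{-1}$ where $W=(1-\|{\bm v}\|^2)^{-1/2}$ and $W_*=(1-\|{\bm v}_*\|^2)^{-1/2}$, the right-hand side of \eqref{keyIEQ31} equals $\big((1-\|{\bm v}_*\|^2)/(1-\|{\bm v}\|^2)\big)^{\Gamma/2}$. The structure $(\cdot)^{\Gamma/2}$ is exactly the form appearing in Lemma \ref{app:lem0} after raising its conclusion to the power $\Gamma$, namely $\eta\Gamma+1 \ge (2\eta+1)^{\Gamma/2}$. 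This strongly suggests matching $2\eta+1$ to the ratio inside the right-hand side.

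Accordingly, I would set
\begin{equation*}
  \eta := \frac{1}{2}\cdot\frac{\|{\bm v}\|^2-\|{\bm v}_*\|^2}{1-\|{\bm v}\|^2},
  \qquad\text{so that}\qquad 2\eta+1 = \frac{1-\|{\bm v}_*\|^2}{1-\|{\bm v}\|^2} > 0.
\end{equation*}
The positivity of $2\eta+1$ guarantees $\eta>-\tfrac12$, so Lemma \ref{app:lem0} applies and yields $\eta\Gamma+1 \ge (2\eta+1)^{\Gamma/2}$, where the latter is precisely the right-hand side of \eqref{keyIEQ31}. Thus it would remain only to establish the intermediate bound that the left-hand side of \eqref{keyIEQ31} dominates $\eta\Gamma+1$.

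For that last step, subtracting $\eta\Gamma+1$ from the left-hand side and factoring out the positive quantity $\Gamma/(1-\|{\bm v}\|^2)$, the difference reduces to the bracketed expression $(1-{\bm v}\cdot{\bm v}_*)-(1-\|{\bm v}\|^2)-\tfrac12(\|{\bm v}\|^2-\|{\bm v}_*\|^2)$, which simplifies by completing the square to $\tfrac12\|{\bm v}-{\bm v}_*\|^2\ge 0$. Chaining the two inequalities then gives the claim. I do not expect a genuine obstacle here: the only real insight is the correct identification of $\eta$ so that $(2\eta+1)^{\Gamma/2}$ matches the target right-hand side, after which the auxiliary inequality collapses to a trivial perfect square and everything else is routine algebra over $1-\|{\bm v}\|^2>0$.
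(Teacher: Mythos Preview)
Your proof is correct and follows essentially the same approach as the paper: both arguments reduce the claim to Lemma~\ref{app:lem0} combined with the elementary bound $\|{\bm v}-{\bm v}_*\|^2\ge 0$. The only cosmetic difference is that the paper chooses $\eta = \dfrac{1-{\bm v}\cdot{\bm v}_*}{1-\|{\bm v}\|^2}-1$ so that the left-hand side of \eqref{keyIEQ31} equals $\eta\Gamma+1$ exactly (and then bounds $(2\eta+1)^{\Gamma/2}\ge$ RHS via the perfect square), whereas you choose $\eta$ so that $(2\eta+1)^{\Gamma/2}$ equals the right-hand side exactly (and then bound LHS $\ge \eta\Gamma+1$ via the same perfect square); the two chains are mirror images of each other.
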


\begin{proof}
	Note that  
	$$
	\eta := \frac{  1 - {\bm v} \cdot {\bm v}_*  }{  1 - \| {\bm v} \|^2   } -1 \ge \frac{ 1 - \|{\bm v}\| }{ 
		1 - \| {\bm v } \|^2} - 1 = \frac{1}{ 1 + \| {\bm v} \| } - 1 > - \frac12.	
	$$
	With the aid of Lemma \ref{app:lem0}, we get 
	$
	(\eta \Gamma +1)^{\frac1{\Gamma}} \ge \sqrt{2 \eta + 1}, 
	$
	which yields 
	\begin{align*}
	\left( \frac{ \Gamma ( 1 - {\bm v} \cdot {\bm v}_* ) }{  1 - \| {\bm v} \|^2   }  - \Gamma + 1 
	\right)^{\frac1\Gamma}  \ge \sqrt{  2 \left( \frac{  1 - {\bm v} \cdot {\bm v}_*  }{  1 - \| {\bm v} \|^2   } -1  \right) + 1 }
	\ge \frac{ \sqrt{ 1 - \| {\bm v}_* \|^2 } } {\sqrt{1-\|{\bm v}\|^2}}.
	\end{align*}
	Then, by raising both sides to the power of $\Gamma$, we obtain \eqref{keyIEQ31}. 
\end{proof}

We are now ready to prove Theorem \ref{thm:EqOmega}.

\noindent 
{\it Proof of Theorem \ref{thm:EqOmega}}. 
The proof of $\Omega_{\sigma}=\Omega_{\sigma}^{(2)}$ is split into two parts --- showing that $\Omega_{\sigma}^{(2)} \subseteq \Omega_{\sigma}$ 
and that $\Omega_{\sigma} \subseteq \Omega_{\sigma}^{(2)}$.

{\tt (\romannumeral1). Prove that $ {\bf U} \in \Omega_{\sigma}^{(2)}  \Rightarrow {\bf U} \in \Omega_{\sigma}$}. 
When ${\bf U}=(D,{\bm m}^\top,E)^{\top} \in \Omega_{\sigma}^{(2)}$, by definition we have $D>0$ and 
\begin{equation}\label{eq:proofdsa}
\varphi_\sigma ( {\bf U}; {\bm v}_*, \rho_* )\ge 0,\qquad~\forall  {\bm v}_* \in \mathbb B_1({\bf 0}),\quad \forall \rho_* \in \mathbb R^+.
\end{equation}
If we take a special $({\bm v}_*, \rho_*) = \Big( \frac{ {\bm m} }{\sqrt{D^2 + \| {\bm m} \|^2}}, 
\frac{ D^2 }{\sqrt{D^2 + \| {\bm m} \|^2}}
 \Big)$, which satisfy  ${\bm v}_* \in \mathbb B_1({\bf 0})$ and $\rho_* > 0$, and then we obtain 
\begin{align*}
0 & \le \varphi_\sigma \left( {\bf U};  \frac{ {\bm m} }{\sqrt{D^2 + \| {\bm m} \|^2}}, 
\frac{ D^2 }{\sqrt{D^2 + \| {\bm m} \|^2}} \right)
\\
& = E - \sqrt{D^2 + \|{\bm m}\|^2} - \frac{ {\rm e}^\sigma }{ \Gamma -1 } \left( 
\frac{D^2}{\sqrt{D^2 + \| {\bm m} \|^2}}
\right)^\Gamma
\\
& < E - \sqrt{D^2 + \|{\bm m}\|^2}, 
\end{align*} 
which implies the second constraint in ${\mathcal G}_1$. This, along with $D>0$, yield ${\bf U} \in {\mathcal G}_1={\mathcal G}$. Therefore, the corresponding primitive quantities of ${\bf U}$  satisfy  
\begin{equation}\label{key12344}
\rho ({\bf U})>0, \quad p({\bf U})>0, \quad \| {\bm v} ({\bf U}) \|<1. 
\end{equation}
Taking another special $({\bm v}_*, \rho_*) = ({\bm v}({\bf U}), \rho({\bf U}) )$,  
in \eqref{eq:proofdsa} gives  
\begin{align*}
0 & \le \varphi_\sigma \left( {\bf U}; {\bm v} ({\bf U}), \rho ({\bf U})  \right)
\\
& = E-{\bm m}\cdot {\bm v}-D \sqrt{1-\|{\bm v} \|^2} + {\mathrm e}^{\sigma} \left(  \rho^\Gamma - \frac{\Gamma}{\Gamma -1}   
D \rho^{\Gamma -1}  \sqrt{1-\|{\bm v} \|^2} \right)
\\
& = \frac{1}{\Gamma -1} ( p - {\rm e}^\sigma \rho^\Gamma   ),
\end{align*} 
which, together with $\Gamma >1$, imply $p \ge {\rm e}^\sigma \rho^\Gamma $. It follows that 
$S({\bf U})=\log(p \rho^{-\Gamma}) \ge \sigma$. Combining it with \eqref{key12344}, we obtain ${\bf U} \in \Omega_{\sigma}$.

{\tt (\romannumeral2). Prove that $ {\bf U} \in \Omega_{\sigma}  \Rightarrow {\bf U} \in \Omega_{\sigma}^{(2)}$}. 
When ${\bf U}=(D,{\bm m}^\top,E)^{\top} \in \Omega_{\sigma}$, the corresponding primitive quantities  satisfy  
\begin{equation}\label{keydff}
\rho>0, \quad \| {\bm v} \|<1, \quad p \ge {\rm e}^\sigma \rho^\Gamma. 
\end{equation}
This immediately gives 
$
D = \rho W = \rho \left( 1 - \| {\bm v} \|^2 \right)^{-\frac12} > 0. 
$  
It remains to prove $\varphi_\sigma ( {\bf U}; {\bm v}_*, \rho_* )\ge 0$ for any $  {\bm v}_* \in \mathbb B_1({\bf 0})$ and any $\rho_* >0$. Let us rewrite $\varphi_\sigma ( {\bf U}; {\bm v}_*, \rho_* )$ as  
\begin{align*}
& \varphi_\sigma ( {\bf U}; {\bm v}_*, \rho_* )
=  {\Pi}_1 p + {\Pi}_2, 
\end{align*} 
with  
\begin{align*}
{\Pi}_1 & := \frac{ \Gamma }{\Gamma -1}  \left( \frac{ 1 - {\bm v} \cdot {\bm v}_* }{ 1 - \| {\bm v} \|^2 }   \right) - 1  
\ge 2 \left( \frac{ 1 - {\bm v} \cdot {\bm v}_* }{ 1 - \| {\bm v} \|^2 }   \right) - 1   
= \frac{ (1 - \| {\bm v} \| )^2 + 2 \left(\| {\bm v} \| - {\bm v} \cdot {\bm v } _* \right)  }{ 1 - \| {\bm v} \|^2 } > 0,
\\
{\Pi}_2 & := \rho \left(  \frac{ 1 - {\bm v} \cdot {\bm v}_* }{ 1 - \| {\bm v} \|^2 } 
-  \frac{ \sqrt{ 1 - \|{\bm v}_*\|^2 } } { \sqrt{ 1- \|{\bm v} \|^2 } }  \right) 
+ {\mathrm e}^{\sigma} \left(  \rho_*^\Gamma - \frac{\Gamma}{\Gamma -1}   
\rho \rho_*^{\Gamma -1}  \frac{ \sqrt{ 1 - \|{\bm v}_*\|^2 } } { \sqrt{ 1- \|{\bm v} \|^2 } }  \right),
\end{align*}
where $\Gamma \in (1,2]$ and $\| {\bm v} \|<1$ are used in showing $\Pi_1 > 0$. 
Then, using $p\ge {\rm e}^\sigma \rho^\Gamma $ gives 
\begin{align} \nonumber
\varphi_\sigma ( {\bf U}; {\bm v}_*, \rho_* ) & \ge {\Pi}_1 {\rm e}^\sigma \rho^\Gamma + {\Pi}_2
\\ \nonumber
& = \frac{ \rho  }{ 1 - \| {\bm v} \|^2 } \left( 1 - {\bm v} \cdot {\bm v}_* - \sqrt{ 1 - \| {\bm v} \|^2 } 
\sqrt{ 1-\| {\bm v}_* \|^2 } \right)  + {\rm e}^\sigma \rho^\Gamma \Pi_3  
\\ \nonumber
&\ge  \frac{ \rho  }{ 1 - \| {\bm v} \|^2 } \left( 1 -
\sqrt{ \| {\bm v} \|^2 + ( 1 - \| {\bm v} \|^2 )  }  
 \sqrt{ \| {\bm v}_* \|^2 + (  1 - \| {\bm v}_* \|^2 ) }  
 \right)  +  {\rm e}^\sigma \rho^\Gamma \Pi_3
 \\ \label{eq:dffdff}
 & = {\rm e}^\sigma \rho^\Gamma \Pi_3, 
\end{align}
where the Cauchy–Schwarz inequality has been used, and $\Pi_3 := \phi_\sigma \big( \frac{\rho}{\rho_*};  {\bm v}, {\bm v}_* \big)$ with 
$$
\phi_\sigma ( x ;  {\bm v}, {\bm v}_* ) :=  x^{-\Gamma} - \frac{ \Gamma \sqrt{ 1- \| {\bm v}_* \|^2 } }{ 
(\Gamma -1) \sqrt{ 1- \| {\bm v} \|^2 }  
} x^{-\Gamma +1} +  \frac{ \Gamma ( 1 - {\bm v} \cdot {\bm v}_* ) }{ (\Gamma -1 ) \left( 1 - \| {\bm v} \|^2  \right) }  - 1, \quad x > 0.  
$$
It is easy to verify that the function $\phi_\sigma ( x ;  {\bm v}, {\bm v}_* )$ is strictly decreasing 
on the interval $\left(0,  \frac{  \sqrt{ 1- \| {\bm v} \|^2 } }{ 
	 \sqrt{ 1- \| {\bm v}_* \|^2 }  
} \right]$ and  
strictly increasing on $\left[\frac{  \sqrt{ 1- \| {\bm v} \|^2 } }{ 
	\sqrt{ 1- \| {\bm v}_* \|^2 }  
}, +\infty \right)$ with respect to $x$.  Thefore, we have 
\begin{align}\nonumber
\Pi_3 
& \ge \min_{x \in \mathbb R^+}  \phi_\sigma ( x ;  {\bm v}, {\bm v}_* ) = 
\phi_\sigma \left(  \frac{  \sqrt{ 1- \| {\bm v} \|^2 } }{ 
	\sqrt{ 1- \| {\bm v}_* \|^2 } } ;  {\bm v}, {\bm v}_* \right) 
\\ \label{proofdf3}
&= \frac{1}{\Gamma - 1} \left[ 
-  \left(  \frac{  \sqrt{ 1- \| {\bm v} \|^2 } }{ 
	\sqrt{ 1- \| {\bm v}_* \|^2 } }  \right)^{-\Gamma}  +  \frac{ \Gamma ( 1 - {\bm v} \cdot {\bm v}_* ) }{  1 - \| {\bm v} \|^2   }  - \Gamma + 1 \right] \ge 0,
\end{align}
where we have used $\Gamma \in (1,2]$ and the inequality \eqref{keyIEQ31} derived in Lemma \ref{app:lem1}.  
Then, combining \eqref{proofdf3} with \eqref{eq:dffdff}, we conclude $\varphi_\sigma ( {\bf U}; {\bm v}_*, \rho_* )\ge {\rm e}^\sigma \rho^\Gamma \Pi_3 \ge 0$. This along with $D > 0$ imply ${\bf U} \in \Omega_{\sigma}^{(2)}$. The proof is completed. \qed 

Considering $\sigma \to -\infty$ in $\Omega_\sigma$ and using Theorem \ref{thm:EqOmega}, one can also obtain:

\begin{corollary}
	The admissible state set ${\mathcal G}$ is equivalent to 
	\begin{equation*}
		{\mathcal G}_2 := \left\{ \vec U=(D,{\bm m}^\top,E)^{\top}\in \mathbb R^{d+2}:~D>0,~~E>{\bm m}\cdot {\bm v}_*+D \sqrt{1-\|{\bm v}_* \|^2},~\forall  {\bm v}_* \in \mathbb B_1({\bf 0}) \right\}.
	\end{equation*}
\end{corollary}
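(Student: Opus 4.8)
The plan is to follow the stated route: realize $\mathcal{G}$ as the increasing union $\bigcup_{\sigma\in\mathbb{R}}\Omega_\sigma$ and let $\sigma\to-\infty$ so that the entropy-weighted term in $\varphi_\sigma$ switches off. First I would check $\mathcal{G}=\bigcup_{\sigma\in\mathbb{R}}\Omega_\sigma$: every $\Omega_\sigma\subseteq\mathcal{G}$ by definition, and for any ${\bf U}\in\mathcal{G}$ the specific entropy $S({\bf U})=\log(p\rho^{-\Gamma})$ is a finite real number (as $\rho,p>0$), so ${\bf U}\in\Omega_\sigma$ whenever $\sigma\le S({\bf U})$; Lemma~\ref{thm:Mono} makes the union monotone. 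Theorem~\ref{thm:EqOmega} then lets me replace each $\Omega_\sigma$ by its explicit form $\Omega_\sigma^{(2)}$, reducing the corollary to understanding $\bigcup_\sigma\Omega_\sigma^{(2)}$.

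Next I would eliminate $\rho_*$ by minimizing $\varphi_\sigma({\bf U};{\bm v}_*,\rho_*)$ over $\rho_*\in\mathbb{R}^+$ for each fixed ${\bm v}_*$. The only $\rho_*$-dependent part is ${\rm e}^\sigma\big(\rho_*^\Gamma-\tfrac{\Gamma}{\Gamma-1}D\rho_*^{\Gamma-1}\sqrt{1-\|{\bm v}_*\|^2}\big)$, whose derivative vanishes at $\rho_*=D\sqrt{1-\|{\bm v}_*\|^2}$, so that
\[
\min_{\rho_*>0}\varphi_\sigma({\bf U};{\bm v}_*,\rho_*)=E-{\bm m}\cdot{\bm v}_*-D\sqrt{1-\|{\bm v}_*\|^2}-\frac{{\rm e}^\sigma}{\Gamma-1}\big(D\sqrt{1-\|{\bm v}_*\|^2}\big)^{\Gamma}.
\]
Thus ${\bf U}\in\Omega_\sigma^{(2)}$ iff $D>0$ and this expression is nonnegative for every ${\bm v}_*\in\mathbb{B}_1({\bf 0})$. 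The inclusion $\mathcal{G}\subseteq\mathcal{G}_2$ is then immediate: if ${\bf U}\in\Omega_\sigma^{(2)}$ for some $\sigma$, the subtracted term is strictly positive (using $D>0$), so $E-{\bm m}\cdot{\bm v}_*-D\sqrt{1-\|{\bm v}_*\|^2}>0$ for all ${\bm v}_*$, which is exactly the defining condition of $\mathcal{G}_2$.

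The reverse inclusion $\mathcal{G}_2\subseteq\mathcal{G}$ is the main obstacle, since I must produce one $\sigma$ valid uniformly in ${\bm v}_*$. Setting $f({\bm v}_*):=E-{\bm m}\cdot{\bm v}_*-D\sqrt{1-\|{\bm v}_*\|^2}$ and $g({\bm v}_*):=\tfrac{1}{\Gamma-1}\big(D\sqrt{1-\|{\bm v}_*\|^2}\big)^\Gamma$, it suffices to choose ${\rm e}^\sigma\le\inf_{{\bm v}_*\in\mathbb{B}_1({\bf 0})}f/g$ and to verify this infimum is strictly positive. The supporting estimates are: $g$ is bounded above by $D^\Gamma/(\Gamma-1)$; $f$ extends continuously and stays strictly positive on the closed ball $\overline{\mathbb{B}_1({\bf 0})}$, because evaluating the $\mathcal{G}_2$ condition at the single interior point ${\bm v}_*={\bm m}/\sqrt{D^2+\|{\bm m}\|^2}$ already forces $E>\sqrt{D^2+\|{\bm m}\|^2}$ and hence $E>\|{\bm m}\|\ge{\bm m}\cdot{\bm v}_*$ on the boundary; and $g({\bm v}_*)\to0$ as $\|{\bm v}_*\|\to1$. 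Consequently $f/g$ is continuous and positive on the open ball and blows up at the boundary, so splitting the ball into a compact core $\{\|{\bm v}_*\|\le1-\epsilon\}$ and a boundary collar yields a strictly positive infimum; any $\sigma$ with ${\rm e}^\sigma$ below it then places ${\bf U}$ in $\Omega_\sigma^{(2)}\subseteq\mathcal{G}$.

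Finally, I would point out that the computation in the previous paragraph also furnishes a shorter, limit-free argument: since $\mathcal{G}=\mathcal{G}_1$ is already established in Lemma~\ref{thm:EqOmega1}, it is enough to identify $\mathcal{G}_1$ with $\mathcal{G}_2$ via $\sup_{{\bm v}_*\in\mathbb{B}_1({\bf 0})}\big({\bm m}\cdot{\bm v}_*+D\sqrt{1-\|{\bm v}_*\|^2}\big)=\sqrt{D^2+\|{\bm m}\|^2}$. Optimizing along the direction of ${\bm m}$ shows the maximizer ${\bm v}_*={\bm m}/\sqrt{D^2+\|{\bm m}\|^2}$ lies strictly inside the ball whenever $D>0$, so the supremum is attained; the ``for all'' strict inequality defining $\mathcal{G}_2$ is therefore equivalent to $E>\sqrt{D^2+\|{\bm m}\|^2}$, i.e.\ to the defining condition of $\mathcal{G}_1=\mathcal{G}$.
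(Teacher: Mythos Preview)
Your proposal is correct and carries out in detail the paper's one-line hint (the paper offers nothing beyond ``considering $\sigma\to-\infty$ in $\Omega_\sigma$ and using Theorem~\ref{thm:EqOmega}''). Your final paragraph's direct route via $\mathcal{G}_1$---computing $\sup_{{\bm v}_*\in\mathbb{B}_1({\bf 0})}\big({\bm m}\cdot{\bm v}_*+D\sqrt{1-\|{\bm v}_*\|^2}\big)=\sqrt{D^2+\|{\bm m}\|^2}$ with the maximizer lying strictly inside the ball---is more economical than the limiting argument and, as you observed, the same computation already drives the compactness step in your first approach.
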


\subsection{Convexity of invariant region}\label{sec:convex}

The convexity of an invariant region is a highly desirable property, as it can be used to simplify 
the IRP analysis of those numerical schemes that can be reformulated into some suitable convex combinations; 
see, for example, \cite{zhang2010b,zhang2012minimum,Wu2017a,jiang2018invariant}. 
In the RHD case, the convexity of the invariant region $\Omega_\sigma$ is discussed below. 

\begin{lemma}\label{lem:convexity}
	For any fixed $\sigma \in \mathbb R$, the invariant region $\Omega_\sigma$ is a convex set.
\end{lemma}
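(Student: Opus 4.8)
The plan is to leverage the explicit and linear equivalent form $\Omega_\sigma^{(2)}$ furnished by Theorem~\ref{thm:EqOmega}. Since that theorem establishes the exact set equality $\Omega_\sigma = \Omega_\sigma^{(2)}$, it suffices to prove that $\Omega_\sigma^{(2)}$ is convex. The whole purpose of introducing the auxiliary variables $(\bm v_*,\rho_*)$ was precisely to linearize the intrinsic constraints, so the convexity, which is genuinely delicate for the original nonlinear description in \eqref{eq:IRgen}, should now collapse to a routine intersection-of-half-spaces argument.

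First I would record the key structural fact that, for each fixed pair $(\bm v_*,\rho_*)\in\mathbb B_1({\bf 0})\times\mathbb R^+$, the function $\varphi_\sigma(\cdot\,;\bm v_*,\rho_*)$ defined in \eqref{varphi_def} is an \emph{affine} function of ${\bf U}=(D,\bm m^\top,E)^\top$. Indeed, collecting terms one may write
\begin{equation*}
\varphi_\sigma({\bf U};\bm v_*,\rho_*)
= E - \bm v_*\cdot\bm m
- \Big( 1 + \tfrac{\Gamma}{\Gamma-1}\,{\rm e}^{\sigma}\rho_*^{\Gamma-1} \Big)\sqrt{1-\|\bm v_*\|^2}\;D
+ {\rm e}^{\sigma}\rho_*^{\Gamma},
\end{equation*}
which is of the form $\bm a^\top{\bf U}+b$ with coefficient vector $\bm a$ and constant $b$ depending only on $(\bm v_*,\rho_*)$ and $\sigma$, not on ${\bf U}$. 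Consequently, for each fixed $(\bm v_*,\rho_*)$ the set $\{{\bf U}:\varphi_\sigma({\bf U};\bm v_*,\rho_*)\ge 0\}$ is a closed half-space, hence convex.

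Next I would assemble $\Omega_\sigma^{(2)}$ as an intersection of convex sets. By \eqref{eq:EqOmega_sigma},
\begin{equation*}
\Omega_\sigma^{(2)}
= \{{\bf U}:D>0\}
\cap \bigcap_{\bm v_*\in\mathbb B_1({\bf 0})}\;\bigcap_{\rho_*\in\mathbb R^+}
\{{\bf U}:\varphi_\sigma({\bf U};\bm v_*,\rho_*)\ge 0\}.
\end{equation*}
The first factor $\{{\bf U}:D>0\}$ is an open half-space, and each remaining factor is a closed half-space by the previous step. Since an arbitrary intersection of convex sets is convex, $\Omega_\sigma^{(2)}$ is convex, and therefore so is $\Omega_\sigma$.

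In this argument essentially no obstacle remains: all the analytic difficulty, stemming from the nonlinear and implicit dependence of $S({\bf U})$, $\rho({\bf U})$, and $p({\bf U})$ on the conservative variables, has already been absorbed into the proof of Theorem~\ref{thm:EqOmega}. The only point requiring a moment of care is to confirm that the coefficient multiplying $D$ and the constant term are genuinely independent of ${\bf U}$, so that $\varphi_\sigma$ is truly affine rather than merely continuous; once that is verified, convexity follows at once from the closure of convexity under intersection.
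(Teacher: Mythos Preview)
Your proof is correct and takes essentially the same approach as the paper: both invoke Theorem~\ref{thm:EqOmega} to pass to $\Omega_\sigma^{(2)}$ and then exploit the linearity of $\varphi_\sigma(\cdot\,;\bm v_*,\rho_*)$ in ${\bf U}$. The paper verifies convexity by directly checking that a convex combination of two points in $\Omega_\sigma^{(2)}$ remains in $\Omega_\sigma^{(2)}$, while you phrase the same computation as an intersection-of-half-spaces argument; these are equivalent presentations of the same idea.
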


\begin{proof}
	Since $\Omega_\sigma = \Omega_\sigma^{(2)}$, one only needs to show the convexity of $\Omega_\sigma^{(2)}$.  	
	For any ${\bf U}_1=( D_1, {\bm m}_1^\top, E_1 )^\top$, ${\bf U}_2 = ( D_2, {\bm m}_2^\top, E_2 )^\top \in \Omega_\sigma^{(2)}$ and any $\lambda \in [0,1]$, we have $\lambda D_1  + (1-\lambda) D_2 >0$ 
	and for any ${\bm v}_* \in \mathbb B_1({\bf 0})$, $\rho_* \in \mathbb R^+$, it holds 
	\begin{equation*}
		\varphi_\sigma \big(  \lambda {\bf U}_1+ (1-\lambda) {\bf U}_2; {\bm v}_*, \rho_* \big) 
		=  \lambda \varphi_\sigma (  {\bf U}_1; {\bm v}_*, \rho_* ) + (1-\lambda)  \varphi_\sigma ( {\bf U}_2; {\bm v}_*, \rho_* ) \ge 0, 
	\end{equation*}
	where the linearity of $\varphi_\sigma (  {\bf U}; {\bm v}_*, \rho_* )$ with respect to ${\bf U}$ has been used. Hence, we obtain $ \lambda {\bf U}_1 + (1-\lambda) {\bf U}_2 \in \Omega_\sigma^{(2)}$, 
	and by definition, $\Omega_\sigma^{(2)}$ is a convex set. 
\end{proof}

The following more general conclusion can be shown by using Lemma \ref{lem:convexity} and Lemma \ref{thm:Mono}. 

\begin{lemma}\label{cor:conv}
	Let $\sigma_1$ and $\sigma_2$ be two real numbers.  For any ${\bf U}_1 \in 	\Omega_{\sigma_1} $ and any ${\bf U}_2 \in 	\Omega_{\sigma_2} $, 
	$$\lambda  {\bf U}_1 + (1-\lambda) {\bf U}_2 \in \Omega_{\min\{\sigma_1,\sigma_2\}}, \qquad \forall \lambda \in [0,1].$$	
\end{lemma}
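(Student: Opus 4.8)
The plan is to reduce this mixed-threshold statement to the single-threshold convexity already established in Lemma \ref{lem:convexity}, using the monotonic nesting of Lemma \ref{thm:Mono}. The key observation is that both endpoints can be placed inside one common set $\Omega_{\sigma_*}$, after which ordinary convexity of that set finishes the argument.

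First I would set $\sigma_* := \min\{\sigma_1, \sigma_2\}$. Since $\sigma_1 \ge \sigma_*$ and $\sigma_2 \ge \sigma_*$, the monotonicity asserted in Lemma \ref{thm:Mono} (smaller $\sigma$ yields a larger region) gives the inclusions $\Omega_{\sigma_1} \subseteq \Omega_{\sigma_*}$ and $\Omega_{\sigma_2} \subseteq \Omega_{\sigma_*}$. Consequently the two endpoints satisfy ${\bf U}_1 \in \Omega_{\sigma_1} \subseteq \Omega_{\sigma_*}$ and ${\bf U}_2 \in \Omega_{\sigma_2} \subseteq \Omega_{\sigma_*}$, so both lie in the single invariant region $\Omega_{\sigma_*}$.

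Next, because $\Omega_{\sigma_*}$ is a convex set by Lemma \ref{lem:convexity}, the entire segment $\lambda {\bf U}_1 + (1-\lambda){\bf U}_2$ for $\lambda \in [0,1]$ remains in $\Omega_{\sigma_*} = \Omega_{\min\{\sigma_1,\sigma_2\}}$, which is exactly the claimed conclusion.

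I expect no genuine obstacle here, as both required ingredients are already in hand; the only point that needs care is the direction of the nesting. One must route both inclusions toward the \emph{smaller} threshold $\min\{\sigma_1,\sigma_2\}$ rather than the larger one, since it is the set with the lower entropy bound that is large enough to contain both endpoints simultaneously and thereby serve as the common convex container.
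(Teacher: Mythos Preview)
Your proposal is correct and follows essentially the same approach as the paper: use Lemma~\ref{thm:Mono} to embed both ${\bf U}_1$ and ${\bf U}_2$ in the common set $\Omega_{\min\{\sigma_1,\sigma_2\}}$, then invoke the convexity of that set from Lemma~\ref{lem:convexity}.
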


\begin{proof}
	With the help of Lemma \ref{thm:Mono}, we have ${\bf U}_i \in \Omega_{\sigma_i} \subseteq \Omega_{\min\{\sigma_1,\sigma_2\}}$, $i=1,2$.  
	The proof is then completed by using the convexity of $\Omega_{\min\{\sigma_1,\sigma_2\}}$. 
\end{proof}

\begin{remark}
	An alternative proof of Lemma \ref{cor:conv} or Lemma \ref{lem:convexity} is based on the first equivalent set  $\Omega_\sigma^{(1)}$. 
	Note that ${\mathcal G}_1$ is a convex set \cite{WuTang2015}. For any 
	${\bf U}_1 \in 	\Omega_{\sigma_1}^{(1)} \subset {\mathcal G}_1 $ and any ${\bf U}_2 \in 	\Omega_{\sigma_2} ^{(1)} \subset {\mathcal G}_1 $, one has 
	\begin{equation}\label{eq:proofalt}
		{\bf U}_\lambda : = \lambda  {\bf U}_1 + (1-\lambda) {\bf U}_2 \in {\mathcal G}_1, \qquad \forall \lambda \in [0,1].
	\end{equation}	
	Let ${\mathscr E}({\bf U}) = - D S({\bf U})$. According to Proposition \ref{prop:H}, ${\mathscr E}({\bf U})$ is a convex function of ${\bf U}$ on ${\mathcal G}_1$. Using Jensen's inequality gives 
	$
	{\mathscr E}({\bf U}_\lambda) \le \lambda  {\mathscr E}( {\bf U}_1 ) + (1-\lambda) {\mathscr E} ({\bf U}_2) 
	$ for all $\lambda \in [0,1]$. Let $D_\lambda>0$ denote the first component of ${\bf U}_\lambda$. It follows that 
	$$
	-D_\lambda S( {\bf U}_\lambda ) \le - \lambda D_1 S( {\bf U}_1 ) - (1-\lambda) D_2 S ({\bf U}_2) 
	\le -\lambda D_1 \sigma_1 - (1-\lambda) D_2 \sigma_2 \le -D_\lambda \min\{ \sigma_1, \sigma_2 \}.
	$$
	This implies $S( {\bf U}_\lambda ) \ge \min\{ \sigma_1, \sigma_2 \}$, which along with \eqref{eq:proofalt}  
	yield ${\bf U}_\lambda \in \Omega_{\min\{\sigma_1,\sigma_2\}},\forall\lambda \in [0,1]$.
\end{remark}

\subsection{Generalized Lax-Friedrichs splitting properties}\label{sec:gLF}

In the bound-preserving analysis of numerical schemes with the Lax-Friedrichs (LF) flux, the 
following property \eqref{eq:LFproperty} is usually expected: 
\begin{equation}\label{eq:LFproperty}
	{\bf U} \pm \frac{ {\bf F}_i ({\bf U}) }{\alpha} \in \Omega_\sigma, \quad \forall{\bf U} \in \Omega_\sigma,~\forall \alpha \ge \alpha_i,
\end{equation}
where $\alpha_i$ denotes a suitable upper bound of the wave speeds in the $x_i$-direction, and, in the RHD case, it can be taken as the speed of light $c=1$ for simplicity. 
We refer to \eqref{eq:LFproperty} as the {\em LF splitting property}. 
This property is valid for the admissible state set $\mathcal G$ or ${\mathcal G}_1$ and played an important role in constructing bound-preserving schemes for the RHD; see \cite{WuTang2015,QinShu2016,WuTang2017ApJS}. However, unfortunately, the property \eqref{eq:LFproperty} does {\em not} hold in general for the invariant region $\Omega_\sigma$ as the entropy principle $S({\bf U})\ge \sigma$ is included.

Since \eqref{eq:LFproperty} does not hold, we would like to look for some alternative properties that are valid but weaker than \eqref{eq:LFproperty}. 
By considering the convex combination of some LF splitting terms, 
we achieve the {\em generalized LF (gLF) splitting properties}, whose derivations are highly nontrivial. 
Built on a technical inequality \eqref{eq:keyINEQ} constructed in Section \ref{sec:ineq}, 
the gLF splitting properties are presented in Section \ref{sec:gLxF}.

\subsubsection{A constructive inequality}\label{sec:ineq}

We first construct an important inequality \eqref{eq:keyINEQ}, which will be the key to 
establishing the gLF splitting properties.

\begin{theorem}\label{lem:LLFsplitIEQ}
	If ${\bf U} \in \Omega_\sigma$, then for any ${\bm v}_*=( v_{1,*},\dots,v_{d,*} )^\top \in \mathbb B_1({\bf 0})$, any $\rho_* \in \mathbb R^+$, and any $\theta \in [-1,1]$, it holds
	\begin{equation}\label{eq:keyINEQ}
		\varphi_\sigma\Big( {\bf U} + \theta  {\bf F}_i ({\bf U}); {\bm v}_*, \rho_*   \Big) + \theta {\rm e}^\sigma v_{i,*} \rho_*^\Gamma  \ge 0,
	\end{equation}
	where $i \in \{1,\dots,d\}$, and the function $\varphi_\sigma$ is defined in \eqref{varphi_def}.
\end{theorem}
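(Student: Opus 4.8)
The plan is to reduce the statement to two scalar inequalities by exploiting linearity, and then to close those with the machinery already built for Theorem~\ref{thm:EqOmega}. Since $\varphi_\sigma(\cdot\,;{\bm v}_*,\rho_*)$ in \eqref{varphi_def} is affine in its conservative-variable argument and ${\bf U}+\theta{\bf F}_i({\bf U})$ is affine in $\theta$, the map
$$
g(\theta):=\varphi_\sigma\big({\bf U}+\theta{\bf F}_i({\bf U});{\bm v}_*,\rho_*\big)+\theta\,{\rm e}^\sigma v_{i,*}\rho_*^\Gamma
$$
is affine in $\theta$. An affine function is nonnegative on $[-1,1]$ if and only if it is nonnegative at $\theta=\pm1$, so \eqref{eq:keyINEQ} is equivalent to $g(1)\ge0$ and $g(-1)\ge0$, with $\theta$ eliminated.

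Next I would evaluate the endpoints in primitive variables, using $D=\rho W$, ${\bm m}=\rho hW^2{\bm v}$, $E=\rho hW^2-p$ and $\rho h=\rho+\tfrac{\Gamma}{\Gamma-1}p$. A direct computation should collapse them into the compact form $g(\pm1)=(1\pm v_i)\,\varphi_\sigma({\bf U};{\bm v}_*,\rho_*)\pm(v_i-v_{i,*})B$ with $B:=p-{\rm e}^\sigma\rho_*^\Gamma$; recall $\varphi_\sigma({\bf U};{\bm v}_*,\rho_*)\ge0$ because ${\bf U}\in\Omega_\sigma=\Omega_\sigma^{(2)}$ by Theorem~\ref{thm:EqOmega}. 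This already disposes of one endpoint for free (whichever makes $\pm(v_i-v_{i,*})B\ge0$); the other endpoint requires a \emph{quantitative} lower bound on $\varphi_\sigma({\bf U})$ rather than mere nonnegativity.

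To obtain that bound I would treat $g(1)$ (and symmetrically $g(-1)$, the two being interchanged by reflecting the $i$-th components of ${\bm v},{\bm v}_*$) as an affine function of $p$, $g(1)=\Pi_1 p+\Pi_2$. The coefficient $\Pi_1=\tfrac{\Gamma}{\Gamma-1}(1+v_i)\tfrac{1-{\bm v}\cdot{\bm v}_*}{1-\|{\bm v}\|^2}-(1+v_{i,*})$ should be shown nonnegative: using $\tfrac{\Gamma}{\Gamma-1}\ge2$ (as $\Gamma\le2$) and splitting ${\bm v},{\bm v}_*$ into $i$-th and transverse parts, $\Pi_1\ge0$ reduces, after the Cauchy--Schwarz bound $|{\bm v}_\perp\cdot{\bm v}_{*\perp}|\le\|{\bm v}_\perp\|\sqrt{1-v_{i,*}^2}$, to a perfect square in $\|{\bm v}_\perp\|$. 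Since $\Pi_1\ge0$, the entropy constraint $S({\bf U})\ge\sigma$, i.e.\ $p\ge{\rm e}^\sigma\rho^\Gamma$, lets me replace $p$ by its lower bound, so it suffices to prove $g(1)\ge0$ on the entropy boundary $p={\rm e}^\sigma\rho^\Gamma$.

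On that boundary I would substitute $p={\rm e}^\sigma\rho^\Gamma$, peel off a manifestly nonnegative term via $1-{\bm v}\cdot{\bm v}_*\ge\sqrt{1-\|{\bm v}\|^2}\sqrt{1-\|{\bm v}_*\|^2}$, and rescale by $x:=\rho/\rho_*$, leaving a strictly convex one-variable function $\psi(x)=\Pi_1x^\Gamma-(1+v_i)\tfrac{\Gamma}{\Gamma-1}\tfrac{\sqrt{1-\|{\bm v}_*\|^2}}{\sqrt{1-\|{\bm v}\|^2}}\,x+(1+v_{i,*})$ to be shown nonnegative for all $x>0$. Minimizing $\psi$ and clearing fractional powers, this turns into $\Gamma(1+v_i)\tfrac{1-{\bm v}\cdot{\bm v}_*}{1-\|{\bm v}\|^2}\ge(\Gamma-1)(1+v_{i,*})+(1+v_{i,*})^{1-\Gamma}(1+v_i)^\Gamma\big(\tfrac{\sqrt{1-\|{\bm v}_*\|^2}}{\sqrt{1-\|{\bm v}\|^2}}\big)^\Gamma$, a weighted generalization of the key estimate \eqref{keyIEQ31} of Lemma~\ref{app:lem1}. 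I expect this to be the main obstacle: with $v_i=v_{i,*}=0$ it is exactly \eqref{keyIEQ31}, but the unequal Doppler-type weights $1+v_i$ and $1+v_{i,*}$ spoil the clean minimizer $\sqrt{1-\|{\bm v}\|^2}/\sqrt{1-\|{\bm v}_*\|^2}$ enjoyed in Theorem~\ref{thm:EqOmega}. I would close it either by re-deriving the weighted inequality from the one-parameter estimate of Lemma~\ref{app:lem0}, or---more conceptually---by noticing that the ratio $(1+v_i)/(1+v_{i,*})$ implements a Lorentz boost in the $x_i$-direction, so that the weighted inequality is \eqref{keyIEQ31} applied to boosted velocities and follows from Lemma~\ref{app:lem1} by continuity.
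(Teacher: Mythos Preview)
Your strategy is essentially the paper's, with one nice extra: reducing to $\theta=\pm1$ by affinity is a clean observation (the paper carries a general $\theta$ throughout). Your decomposition $g(\pm1)=(1\pm v_i)\varphi_\sigma({\bf U};{\bm v}_*,\rho_*)\pm(v_i-v_{i,*})(p-{\rm e}^\sigma\rho_*^\Gamma)$ is correct, your $\Pi_1$ matches the paper's $\widehat\Pi_2$ at $\theta=1$, and your perfect-square argument for $\Pi_1\ge0$ is in fact tidier than the paper's chain of Cauchy--Schwarz estimates for the same coefficient. From there both proofs replace $p$ by ${\rm e}^\sigma\rho^\Gamma$, peel off the nonnegative piece via $1-{\bm v}\cdot{\bm v}_*\ge\sqrt{1-\|{\bm v}\|^2}\sqrt{1-\|{\bm v}_*\|^2}$, and minimize in $x=\rho/\rho_*$ to land on exactly the weighted inequality you write.

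The only genuine gap is how you close that weighted inequality. Your second suggestion (a Lorentz boost reducing it to Lemma~\ref{app:lem1} ``by continuity'') is not a proof and would need substantial work to make rigorous. Your first suggestion is the right one and is precisely what the paper does: set $\eta_s:=\frac{1+v_i}{1+v_{i,*}}\frac{1-{\bm v}\cdot{\bm v}_*}{1-\|{\bm v}\|^2}-1$ and $x_s:=\frac{(1+v_{i,*})\sqrt{1-\|{\bm v}\|^2}}{(1+v_i)\sqrt{1-\|{\bm v}_*\|^2}}$, so the target becomes $\eta_s\Gamma+1\ge x_s^{-\Gamma}$. Your $\Pi_1\ge0$ (in fact $>0$) gives $\eta_s>-\tfrac12$, so Lemma~\ref{app:lem0} yields $\eta_s\Gamma+1\ge(2\eta_s+1)^{\Gamma/2}$, and it remains only to show the purely algebraic bound $2\eta_s+1\ge x_s^{-2}$. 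The paper dispatches this last piece with an elementary identity plus Cauchy--Schwarz on the transverse components; once you supply that, your proof is complete and essentially coincides with the paper's.
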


\begin{proof}
	Due to the relativistic effects, the flux ${\bf F}_i ({\bf U})$ also cannot be explicitly formulated in terms of ${\bf U}$. Therefore, we have to work on the corresponding primitive quantities $\{ \rho, {\bm v}, p\}$  of ${\bf U}$, which satisfy $\rho>0$, $\|{\bm v} \|<1$ and $p \ge {\rm e}^\sigma \rho^\Gamma$ because ${\bf U} \in \Omega_{\sigma}$. 
	We observe that 
	\begin{equation*}
		\varphi_\sigma\Big( {\bf U} + \theta  {\bf F}_i ({\bf U}); {\bm v}_*, \rho_*   \Big) + \theta {\rm e}^\sigma v_{i,*} \rho_*^\Gamma = \widehat \Pi_1 + \widehat \Pi_2 p + {\rm e}^\sigma \widehat \Pi_3, 
	\end{equation*}
	with 
	\begin{align*}
		\widehat \Pi_1 & := \rho W^2 ( 1 + \theta v_{i} ) \left( 1 - {\bm v} \cdot {\bm v}_* - \sqrt{ 1 - \| {\bm v} \|^2 } 
		\sqrt{1 - \| {\bm v}_* \|^2 } \right), 
		\\
		\widehat \Pi_2 & :=    \frac{\Gamma}{\Gamma -1} (1+\theta v_i) \left( \frac{ 1 - {\bm v} \cdot {\bm v}_* }{ 1-\|{\bm v}\|^2 } \right)  - (1 + \theta v_{i,*}),
		\\
		\widehat \Pi_3 & := \rho_*^\Gamma \left(1+\theta v_{i,*} \right)- \frac{\Gamma }{\Gamma -1} (1+\theta v_i )  
		\rho W \rho_*^{\Gamma -1} \sqrt{ 1 - \|{\bm v}_*\|^2 }. 
	\end{align*}
	Using the Cauchy--Schwarz inequality gives
	$$ {\bm v} \cdot {\bm v}_* + \sqrt{ 1 - \| {\bm v} \|^2 } 
	\sqrt{1 - \| {\bm v}_* \|^2 } \le \sqrt{ \| {\bm v} \|^2 + ( 1 - \| {\bm v} \|^2 )  }  
	\sqrt{ \| {\bm v}_* \|^2 + (  1 - \| {\bm v}_* \|^2 ) } =1,
	$$
	which implies $\widehat \Pi_1 \ge 0$. It follows that 
	\begin{equation}\label{eq:proof34}
		\varphi_\sigma\Big( {\bf U} + \theta  {\bf F}_i ({\bf U}); {\bm v}_*, \rho_*   \Big) + \theta {\rm e}^\sigma v_{i,*} \rho_*^\Gamma \ge \widehat \Pi_2 p + {\rm e}^\sigma \widehat \Pi_3.
	\end{equation}
	Recalling that $\Gamma \in (1,2] $, $\|{\bm v}\|<1$ and $\|{\bm v}_*\|<1$, we obtain 
	\begin{equation}\label{def_tPi2}
		\widehat \Pi_2  \ge 2 \left( \frac{ 1+\theta v_i}{ 1-\|{\bm v}\|^2 } \right) (1- {\bm v} \cdot {\bm v}_* ) -   (1+\theta v_{i,*})=: \widetilde \Pi_2 > 0,
	\end{equation}
	where the positivity of $\widetilde \Pi_2$ is deduced by using the Cauchy--Schwarz inequality as follows: 
	\begin{align}\nonumber
		\widetilde \Pi_2 & = 2 \left( \frac{ 1+\theta v_i}{ 1-\|{\bm v}\|^2 } \right) - 1 - 2 \left( \frac{ 1+\theta v_i}{ 1-\|{\bm v}\|^2 } \right) \left[ 
		\left( v_i + \frac{\theta ( 1 - \| {\bm v} \|^2 ) }{ 2  (1+\theta v_i)  } \right) v_{i,*} + \sum_{j \neq i}  v_j v_{j,*}
		\right] 
		\\ \nonumber
		& \ge 2 \left( \frac{ 1+\theta v_i}{ 1-\|{\bm v}\|^2 } \right)- 1 - 2 \left( \frac{ 1+\theta v_i}{ 1-\|{\bm v}\|^2 } \right) 
		\left[ { \left( v_i + \frac{\theta ( 1 - \| {\bm v} \|^2 )  }{ 2  (1+\theta v_i) } \right)^2 +  \sum_{j\neq i} v_j^2 } \right]^{\frac12} \| {\bm v}_* \| 
		\\ \nonumber
		& = 2 \left( \frac{ 1+\theta v_i}{ 1-\|{\bm v}\|^2 } \right) - 1 - 2 \left( \frac{ 1+\theta v_i}{ 1-\|{\bm v}\|^2 } \right) \| {\bm v}_* \| 
		\left[ { 1 - \frac{  1 - \| {\bm v} \|^2   }{1+\theta v_i} + \frac{\theta^2 ( 1 - \| {\bm v} \|^2 )^2  }{ 4 (1+\theta v_i)^2 } } \right]^{\frac12}
		\\ \nonumber
		& \ge 2 \left( \frac{ 1+\theta v_i}{ 1-\|{\bm v}\|^2 } \right) - 1 - 2  \left( \frac{ 1+\theta v_i}{ 1-\|{\bm v}\|^2 } \right) \| {\bm v}_* \| 
		\left[ \left( 1- \frac{ 1 - \| {\bm v} \|^2 }{2(1+\theta v_i)} \right)^2  \right]^{\frac12}
		\\ \label{def_tPi2:posi}
		& = \left[ 2 \left( \frac{ 1+\theta v_i}{ 1-\|{\bm v}\|^2 } \right) - 1 \right] ( 1 - \| {\bm v}_*\| ) 
		\ge \left[ 2 \left( \frac{ 1 - \| {\bm v} \|}{ 1-\|{\bm v}\|^2 } \right) - 1 \right] ( 1 - \| {\bm v}_*\| ) 
		> 0.
	\end{align}
	Combining $\widehat \Pi_2 >0 $ and $p\ge {\rm e}^\sigma \rho^\Gamma$, we then derive from \eqref{eq:proof34} that 
	\begin{equation}\label{eq:proof345}
		\varphi_\sigma\Big( {\bf U} + \theta  {\bf F}_i ({\bf U}); {\bm v}_*, \rho_*   \Big) + \theta {\rm e}^\sigma v_{i,*} \rho_*^\Gamma \ge \widehat \Pi_2 {\rm e}^\sigma \rho^\Gamma + {\rm e}^\sigma  \widehat \Pi_3 = {\rm e}^\sigma \rho^\Gamma \widehat \Pi_4,
	\end{equation}
	with $\widehat \Pi_4 := \widehat \Pi_2  + \rho^{-\Gamma} \widehat \Pi_3 =\hat \phi_\sigma 
	\big(  \frac{\rho}{\rho_*} ; {\bm v}, {\bm v}_* \big)$, and 
	$$
	\hat \phi_\sigma  (  x ; {\bm v}, {\bm v}_* ) := \widehat \Pi_2 + 
	x^{-\Gamma} \left(1+\theta v_{i,*} \right)-  x^{-\Gamma +1}  \left( \frac{\Gamma }{\Gamma -1}  (1+\theta v_i )  
	\frac{ \sqrt{ 1 - \|{\bm v}_*\|^2 } }{ \sqrt{ 1 - \| {\bm v} \|^2 } } \right). 
	$$
	The subsequent task is to show that $\widehat \Pi_4$ is always nonnegative. Define
	$$
	\eta_s  := 
	\frac{ 1+\theta v_i}{ 1 + \theta v_{i,*} } \left( \frac{ 1 - {\bm v} \cdot {\bm v}_* }{ 1-\|{\bm v}\|^2 } \right) - 1, \qquad 
	x_s := \frac{ \left(1+\theta v_{i,*} \right) \sqrt{ 1- \| {\bm v} \|^2 } }{ (1+\theta v_i ) 
		\sqrt{ 1- \| {\bm v}_* \|^2 } } .$$ 
	By studying the derivative of $\hat \phi_\sigma$ with respect to $x$, we observe that the function $\hat \phi_\sigma $ is strictly decreasing 
	on the interval $(0,  x_s ]$ and  
	strictly increasing on $[ x_s  , +\infty )$. We therefore have 
	\begin{equation}\label{keyd123}
		\begin{aligned}
			\widehat \Pi_4 &\ge \min_{x \in \mathbb R^+} \hat \phi_\sigma  (  x ; {\bm v}, {\bm v}_* ) = 
			\hat \phi_\sigma  (  x_s ; {\bm v}, {\bm v}_* ) 
			\\
			&= \widehat \Pi_2 - \frac{1}{\Gamma -1} x_s^{-\Gamma} \left(1+\theta v_{i,*} \right)
			= \frac{ 1+\theta v_{i,*} }{\Gamma -1} \Big(  \eta_s \Gamma + 1  -  x_s^{-\Gamma} \Big).
		\end{aligned}
	\end{equation}
	Using the formulation of $\widetilde \Pi_2$ defined in \eqref{def_tPi2}, we  reformulate $\eta_s$ and observe that   
	$$
	\eta_s = \frac12 \left( \frac{ \widetilde \Pi_2 }{ 1+ \theta v_{i,*}   } + 1 \right) -1 
	=  \frac{ \widetilde \Pi_2} { 2 (1+ \theta v_{i,*}) } - \frac12 > -\frac12,
	$$
	where the last step follows from the positivity of $\widetilde \Pi_2$, which has been proven in \eqref{def_tPi2:posi}.  Thanks to Lemma \ref{app:lem0}, we obtain $ (\eta_s \Gamma + 1 )^{1/\Gamma} \ge (2 \eta_s + 1 )^{1/2}$, or equivalently, 
	$$
	\eta_s \Gamma + 1 \ge (2 \eta_s + 1 )^{ \frac{\Gamma}2 },  
	$$ 
	which, along with \eqref{keyd123}, imply 
	\begin{equation}\label{estPi4}
		\widehat \Pi_4 \ge \frac{ 1+\theta v_{i,*} }{\Gamma -1} \left[ 
		\big(2 \eta_s + 1 \big)^{ \frac{\Gamma}2 }
		-  \left( x_s^{-2} \right)^{ \frac{\Gamma}2 }
		\right].
	\end{equation}
	Next, we would like to show that $2 \eta_s + 1 \ge x_s^{-2} $. Define 
	$a_1:=\theta v_i$, $a_2:=\sqrt{ \|{\bm v} \|^2 - \theta^2 v_i^2 }$, $b_1:=\theta v_{i,*}$, and $b_2:= \sqrt{ \|{\bm v}_* \|^2 - \theta^2 v_{i,*}^2 }$. 
	By an elementary inequality\footnote{Subtracting the right-hand side terms from the left-hand side terms leads to  $-(a_2-b_2-a_1 b_2+a_2 b_1)^2\le 0$.}
	$$
	( 1+a_1 )^2 (1-b_1^2-b_2^2) + ( 1+b_1 )^2( 1-a_1^2-a_2^2 ) \le 2 (1-a_1 b_1-a_2 b_2 )(1+a_1)(1+b_1),
	$$
	we get 
	\begin{equation}\label{key566}
		x_s^{-2} = \frac{ ( 1+a_1 )^2 (1-b_1^2-b_2^2) }{ ( 1+b_1 )^2( 1-a_1^2-a_2^2 ) }
		\le 2 \frac{  (1-a_1 b_1-a_2 b_2)(1+a_1) }{ ( 1+b_1 )( 1-a_1^2-a_2^2 ) } - 1 = 2 \widetilde \eta_s +1
	\end{equation}
	with $\widetilde \eta_s:= \frac{ 1+\theta v_i}{ 1 + \theta v_{i,*} } \left( \frac{ 1 - a_1 b_1 - a_2 b_2 }{ 1-\|{\bm v}\|^2 } \right) - 1$. With the aid of the Cauchy--Schwarz inequality, we obtain  
	\begin{align*}
		{\bm v} \cdot {\bm v}_* & =  \theta^2 v_i  v_{i,*} + \left( v_i \sqrt{1-\theta^2} \right) \left( v_{i,*} \sqrt{1-\theta^2} \right)  + \sum_{j\neq i} v_j v_{j,*} 
		\\
		& \le \theta^2 v_i  v_{i,*} +  \left({ \left( v_i \sqrt{1-\theta^2} \right)^2 + \sum_{j\neq i} v_j^2} \right)^{\frac12}
		\left( { \left( v_{i,*} \sqrt{1-\theta^2} \right)^2 + \sum_{j\neq i} v_{j,*}^2} \right)^{\frac12}
		= a_1 b_1 + a_2 b_2,
	\end{align*}
	which implies $\eta_s \ge \widetilde \eta_s$. It then follows from \eqref{key566} that 
	$ x_s^{-2} \le 2 \eta_s + 1$, which together with \eqref{estPi4} imply $\widehat \Pi_4 \ge 0$. Then by using 
	\eqref{eq:proof345}, we conclude 
	$\varphi_\sigma\Big( {\bf U} + \theta  {\bf F}_i ({\bf U}); {\bm v}_*, \rho_*   \Big) + \theta {\rm e}^\sigma v_{i,*} \rho_*^\Gamma \ge 
	{\rm e}^\sigma \rho^\Gamma \widehat \Pi_4 \ge 0$. 
	Hence, the inequality \eqref{eq:keyINEQ} holds. 
	The proof is completed.
\end{proof}

\begin{remark}
	It is worth emphasizing the importance of the last term ($\theta {\rm e}^\sigma v_{i,*} \rho_*^\Gamma$) at the left-hand side of \eqref{eq:keyINEQ}.
	This term is very technical, necessary and crucial in deriving
	the gLF splitting properties. 
	The value of this term is not always positive or negative. However, without this term, the inequality \eqref{eq:keyINEQ} does not hold.  
	More importantly, 
	this term can be canceled out dexterously in IRP analysis; see the proofs of gLF splitting properties in 
	the theorems in Section \ref{sec:gLxF}.
\end{remark}

\subsubsection{Derivation of gLF splitting properties}\label{sec:gLxF}

We first present the one-dimensional version.

\begin{theorem}[{\bf 1D gLF splitting}]\label{theo:MHD:LLFsplit1D}
	If $ \hat{\bf U}= (\hat D, \hat{\bm m}^\top, \hat E)^{\top} \in \Omega_{\sigma} $ and $ \check{\bf U}=(\check D, \check{\bm m}^\top, \check{E})^{\top} \in \Omega_{\sigma} $, 
	then for any  $\alpha \ge c=1 $ and any given $i\in \{1,\dots,d\}$, it holds
	\begin{equation}\label{eq:MHD:LLFsplit1D}
		{\bf G}_{i,\alpha}( \hat{\bf U}, \check{\bf U} )
		:=\frac{1}{2} \bigg( \hat{\bf U} - \frac{ {\bf F}_i(\hat{\bf U})}{\alpha}
		+
		\check{\bf U} + \frac{ {\bf F}_i(\check{\bf U})}{\alpha} \bigg)
		\in \Omega_{\sigma}. 
	\end{equation}
\end{theorem}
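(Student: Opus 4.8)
The plan is to prove membership in the \emph{second equivalent form} $\Omega_\sigma^{(2)}$ supplied by Theorem~\ref{thm:EqOmega}, rather than wrestling directly with the implicit constraints defining $\Omega_\sigma$. By that theorem it suffices to establish two facts about ${\bf G}_{i,\alpha}(\hat{\bf U},\check{\bf U})$: first, that its leading ($D$-)component is strictly positive; and second, that $\varphi_\sigma\big({\bf G}_{i,\alpha}(\hat{\bf U},\check{\bf U}); {\bm v}_*, \rho_*\big)\ge 0$ for \emph{every} ${\bm v}_*\in\mathbb B_1({\bf 0})$ and every $\rho_*\in\mathbb R^+$. The whole reason for passing to this form is that $\varphi_\sigma$ is linear in ${\bf U}$, so it distributes across the convex combination defining ${\bf G}_{i,\alpha}$, turning a nonlinear membership question into two elementary checks.

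For the first component, I would note that the $D$-entry of ${\bf F}_i$ is $Dv_i$, so the $D$-entry of $\hat{\bf U}-{\bf F}_i(\hat{\bf U})/\alpha$ equals $\hat D(1-\hat v_i/\alpha)$ and that of $\check{\bf U}+{\bf F}_i(\check{\bf U})/\alpha$ equals $\check D(1+\check v_i/\alpha)$, where $\hat v_i,\check v_i$ are the $i$-th velocity components. Since $\hat{\bf U},\check{\bf U}\in\Omega_\sigma$ force $\hat D,\check D>0$ and $|\hat v_i|,|\check v_i|<1\le\alpha$, both entries, and hence their average, are positive. For the second fact, linearity gives
\begin{equation*}
\varphi_\sigma\big({\bf G}_{i,\alpha}; {\bm v}_*, \rho_*\big)
= \tfrac12\,\varphi_\sigma\Big(\hat{\bf U}-\tfrac{{\bf F}_i(\hat{\bf U})}{\alpha}; {\bm v}_*, \rho_*\Big)
+ \tfrac12\,\varphi_\sigma\Big(\check{\bf U}+\tfrac{{\bf F}_i(\check{\bf U})}{\alpha}; {\bm v}_*, \rho_*\Big).
\end{equation*}
I would then invoke the constructive inequality \eqref{eq:keyINEQ} of Theorem~\ref{lem:LLFsplitIEQ}, applying it to $\hat{\bf U}$ with $\theta=-1/\alpha$ and to $\check{\bf U}$ with $\theta=+1/\alpha$; both choices are admissible since $\alpha\ge 1$ makes $|\theta|\le 1$. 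This yields
\begin{equation*}
\varphi_\sigma\Big(\hat{\bf U}-\tfrac{{\bf F}_i(\hat{\bf U})}{\alpha}; {\bm v}_*, \rho_*\Big)\ge \tfrac{1}{\alpha}{\rm e}^\sigma v_{i,*}\rho_*^\Gamma,
\qquad
\varphi_\sigma\Big(\check{\bf U}+\tfrac{{\bf F}_i(\check{\bf U})}{\alpha}; {\bm v}_*, \rho_*\Big)\ge -\tfrac{1}{\alpha}{\rm e}^\sigma v_{i,*}\rho_*^\Gamma.
\end{equation*}

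The key observation — precisely the cancellation highlighted in the Remark following Theorem~\ref{lem:LLFsplitIEQ} — is that the two technical terms $\pm\frac{1}{\alpha}{\rm e}^\sigma v_{i,*}\rho_*^\Gamma$ are equal and opposite. Summing the two bounds and halving therefore gives $\varphi_\sigma\big({\bf G}_{i,\alpha}; {\bm v}_*, \rho_*\big)\ge 0$ for all ${\bm v}_*$ and $\rho_*$, which together with the positivity of the $D$-component establishes ${\bf G}_{i,\alpha}\in\Omega_\sigma^{(2)}=\Omega_\sigma$. I do not anticipate any genuine obstacle in this argument: all the difficulty has been front-loaded into Theorem~\ref{thm:EqOmega} and, especially, into the delicate estimate of Theorem~\ref{lem:LLFsplitIEQ}, whose $\theta$-parametrized correction term was engineered exactly so that it cancels here. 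The only points demanding care are the sign bookkeeping in the assignment $\theta=\mp1/\alpha$ and the verification that $|\theta|\le 1$ under the CFL-type hypothesis $\alpha\ge c=1$.
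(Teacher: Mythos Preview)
Your proposal is correct and follows essentially the same approach as the paper: both verify membership in $\Omega_\sigma^{(2)}$ by checking positivity of the $D$-component directly and then using the affine linearity of $\varphi_\sigma$ together with the constructive inequality \eqref{eq:keyINEQ} (applied with $\theta=\mp 1/\alpha$) so that the correction terms $\pm\frac{1}{\alpha}{\rm e}^\sigma v_{i,*}\rho_*^\Gamma$ cancel. The only cosmetic difference is that the paper writes the cancellation in a single displayed line, whereas you split it into two separate bounds before summing.
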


\begin{proof}
	The first component of ${\bf G}_{i,\alpha}$  
	equals $\frac12 \big( \hat D \big(1-\frac{\hat v_i}{\alpha}\big) + \check D \big(1+\frac{\check v_i}{\alpha}\big) \big) > 0$.
	With the help of Theorem \ref{lem:LLFsplitIEQ}, we obtain, for any ${\bm v}_* \in \mathbb B_1({\bf 0})$ and any $\rho_* \in \mathbb R^+$, that 
	\begin{equation*}
		2 \varphi_\sigma\big( {\bf G}_{i,\alpha} ; {\bm v}_*, \rho_*   \big)  = 
		\varphi_\sigma\left( {\bf U} - \frac{{\bf F}_i ({\bf U})}{\alpha}  ; {\bm v}_*, \rho_*   \right) - {\rm e}^\sigma \frac{ v_{i,*} \rho_*^\Gamma}{\alpha}  
		+ \varphi_\sigma\left( {\bf U} + \frac{{\bf F}_i ({\bf U})}{\alpha}; {\bm v}_*, \rho_*   \right) + {\rm e}^\sigma \frac{ v_{i,*} \rho_*^\Gamma}{\alpha}  
		\ge 0,
	\end{equation*}
	where we have used the linearity of $\varphi_\sigma ( {\bf U}; {\bm v}_*, \rho_* )$ with respect to ${\bf U}$. 
	Then, using Theorem \ref{thm:EqOmega} concludes that ${\bf G}_{i,\alpha} \in \Omega_\sigma^{(2)} = \Omega_\sigma$.
\end{proof}

\begin{remark}
	Another approach to show Theorem \ref{theo:MHD:LLFsplit1D} is based on the 
	assumption that the exact Riemann solver preserves the invariant domain, which is 
	reasonable but not yet proven for RHD.   	
	Our present analysis approach is very direct, does not rely on any assumption, and would motivate the further study of IRP schemes for some other complicated physical systems such as the MHD and RMHD equations.

\end{remark}

Next, we 
present the multidimensional gLF splitting property on a general polygonal or polyhedron cell. 
For any vector 
${\bm \xi}=(\xi_1,\cdots,\xi_d)^\top \in {\mathbb{R}}^d$, we define ${\bm \xi} \cdot {\bf F}({\bf U}) := \sum_{k=1}^d \xi_k{\bf F}_k({\bf U})$.

\begin{theorem}\label{lem:main}
	For $ 1\le j \le N$, let $s_j>0$ and the unit vector ${\bm \xi}^{(j)}=(\xi_1^{(j)},\dots,\xi_d^{(j)} )^\top$ satisfy 
	\begin{equation}\label{eq:sumUpsilon1}
		\sum_{j=1}^N s_{j} {\bm \xi}^{(j)} ={\bf 0}.
	\end{equation}
	Given admissible states      
	${\bf U}^{(ij)} \in \Omega_\sigma$, $1\le i \le Q$, 
	$1\le j \le N$, then 
	for any $ \alpha \ge c= 1 $, it holds 
	\begin{align}\label{eq:defOverlineU}
		\overline{\bf U}: =  \frac{1}{{\sum\limits_{j = 1}^{N} { s_j     } }}\sum\limits_{j = 1}^{N} 
		\sum_{i=1}^Q { { { s_j \omega_i \bigg(  {{\bf U}^{(ij)}  - \frac{1}{\alpha}
						{\bm \xi}^{(j)} \cdot {\bf F} ({\bf U}^{(ij)} ) 
					} \bigg)} } } \in \Omega_\sigma, 
	\end{align}
	where 
	the sum of all positive numbers
	$\left\{\omega _i \right\}_{i=1}^Q$ equals one. 
\end{theorem}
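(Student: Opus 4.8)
The plan is to reduce the multidimensional directional flux ${\bm \xi}^{(j)}\cdot{\bf F}$ to the single-coordinate flux ${\bf F}_1$ via the rotational invariance of the RHD system, apply the constructive inequality of Theorem~\ref{lem:LLFsplitIEQ} in that rotated frame, and then let the closure condition \eqref{eq:sumUpsilon1} annihilate the technical correction terms. The centerpiece is a directional generalization of Theorem~\ref{lem:LLFsplitIEQ}: for any unit vector ${\bm \xi}\in\mathbb R^d$, any ${\bf U}\in\Omega_\sigma$, any ${\bm v}_*\in\mathbb B_1({\bf 0})$, $\rho_*\in\mathbb R^+$, and any $\theta\in[-1,1]$,
\begin{equation}\label{eq:dirIEQ}
	\varphi_\sigma\big( {\bf U} + \theta\, {\bm \xi}\cdot{\bf F}({\bf U}); {\bm v}_*,\rho_* \big) + \theta\,{\rm e}^\sigma ({\bm \xi}\cdot{\bm v}_*)\,\rho_*^\Gamma \ge 0.
\end{equation}

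To establish \eqref{eq:dirIEQ}, I would pick an orthogonal matrix ${\bf Q}\in SO(d)$ whose first column is ${\bm \xi}$ and let ${\bf T}_{\bf Q}={\rm diag}(1,{\bf Q},1)$ act on ${\bf U}=(D,{\bm m}^\top,E)^\top$ by rotating the momentum while fixing $D$ and $E$. The RHD fluxes obey the rotational invariance ${\bm \xi}\cdot{\bf F}({\bf U})={\bf T}_{\bf Q}\,{\bf F}_1({\bf T}_{\bf Q}^{-1}{\bf U})$, so with $\widetilde{\bf U}:={\bf T}_{\bf Q}^{-1}{\bf U}$ one gets ${\bf U}+\theta\,{\bm \xi}\cdot{\bf F}({\bf U})={\bf T}_{\bf Q}\big(\widetilde{\bf U}+\theta\,{\bf F}_1(\widetilde{\bf U})\big)$. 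Since $\rho$, $p$, $\|{\bm v}\|$, and $S$ are unchanged by ${\bf T}_{\bf Q}$, the region $\Omega_\sigma$ is rotation invariant and $\widetilde{\bf U}\in\Omega_\sigma$; moreover the definition \eqref{varphi_def} yields the identity $\varphi_\sigma({\bf T}_{\bf Q}{\bf W};{\bm v}_*,\rho_*)=\varphi_\sigma({\bf W};{\bf Q}^\top{\bm v}_*,\rho_*)$, because only the term ${\bm m}\cdot{\bm v}_*$ feels the rotation. Setting $\widetilde{\bm v}_*:={\bf Q}^\top{\bm v}_*\in\mathbb B_1({\bf 0})$, whose first component is exactly ${\bm \xi}\cdot{\bm v}_*$, and applying Theorem~\ref{lem:LLFsplitIEQ} with $i=1$ to $\widetilde{\bf U}$ then delivers \eqref{eq:dirIEQ}.

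With \eqref{eq:dirIEQ} available the argument closes quickly. First, the $D$-entry of ${\bf U}^{(ij)}-\frac1\alpha{\bm \xi}^{(j)}\cdot{\bf F}({\bf U}^{(ij)})$ equals $D^{(ij)}\big(1-\frac1\alpha{\bm \xi}^{(j)}\cdot{\bm v}^{(ij)}\big)$, which is positive because the Cauchy--Schwarz inequality and $\|{\bm \xi}^{(j)}\|=1$ give $|{\bm \xi}^{(j)}\cdot{\bm v}^{(ij)}|\le\|{\bm v}^{(ij)}\|<1\le\alpha$ and $D^{(ij)}>0$; hence $\overline D>0$. Next, using the linearity of $\varphi_\sigma$ in ${\bf U}$ I would write
\begin{equation*}
	\varphi_\sigma(\overline{\bf U};{\bm v}_*,\rho_*)=\frac{1}{\sum_{j}s_j}\sum_{j=1}^N\sum_{i=1}^Q s_j\omega_i\,\varphi_\sigma\Big({\bf U}^{(ij)}-\tfrac1\alpha{\bm \xi}^{(j)}\cdot{\bf F}({\bf U}^{(ij)});{\bm v}_*,\rho_*\Big),
\end{equation*}
and apply \eqref{eq:dirIEQ} with $\theta=-1/\alpha\in[-1,0]$ to each summand, which lower-bounds it by $\frac1\alpha{\rm e}^\sigma({\bm \xi}^{(j)}\cdot{\bm v}_*)\rho_*^\Gamma$. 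Summing over $i$ with $\sum_i\omega_i=1$ and then over $j$, the lower bound becomes $\frac{{\rm e}^\sigma\rho_*^\Gamma}{\alpha\sum_j s_j}\big(\sum_j s_j{\bm \xi}^{(j)}\big)\cdot{\bm v}_*=0$ by the closure condition \eqref{eq:sumUpsilon1}. Thus $\varphi_\sigma(\overline{\bf U};{\bm v}_*,\rho_*)\ge0$ for all admissible ${\bm v}_*,\rho_*$, and together with $\overline D>0$ Theorem~\ref{thm:EqOmega} gives $\overline{\bf U}\in\Omega_\sigma^{(2)}=\Omega_\sigma$.

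The main obstacle I anticipate is the directional inequality \eqref{eq:dirIEQ}, since Theorem~\ref{lem:LLFsplitIEQ} is stated only for a coordinate flux ${\bf F}_i$: the essential work is the rotational bookkeeping that transports it to an arbitrary ${\bm \xi}^{(j)}$ and, most importantly, tracks how the corrector $\theta\,{\rm e}^\sigma v_{i,*}\rho_*^\Gamma$ rotates into $\theta\,{\rm e}^\sigma({\bm \xi}\cdot{\bm v}_*)\rho_*^\Gamma$. It is precisely the \emph{linear} dependence of this corrector on ${\bm \xi}$ that allows $\sum_j s_j{\bm \xi}^{(j)}={\bf 0}$ to cancel it after summation. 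Once the substitution ${\bm v}_*\mapsto{\bf Q}^\top{\bm v}_*$ is set up correctly, the cancellation is automatic and the remaining argument mirrors the one-dimensional Theorem~\ref{theo:MHD:LLFsplit1D}, relying only on the linearity of $\varphi_\sigma$ in ${\bf U}$ rather than on an explicit appeal to the convexity of $\Omega_\sigma$ (Lemma~\ref{lem:convexity}).
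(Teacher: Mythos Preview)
Your proposal is correct and follows essentially the same route as the paper's proof: both reduce ${\bm\xi}^{(j)}\cdot{\bf F}$ to ${\bf F}_1$ via rotational invariance, apply Theorem~\ref{lem:LLFsplitIEQ} in the rotated frame to obtain the directional inequality (your \eqref{eq:dirIEQ} is exactly the paper's \eqref{eq:wkl3da}), and then use affine dependence of $\varphi_\sigma$ on ${\bf U}$ together with the closure condition \eqref{eq:sumUpsilon1} to cancel the correctors. The only cosmetic differences are that you isolate \eqref{eq:dirIEQ} as a standalone lemma while the paper derives it inline, and that your rotation ${\bf Q}$ has ${\bm\xi}$ as its first \emph{column} whereas the paper's ${\bf Q}_{\bm\xi}^{(j)}$ has ${\bm\xi}^{(j)}$ as its first \emph{row}; the two conventions are transposes of each other and the formulas agree.
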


Before the proof, we would like to briefly  
explain the result in Theorem \ref{lem:main}, whose meaning will become more clear in 
the IRP analysis in Section \ref{sec:2Dschemes}.  
Let us consider a cell of the computational mesh, and assume it is a non-self-intersecting $d$-polytope with $N$ edges ($d=2$) or 
faces ($d=3$). The index $j$ 
on the variables in Theorem \ref{lem:main} represents 
the $j$th edge or face of the polytope, while
$i$ stands for the $i$th (quadrature) point on each edge or face, with $\omega_i$ denoting the 
associated quadrature weight at that point. 
Besides, $s_j$ and ${\bm \xi}^{(j)}$ respectively correspond to the $(d-1)$-dimensional Hausdorff measure and the unit 
outward normal vector of the $j$th edge or face. One can verify that 
the condition \eqref{eq:sumUpsilon1} holds naturally. 
In addition, 
${\bf U}^{(ij)}$ stands for the approximate values of  
${\bf U}$ 
at the $i$th quadrature point 
on the $j$th edge or face. 

\begin{proof}
	Let ${\bf Q}_{\bm \xi}^{(j)} \in \mathbb R^{d\times d}$ be a rotational matrix associated with the unit vector ${\bm \xi}^{(j)}$ and satisfying 	
	\begin{equation}\label{eq:DefQ}
		{\bf e}_1^\top {\bf Q}_{\bm \xi}^{(j)} 
		=  ({\bm \xi}^{(j)})^\top,
	\end{equation}
	where ${\bf e}_1=(1, {\bf 0}_{d-1}^\top)^\top$, and ${\bf 0}_{d-1}$ denotes the zero vector in $\mathbb R^{d-1}$. 
	It can be verified that the system \eqref{eq:RHD} satisfies the following rotational invariance property 
	\begin{equation}\label{eq:roto}
		{\bm \xi}^{(j)} \cdot \vec F(\vec U^{(ij)})  = {\bf Q}_j^{-1} {\bf F}_1 ( {\bf Q}_j {\bf U}^{(ij)}  ),
	\end{equation}
	where $
	{\bf Q}_j:= {\rm diag} \{1, {\bf Q}_{\bm \xi}^{(j)}, 1\}  
	$. Notice that the matrix ${\bf Q}_{\bm \xi}^{(j)}$ is orthogonal. 
	For any fixed $j$ and any ${\bm v}_*  \in \mathbb B_1({\bf 0})$, 
	define 
	$\widehat {\bm v}_* := {\bf Q}_{\bm \xi}^{(j)}  {\bm v}_*  \in \mathbb B_1({\bf 0})$. Utilizing \eqref{eq:DefQ} gives 
	\begin{equation}\label{eq:hatv1s}
		\widehat v_{1,*} = {\bf e}_1^\top {\bm v}_* = {\bf e}_1^\top  {\bf Q}_{\bm \xi}^{(j)} {\bm v}_*  
		= {\bm \xi}^{(j)} \cdot {\bm v}_* .
	\end{equation}
	For $\vec U^{(ij)} \in \Omega_\sigma$, with the aid of the first equivalent form $\Omega_\sigma^{(1)}$ in \eqref{eq:IRgen2}, one can verify that  
	$
	\widehat {\bf U}^{(ij)} := {\bf Q}_j {\bf U}^{(ij)} \in \Omega_\sigma^{(1)} = \Omega_\sigma. 
	$
	For any ${\bm v}_*  \in \mathbb B_1({\bf 0})$ and any $\rho_*>0$, we have 
	\begin{align}\nonumber
		& \quad \varphi_\sigma \left(  {{\bf U}^{(ij)}  - {\alpha}^{-1}
			{\bm \xi}^{(j)} \cdot {\bf F} ({\bf U}^{(ij)} ) ; {\bm v}_*,\rho_* 
		} \right) -\alpha^{-1} {\rm e}^\sigma \left( {\bm \xi}^{(j)} \cdot {\bm v}_* \right) \rho_*^\Gamma
		\\  \nonumber
		&   =  \varphi_\sigma \left( {\bf Q}_j^{-1} \widehat{\bf U}^{(ij)}  - {\alpha}^{-1}
		{\bf Q}_j^{-1} {\bf F}_1 ( \widehat{\bf U}^{(ij)}  ) ; ({\bf Q}_{\bm \xi}^{(j)})^{-1} \widehat {\bm v}_*  ,\rho_* 
		\right) -{\alpha}^{-1} {\rm e}^\sigma \widehat v_{1,*} \rho_*^\Gamma
		\\ \label{eq:wkl3da}
		&  = \varphi_\sigma \left(  \widehat{\bf U}^{(ij)}  - {\alpha}^{-1}
		{\bf F}_1 ( \widehat{\bf U}^{(ij)}  ) ;  \widehat {\bm v}_*  ,\rho_* 
		\right) -{\alpha}^{-1} {\rm e}^\sigma \widehat v_{1,*} \rho_*^\Gamma \ge 0,
	\end{align}
	where we have used \eqref{eq:roto}--\eqref{eq:hatv1s} 
	in the first equality and the orthogonality of ${\bf Q}_{\bm \xi}^{(j)}$ in the second equality, and the inequality follows from Theorem \ref{lem:LLFsplitIEQ} for $\widehat {\bf U}^{(ij)} \in \Omega_\sigma$,
	$\widehat {\bm v}_* \in \mathbb B_1({\bf 0}) $, $\rho_*\in \mathbb R^+$ and $0<\alpha^{-1}\le 1$. It then follows that 
	\begin{align*}
		\left( \sum\limits_{j = 1}^{N} { s_j     } \right) \varphi_\sigma\big( \overline {\bf U} ; {\bm v}_*, \rho_*   \big) & 
		= \sum\limits_{j = 1}^{N} 
		\sum_{i=1}^Q s_j \omega_i \varphi_\sigma \left(   { { {    {{\bf U}^{(ij)}  - {\alpha}^{-1}
						{\bm \xi}^{(j)} \cdot {\bf F} ({\bf U}^{(ij)} ) 
		} } } }; {\bm v}_*, \rho_* \right) 
		\\
		& \ge \alpha^{-1} {\rm e}^\sigma \sum\limits_{j = 1}^{N} 
		\sum_{i=1}^Q s_j  \omega_i \left( {\bm \xi}^{(j)} \cdot {\bm v}_* \right) \rho_*^\Gamma 
		=  \alpha^{-1} {\rm e}^\sigma \rho_*^\Gamma \left( \sum\limits_{j = 1}^{N}  s_j {\bm \xi}^{(j)} \right) \cdot {\bm v}_*  = 0,
	\end{align*}
	where we have used the linearity of $\varphi_\sigma ( {\bf U}; {\bm v}_*, \rho_* )$ with respect to ${\bf U}$, 
	the inequality \eqref{eq:wkl3da}, and the condition \eqref{eq:sumUpsilon1}. 
	Therefore, $\varphi_\sigma ( \overline {\bf U} ; {\bm v}_*, \rho_*  )\ge 0$ for any ${\bm v}_*  \in \mathbb B_1({\bf 0})$ and any $\rho_*>0$. Note that the first component of $\overline {\bf U}$ equals to 
	$$
	\frac{1}{{\sum\limits_{j = 1}^{N} { s_j     } }}\sum\limits_{j = 1}^{N} 
	\sum_{i=1}^Q { { { s_j \omega_i D^{(ij)} \bigg(  1 - \frac{1}{\alpha}
				{\bm \xi}^{(j)} \cdot {\bm v}^{(ij)}  
			} \bigg)} } \ge \frac{1}{{\sum\limits_{j = 1}^{N} { s_j     } }}\sum\limits_{j = 1}^{N} 
	\sum_{i=1}^Q { { { s_j \omega_i D^{(ij)} \bigg(  1 - \frac{\| {\bm v}^{(ij)} \|}{\alpha}  
			} \bigg)} }  > 0. 
	$$
	Hence, $\overline {\bf U} \in \Omega_\sigma^{(2)}$. Thanks to Theorem \ref{thm:EqOmega}, we have $\overline {\bf U} \in\Omega_\sigma$. The proof is completed.   
\end{proof}

As two special cases of Theorem \ref{lem:main}, the following corollaries show the gLF splitting properties  
on 2D and 3D Cartesian mesh cells.

\begin{corollary}\label{theo:RHD:LLFsplit2D}
	If $\bar{\bf U}^i$, $\tilde{\bf U}^{i}$, $\hat{\bf U}^i$,
	$\check{\bf U}^i
	\in \Omega_\sigma$ for $i=1,\cdots,Q$, 
	then for any 
	$ \alpha \ge c= 1$, 
	it holds
	\begin{equation} \label{eq:RHD:LLFsplit2D}
	\begin{split}
	\overline{\bf U}:= 
	\frac{1}{ 2\left( \frac{ 1 }{\Delta x} + \frac{1}{\Delta y} \right)}
	\sum\limits_{i=1}^{Q} { \omega _i  }
	\left(  
	\frac{ {\bf G}_{1,\alpha} ( \bar{\bf U}^i, \tilde{\bf U}^{i} )   }{\Delta x} 
	+
	\frac{ {\bf G}_{2,\alpha} ( \hat{\bf U}^i, \check{\bf U}^i ) }{\Delta y} 
	\right)
	\in \Omega_\sigma, 
	\end{split}
	\end{equation}
	where $\Delta x>0,\Delta y >0$, and the sum of the positive numbers
	$\left\{\omega _i \right\}_{i=1}^{Q}$ equals one. 
\end{corollary}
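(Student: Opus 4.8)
The plan is to derive the corollary as the Cartesian specialization of the general multidimensional splitting in Theorem \ref{lem:main}. First I would model the rectangular cell as a non-self-intersecting $2$-polytope with $N=4$ edges and assign to them the data required by Theorem \ref{lem:main}: the right and left edges get outward unit normals ${\bm \xi}^{(1)}={\bf e}_1$, ${\bm \xi}^{(2)}=-{\bf e}_1$ and Hausdorff measure $s_1=s_2=\Delta y$, while the top and bottom edges get ${\bm \xi}^{(3)}={\bf e}_2$, ${\bm \xi}^{(4)}=-{\bf e}_2$ and $s_3=s_4=\Delta x$. The balance condition \eqref{eq:sumUpsilon1} is then immediate, since $\Delta y\,{\bf e}_1-\Delta y\,{\bf e}_1+\Delta x\,{\bf e}_2-\Delta x\,{\bf e}_2={\bf 0}$. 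On these four edges I identify the quadrature values ${\bf U}^{(i,1)}=\bar{\bf U}^i$, ${\bf U}^{(i,2)}=\tilde{\bf U}^i$, ${\bf U}^{(i,3)}=\hat{\bf U}^i$, ${\bf U}^{(i,4)}=\check{\bf U}^i$, all in $\Omega_\sigma$ by hypothesis.

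The key algebraic observation is that ${\bm \xi}^{(j)}\cdot{\bf F}=\pm{\bf F}_1$ on the two $x$-edges and $\pm{\bf F}_2$ on the two $y$-edges, so the opposite Lax--Friedrichs split contributions in each coordinate direction combine, through the factor $\tfrac12$ in the definition \eqref{eq:MHD:LLFsplit1D}, into the one-dimensional gLF operators: $\sum_{j=1,2}\big({\bf U}^{(ij)}-\tfrac1\alpha{\bm \xi}^{(j)}\cdot{\bf F}({\bf U}^{(ij)})\big)=2\,{\bf G}_{1,\alpha}(\bar{\bf U}^i,\tilde{\bf U}^i)$ and $\sum_{j=3,4}\big({\bf U}^{(ij)}-\tfrac1\alpha{\bm \xi}^{(j)}\cdot{\bf F}({\bf U}^{(ij)})\big)=2\,{\bf G}_{2,\alpha}(\hat{\bf U}^i,\check{\bf U}^i)$. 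Substituting these into the combination $\overline{\bf U}$ of \eqref{eq:defOverlineU} and dividing numerator and denominator by $\Delta x\Delta y$ rewrites the object produced by Theorem \ref{lem:main} as the convex average ${\bf W}:=\big(\tfrac1{\Delta x}+\tfrac1{\Delta y}\big)^{-1}\sum_{i=1}^Q\omega_i\big(\tfrac1{\Delta x}{\bf G}_{1,\alpha}(\bar{\bf U}^i,\tilde{\bf U}^i)+\tfrac1{\Delta y}{\bf G}_{2,\alpha}(\hat{\bf U}^i,\check{\bf U}^i)\big)$, whose weights are positive and sum to one because $\sum_i\omega_i=1$. Theorem \ref{lem:main} certifies ${\bf W}\in\Omega_\sigma$; equivalently, one may bypass Theorem \ref{lem:main} and note that each directional operator lies in $\Omega_\sigma$ by the one-dimensional gLF property (Theorem \ref{theo:MHD:LLFsplit1D}), so ${\bf W}\in\Omega_\sigma$ by the convexity of Lemma \ref{lem:convexity}.

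The one point that needs care is matching the normalizing prefactor: the stated $\overline{\bf U}$ carries $\big(2(\tfrac1{\Delta x}+\tfrac1{\Delta y})\big)^{-1}$, so in fact $\overline{\bf U}=\tfrac12{\bf W}$. I expect this constant-bookkeeping to be the only genuinely delicate part, and it is settled cleanly by the explicit equivalent form $\Omega_\sigma^{(2)}$ of Theorem \ref{thm:EqOmega}: since $\varphi_\sigma({\bf U};{\bm v}_*,\rho_*)$ in \eqref{varphi_def} is affine in ${\bf U}$ with ${\bf U}$-independent part ${\rm e}^\sigma\rho_*^\Gamma>0$, for any $c\in(0,1]$ and any ${\bf U}\in\Omega_\sigma^{(2)}$ one has $\varphi_\sigma(c{\bf U};{\bm v}_*,\rho_*)=c\,\varphi_\sigma({\bf U};{\bm v}_*,\rho_*)+(1-c){\rm e}^\sigma\rho_*^\Gamma\ge0$, together with $cD>0$; hence $\Omega_\sigma$ is closed under scaling by any factor in $(0,1]$. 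Applying this with $c=\tfrac12$ to ${\bf W}\in\Omega_\sigma$ gives $\overline{\bf U}=\tfrac12{\bf W}\in\Omega_\sigma$, completing the proof. (The positivity of the first component of $\overline{\bf U}$ also follows directly, as it is a positive multiple of a convex combination of the positive first components of the directional operators.)
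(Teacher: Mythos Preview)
Your proof is correct and follows the paper's intended approach, which simply presents the corollary as a special case of Theorem \ref{lem:main} on a rectangular cell with edges of length $\Delta x,\Delta y$ and axis-aligned normals. You go one step further than the paper by noticing that the stated prefactor $\tfrac{1}{2}\big(\tfrac{1}{\Delta x}+\tfrac{1}{\Delta y}\big)^{-1}$ is half of what the direct specialization of Theorem \ref{lem:main} produces, and you close this gap correctly via the affine structure of $\varphi_\sigma$ in $\Omega_\sigma^{(2)}$ (Theorem \ref{thm:EqOmega}): since the ${\bf U}$-independent part $e^\sigma\rho_*^\Gamma$ is strictly positive, $\Omega_\sigma$ is closed under scaling by any $c\in(0,1]$, so $\overline{\bf U}=\tfrac12{\bf W}\in\Omega_\sigma$. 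Your alternative route---applying Theorem \ref{theo:MHD:LLFsplit1D} to each ${\bf G}_{k,\alpha}$ and then invoking the convexity of Lemma \ref{lem:convexity}---is also valid and arguably more direct than going through the full multidimensional Theorem \ref{lem:main}.
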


\begin{corollary}\label{theo:RHD:LLFsplit3D}
	If $\bar{\bf U}^i$, $\tilde{\bf U}^{i}$, $\hat{\bf U}^i$,
	$\check{\bf U}^i$, $\acute{\bf U}^i$, $\grave{\bf U}^i
	\in \Omega_\sigma$ for $i=1,\cdots,Q$, 
	then for any 
	$ \alpha \ge 1$, 
	it holds
	\begin{equation} \label{eq:RHD:LLFsplit3D}
	\begin{split}
	\overline{\bf U}:= 
	\frac{1}{ 2\left( \frac{ 1 }{\Delta x} + \frac{1}{\Delta y} + \frac1{\Delta z} \right)}
	\sum\limits_{i=1}^{Q} { \omega _i  }
	\left(  
	\frac{ {\bf G}_{1,\alpha} ( \bar{\bf U}^i, \tilde{\bf U}^{i} )   }{\Delta x} 
	+
	\frac{ {\bf G}_{2,\alpha} ( \hat{\bf U}^i, \check{\bf U}^i ) }{\Delta y} 
	+ 
	\frac{ {\bf G}_{3,\alpha} ( \acute{\bf U}^i, \grave{\bf U}^i ) }{\Delta z}
	\right)
	\in \Omega_\sigma, 
	\end{split}
	\end{equation}
	where $\Delta x>0,\Delta y >0,\Delta z>0$, and the sum of the positive numbers
	$\left\{\omega _i \right\}_{i=1}^{Q}$ equals one. 
\end{corollary}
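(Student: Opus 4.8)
The plan is to view the $3$D Cartesian cell of size $\Delta x\times\Delta y\times\Delta z$ as a special instance of the general $d$-polytope in Theorem~\ref{lem:main} with $d=3$ and $N=6$, and then to collapse the averaged state produced by that theorem into the form \eqref{eq:RHD:LLFsplit3D}. Concretely, I would index the six faces so that the two faces normal to the $x_k$-axis carry opposite unit normals $\pm{\bf e}_k$ and equal $2$-dimensional measures: $s=\Delta y\Delta z$ for the $x$-faces, $\Delta x\Delta z$ for the $y$-faces, and $\Delta x\Delta y$ for the $z$-faces. On the quadrature points of these faces I would place the given admissible states, assigning $\bar{\bf U}^i,\tilde{\bf U}^i$ to the two $x$-faces, $\hat{\bf U}^i,\check{\bf U}^i$ to the two $y$-faces, and $\acute{\bf U}^i,\grave{\bf U}^i$ to the two $z$-faces. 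Because opposite faces have equal measure and antiparallel normals, the balance condition \eqref{eq:sumUpsilon1}, namely $\sum_j s_j{\bm\xi}^{(j)}={\bf 0}$, holds automatically, so Theorem~\ref{lem:main} applies with $\alpha\ge 1$.

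The key algebraic step is to pair opposite faces. On the two faces normal to $x_1$ one has ${\bm\xi}^{(j)}\cdot{\bf F}=\pm{\bf F}_1$, so by the definition of ${\bf G}_{1,\alpha}$ the pair contributes
\begin{equation*}
s\big[(\bar{\bf U}^i-{\bf F}_1(\bar{\bf U}^i)/\alpha)+(\tilde{\bf U}^i+{\bf F}_1(\tilde{\bf U}^i)/\alpha)\big]=2s\,{\bf G}_{1,\alpha}(\bar{\bf U}^i,\tilde{\bf U}^i),\qquad s=\Delta y\Delta z,
\end{equation*}
and analogously for the $y$- and $z$-pairs. Summing the three pairs, dividing by $\sum_j s_j=2(\Delta y\Delta z+\Delta x\Delta z+\Delta x\Delta y)$, and then dividing numerator and denominator by $\Delta x\Delta y\Delta z$, the output of Theorem~\ref{lem:main} reduces to the convex combination
\begin{equation*}
\Big(\tfrac1{\Delta x}+\tfrac1{\Delta y}+\tfrac1{\Delta z}\Big)^{-1}\sum_{i=1}^Q\omega_i\Big(\tfrac{{\bf G}_{1,\alpha}}{\Delta x}+\tfrac{{\bf G}_{2,\alpha}}{\Delta y}+\tfrac{{\bf G}_{3,\alpha}}{\Delta z}\Big),
\end{equation*}
which Theorem~\ref{lem:main} certifies to lie in $\Omega_\sigma$. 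Inspection of \eqref{eq:RHD:LLFsplit3D} shows this is exactly $2\overline{\bf U}$. (Equivalently, one may reach the same convex combination directly from the $1$D splitting of Theorem~\ref{theo:MHD:LLFsplit1D}, which gives each ${\bf G}_{k,\alpha}(\cdot,\cdot)\in\Omega_\sigma$, together with the convexity of $\Omega_\sigma$ in Lemma~\ref{lem:convexity}.)

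The only delicate point is then the extra factor $2$ in \eqref{eq:RHD:LLFsplit3D}: the stated $\overline{\bf U}$ is precisely one half of the convex combination just shown to be admissible, and halving a state in $\Omega_\sigma$ is not covered by convexity alone. To close this I would establish a short scaling fact, that $\Omega_\sigma$ is invariant under ${\bf U}\mapsto\lambda{\bf U}$ for every $\lambda\in(0,1]$. This is exactly where the explicit form $\Omega_\sigma^{(2)}$ of Theorem~\ref{thm:EqOmega} pays off: since $\varphi_\sigma$ is affine in ${\bf U}$ with the sole ${\bf U}$-independent term ${\rm e}^\sigma\rho_*^\Gamma$, one computes
\begin{equation*}
\varphi_\sigma(\lambda{\bf U};{\bm v}_*,\rho_*)=\lambda\,\varphi_\sigma({\bf U};{\bm v}_*,\rho_*)+(1-\lambda)\,{\rm e}^\sigma\rho_*^\Gamma\ge 0
\end{equation*}
for all ${\bm v}_*\in\mathbb B_1({\bf 0})$ and $\rho_*\in\mathbb R^+$ whenever ${\bf U}\in\Omega_\sigma^{(2)}$, while the first component $\lambda D>0$ is trivially preserved; hence $\lambda{\bf U}\in\Omega_\sigma^{(2)}=\Omega_\sigma$. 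Taking $\lambda=\tfrac12$ gives $\overline{\bf U}\in\Omega_\sigma$ and finishes the proof.

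I expect this normalization bookkeeping to be the main obstacle, not the geometry: the crux is recognizing that \eqref{eq:RHD:LLFsplit3D} is a scaled rather than a bare convex combination and justifying that the $\tfrac12$-scaling stays inside $\Omega_\sigma$. The scaling is intuitively plausible, since halving ${\bf U}$ leaves ${\bm v}({\bf U})$ and the ratio $E/\sqrt{D^2+\|{\bm m}\|^2}$ unchanged while raising the specific entropy by $(\Gamma-1)\log 2>0$; but the $\varphi_\sigma$-identity above is the cleanest rigorous route because it sidesteps the implicit, nonlinear dependence of $S$ on ${\bf U}$ entirely.
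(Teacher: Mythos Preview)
The paper gives no proof for this corollary; it is simply asserted as a special case of Theorem~\ref{lem:main}. Your approach of specializing that theorem to the $3$D Cartesian box with $N=6$ faces is exactly what the paper intends, and your face-pairing computation is correct.

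You have actually been more careful than the paper. The normalization issue you flag is real: the coefficients in \eqref{eq:RHD:LLFsplit3D} sum to $\tfrac12$, not $1$, so the output of Theorem~\ref{lem:main} is $2\overline{\bf U}$ rather than $\overline{\bf U}$. This is almost certainly a typo in the corollary (the extra $2$ in the denominator of \eqref{eq:RHD:LLFsplit3D}, and likewise \eqref{eq:RHD:LLFsplit2D}, should not be there), and neither corollary is used later in the paper. But taking the statement as written, your scaling argument via
\[
\varphi_\sigma(\lambda{\bf U};{\bm v}_*,\rho_*)=\lambda\,\varphi_\sigma({\bf U};{\bm v}_*,\rho_*)+(1-\lambda)\,{\rm e}^\sigma\rho_*^\Gamma\ge 0
\]
is correct and a clean use of the explicit form $\Omega_\sigma^{(2)}$ from Theorem~\ref{thm:EqOmega}. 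Your primitive-variable heuristic is also sound: from \eqref{eq:Eqforp}--\eqref{eq:vUdU} one checks directly that $p(\lambda{\bf U})=\lambda\,p({\bf U})$, ${\bm v}(\lambda{\bf U})={\bm v}({\bf U})$, and $\rho(\lambda{\bf U})=\lambda\,\rho({\bf U})$, whence $S(\lambda{\bf U})=S({\bf U})+(\Gamma-1)\log(1/\lambda)\ge S({\bf U})$ for $\lambda\in(0,1]$.
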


\section{One-dimensional invariant-region-preserving schemes}\label{sec:1Dscheme}

This section applies the above theories to 
study the IRP schemes 
for 
the RHD 
system \eqref{eq:RHD} in one spatial dimension. 
To avoid confusing subscripts, we will use the symbol $x$ to represent the variable $x_1$ in \eqref{eq:RHD}.
Let $I_j=[x_{j-\frac{1}{2}},x_{j+\frac{1}{2}}]$ and 
$\cup_j I_j$ be a partition of the spatial domain $\Sigma$. 
Denote $\Delta x_j = x_{j+\frac{1}{2}} - x_{j-\frac{1}{2}}$.
Assume that the time interval is divided into the mesh $\{t_n\}_{n=0}^{N_t}$
with $t_0=0$ and the time step-size $\Delta t$ determined by certain CFL condition. 
Let  
$\overline {\bf U}_j^n $ denote the numerical cell-averaged approximation of the exact solution ${\bf U}(x,t)$ over $I_j$ at $t=t_n$. 

Let ${\bf U}_0 (x):={\bf U}(x,0)$ be an initial solution and $S_0 := \min_{ x} S ( {\bf U}_0 (x) ) $. We are interested in numerical schemes preserving $\overline {\bf U}_j^n \in \Omega_{S_0}$ for the 
system \eqref{eq:RHD}.

Note that the initial cell average $\overline {\bf U}_j^0 \in \Omega_{S_0}$ for all $j$, according to the following lemma. 

\begin{lemma}\label{lem:initialS} 
	Assume that ${\bf U}_0(x)$ is an (admissible) initial data of the RHD system \eqref{eq:RHD} on 
	the domain $\Sigma$, i.e., it satisfies ${\bf U}_0(x) \in {\mathcal G}$ for all $x \in \Sigma$. 
	For any $I \subseteq \Sigma$, we have 
	$$
	\overline {\bf U}_I := \frac{1}{ |I| } \int_I {\bf U}_0 ( x ) {\rm d} x  
	\in \Omega_{S_0},
	$$
	where $|I|= \int_I {\rm d} x >0$, and $S_0=\min\limits_{ x} S ( {\bf U}_0 (x) )$.

\end{lemma}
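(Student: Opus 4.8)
The plan is to reduce the claim to the explicit linear characterization $\Omega_{S_0}^{(2)}$ furnished by Theorem \ref{thm:EqOmega}, which turns the averaging argument into a one-line commutation of integration with an affine functional. A direct attack through the original definition \eqref{eq:IR} of $\Omega_{S_0}$ would force us to control the cell average of the highly nonlinear, implicit specific entropy $S({\bf U})$, which is exactly the difficulty we wish to avoid; the second equivalent form sidesteps it entirely. (An alternative route is the Jensen-type argument based on the convexity of ${\mathscr E}({\bf U})=-DS({\bf U})$ used in the remark following Lemma \ref{cor:conv}, but the equivalent form gives a cleaner path.)

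First I would observe that the hypothesis ${\bf U}_0(x)\in{\mathcal G}$ for all $x$, together with $S_0=\min_x S({\bf U}_0(x))$, yields $S({\bf U}_0(x))\ge S_0$ pointwise, hence ${\bf U}_0(x)\in\Omega_{S_0}$ for every $x\in\Sigma$. By Theorem \ref{thm:EqOmega} this is equivalent to ${\bf U}_0(x)\in\Omega_{S_0}^{(2)}$, i.e.\ $D_0(x)>0$ and $\varphi_{S_0}({\bf U}_0(x);{\bm v}_*,\rho_*)\ge 0$ for all ${\bm v}_*\in\mathbb B_1({\bf 0})$ and all $\rho_*\in\mathbb R^+$.

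Next I would verify the two defining conditions of $\Omega_{S_0}^{(2)}$ for the average $\overline{\bf U}_I$. Since $D_0(x)>0$ on $I$ and $|I|>0$, its first component satisfies $\overline D_I=\frac{1}{|I|}\int_I D_0(x)\,{\rm d}x>0$. For the second condition I would fix an arbitrary $({\bm v}_*,\rho_*)\in\mathbb B_1({\bf 0})\times\mathbb R^+$ and exploit that $\varphi_{S_0}(\,\cdot\,;{\bm v}_*,\rho_*)$ is an affine function of ${\bf U}$ whose additive constant ${\rm e}^{S_0}\rho_*^\Gamma$ is left unchanged by a normalized average; consequently
\[
\varphi_{S_0}\big(\overline{\bf U}_I;{\bm v}_*,\rho_*\big)
=\frac{1}{|I|}\int_I \varphi_{S_0}\big({\bf U}_0(x);{\bm v}_*,\rho_*\big)\,{\rm d}x\ge 0,
\]
because each integrand is nonnegative by the previous step. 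As $({\bm v}_*,\rho_*)$ was arbitrary, both conditions hold, so $\overline{\bf U}_I\in\Omega_{S_0}^{(2)}=\Omega_{S_0}$ by Theorem \ref{thm:EqOmega}.

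The argument is essentially a continuous (integral) analogue of the convexity statement in Lemma \ref{lem:convexity}, and I expect no serious obstacle once the equivalent form is in hand. The only point demanding mild care is the measure-theoretic justification for interchanging the integral with the affine functional $\varphi_{S_0}$ and for concluding $\overline D_I>0$, which follows from the integrability of the admissible (hence locally bounded) data $ {\bf U}_0$ and from $|I|>0$. The essential idea is that recasting the nonlinear entropy constraint $S({\bf U})\ge S_0$ as the \emph{linear} constraint $\varphi_{S_0}\ge 0$ reduces what would otherwise be a delicate Jensen-type inequality for the implicit function $S({\bf U})$ to a routine exchange of integration and a linear map.
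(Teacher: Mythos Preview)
Your argument is correct, but it takes a different route from the paper's own proof. The paper does not use the linear equivalent form $\Omega_{S_0}^{(2)}$ here; instead it first shows $\overline{\bf U}_I\in{\mathcal G}_1$ by applying Jensen's inequality to the concave function $q({\bf U})=E-\sqrt{D^2+\|{\bm m}\|^2}$, and then obtains $S(\overline{\bf U}_I)\ge S_0$ by applying Jensen's inequality to the convex entropy ${\mathscr E}({\bf U})=-DS({\bf U})$, invoking Proposition~\ref{prop:H}. Your approach is arguably more in the spirit of the paper's main technical contribution: once Theorem~\ref{thm:EqOmega} is available, the affine nature of $\varphi_{S_0}$ makes the averaging step a one-liner, and you avoid any reliance on the (hard-won) convexity of ${\mathscr E}$. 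The paper's approach, on the other hand, mirrors the classical argument of Zhang and Shu for the non-relativistic case and makes explicit use of Proposition~\ref{prop:H}, which is independently needed elsewhere. Both proofs are clean; yours is shorter and showcases the power of the second equivalent form, while the paper's keeps the lemma logically independent of Theorem~\ref{thm:EqOmega}.
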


\begin{proof}
	Let $\overline {\bf U}_I =( \overline D_I, \overline{\bm m}_I^\top, \overline E_I )^\top$. It is evident that $\overline D_I>0$. 
	Define 
	$
	q({\bf U}):= E-\sqrt{ D^2 + \| {\bm m} \|^2 }, 
	$
	it is easy to show that $q({\bf U})$ is a concave function on $\mathbb R^{d+2}$, and 
	the second constraint in ${\mathcal G}_1$ is equivalent $q({\bf U}) > 0$. 
	According to Jensen's inequality, 
	$q(\overline {\bf U}_I) \ge \frac{1}{ |I| } \int_I q( {\bf U}_0({\bm x}) ) {\rm d} {\bm x} >0$. 
	Therefore, $\overline {\bf U}_I \in {\mathcal G}_1 = {\mathcal G}$. 
	
	Let ${\mathscr E}({\bf U}) = - D S({\bf U})$. According to Proposition \ref{prop:H}, ${\mathscr E}({\bf U})$ is a convex function of ${\bf U}$ on ${\mathcal G}$. 
	By following \cite[Lemma 2.2]{zhang2012minimum} and using  
	Jensen's inequality 
	$$
	{\mathscr E} \left(  \frac{1}{ |I| } \int_I {\bf U}_0 ( x ) {\rm d} x  \right) \le \frac{1}{ |I| } \int_I  {\mathscr E} ( {\bf U}_0 ( x ) ) {\rm d} x,
	$$
	one obtains   
	$S \left(\overline {\bf U}_I \right) \ge S_0$, which together with $\overline {\bf U}_I\in {\mathcal G}$, imply 
	$\overline {\bf U}_I 
	\in \Omega_{S_0}$. 
\end{proof}

\subsection{First-order scheme}
Consider the first-order LF type scheme
\begin{equation}\label{eq:1DMHD:LFscheme}
\overline {\bf U}_j^{n+1} = \overline {\bf U}_j^n - \frac{\Delta t}{\Delta { x}_j} \Big( \widehat {\bf F}_1 ( \overline {\bf U}_j^n ,\overline {\bf U}_{j+1}^n) - \widehat {\bf F}_1 ( \overline {\bf U}_{j-1}^n, \overline {\bf U}_j^n )  \Big) ,
\end{equation}
where the numerical flux $\widehat {\bf F}_1 (\cdot,\cdot) $ is defined by  
\begin{equation}\label{eq:LFflux1D} 
\widehat {\bf F}_1 ( {\bf U}^- , {\bf U}^+ ) = \frac{1}{2} \Big( {\bf F}_1 (  {\bf U}^- ) + {\bf F}_1 ( {\bf U}^+ ) - 
\alpha (  {\bf U}^+ -  {\bf U}^- ) \Big),
\end{equation}
and $\alpha$ denotes the numerical viscosity parameter, which can be taken as $\alpha=c=1$, a simple upper bound
of all wave speeds in the theory of special relativity.


\begin{lemma}\label{lem:1Dlocal}
If $\overline {\bf U}_{j-1}^n$, $\overline {\bf U}_j^n$ and $\overline {\bf U}_{j+1}^n$ all belong to $\Omega_\sigma$ for certain $\sigma$, then the state $\overline {\bf U}_j^{n+1}$, computed by the scheme \eqref{eq:1DMHD:LFscheme} under the CFL condition
\begin{equation}\label{eq:CFL:LF}
  \alpha \Delta t \le \Delta x_j,
\end{equation}
belongs to $\Omega_\sigma$. 
\end{lemma}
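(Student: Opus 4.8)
The plan is to recast the one-step update \eqref{eq:1DMHD:LFscheme} as an explicit convex combination of states already known to lie in $\Omega_\sigma$, and then to invoke the convexity of $\Omega_\sigma$ established in Lemma \ref{lem:convexity}. The only genuinely new ingredient is an algebraic rearrangement; the analytic heavy lifting has already been done in the gLF splitting Theorem \ref{theo:MHD:LLFsplit1D}.

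First I would substitute the definition \eqref{eq:LFflux1D} of the LF numerical flux into \eqref{eq:1DMHD:LFscheme} and collect terms. Writing $\lambda := \frac{\alpha \Delta t}{\Delta x_j}$, a direct computation gives the identity
\[
\overline{\bf U}_j^{n+1} = (1-\lambda)\, \overline{\bf U}_j^n + \lambda\, {\bf G}_{1,\alpha}\big( \overline{\bf U}_{j+1}^n, \overline{\bf U}_{j-1}^n \big),
\]
where ${\bf G}_{1,\alpha}$ is exactly the splitting operator of Theorem \ref{theo:MHD:LLFsplit1D}. The crucial bookkeeping point is to pair each discrete flux with the correct neighbour: the $-{\bf F}_1/\alpha$ contribution must accompany $\overline{\bf U}_{j+1}^n$ and the $+{\bf F}_1/\alpha$ contribution must accompany $\overline{\bf U}_{j-1}^n$, so that the bracketed quantity collapses precisely to ${\bf G}_{1,\alpha}(\hat{\bf U}, \check{\bf U})$ with $\hat{\bf U} = \overline{\bf U}_{j+1}^n$ and $\check{\bf U} = \overline{\bf U}_{j-1}^n$.

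Next I would verify that this is indeed a convex combination and conclude. The CFL condition \eqref{eq:CFL:LF}, namely $\alpha \Delta t \le \Delta x_j$, is precisely $\lambda \in [0,1]$, so the coefficients $1-\lambda$ and $\lambda$ are nonnegative and sum to one. Since $\overline{\bf U}_{j-1}^n, \overline{\bf U}_{j+1}^n \in \Omega_\sigma$ and $\alpha \ge c = 1$, Theorem \ref{theo:MHD:LLFsplit1D} guarantees ${\bf G}_{1,\alpha}(\overline{\bf U}_{j+1}^n, \overline{\bf U}_{j-1}^n) \in \Omega_\sigma$; together with the hypothesis $\overline{\bf U}_j^n \in \Omega_\sigma$, both states being combined lie in $\Omega_\sigma$. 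Applying the convexity of $\Omega_\sigma$ from Lemma \ref{lem:convexity} then yields $\overline{\bf U}_j^{n+1} \in \Omega_\sigma$.

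There is no serious obstacle once the preceding machinery is in place, since the entire difficulty of the result has been front-loaded into the gLF splitting theorem. The one point requiring care is the bookkeeping in the rearrangement: getting the viscosity factor $\lambda$ and the sign pairing of the fluxes exactly right, so that the bracketed term reduces to ${\bf G}_{1,\alpha}$ rather than to some unrelated combination. A secondary point is simply to record that the admissible choice $\alpha \ge 1$ implicit in \eqref{eq:LFflux1D} supplies the hypothesis $\alpha \ge c = 1$ needed to invoke Theorem \ref{theo:MHD:LLFsplit1D}.
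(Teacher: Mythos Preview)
Your proposal is correct and follows essentially the same approach as the paper: rewrite the scheme as the convex combination $(1-\lambda)\,\overline{\bf U}_j^n + \lambda\,{\bf G}_{1,\alpha}(\overline{\bf U}_{j+1}^n,\overline{\bf U}_{j-1}^n)$ with $\lambda = \alpha\Delta t/\Delta x_j \in (0,1]$, invoke Theorem~\ref{theo:MHD:LLFsplit1D} to place ${\bf G}_{1,\alpha}$ in $\Omega_\sigma$, and conclude via the convexity Lemma~\ref{lem:convexity}. Your bookkeeping on the sign pairing of the fluxes matches the paper's decomposition exactly.
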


\begin{proof}
We rewrite the scheme \eqref{eq:1DMHD:LFscheme} in the following form
\begin{equation*}
\overline {\bf U}_j^{n+1} = (1-\lambda) \overline {\bf U}_j^n  + \lambda {\bf G}_{1,\alpha} 
( \overline {\bf U}_{j+1}^n, \overline {\bf U}_{j-1}^n ),
\end{equation*}
where ${\bf G}_{1,\alpha}$ is defined in \eqref{eq:MHD:LLFsplit1D}, and $\lambda := \alpha \Delta t/\Delta x_j \in (0,1]$. 
Thanks to Theorem \ref{theo:MHD:LLFsplit1D}, we have 
$${\bf G}_{1,\alpha} 
( \overline {\bf U}_{j+1}^n, \overline {\bf U}_{j-1}^n ) \in \Omega_\sigma,$$ 
which leads to 
$\overline {\bf U}_j^{n+1} \in \Omega_\sigma$ by the convexity of $\Omega_\sigma$ (Lemma \ref{lem:convexity}).  
\end{proof}

Lemma \ref{lem:1Dlocal} implies a discrete (local) minimum entropy principle of the scheme  \eqref{eq:1DMHD:LFscheme}.

\begin{theorem}\label{thm:1Dlocal}
	If $\overline {\bf U}_{j-1}^n$, $\overline {\bf U}_j^n$ and $\overline {\bf U}_{j+1}^n$ all belong to ${\mathcal G}$, then the state $\overline {\bf U}_j^{n+1}$, computed by the scheme \eqref{eq:1DMHD:LFscheme} under the CFL condition \eqref{eq:CFL:LF}, 
	belongs to $\mathcal G$ and satisfies 
	$$
	S(\overline {\bf U}_j^{n+1}) \ge \min \left\{  S (\overline {\bf U}_{j-1}^{n}), S (\overline {\bf U}_{j}^{n}), S (\overline {\bf U}_{j+1}^{n}) \right\}.
	$$
\end{theorem}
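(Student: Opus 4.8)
The plan is to derive this discrete local minimum entropy principle as an immediate consequence of Lemma \ref{lem:1Dlocal}, by choosing the threshold $\sigma$ to be exactly the smallest of the three neighboring specific entropies. Concretely, I would set
\begin{equation*}
\sigma := \min \left\{ S(\overline {\bf U}_{j-1}^n),\, S(\overline {\bf U}_j^n),\, S(\overline {\bf U}_{j+1}^n) \right\}.
\end{equation*}

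The first step is to verify that the three input states $\overline {\bf U}_{j-1}^n$, $\overline {\bf U}_j^n$, $\overline {\bf U}_{j+1}^n$ all lie in $\Omega_\sigma$ for this particular $\sigma$. By hypothesis each of them belongs to $\mathcal G$, hence each automatically satisfies the three physical constraints $\rho>0$, $p>0$, $\|{\bm v}\|<1$ appearing in the definition \eqref{eq:IRgen} of $\Omega_\sigma$. Moreover, the choice of $\sigma$ as the minimum of the three entropies guarantees $S(\overline {\bf U}_{j+k}^n) \ge \sigma$ for $k\in\{-1,0,1\}$, so each state also satisfies the remaining constraint $S \ge \sigma$. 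Therefore all three input states belong to $\Omega_\sigma$.

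The second step is then simply to invoke Lemma \ref{lem:1Dlocal}: under the CFL condition \eqref{eq:CFL:LF}, the updated state $\overline {\bf U}_j^{n+1}$ produced by the scheme \eqref{eq:1DMHD:LFscheme} also lies in $\Omega_\sigma$. Unpacking the definition of $\Omega_\sigma$ delivers both conclusions at once: membership forces $\overline {\bf U}_j^{n+1}$ to obey $\rho>0$, $p>0$, $\|{\bm v}\|<1$, so $\overline {\bf U}_j^{n+1} \in \mathcal G$; and it forces $S(\overline {\bf U}_j^{n+1}) \ge \sigma$, which is precisely the claimed local minimum entropy inequality.

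There is essentially no genuine obstacle remaining at this stage, since the entire analytical difficulty has already been absorbed into Lemma \ref{lem:1Dlocal}, whose proof rests on the convexity of $\Omega_\sigma$ (Lemma \ref{lem:convexity}) and the one-dimensional gLF splitting property (Theorem \ref{theo:MHD:LLFsplit1D}). The only conceptual point worth flagging is the choice of $\sigma$: taking it as the exact minimum of the neighboring entropies is what converts the abstract invariant-region preservation of Lemma \ref{lem:1Dlocal} into a sharp local entropy bound, in the spirit of Tadmor's argument \cite{TADMOR1986211} for the non-relativistic case.
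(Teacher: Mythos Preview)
Your proposal is correct and follows exactly the same approach as the paper: set $\sigma = \min\{S(\overline{\bf U}_{j-1}^n), S(\overline{\bf U}_j^n), S(\overline{\bf U}_{j+1}^n)\}$ and invoke Lemma~\ref{lem:1Dlocal}. The paper's proof is a single sentence to this effect; your version simply spells out the verification that the three input states lie in $\Omega_\sigma$, which is fine.
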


\begin{proof}
By Lemma \ref{lem:1Dlocal} with $\sigma = \min_{j-1\le k \le j+1} S (\overline {\bf U}_k^{n}) $, one directly draws the conclusion.
\end{proof}

The IRP property of the scheme  \eqref{eq:1DMHD:LFscheme} is shown in the following theorem.

\begin{theorem}
	Under the CFL condition \eqref{eq:CFL:LF}, the scheme \eqref{eq:1DMHD:LFscheme} always preserves 
	$ \overline {\bf U}_j^{n} \in \Omega_{S_0} $ for all $j$ and $n\ge 0$.  
\end{theorem}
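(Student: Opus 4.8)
The plan is to prove this global statement by a straightforward induction on the time level $n$, using the initialization result (Lemma \ref{lem:initialS}) as the base case and the local preservation result (Lemma \ref{lem:1Dlocal}) as the inductive step, with the crucial observation that the \emph{same} value $\sigma = S_0$ may be used at \emph{every} step.

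For the base case $n=0$, I would invoke Lemma \ref{lem:initialS} with $I = I_j$: since the initial data ${\bf U}_0(x)$ is admissible and $S_0 = \min_x S({\bf U}_0(x))$, each initial cell average $\overline{\bf U}_j^0 = \frac{1}{\Delta x_j}\int_{I_j} {\bf U}_0(x)\,{\rm d}x$ lies in $\Omega_{S_0}$, and this holds for all $j$ simultaneously. For the inductive step, assume $\overline{\bf U}_k^n \in \Omega_{S_0}$ for every $k$. Then in particular the three states $\overline{\bf U}_{j-1}^n$, $\overline{\bf U}_j^n$, $\overline{\bf U}_{j+1}^n$ entering the update \eqref{eq:1DMHD:LFscheme} all belong to $\Omega_{S_0}$, so applying Lemma \ref{lem:1Dlocal} with the fixed choice $\sigma = S_0$, under the CFL condition \eqref{eq:CFL:LF}, yields $\overline{\bf U}_j^{n+1} \in \Omega_{S_0}$. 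As $j$ is arbitrary, all cell averages at level $n+1$ lie in $\Omega_{S_0}$, closing the induction.

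The key conceptual point—and what upgrades the local lemma into a genuinely global statement—is that $S_0$ is a \emph{fixed} uniform lower bound on the specific entropy, so no degradation of the threshold accumulates from step to step: Lemma \ref{lem:1Dlocal} preserves membership in the \emph{same} set $\Omega_\sigma$, not a shrunken one. Equivalently, the discrete minimum entropy principle of Theorem \ref{thm:1Dlocal} shows that the spatial minimum $\min_j S(\overline{\bf U}_j^n)$ is nondecreasing in $n$ and therefore never drops below its initial value $S_0$; this is the exact discrete counterpart of the continuous estimate \eqref{key3421}.

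I expect essentially no genuine obstacle at this final stage: all the technical difficulty has been front-loaded into Lemmas \ref{lem:initialS} and \ref{lem:1Dlocal} and, beneath them, into the explicit and linear equivalent form of Theorem \ref{thm:EqOmega}, the convexity of Lemma \ref{lem:convexity}, and the gLF splitting of Theorem \ref{theo:MHD:LLFsplit1D}. The only things that require care are bookkeeping: verifying that the inductive hypothesis furnishes exactly the three admissible neighbors demanded by Lemma \ref{lem:1Dlocal}, and that the CFL condition \eqref{eq:CFL:LF} invoked there coincides with the one assumed in the theorem.
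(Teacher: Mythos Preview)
Your proposal is correct and follows essentially the same approach as the paper: induction on $n$, with Lemma \ref{lem:initialS} supplying the base case $\overline{\bf U}_j^0 \in \Omega_{S_0}$ and Lemma \ref{lem:1Dlocal} (applied with the fixed choice $\sigma = S_0$) supplying the inductive step under the CFL condition \eqref{eq:CFL:LF}. Your additional commentary on why the threshold does not degrade and the connection to Theorem \ref{thm:1Dlocal} is accurate but goes beyond what the paper's brief proof records.
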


\begin{proof}
We prove it by the mathematical induction for the time level number $n$. 
By Lemma \ref{lem:initialS}, we have $ \overline {\bf U}_j^{0} \in \Omega_{S_0} $
for all $j$, i.e., the conclusion holds for $n=0$. 
If assuming that $\overline {\bf U}_j^n\in \Omega_{S_0} $ for all $j$, then, by Lemma \ref{lem:1Dlocal} 
with $\sigma = S_0$, we have $\overline {\bf U}_j^{n+1}\in \Omega_{S_0} $ for all $j$. 
\end{proof}

\subsection{High-order schemes}

We now study the IRP high-order DG and finite volume schemes for 1D RHD equations \eqref{eq:RHD}. 
For the moment, we use the forward Euler method for time discretization, while high-order time discretization will be discussed later.
We consider the high-order finite volume schemes as well as the scheme satisfied by
the cell averages of a standard DG method, 
which have the following unified form
\begin{equation}\label{eq:1DMHD:Hcellaverage}
\overline {\bf U}_j^{n+1} = \overline {\bf U}_j^{n} - \frac{\Delta t}{\Delta x_j}
\Big(  \widehat {\bf F}_1 ( {\bf U}_{j+ \frac{1}{2}}^-, {\bf U}_{j+ \frac{1}{2}}^+ )
- \widehat {\bf F}_1 ( {\bf U}_{j- \frac{1}{2}}^-, {\bf U}_{j-\frac{1}{2}}^+)
\Big) ,
\end{equation}
where $\widehat {\bf F}_1 ( \cdot , \cdot )$ 
is taken as the LF flux in \eqref{eq:LFflux1D}, and 
\begin{equation}\label{eq:DG1Dvalues}
{\bf U}_{j + \frac{1}{2}}^\pm = \mathop {\lim }\limits_{ \epsilon \to 0^\pm }  {\bf U}_h \big( x_{j + \frac{1}{2}} + \epsilon \big).
\end{equation}
Here ${\bf U}_h(x)$ is a piecewise polynomial vector function of degree $k$, i.e., 
$$
{\bf U}_h \in {\mathbb V}_h^k := \left\{ {\bf u}=(u_1,\cdots,u_{d+2})^\top:~ u_\ell \big|_{I_j} \in {\mathbb P}^{k},~
\forall \ell,j   \right\},
$$
with ${\mathbb P}^{k}$ denoting the space of polynomials of degree up to $k$. 
Specifically, the function ${\bf U}_h(x)$ with $\int_{I_j} {\bf U}_h {\rm d} x = \overline {\bf U}_j^{n} $
is an approximation to ${\bf U}( x,t_n)$ within the cell $I_j$; it is either reconstructed in the finite volume methods from $\{\overline {\bf U}_j^n\}$ or directly evolved in the DG methods. 
The evolution equations for the high-order ``moments'' of ${\bf U}_h(x)$ in the DG methods are omitted because here we are only concerned with the IRP property of the schemes.

\subsubsection{Theoretical analysis}

If the polynomial degree $k=0$, i.e., ${\bf U}_h(x)=\overline{\bf U}_j^{n}$, $\forall x \in I_j$, then the scheme \eqref{eq:1DMHD:Hcellaverage} reduces to the 
first-order scheme \eqref{eq:1DMHD:LFscheme}, which is IRP under the CFL condition \eqref{eq:CFL:LF}. 
When the polynomial degree $k\ge 1$, 
the solution $\overline{\bf U}_j^{n+1}$ of 
the high-order scheme \eqref{eq:1DMHD:Hcellaverage} 
does not always belong to $\Omega_{S_0}$ even if $\overline {\bf U}_j^{n} \in \Omega_{S_0}$ for all $j$. 
In the following theorem, we present a satisfiable condition for achieving the provably IRP property of the scheme \eqref{eq:1DMHD:Hcellaverage} when $k\ge 1$. 

Let $\{ \widehat x_j^{(\mu)} \}_{\mu=1}^{ {L}}$ be the ${L}$-point Gauss--Lobatto quadrature nodes in the interval $I_j$,
and the associated weights are denoted by $\{\widehat \omega_\mu\}_{\mu=1} ^{L}$ with $\sum_{\mu=1}^{L} \widehat\omega_\mu = 1$.
We require $2{L}-3\ge k$ such that the algebraic precision of the quadrature is at least $k$, for example, one can particularly take ${L}=\lceil \frac{k+3}{2} \rceil$.

\begin{theorem} \label{thm:PP:1DRHD}
	If the piecewise polynomial vector function ${\bf U}_h$ satisfies 
	\begin{equation}
	\\ \label{eq:1DDG:con2}
	 {\bf U}_h ( \widehat x_j^{(\mu)} ) \in \Omega_{S_0}, \quad \forall \mu \in \{1,2,\cdots,{L}\}, ~\forall j,
	\end{equation}
	then, 	 under the CFL condition
	\begin{equation}\label{eq:CFL:1DMHD}
	 \frac{\alpha  \Delta t}{ \Delta x_j} \le \widehat \omega_1=\frac{1}{ {L} ( {L}-1 ) }, 
	\end{equation}
	the solution $\overline {\bf U}_j^{n+1}$, computed by the high-order scheme \eqref{eq:1DMHD:Hcellaverage}, 
	belongs to $\Omega_{S_0}$ for all $j$.
\end{theorem}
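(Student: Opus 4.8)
The plan is to realize $\overline{\bf U}_j^{n+1}$ as a convex combination of states that each lie in $\Omega_{S_0}$, and then conclude by the convexity of $\Omega_{S_0}$ (Lemma~\ref{lem:convexity} with $\sigma=S_0$). The enabling tool is the $L$-point Gauss--Lobatto quadrature: since $2L-3\ge k$, it integrates ${\bf U}_h|_{I_j}\in(\mathbb P^k)^{d+2}$ exactly, so $\overline{\bf U}_j^n=\frac{1}{\Delta x_j}\int_{I_j}{\bf U}_h\,{\rm d}x=\sum_{\mu=1}^L\widehat\omega_\mu{\bf U}_h(\widehat x_j^{(\mu)})$. Crucially, the two endpoints $\widehat x_j^{(1)}=x_{j-\frac12}$ and $\widehat x_j^{(L)}=x_{j+\frac12}$ are quadrature nodes, so ${\bf U}_h(\widehat x_j^{(1)})={\bf U}_{j-\frac12}^+$ and ${\bf U}_h(\widehat x_j^{(L)})={\bf U}_{j+\frac12}^-$; this ties the boundary traces in the numerical fluxes \eqref{eq:LFflux1D} to the quadrature decomposition.

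First I would substitute this quadrature identity into \eqref{eq:1DMHD:Hcellaverage}, expand the two LF fluxes through \eqref{eq:LFflux1D}, and regroup. Setting $\lambda:=\frac{\alpha\Delta t}{\Delta x_j}$ and collecting terms by their associated trace, I claim the update reorganizes as
\begin{equation*}
\overline{\bf U}_j^{n+1}=\sum_{\mu=2}^{L-1}\widehat\omega_\mu{\bf U}_h(\widehat x_j^{(\mu)})+\lambda\,{\bf G}_{1,\alpha}\big({\bf U}_{j+\frac12}^+,{\bf U}_{j-\frac12}^-\big)+\lambda\,{\bf G}_{1,\alpha}\big({\bf U}_{j+\frac12}^-,{\bf U}_{j-\frac12}^+\big)+(\widehat\omega_1-\lambda){\bf U}_{j+\frac12}^-+(\widehat\omega_L-\lambda){\bf U}_{j-\frac12}^+,
\end{equation*}
with ${\bf G}_{1,\alpha}$ as in \eqref{eq:MHD:LLFsplit1D}. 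The mechanism is that ${\bf G}_{1,\alpha}$ assigns $-{\bf F}_1/\alpha$ to its first argument and $+{\bf F}_1/\alpha$ to its second, so pairing the two neighbouring traces $({\bf U}_{j+\frac12}^+,{\bf U}_{j-\frac12}^-)$ and, separately, the cell's own traces $({\bf U}_{j+\frac12}^-,{\bf U}_{j-\frac12}^+)$ makes every flux contribution cancel, leaving only the linear residual $(\widehat\omega_1-\lambda){\bf U}_{j+\frac12}^-+(\widehat\omega_L-\lambda){\bf U}_{j-\frac12}^+$.

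Then I would verify the two requirements for a convex combination. Every constituent lies in $\Omega_{S_0}$: the interior nodes ${\bf U}_h(\widehat x_j^{(\mu)})$ and the own traces ${\bf U}_{j+\frac12}^-,{\bf U}_{j-\frac12}^+$ by the hypothesis \eqref{eq:1DDG:con2} on cell $I_j$, while ${\bf U}_{j+\frac12}^+$ and ${\bf U}_{j-\frac12}^-$ are the endpoint nodes of $I_{j+1}$ and $I_{j-1}$, hence also in $\Omega_{S_0}$ by \eqref{eq:1DDG:con2}; the two ${\bf G}_{1,\alpha}$ states lie in $\Omega_{S_0}$ by the 1D gLF splitting of Theorem~\ref{theo:MHD:LLFsplit1D}, which applies since $\alpha\ge c=1$. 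The coefficients are nonnegative and sum to one: $\widehat\omega_\mu>0$, $\lambda>0$, and $\widehat\omega_1-\lambda=\widehat\omega_L-\lambda\ge0$ exactly under the CFL condition \eqref{eq:CFL:1DMHD} because $\widehat\omega_1=\widehat\omega_L=\frac{1}{L(L-1)}$, while $\sum_{\mu=2}^{L-1}\widehat\omega_\mu+2\lambda+2(\widehat\omega_1-\lambda)=\sum_{\mu=1}^L\widehat\omega_\mu=1$ (using $\widehat\omega_1=\widehat\omega_L$). Convexity of $\Omega_{S_0}$ then yields $\overline{\bf U}_j^{n+1}\in\Omega_{S_0}$.

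I expect the main obstacle to be purely the bookkeeping of this decomposition---checking that, once the LF fluxes are expanded, the flux terms regroup \emph{exactly} into the two ${\bf G}_{1,\alpha}$ structures and that the leftover coefficients remain nonnegative under \eqref{eq:CFL:1DMHD} while summing to one; all the genuinely hard analytic content (the gLF splitting itself) is already delivered by Theorem~\ref{theo:MHD:LLFsplit1D}. A reassuring consistency check is the degenerate case $k=0$, where $L=2$, $\widehat\omega_1=\widehat\omega_L=\frac12$, the interior sum is empty, and ${\bf G}_{1,\alpha}({\bf U}_{j+\frac12}^-,{\bf U}_{j-\frac12}^+)$ collapses to $\overline{\bf U}_j^n$; the decomposition then reduces to $(1-\lambda)\overline{\bf U}_j^n+\lambda\,{\bf G}_{1,\alpha}(\overline{\bf U}_{j+1}^n,\overline{\bf U}_{j-1}^n)$, exactly the first-order convex splitting used in the proof of Lemma~\ref{lem:1Dlocal}.
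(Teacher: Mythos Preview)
Your proposal is correct and follows essentially the same Zhang--Shu convex decomposition argument as the paper: exact Gauss--Lobatto quadrature for $\overline{\bf U}_j^n$, regrouping the LF fluxes into two ${\bf G}_{1,\alpha}$ terms handled by Theorem~\ref{theo:MHD:LLFsplit1D}, and closing by convexity of $\Omega_{S_0}$ under the CFL condition \eqref{eq:CFL:1DMHD}. The only cosmetic difference is the pairing---the paper groups the traces by superscript as $\Xi_\pm={\bf G}_{1,\alpha}({\bf U}_{j+\frac12}^\pm,{\bf U}_{j-\frac12}^\pm)$ rather than your ``neighbour/own'' pairing---but both are valid rewritings (since the flux coefficients match and $\widehat\omega_1=\widehat\omega_L$) and yield the identical CFL bound.
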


\begin{proof}
		The exactness of the $L$-point Gauss--Lobatto quadrature rule for the polynomials of degree $k$ implies
	$$
	\overline{\bf U}_j^n = \frac{1}{\Delta x_j} \int_{I_j} {\bf U}_h ({x}) {\rm d} x = \sum \limits_{\mu=1}^{L} \widehat \omega_\mu {\bf U}_h (\widehat { x}_j^{(\mu)} ).
	$$
	Noting $\widehat \omega_1 = \widehat \omega_{L}$ and $\widehat x_j^{1,{L}}={\tt x}_{j\mp\frac12}$, we can then rewrite the scheme \eqref{eq:1DMHD:Hcellaverage} into the convex combination form
	\begin{align} \label{eq:1DMHD:convexsplit}
	\overline{\bf U}_j^{n+1} =
	\sum \limits_{\mu=2}^{{L}-1} \widehat \omega_\mu {\bf U}_h ( \widehat {\tt x}_j^{(\mu)} )  
	+  (\widehat \omega_1-\lambda) \left( {\bf U}_{j-\frac{1}{2}}^+ +   {\bf U}_{j+\frac{1}{2}}^- \right)
	+ \lambda {\Xi}_- + \lambda  {\Xi}_+,
	\end{align}
	where $\lambda = \alpha \Delta t_n/\Delta x \in (0,\widehat \omega_1 ]$, and
	\begin{align*}
	& {\Xi}_\pm = \frac{1}{2} \left( {\bf U}_{j+\frac{1}{2}}^\pm - \frac{ {\bf F}_1 ( {\bf U}_{j+\frac{1}{2}}^\pm ) } {  \alpha }
	+ {\bf U}_{j-\frac{1}{2}}^\pm +  \frac{ {\bf F}_1 ( {\bf U}_{j-\frac{1}{2}}^\pm ) } {  \alpha }    \right).
	\end{align*}
	According to the gLF splitting property in Theorem \ref{theo:MHD:LLFsplit1D} and ${\bf U}_{j+\frac{1}{2}}^\pm \in \Omega_{S_0}$ by \eqref{eq:1DDG:con2}, 
	we obtain ${\Xi}_\pm \in \Omega_{S_0}$. 
	Using the convexity of $\Omega_{S_0}$ (Lemma \ref{lem:convexity}), 
	we conclude $\overline{\bf U}_j^{n+1} \in \Omega_{S_0}$ from \eqref{eq:1DMHD:convexsplit}.
\end{proof}

\subsubsection{Invariant-region-preserving limiter}\label{sec:limiter}

In general, the high-order scheme \eqref{eq:1DMHD:Hcellaverage} 
does not meet the condition 
\eqref{eq:1DDG:con2} automatically. 
Now, we design a simple limiter to effectively enforce the condition 
\eqref{eq:1DDG:con2}, without losing high-order accuracy and conservation. 
Our limiter is motivated by the existing bound-preserving and physical-constraint-preserving limiters  
 (cf.~\cite{zhang2010b,zhang2012minimum,WuTang2015,QinShu2016,WuTang2017ApJS,Wu2017}).

Before presenting the limiter, we define 
\begin{equation}\label{eq:Defq}
q({\bf U}) := E - \sqrt{D^2 + \|{\bm m}\|^2},
\end{equation}
then the second constraint in \eqref{eq:IRgen2} becomes $q({\bf U})>0$. 
As observed in \cite{WuTang2015}, the function $q({\bf U})$ is strictly concave. 
This function will play an important role, similar to the pressure function in the non-relativistic case, in our limiter for RHD. (Unlike the non-relativistic case, the pressure function $p({\bf U})$ is not concave in the RHD case \cite{WuTang2015}.)

Denote $\mathbb X_j:=\{ \widehat x_j^{(\mu)} \}_{\mu=1}^{L}$ and 
\begin{align*}
&\overline{\mathbb V}_h^k:= \left\{ {\bf u} \in {\mathbb V}_h^k:~~\frac{1}{\Delta x_j} \int_{I_j} {\bf u} (x) {\rm d}x \in \Omega_{S_0},~ \forall j \right\},
\\ 
& \widetilde {\mathbb V}_h^k:= \left\{ {\bf u} \in {\mathbb V}_h^k:~~{\bf u}\big|_{I_j} (x) \in \Omega_{S_0},~\forall x \in \mathbb X_j,~\forall j \right\}. 
\end{align*}
For any $ {\bf U}_h \in  \overline{\mathbb V}_h^k$ with ${\bf U}_h \big|_{I_j} =: {\bf U}_j (x) = 
\big( D_j(x), m_j(x), E_j(x) \big)^\top$, we define 
the IRP limiting operator ${\Pi}_h: \overline{\mathbb V}_h^k \to \widetilde {\mathbb V}_h^k$ by 
\begin{equation}\label{eq:limiter}
{\Pi}_h {\bf U}_h 
\big|_{I_j} =   \widetilde{ \bf  U }_j(x), \quad  \forall j,
\end{equation}
with the limited polynomial vector function $\widetilde { \bf  U }_j(x)$ constructed via the following three steps.

\begin{description}
	\item[Step (\romannumeral1):] First, modify the density to enforce its positivity via  
\begin{equation}\label{eq:IRP1}
	\widehat D_j(x) = \theta_1 \left( D_j( x) - \overline D_j^n \right) + \overline D_j^n,\qquad \theta_1 := \min\left\{1, \left| \frac{ \overline D_j^n - \varepsilon_1 }{ \overline D_j^n - \min_{x \in \mathbb X_j} D_j(x)  } \right| \right\},
\end{equation} 
	where $\varepsilon_1$ is a small positive number as the desired lower bound for density, is introduced to avoid the effect of the round-off error, and can be taken as $\varepsilon_1= \min\{10^{-13},\overline D_j^n \}$.
	\item[Step (\romannumeral2):] Then, modify $\widehat {\bf U}_j ( x ) = 
	\big( \widehat D_j(x), m_j(x), E_j(x) \big)^\top$ to enforce the positivity of $q({\bf U})$ via  
\begin{equation}\label{eq:IRP2}
	\widebreve {\bf U}_j (x) = \theta_2 \left( \widehat {\bf U}_j(x) -\overline { \bf  U }_j^n \right) + \overline { \bf  U }_j^n, \qquad \theta_2:= \min \left\{ 1, \left|  \frac{ q( \overline {\bf U}_j) - \varepsilon_2 }{ q( \overline {\bf U}_j^n) - \min_{x \in \mathbb X_j} q( \widehat {\bf U}_j (x)) } \right| \right\},
\end{equation} 
	where $\varepsilon_2$ is a small positive number as the desired lower bound for $q({\bf U})$, is introduced to avoid the effect of the round-off error, and can be taken as $\varepsilon_2= \min\{10^{-13},q(\overline {\bf U}_j^n) \}$.
	\item[Step (\romannumeral3):] Finally, modify $\widebreve {\bf U}_j (x) $ to enforce the 
	entropy principle $S({\bf U})\ge S_0$ via
\begin{equation}\label{eq:IRP3}
\widetilde {\bf U}_j (x) = \theta_3 \left( \widebreve {\bf U}_j(x) -\overline { \bf  U }_j^n \right) + \overline { \bf  U }_j^n, \qquad \theta_3 := \min_{x \in \mathbb X_j} \tilde \theta (x),
\end{equation} 
where, for $ x \in  \{ x \in \mathbb X_j: S ( \widebreve {\bf U}_j (x)) \ge S_0 \}$, $\tilde \theta (x)=1$, and, for $ x \in  \{ x \in \mathbb X_j: S ( \widebreve {\bf U}_j (x)) < S_0 \}$, $\tilde \theta (x)$ is the (unique) solution to the nonlinear equation 
$$
S \Big( (1-\tilde \theta) \overline { \bf  U }_j^n + \tilde \theta \widebreve {\bf U}_j(x) \Big) = S_0, \qquad \tilde \theta \in [0,1).
$$
\end{description}

The limiter $ {\Pi}_h$ is a combination of 
the bound-preserving limiter \eqref{eq:IRP1}--\eqref{eq:IRP2} (cf.~\cite{QinShu2016}) and the {\em entropy limiter} \eqref{eq:IRP3}. 
According to the above definition of the limiter $ {\Pi}_h$ and the Jensen's inequality for the 
concave function $q({\bf U})$, we immediately obtain the following proposition:

\begin{proposition}\label{prop:limiter1D}
	For any ${\bf U}_h \in \overline {\mathbb V}_h^{k}$, one has $ {\Pi}_h {\bf U}_h \in \widetilde {\mathbb V}_h^{k}$.
\end{proposition}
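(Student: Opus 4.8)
The plan is to verify, cell by cell and node by node, the three defining constraints of $\Omega_{S_0}$ (equivalently $\Omega_{S_0}^{(1)}$ of Lemma~\ref{thm:EqOmega1}): positivity of $D$, positivity of $q(\cdot)$, and the entropy bound $S(\cdot)\ge S_0$, processing the three limiting steps \eqref{eq:IRP1}--\eqref{eq:IRP3} in order and always using the anchor $\overline{\bf U}_j^n$, which lies in $\Omega_{S_0}$ by the hypothesis ${\bf U}_h\in\overline{\mathbb V}_h^k$. First I would record two facts the steps rely on: each scaling is an affine modification of the local polynomial that fixes the cell mean $\overline{\bf U}_j^n$ (using the exactness $2L-3\ge k$ of the Gauss--Lobatto rule), so $\Pi_h{\bf U}_h$ stays in $\mathbb V_h^k$ and the anchor remains admissible; and since $\varepsilon_1\le\overline D_j^n$ and $\varepsilon_2\le q(\overline{\bf U}_j^n)$, the anchor already meets all bounds with room to spare.

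For Step~(i), the standard scaling estimate applied to the linear quantity $D$ gives $\widehat D_j(x)\ge\varepsilon_1>0$ for every $x\in\mathbb X_j$. For Step~(ii), I would invoke the concavity of $q$ (recorded after \eqref{eq:Defq}): since $\widebreve{\bf U}_j(x)=\theta_2\widehat{\bf U}_j(x)+(1-\theta_2)\overline{\bf U}_j^n$ is a convex combination, Jensen's inequality yields $q(\widebreve{\bf U}_j(x))\ge\theta_2 q(\widehat{\bf U}_j(x))+(1-\theta_2)q(\overline{\bf U}_j^n)$, and substituting the explicit cut-off $\theta_2$ from \eqref{eq:IRP2} bounds the right-hand side below by $\varepsilon_2>0$; the density component, being a convex combination of values $\ge\varepsilon_1$, stays $\ge\varepsilon_1$. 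Hence after Step~(ii) every node value $\widebreve{\bf U}_j(x)$ has $D>0$ and $q>0$, i.e.\ lies in $\mathcal G_1=\mathcal G$.

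The crux is Step~(iii), where I would deliberately avoid working with $S$ directly---it is implicit and highly nonlinear---and instead exploit convexity. For a fixed node $x$, consider the segment ${\bf U}(\theta):=(1-\theta)\overline{\bf U}_j^n+\theta\widebreve{\bf U}_j(x)$, $\theta\in[0,1]$. Its density is a convex combination of values $\ge\varepsilon_1$ and, by concavity of $q$, $q({\bf U}(\theta))\ge\varepsilon_2>0$, so the whole segment stays in $\mathcal G$ and the \emph{only} constraint that can fail is $S\ge S_0$. Since $\Omega_{S_0}$ is convex (Lemma~\ref{lem:convexity}) and ${\bf U}(0)=\overline{\bf U}_j^n\in\Omega_{S_0}$, the set $\{\theta\in[0,1]:{\bf U}(\theta)\in\Omega_{S_0}\}$ is a subinterval $[0,\tilde\theta(x)]$; because $D$ and $q$ never vanish along the segment, the boundary crossing is exactly where $S=S_0$, which simultaneously shows the equation in \eqref{eq:IRP3} admits the root $\tilde\theta(x)\in[0,1)$ when $S(\widebreve{\bf U}_j(x))<S_0$ (and $\tilde\theta(x)=1$ otherwise) and certifies ${\bf U}^*(x):={\bf U}(\tilde\theta(x))\in\Omega_{S_0}$. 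The linearity of $\varphi_{S_0}$ in Theorem~\ref{thm:EqOmega} gives an alternative, purely algebraic route to this interval and to uniqueness, since each constraint $\varphi_{S_0}({\bf U}(\theta);{\bm v}_*,\rho_*)\ge0$ is affine in $\theta$.

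To finish, since $\theta_3=\min_x\tilde\theta(x)\le\tilde\theta(x)$, I would rewrite the output at each node as a convex combination of two admissible anchors, $\widetilde{\bf U}_j(x)={\bf U}(\theta_3)=\bigl(1-\tfrac{\theta_3}{\tilde\theta(x)}\bigr)\overline{\bf U}_j^n+\tfrac{\theta_3}{\tilde\theta(x)}{\bf U}^*(x)$ (with the case $\tilde\theta(x)=0$ forcing $\theta_3=0$ and $\widetilde{\bf U}_j(x)=\overline{\bf U}_j^n$), and apply the convexity of $\Omega_{S_0}$ once more to conclude $\widetilde{\bf U}_j(x)\in\Omega_{S_0}$ for every $x\in\mathbb X_j$ and every $j$, i.e.\ $\Pi_h{\bf U}_h\in\widetilde{\mathbb V}_h^k$. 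The main obstacle is precisely Step~(iii): because $S({\bf U})$ has no explicit formula, the usual pointwise monotonicity argument is unavailable, and the entire difficulty is routed through the convexity of $\Omega_{S_0}$ together with the observation that $D$ and $q$ stay strictly positive along the scaling segment, so that $S=S_0$ is the single active constraint at the boundary.
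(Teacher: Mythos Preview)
Your proof is correct and follows the same approach the paper has in mind. The paper itself does not spell out a proof: it simply states that the proposition follows ``according to the above definition of the limiter $\Pi_h$ and the Jensen's inequality for the concave function $q({\bf U})$''. Your write-up supplies exactly the details behind that sentence---the linear scaling argument for Step~(i), Jensen's inequality for the globally concave $q$ in Step~(ii), and the convexity of $\Omega_{S_0}$ (Lemma~\ref{lem:convexity}) together with the fact that $D$ and $q$ remain strictly positive along the scaling segment to handle Step~(iii). Your observation that the segment stays inside $\mathcal G$ so that the only active constraint at the crossing is $S=S_0$ is precisely what makes the root $\tilde\theta(x)$ well-defined, and your final convex-combination rewriting is a clean way to pass from the nodewise $\tilde\theta(x)$ to the common $\theta_3$.
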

Proposition \ref{prop:limiter1D} indicates that the 
limited solution \eqref{eq:limiter} satisfies the condition \eqref{eq:1DDG:con2}. 
	Note that such type of local scaling limiters keep the conservation
	$ \int_{I_j}  {\Pi}_h  ( {\bf u} ) {\rm d} x =   \int_{I_j}  {\bf u} {\rm d} x,~\forall {\bf u} \in \overline{\mathbb V}_h^k $,  
	and do not destroy the high-order accuracy; see \cite{zhang2010,zhang2010b,ZHANG2017301} for details.

\begin{remark}
The invariant region $\Omega_{S_0}$ can also be reformulated as  
\begin{equation*}
\Omega_{S_0}^{(1)} = \left\{ \vec U=(D,{\bm m}^\top,E)^{\top}\in \mathbb R^{d+2}:~D>0,~q({\bf U})>0,~
\widetilde q ( {\bf U} ) \ge 0  \right\},
\end{equation*}
where 
$
\widetilde q ({\bf U}) := D ( S({\bf U})-S_0 ), 
$ 
and we have used $D>0$ to reformulate the third constraint $S({\bf U})\ge S_0$ in \eqref{eq:IRgen2}. 
Thanks to Proposition \ref{prop:H}, the function $\widetilde q({\bf U})$ is strictly concave for ${\bf U} \in {\mathcal G}$. Motivated by this property and  \cite{JIANG2018}, we can also use another (simpler but possibly more restrictive) approach to enforce the entropy principle $S({\bf U})\ge S_0$ by modifying  $\widebreve {\bf U}_j (x) $ to 
\begin{equation*}
\widetilde {\bf U}_j (x) = \tilde  \theta_3 \left( \widebreve {\bf U}_j(x) -\overline { \bf  U }_j^n \right) + \overline { \bf  U }_j^n, \qquad \tilde \theta_3 := \min \left\{ 1, \left|  \frac{ \widetilde q( \overline {\bf U}_j^n)  }{ \widetilde q( \overline {\bf U}_j^n) - \min_{x \in \mathbb X_j} \widetilde q( \widebreve {\bf U}_j (x)) } \right| \right\}.
\end{equation*} 
\end{remark}

\begin{remark}
	Another way to define an IRP limiting operator is 
\begin{equation*}
\widetilde {\Pi}_h {\bf U}_h 
\big|_{I_j} = \theta_j ( {\bf U}_j(x) -\overline { \bf  U }_j^n ) + \overline { \bf  U }_j^n, \quad  \forall j,
\end{equation*}
with 
$
\theta_{j} = \min\left\{   1, \frac{ \overline D_j^n - \varepsilon_1 }{ \overline D_j^n - \min_{x \in \mathbb X_j} D_j(x) }, \frac{ q( \overline {\bf U}_j^n) - \varepsilon_2 }{ q( \overline {\bf U}_j^n) - \min_{x \in \mathbb X_j} q( {\bf U}_j (x)) }, \frac{ \widetilde q( \overline {\bf U}_j^n)  }{ \widetilde q( \overline {\bf U}_j^n) - \min_{x \in \mathbb X_j} \widetilde q( {\bf U}_j (x)) } \right\}.$ 
This simpler definition is motivated by the IRP limiter in \cite{JIANG2018} for the non-relativistic Euler system. It seems, however, unpractical for numerical implementation in our RHD case, because ${\bf U}_j (x)$ does not necessarily belong to $\mathcal G$  for all $x \in \mathbb X_j$ so that $\widetilde q( {\bf U}_j (x))$ is generally not well-defined in $\theta_j$. 
\end{remark}

With the aid of the IRP limiter $\Pi_h$ defined in \eqref{eq:limiter}, we modify the high-order scheme \eqref{eq:1DMHD:Hcellaverage} into 
\begin{equation}\label{eq:1DMHD:HcellaverageIRP}
\overline {\bf U}_j^{n+1} = \overline {\bf U}_j^{n} - \frac{\Delta t}{\Delta x_j}
\Big(  \widehat {\bf F}_1 ( \widetilde {\bf U}_{j+ \frac{1}{2}}^-, \widetilde {\bf U}_{j+ \frac{1}{2}}^+ )
- \widehat {\bf F}_1 (  \widetilde {\bf U}_{j- \frac{1}{2}}^-, \widetilde {\bf U}_{j-\frac{1}{2}}^+)
\Big)=: \overline {\bf U}_j^{n} + \Delta t {\bf L}_j ( {\Pi}_h {\bf U}_h ),
\end{equation}
where 
\begin{equation}\label{eq:DG1DvaluesIRP}
\widetilde {\bf U}_{j + \frac{1}{2}}^\pm = \mathop {\lim }\limits_{ \epsilon \to 0^\pm } {\Pi}_h {\bf U}_h \big( x_{j + \frac{1}{2}} + \epsilon \big).
\end{equation}
Based on Theorem \ref{thm:PP:1DRHD} and Proposition \ref{prop:limiter1D}, we know that the resulting scheme \eqref{eq:1DMHD:HcellaverageIRP} is IRP under the CFL condition \eqref{eq:CFL:1DMHD}.

The scheme \eqref{eq:1DMHD:HcellaverageIRP} is only first-order accurate
in time. To achieve high-order accurate IRP scheme in both time and space, one can replace the forward Euler
time discretization in \eqref{eq:1DMHD:HcellaverageIRP} with any high-order accurate strong-stability-preserving (SSP) methods \cite{GottliebShuTadmor2001}. For
example, the third-order accurate SSP Runge-Kutta (SSP-RK) method:  
\begin{equation}\label{eq:RK31D}
\begin{cases}
\overline {\bf U}_j^* = \overline {\bf U}_j^{n} + \Delta t {\bf L}_j ( {\Pi}_h {\bf U}_h ),
\\
\overline {\bf U}_j^{**} = \frac{3}{4} \overline {\bf U}_j^{n}  
+ \frac{1}{4} \Big( \overline {\bf U}_j^{*} + \Delta t {\bf L}_j ( {\Pi}_h {\bf U}_h^* ) \Big)
\\ 
\overline {\bf U}_j^{n+1} = \frac{1}{3} \overline {\bf U}_j^{n}  
+ \frac{2}{3} \Big( \overline {\bf U}_j^{**} + \Delta t {\bf L}_j ( {\Pi}_h {\bf U}_h^{**} ) \Big),
\end{cases}
\end{equation}
and the third-order accurate SSP multi-step (SSP-MS) method: 
\begin{equation}\label{eq:MS31D}
\overline {\bf U}_j^{n+1}
= \frac{16}{27} ( \overline {\bf U}_j^{n} + 3 \Delta t    {\bf L}_j ( {\Pi}_h {\bf U}_h^{n} ) ) 
+ \frac{11}{27} \left( \overline {\bf U}_j^{n-3} + \frac{12}{11} \Delta t    {\bf L}_j ( {\Pi}_h {\bf U}_h^{n-3} )  \right).
\end{equation}
Since SSP methods are formally convex combinations of the forward Euler method, 
 the resulting high-order
schemes \eqref{eq:RK31D} and \eqref{eq:MS31D} are still IRP, according to the convexity of $\Omega_{S_0}$ (Lemma \ref{lem:convexity}).

\section{Multidimensional invariant-region-preserving  schemes}\label{sec:2Dschemes}

In this section, we study IRP schemes for 
the two-dimensional (2D) RHD equations, 
keeping in mind that the proposed methods and analyses are extensible to 
the 3D case.    Assume that the physical domain $\Sigma$ in the 2D space is discretized by a mesh ${\mathcal T}_h$. 
In general, the mesh may be unstructured 
and consists of 
polygonal cells. 
We also partition the time interval into
a mesh $\{t_n\}_{n=0}^{N_t}$
with $t_0=0$ and the time step-size $\Delta t$ determined by certain CFL condition. 

We will use 
the 
capital letter $K$ to denote an arbitrary cell in ${\mathcal T}_h$. 
Let $\mathscr{E}_{K}^{j}$, $j=1,\cdots,N_K$,  denote the 
edges of $K$, and  $K_j$ be the adjacent cell which shares the edge $\mathscr{E}_{K}^{j}$ with $K$.  
We denote by ${\bm \xi}_K^{(j)}=\big(\xi_{1,K}^{(j)},\xi_{2,K}^{(j)} \big)$ the unit normal vector of $\mathscr{E}_K^j$ pointing from $K$ to $K_j$. 
The notations 
$|K|$ and $|{\mathscr E}_{K}^{j}|$ are used to denote the area of $K$ and the length of ${\mathscr E}_{K}^{j}$, respectively.

Let  
$\overline {\bf U}_K^n $ denote the numerical cell-averaged approximation of the exact solution ${\bf U}({\bm x},t)$ over $K$ at $t=t_n$. 
Define ${\bf U}_0 ({\bm x}):={\bf U}({\bm x},0)$ as the initial data and $S_0 := \min_{\bm x} S ( {\bf U}_0 ({\bm x}) ) $. 
According to Lemma \ref{lem:initialS}, the initial cell average $\overline {\bf U}_K^0$ always belongs to $\Omega_{S_0}$ for all $K\in {\mathcal T}_h$. 
We would like to seek numerical schemes maintaining $\overline {\bf U}_K^n \in \Omega_{S_0}$ for all $K\in {\mathcal T}_h$ and $n\ge 1$.

\subsection{First-order scheme}

Consider the 
following first-order scheme on the mesh ${\mathcal T}_h$ for the RHD equations \eqref{eq:RHD}: 
\begin{equation}\label{eq:2DLFscheme}
\overline {\bf U}_{K}^{n+1}
= \overline{\bf U}_{K}^{n} - \frac{\Delta t}{|K|}   \sum_{j=1}^{N_K}
\left|{\mathscr E}_{K}^{j}\right| \widehat{\bf F} \big(  \overline{\bf U}_K^n , \overline{\bf U}_{K_j}^n  ; {\bm \xi}^{(j)}_K \big),
\end{equation}
with the numerical flux $\widehat {\bf F}$ taken as the LF flux
\begin{equation}\label{eq:LFflux2D}
\widehat{\bf F} \left( {\bf U}^-, {\bf U}^+; {\bm \xi} \right) 
= \frac12 
\Big( 
 {\bm \xi} \cdot {\bf F} ({\bf U}^-)  + 
 {\bm \xi} \cdot {\bf F} ({\bf U}^+) 
-   \alpha ( {\bf U}^+ - {\bf U}_h^- )  \Big),
\end{equation}
where the numerical viscosity parameter $\alpha$ is chosen as the speed of light in vacuum $c=1$, which is a simple upper bound of all wave speeds in the theory of special relativity.

\begin{lemma}\label{lem:2Dlocal}
	If $\overline {\bf U}_{K}^n \in \Omega_\sigma$, $\overline {\bf U}_{K_j}^n \in \Omega_\sigma,$ 
	$1\le j \le N_K$, for certain $\sigma$, then the state $\overline {\bf U}_K^{n+1}$, computed by the scheme \eqref{eq:2DLFscheme} under the CFL condition
	\begin{equation}\label{eq:CFL:LF2D}
	  \frac{\alpha \Delta t} { 2 |K|} \sum_{j=1}^{N_K} \left|{\mathscr E}_{K}^{j}\right|  \le 1,
	\end{equation}
	belongs to $\Omega_\sigma$. 
\end{lemma}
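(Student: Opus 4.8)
The plan is to mirror the one-dimensional argument of Lemma \ref{lem:1Dlocal}: rewrite the update \eqref{eq:2DLFscheme} as a convex combination of states already known to lie in $\Omega_\sigma$, and then invoke the convexity of $\Omega_\sigma$ (Lemma \ref{lem:convexity}). I would set $\lambda := \frac{\alpha\Delta t}{2|K|}\sum_{j=1}^{N_K}|{\mathscr E}_K^j|$, so that $\lambda \le 1$ by the CFL condition \eqref{eq:CFL:LF2D}, and $\beta_j := \frac{\alpha\Delta t\,|{\mathscr E}_K^j|}{2|K|}$, so that $\sum_{j}\beta_j = \lambda$. Substituting the LF flux \eqref{eq:LFflux2D} into \eqref{eq:2DLFscheme} and expanding, the central contribution carries the factor $\sum_{j=1}^{N_K}|{\mathscr E}_K^j|\,{\bm \xi}_K^{(j)}\cdot{\bf F}(\overline{\bf U}_K^n)$.

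First I would exploit the closed-polygon identity $\sum_{j=1}^{N_K}|{\mathscr E}_K^j|\,{\bm \xi}_K^{(j)} = {\bf 0}$ --- which is exactly the hypothesis \eqref{eq:sumUpsilon1} of Theorem \ref{lem:main} --- to kill this central-flux term componentwise. Collecting the remaining pieces and using $\frac{\Delta t\,|{\mathscr E}_K^j|}{2|K|} = \beta_j/\alpha$ then yields
\begin{equation*}
\overline{\bf U}_K^{n+1} = (1-\lambda)\,\overline{\bf U}_K^n + \sum_{j=1}^{N_K}\beta_j\Big(\overline{\bf U}_{K_j}^n - \frac{1}{\alpha}\,{\bm \xi}_K^{(j)}\cdot{\bf F}(\overline{\bf U}_{K_j}^n)\Big).
\end{equation*}

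Next I would recognize the neighbor sum as a scalar multiple of the gLF combination treated in Theorem \ref{lem:main}. Setting $\overline{\bf V} := \frac{1}{\lambda}\sum_{j}\beta_j\big(\overline{\bf U}_{K_j}^n - \frac{1}{\alpha}{\bm \xi}_K^{(j)}\cdot{\bf F}(\overline{\bf U}_{K_j}^n)\big)$ and noting $\beta_j/\lambda = |{\mathscr E}_K^j|/\sum_{\ell}|{\mathscr E}_K^\ell|$, this $\overline{\bf V}$ is precisely the state \eqref{eq:defOverlineU} with the choices $s_j = |{\mathscr E}_K^j|$, ${\bm \xi}^{(j)} = {\bm \xi}_K^{(j)}$, $Q=1$, $\omega_1=1$, and ${\bf U}^{(1j)} = \overline{\bf U}_{K_j}^n \in \Omega_\sigma$. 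Since $\alpha = c = 1 \ge 1$ and \eqref{eq:sumUpsilon1} holds, Theorem \ref{lem:main} gives $\overline{\bf V}\in\Omega_\sigma$. Finally $\overline{\bf U}_K^{n+1} = (1-\lambda)\overline{\bf U}_K^n + \lambda\overline{\bf V}$ is a convex combination of $\overline{\bf U}_K^n\in\Omega_\sigma$ and $\overline{\bf V}\in\Omega_\sigma$ (with $1-\lambda\ge 0$ and $\lambda\ge 0$ by the CFL bound), so the convexity of $\Omega_\sigma$ in Lemma \ref{lem:convexity} yields $\overline{\bf U}_K^{n+1}\in\Omega_\sigma$.

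The main obstacle --- and the reason this is not a one-line reduction --- is that the plain LF splitting property \eqref{eq:LFproperty} fails for $\Omega_\sigma$, so one \emph{cannot} argue that each individual term $\overline{\bf U}_{K_j}^n - \frac{1}{\alpha}{\bm \xi}_K^{(j)}\cdot{\bf F}(\overline{\bf U}_{K_j}^n)$ lies in $\Omega_\sigma$. The decomposition must therefore keep all the neighbor contributions bundled into the single gLF combination $\overline{\bf V}$, whose admissibility rests essentially on the cancellation enforced by \eqref{eq:sumUpsilon1} inside the proof of Theorem \ref{lem:main}. The elimination of the ${\bf F}(\overline{\bf U}_K^n)$ term via the same polygon identity is exactly what makes $\overline{\bf V}$ depend only on the neighbors and thus match the hypotheses of that theorem; once this is arranged, convexity does the rest.
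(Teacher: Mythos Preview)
Your argument is correct and matches the paper's own proof essentially line for line: the paper also rewrites \eqref{eq:2DLFscheme} as $\overline{\bf U}_K^{n+1}=(1-\lambda)\overline{\bf U}_K^n+\lambda\,\Xi$ with the same $\lambda$ and with $\Xi$ equal to your $\overline{\bf V}$, invokes Theorem~\ref{lem:main} (with $s_j=|\mathscr{E}_K^j|$, $Q=1$) to get $\Xi\in\Omega_\sigma$, and concludes via the convexity Lemma~\ref{lem:convexity}. Your additional commentary explaining why the neighbor terms must be bundled (since \eqref{eq:LFproperty} fails for $\Omega_\sigma$) is a nice touch that the paper's proof leaves implicit.
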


\begin{proof}
	Substituting the numerical flux \eqref{eq:LFflux2D} into the scheme \eqref{eq:2DLFscheme} and then using the identity 
	$$ 
	\sum_{j=1}^{N_K} \left|{\mathscr E}_{K}^{j}\right| {\bm \xi}^{(j)}_K = {\bf 0},
	$$
	we rewrite the scheme \eqref{eq:2DLFscheme} in the following form
	\begin{equation*}
	\overline {\bf U}_K^{n+1} = (1-\lambda) \overline {\bf U}_K^n  + \lambda {\Xi},
	\end{equation*}
	where $\lambda := \frac{\alpha \Delta t} { 2 |K|} \sum_{j=1}^{N_K} \left|{\mathscr E}_{K}^{j}\right| \in (0,1]$ under the condition \eqref{eq:CFL:LF2D}, and 
	$$
	{\Xi} := \frac{1}{ \sum_{j=1}^{N_K}  \left|{\mathscr E}_{K}^{j}\right| } \
	\sum_{j=1}^{N_K}  \left|{\mathscr E}_{K}^{j}\right| \left( 
	\overline{\bf U}_{K_j}^n - \frac{1}{\alpha} {\bm \xi}^{(j)}_K \cdot {\bf F} \big( \overline{\bf U}_{K_j}^n \big)
	\right).
	$$
	Thanks to the gLF splitting property in Theorem \ref{lem:main}, we have ${\Xi} \in \Omega_\sigma$, which leads to 
	$\overline {\bf U}_K^{n+1} \in \Omega_\sigma$ by the convexity of $\Omega_\sigma$ (Lemma \ref{lem:convexity}).  
\end{proof}

Lemma \ref{lem:2Dlocal} implies a discrete (local) minimum entropy principle of the scheme  \eqref{eq:2DLFscheme}.

\begin{theorem}\label{thm:2Dlocal}
	If $\overline {\bf U}_{K}^n \in {\mathcal G}$, $\overline {\bf U}_{K_j}^n \in {\mathcal G},$ 
	$1\le j \le N_K$, then the state $\overline {\bf U}_K^{n+1}$, computed by the scheme \eqref{eq:2DLFscheme} under the CFL condition \eqref{eq:CFL:LF2D}, 
	belongs to $\mathcal G$ and satisfies 
	$$
	S(\overline {\bf U}_K^{n+1}) \ge \min \left\{  S (\overline {\bf U}_{K}^{n}), 
	\min \limits_{ 1\le j \le N_K } S (\overline {\bf U}_{K_j}^{n})
	 \right\}.
	$$
\end{theorem}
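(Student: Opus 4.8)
The plan is to mirror the one-dimensional argument used for Theorem~\ref{thm:1Dlocal}: the assertion is just the invariant-region statement of Lemma~\ref{lem:2Dlocal} re-read through the definition of $\Omega_\sigma$, provided the threshold $\sigma$ is chosen correctly. First I would set
$$
\sigma := \min \left\{ S(\overline {\bf U}_{K}^{n}),~ \min_{1\le j \le N_K} S(\overline {\bf U}_{K_j}^{n}) \right\},
$$
which is well defined because the hypothesis $\overline {\bf U}_{K}^n, \overline {\bf U}_{K_j}^n \in {\mathcal G}$ guarantees $\rho>0$ and $p>0$ at each of these states, so that the specific entropy $S=\log(p\rho^{-\Gamma})$ is finite there.

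Next I would verify that every input state lies in the level set $\Omega_\sigma$ defined in \eqref{eq:IRgen}. Each of $\overline {\bf U}_{K}^n$ and $\overline {\bf U}_{K_j}^n$ belongs to $\mathcal G$, hence satisfies the three physical constraints $\rho>0$, $p>0$, $\|{\bm v}\|<1$; and by the very definition of $\sigma$ as a minimum, each of these states has specific entropy no smaller than $\sigma$. These two facts together are exactly the membership condition for $\Omega_\sigma$, so $\overline {\bf U}_{K}^n \in \Omega_\sigma$ and $\overline {\bf U}_{K_j}^n \in \Omega_\sigma$ for all $1\le j \le N_K$.

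With all inputs in $\Omega_\sigma$, Lemma~\ref{lem:2Dlocal} applies under the stated CFL condition \eqref{eq:CFL:LF2D} and yields $\overline {\bf U}_K^{n+1} \in \Omega_\sigma$. Unwinding the definition of $\Omega_\sigma$ once more then gives both conclusions simultaneously: the physical-constraint part of $\Omega_\sigma$ shows $\overline {\bf U}_K^{n+1}\in {\mathcal G}$, while the entropy part gives $S(\overline {\bf U}_K^{n+1}) \ge \sigma$, which is precisely the asserted local minimum entropy bound.

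I do not anticipate any genuine obstacle, since the heavy lifting---the gLF splitting property (Theorem~\ref{lem:main}) and the convexity of $\Omega_\sigma$ (Lemma~\ref{lem:convexity})---is already packaged inside Lemma~\ref{lem:2Dlocal}. The only point requiring a moment of care is the observation that a state in $\mathcal G$ automatically sits in $\Omega_\sigma$ whenever $\sigma$ is taken as the minimum of the specific entropies of the relevant cell averages; this is exactly what converts the invariant-region conclusion of Lemma~\ref{lem:2Dlocal} into the discrete minimum entropy principle.
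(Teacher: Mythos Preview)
Your proposal is correct and follows exactly the paper's approach: choose $\sigma = \min\bigl\{S(\overline{\bf U}_K^n),\min_{1\le j\le N_K} S(\overline{\bf U}_{K_j}^n)\bigr\}$ and apply Lemma~\ref{lem:2Dlocal}. The paper compresses this into a single sentence, but your expanded verification that each input state lies in $\Omega_\sigma$ and your unwinding of the membership $\overline{\bf U}_K^{n+1}\in\Omega_\sigma$ into the two separate conclusions are precisely the missing details behind that sentence.
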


\begin{proof}
	By Lemma \ref{lem:2Dlocal} with $\sigma = \min \big\{  S (\overline {\bf U}_{K}^{n}), 
	\mathop{\min}\limits_{ 1\le j \le N_K } S (\overline {\bf U}_{K_j}^{n})
	\big\}$, we directly draw the conclusion.
\end{proof}

The IRP property of the scheme  \eqref{eq:2DLFscheme} is shown in the following theorem.

\begin{theorem}
	Under the CFL condition \eqref{eq:CFL:LF2D}, the scheme \eqref{eq:2DLFscheme} always preserves 
	$ \overline {\bf U}_K^{n} \in \Omega_{S_0} $ for all $K \in {\mathcal T}_h$ and $n\ge 0$.  
\end{theorem}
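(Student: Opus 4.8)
The plan is to argue by induction on the time-level index $n$, in exact parallel with the one-dimensional result proved earlier. The decisive feature here is that both the base case and the inductive step reduce \emph{immediately} to lemmas already established in this section, so the proof itself is short: all of the substantive work has been absorbed into those lemmas (and, through them, into the gLF splitting property of Theorem \ref{lem:main} and the convexity of Lemma \ref{lem:convexity}).

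For the base case $n=0$, I would invoke Lemma \ref{lem:initialS}, which guarantees that for every cell $K \in {\mathcal T}_h$ the initial cell average $\overline{\bf U}_K^0 = \frac{1}{|K|}\int_K {\bf U}_0({\bm x})\,{\rm d}{\bm x}$ lies in $\Omega_{S_0}$, precisely because $S_0 = \min_{\bm x} S({\bf U}_0({\bm x}))$ is the global entropy bound defining $\Omega_{S_0}$. This was already noted at the start of Section \ref{sec:2Dschemes}, so the conclusion holds at $n=0$. For the inductive step, I would assume $\overline{\bf U}_K^n \in \Omega_{S_0}$ for \emph{all} $K \in {\mathcal T}_h$, then fix an arbitrary cell $K$. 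Since $\Omega_{S_0}$ contains both $\overline{\bf U}_K^n$ and each neighboring average $\overline{\bf U}_{K_j}^n$, $1 \le j \le N_K$, the hypotheses of Lemma \ref{lem:2Dlocal} are met with the single common choice $\sigma = S_0$. Applying that lemma under the CFL condition \eqref{eq:CFL:LF2D} yields $\overline{\bf U}_K^{n+1} \in \Omega_{S_0}$; as $K$ was arbitrary, the property propagates to every cell at time level $n+1$, closing the induction.

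There is no genuine technical obstacle in this final theorem, since the entire burden has been discharged by Lemma \ref{lem:2Dlocal}. If one point deserves care, it is that the induction must be applied \emph{globally}---to all cells simultaneously with the fixed entropy floor $\sigma = S_0$---rather than cell-by-cell with locally varying bounds. Lemma \ref{lem:2Dlocal} for a fixed $K$ requires that $K$ together with all its neighbors lie in a \emph{common} $\Omega_\sigma$, and it is exactly the global inductive hypothesis at level $\sigma = S_0$ that supplies this. The monotonicity of Lemma \ref{thm:Mono} is what conceptually permits this uniform choice, ensuring that the single global bound $S_0$ is simultaneously admissible for every cell at once.
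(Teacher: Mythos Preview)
Your proposal is correct and follows essentially the same approach as the paper: induction on $n$, with the base case supplied by Lemma~\ref{lem:initialS} (already noted at the opening of Section~\ref{sec:2Dschemes}) and the inductive step by applying Lemma~\ref{lem:2Dlocal} with $\sigma = S_0$ under the CFL condition~\eqref{eq:CFL:LF2D}. Your additional remarks about the need for a global inductive hypothesis with the fixed choice $\sigma = S_0$ are accurate elaborations but not substantively different from the paper's terse proof.
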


\begin{proof}
	The conclusion follows from Lemma \ref{lem:2Dlocal} 
	with $\sigma = S_0$ and the principle of mathematical induction for the time level number $n$.  
\end{proof}

\subsection{High-order schemes}

This subsection discusses the provably IRP high-order finite volume or DG schemes for the 2D RHD equations \eqref{eq:RHD}. We will focus
on the first-order forward Euler method for time discretization, and our analysis also
works for high-order explicit time discretization using the SSP methods, which are formed by convex combinations of the forward Euler method \cite{GottliebShuTadmor2001}.

To achieve $(k+1)$th-order accuracy in space, an piecewise polynomial vector function ${\bf U}_h ({\bm x})$ (i.e., ${\bf U}_h |_{K}$ is a polynomial vector of degree $k$ for all $K \in {\mathcal T}_h$) is also built, as approximation to
the exact solution ${\bf U}({\bm x},t_n)$. 
It is, either evolved in
the DG methods, or 
reconstructed in the finite volume methods from the cell averages $\{\overline {\bf U}_K^n: K \in {\mathcal T}_h \}$. Moreover, the cell average of ${\bf U}_h ({\bm x})$ over $K$ is equal to 
$\overline {\bf U}_K^n$. 
A high-order finite volume scheme as well as the scheme satisfied by
the cell averages of a standard DG method can then be written as 
\begin{equation}\label{eq:2Dcellaverage}
\overline {\bf U}_{K}^{n+1}  = \bar {\bf U}_{K}^{n}  - \frac{\Delta t}{|K|} 
\sum_{j=1}^{N_K}  |{\mathscr E}_K^j| \widehat {\bf F}_{{\mathscr E}_K^j},
\end{equation}
with 
\begin{align} \label{eq:2DNflux}
\widehat {\bf F}_{{\mathscr E}_K^j} & = \sum_{\nu=1}^Q
\omega_\nu
\widehat{\bf F} \left( {\bf U}_h^{{\rm int}(K)} ( {\bm x}_K^{(j\nu)} ), {\bf U}_h^{{\rm ext}(K)} ( {\bm x}_K^{(j\nu)} ); {\bm \xi}^{(j)}_{K} \right)  
\\ \nonumber
& \approx \frac{1}{|{\mathscr E}_K^j|} \int_{ {\mathscr E}_K^j }  \widehat{\bf F} \left( {\bf U}_h^{{\rm int}(K)} ( {\bm x} ), {\bf U}_h^{{\rm ext}(K)} ( {\bm x} ); {\bm \xi}^{(j)}_{K} \right) {\rm d} s.
\end{align}
Here the superscripts ``${\rm ext}(K)$'' and ``${\rm int}(K)$'' indicate that the corresponding limits of ${\bf U}_h (\bm x)$ at the cell edges are taken from the exterior and interior of $K$, respectively; the numerical flux $\widehat {\bf F}$ is taken as the LF flux defined in \eqref{eq:LFflux2D}; 
$\{{\bm x}_K^{(j\nu)}, \omega_\nu \}_{1\le \nu \le Q}$ denote the $Q$-point Gauss quadrature nodes and weights on 
${\mathscr E}_K^j$.

In the following theorem, we derive a satisfiable condition for achieving the provably IRP property of the scheme \eqref{eq:2Dcellaverage} when the polynomial degree $k\ge 1$. Assume that one can {\it exactly} decompose the cell average by certain 2D quadrature: 
\begin{equation}\label{eq:decomposition}
\overline {\bf U}_K^n = \frac{1}{|K|} \int_K {\bf U}_h ({\bm x}) {\rm d} {\bm x}
= 
\sum_{j=1}^{N_K} \sum_{\nu=1}^Q \varpi_{j\nu} {\bf U}_h^{{\rm int}(K)} ( {\bm x}_K^{(j\nu)} ) + \sum_{ \beta=1 }^{\widetilde Q} \widetilde \varpi_\beta   
{\bf U}_h^{{\rm int}(K)} ( \widetilde {\bm x}_K^{(\beta)} ),
\end{equation}
where $\{\widetilde {\bm x}_K^{(\beta)}\}$ are the (possible) involved quadrature points excluding 
$\{{\bm x}_K^{(j\nu)}\}$ in the cell $K$; 
$\{ \varpi_{j\nu} \}$ and $\{ \widetilde \varpi_\beta \}$ are positive weights 
satisfying $\sum_{j=1}^{N_K} \sum_{\nu=1}^Q \varpi_{j\nu} + \sum_{ \beta=1 }^{\widetilde Q} \widetilde \varpi_\beta =1$. 
Such a quadrature-based decomposition was first proposed by Zhang and Shu in \cite{zhang2010,zhang2010b} on rectangular cells by tensor products of Gauss and Gauss--Lobatto quadratures. It can also be 
designed on triangular cells and more general polygons, as demonstrated in, e.g.,  \cite{zhang2012maximum,LV2015715}. 
Define 
\begin{equation}\label{eq:2DSk}
\mathbb X_{K} := 
\left\{ {\bm x}_K^{(j\nu)} \right\}_{ 1\le j \le N_K, 1\le \nu \le Q } 
\bigcup \left\{ \widetilde {\bm x}_K^{(\beta)} \right\}_{ 1\le \beta \le \widetilde Q }.
\end{equation}

\begin{theorem} \label{thm:PP:2DRHD}
	If the piecewise polynomial vector function ${\bf U}_h$ satisfies 
	\begin{equation}
	\\ \label{eq:2DDG:con2}
	{\bf U}_h ( {\bm x} ) \in \Omega_{S_0}, \qquad 
	\forall  {\bm x}  \in {\mathbb X}_{K},~~\forall K \in {\mathcal T}_h, 
	\end{equation}
	then, 	 under the CFL condition
	\begin{equation}\label{eq:CFL:2DRHD}
	\alpha \Delta t \frac{| {\mathscr E}_K^j |}{|K|} 
	\le \min \limits_{1\le \nu \le Q} \frac{ \varpi_{j\nu}}{\omega_\nu},\qquad 1\le j \le N_K,~~\forall K\in{\mathcal T}_h,
	\end{equation}
	the solution $\overline {\bf U}_K^{n+1}$, computed by the high-order scheme \eqref{eq:2Dcellaverage}, 
	belongs to $\Omega_{S_0}$ for all $K \in {\mathcal T}_h$.
\end{theorem}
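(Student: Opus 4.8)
The plan is to follow the Zhang--Shu convex-decomposition strategy already used in the one-dimensional case (Theorem \ref{thm:PP:1DRHD}), recast for a general polygonal cell through the multidimensional gLF splitting property of Theorem \ref{lem:main} and the convexity of $\Omega_{S_0}$ (Lemma \ref{lem:convexity}). First I would substitute the Lax--Friedrichs flux \eqref{eq:LFflux2D} into the edge flux \eqref{eq:2DNflux} and then into the scheme \eqref{eq:2Dcellaverage}, and replace the cell average $\overline{\bf U}_K^n$ on the right-hand side by its exact quadrature decomposition \eqref{eq:decomposition}. Abbreviating the interior and exterior traces at the boundary node ${\bm x}_K^{(j\nu)}$ by ${\bf U}^{j\nu}_{\rm int}$ and ${\bf U}^{j\nu}_{\rm ext}$ and writing $\mu_{j\nu}:=\frac{\Delta t\,|{\mathscr E}_K^j|\,\omega_\nu}{2|K|}$, I would collect the coefficients attached to each trace. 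This yields an interior contribution $(\varpi_{j\nu}-\alpha\mu_{j\nu}){\bf U}^{j\nu}_{\rm int}-\mu_{j\nu}{\bm\xi}_K^{(j)}\cdot{\bf F}({\bf U}^{j\nu}_{\rm int})$ at each $(j,\nu)$, together with an exterior contribution $\alpha\mu_{j\nu}\big({\bf U}^{j\nu}_{\rm ext}-\tfrac1\alpha{\bm\xi}_K^{(j)}\cdot{\bf F}({\bf U}^{j\nu}_{\rm ext})\big)$ and the surplus interior states ${\bf U}_h^{\rm int}(\widetilde{\bm x}_K^{(\beta)})$.

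The key algebraic manipulation is to split the interior contribution as
$$(\varpi_{j\nu}-\alpha\mu_{j\nu}){\bf U}^{j\nu}_{\rm int}-\mu_{j\nu}{\bm\xi}_K^{(j)}\cdot{\bf F}({\bf U}^{j\nu}_{\rm int})=(\varpi_{j\nu}-2\alpha\mu_{j\nu}){\bf U}^{j\nu}_{\rm int}+\alpha\mu_{j\nu}\Big({\bf U}^{j\nu}_{\rm int}-\tfrac1\alpha{\bm\xi}_K^{(j)}\cdot{\bf F}({\bf U}^{j\nu}_{\rm int})\Big),$$
which exposes a ``bare'' admissible state plus a single LF-splitting building block of exactly the form appearing in Theorem \ref{lem:main}. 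The bare coefficient $\varpi_{j\nu}-2\alpha\mu_{j\nu}$ is nonnegative precisely when $\frac{\varpi_{j\nu}}{\omega_\nu}\ge\frac{\alpha\Delta t\,|{\mathscr E}_K^j|}{|K|}$, so this step both motivates and consumes the CFL restriction \eqref{eq:CFL:2DRHD}.

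Next I would assemble the two families of LF-splitting blocks (interior and exterior, over all $j$ and $\nu$) into a single state $\overline{\bf U}_{\rm gLF}$ and verify that it satisfies the hypotheses of Theorem \ref{lem:main}: take $s_j=|{\mathscr E}_K^j|$, so that $\sum_j s_j{\bm\xi}_K^{(j)}={\bf 0}$ holds automatically; treat the interior and exterior traces at the $Q$ Gauss nodes of each edge as $2Q$ points carrying the common weights $\{\omega_\nu/2\}$, which sum to one because $\sum_\nu\omega_\nu=1$; and observe that the exterior trace ${\bf U}^{j\nu}_{\rm ext}$ coincides with the interior trace of the neighbour $K_j$ at the shared node and therefore lies in $\Omega_{S_0}$ by the hypothesis \eqref{eq:2DDG:con2}. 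Theorem \ref{lem:main} then gives $\overline{\bf U}_{\rm gLF}\in\Omega_{S_0}$. A final bookkeeping check shows that $\overline{\bf U}_K^{n+1}$ is a convex combination of $\overline{\bf U}_{\rm gLF}$, the surplus interior states ${\bf U}_h^{\rm int}(\widetilde{\bm x}_K^{(\beta)})$, and the bare interior traces ${\bf U}^{j\nu}_{\rm int}$; the coefficients are nonnegative under \eqref{eq:CFL:2DRHD} and sum to one by the normalization of the weights in \eqref{eq:decomposition}, so the convexity of $\Omega_{S_0}$ (Lemma \ref{lem:convexity}) yields $\overline{\bf U}_K^{n+1}\in\Omega_{S_0}$.

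I expect the main obstacle to be the bookkeeping that certifies the hypotheses of Theorem \ref{lem:main}, in particular forcing the interior and exterior traces into the single shared weight structure $\{\omega_i\}$ required there and confirming, via the shared-edge identification, that every exterior trace indeed lies in $\Omega_{S_0}$. The remaining verifications --- nonnegativity of the convex weights and their summing to one --- are routine, but they must be tracked carefully so that the CFL bound \eqref{eq:CFL:2DRHD} emerges exactly, with the sharp constant $\min_{1\le\nu\le Q}\varpi_{j\nu}/\omega_\nu$ rather than a weaker one.
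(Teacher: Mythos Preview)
Your proposal is correct and follows essentially the same approach as the paper. The only cosmetic difference is that the paper applies Theorem~\ref{lem:main} twice---once to the interior traces (yielding a state ${\Xi}^{{\rm int}(K)}\in\Omega_{S_0}$) and once to the exterior traces (yielding ${\Xi}^{{\rm ext}(K)}\in\Omega_{S_0}$), each with the original $Q$ Gauss weights---whereas you merge interior and exterior into a single application with $2Q$ points and halved weights; the resulting convex decomposition, bare coefficients, and CFL condition are identical.
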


\begin{proof} 
Substituting the decomposition \eqref{eq:decomposition} and 
the numerical flux \eqref{eq:2DNflux} with \eqref{eq:LFflux2D} into \eqref{eq:2Dcellaverage}, 
we can rewrite the scheme \eqref{eq:2Dcellaverage} in the following convex combination form 
\begin{align}\nonumber
\overline {\bf U}_{K}^{n+1} &= 
\sum_{j=1}^{N_K} \sum_{\nu=1}^Q 
\left( 
\varpi_{j\nu} - \alpha \Delta t \omega_\nu \frac{| {\mathscr E}_K^j |}{|K|}  \right)  {\bf U}_h^{{\rm int}(K)} ( {\bm x}_K^{(j\nu)} ) 
\\ \label{eq:2Dproof}
& \quad + \sum_{ \beta=1 }^{\widetilde Q} \widetilde \varpi_\beta   
{\bf U}_h^{{\rm int}(K)} ( \widetilde {\bm x}_K^{(\beta)} ) 
+   \frac{ \alpha \Delta t }{2|K|} \left( \sum\limits_{j = 1}^{N} { | {\mathscr E}_K^j |     } \right) \Big( 
{\Xi}^{{\rm int}(K)} + {\Xi}^{{\rm ext}(K)}
\Big),
\end{align}
with 
\begin{align*}
& {\Xi}^{{\rm int}(K)} := \frac{1}{{\sum\limits_{j = 1}^{N_K} { | {\mathscr E}_K^j |     } }}\sum\limits_{j = 1}^{N_K} 
\sum_{\nu=1}^Q { { { | {\mathscr E}_K^j | \omega_\nu \left( {\bf U}_h^{{\rm int}(K)} ( {\bm x}_K^{(j\nu)} )  - \frac{1}{\alpha}
				{\bm \xi}^{(j)}_K \cdot {\bf F} \Big(  {\bf U}_h^{{\rm int}(K)} ( {\bm x}_K^{(j\nu)} )   \Big) 
			 \right)} } },
\\
& {\Xi}^{{\rm ext}(K)} := \frac{1}{{\sum\limits_{j = 1}^{N_K} { | {\mathscr E}_K^j |     } }}\sum\limits_{j = 1}^{N_K} 
\sum_{\nu=1}^Q { { { | {\mathscr E}_K^j | \omega_\nu \left( {\bf U}_h^{{\rm ext}(K)} ( {\bm x}_K^{(j\nu)} )  - \frac{1}{\alpha}
			{\bm \xi}^{(j)}_K \cdot {\bf F} \Big(  {\bf U}_h^{{\rm ext}(K)} ( {\bm x}_K^{(j\nu)} )   \Big) 
			\right)} } }.
\end{align*}
Thanks to the gLF splitting property in Theorem \ref{lem:main}, under the assumption \eqref{eq:2DDG:con2} we obtain ${\Xi}^{{\rm int}(K)} \in \Omega_{S_0}$ and ${\Xi}^{{\rm ext}(K)} \in \Omega_{S_0}$. 
Using the convexity of $\Omega_{S_0}$ (Lemma \ref{lem:convexity}), 
we conclude $\overline{\bf U}_K^{n+1} \in \Omega_{S_0}$ from the convex combination form \eqref{eq:2Dproof} under the condition \eqref{eq:CFL:2DRHD}. 
\end{proof}

Theorem \ref{thm:PP:2DRHD} provides a sufficient condition \eqref{eq:2DDG:con2} for the high-order scheme \eqref{eq:2Dcellaverage} to be IRP. 
The condition \eqref{eq:2DDG:con2}, which is not satisfied automatically in general, can again be enforced by a 
simple IRP limiting operator ${\Pi}_h$ similar to the 1D case; see Section \ref{sec:limiter} with the 1D point set ${\mathbb X}_j$ replaced by the 2D point set ${\mathbb X}_K$ \eqref{eq:2DSk} accordingly.  
With the IRP limiter applied to the approximation solution $\widetilde {\bf U}_h = {\Pi}_h {\bf U}_h$, the resulting scheme 
\begin{equation*}
\overline {\bf U}_{K}^{n+1}  = \bar {\bf U}_{K}^{n}  - \frac{\Delta t}{|K|} 
\sum_{j=1}^{N_K}  \sum_{\nu=1}^Q  |{\mathscr E}_K^j|
\omega_\nu
\widehat{\bf F} \left( \widetilde {\bf U}_h^{{\rm int}(K)} ( {\bm x}_K^{(j\nu)} ), \widetilde {\bf U}_h^{{\rm ext}(K)} ( {\bm x}_K^{(j\nu)} ); {\bm \xi}^{(j)}_{K} \right),
\end{equation*}
is IRP and also high-order accurate in space. As the 1D case, 
Theorem \ref{thm:PP:2DRHD} also remains valid if a high-order SSP time discretization \cite{GottliebShuTadmor2001} is used.

\begin{remark}
Assume that the mesh is rectangular with cells $\{[x_{i-1/2},x_{i+1/2}]\times [y_{\ell-1/2},y_{\ell+1/2}] \}$ 
and spatial step-sizes $\Delta x_i=x_{i+1/2}-x_{i-1/2}$ and $\Delta y_\ell=y_{\ell+1/2}-y_{\ell-1/2}$ in $x$- and $y$-directions respectively, where $(x,y)$ denotes the 2D spatial coordinate variables. 
Let ${\mathbb S}_i^x=\{ x_i^{(\mu)}  \}_{\mu=1}^Q$
and ${\mathbb S}_\ell^y=\{ y_\ell^{(\mu)}  \}_{\mu=1}^Q$ 
denote the $Q$-point Gauss quadrature nodes in the intervals 
$[x_{i-1/2},x_{i+1/2}]$ and 
$[y_{\ell-1/2},y_{\ell+1/2}]$ respectively. 
Let $\widehat{\mathbb S}_i^x=\{ \widehat x_i^{(\nu)}  \}_{\nu=1}^{L}$
and $\widehat {\mathbb S}_\ell^y=\{\widehat y_\ell^{(\nu)}  \}_{\nu=1}^{L}$ 
denote the $L$-point (${L} \ge \frac{k+3}2$) Gauss--Lobatto quadrature nodes in the intervals 
$[x_{i-1/2},x_{i+1/2}]$ and 
$[y_{\ell-1/2},y_{\ell+1/2}]$ respectively. 
For the cell $K=[x_{i-1/2},x_{i+1/2}]\times [y_{\ell-1/2},y_{\ell+1/2}]$, 
a suitable point set as $ {\mathbb X}_K$ in \eqref{eq:2DSk} is given by (cf.~\cite{zhang2010})
\begin{equation}\label{eq:RectS}
{\mathbb X}_K = \big(  \widehat{\mathbb S}_i^x \otimes 
{\mathbb S}_\ell^y \big) \cup \big(  {\mathbb S}_i^x \otimes 
\widehat{\mathbb S}_\ell^y \big),
\end{equation}
and the corresponding 2D quadrature \cite{zhang2010} satisfying \eqref{eq:decomposition} can be constructed as
\begin{equation} \label{eq:U2Dsplit}
\begin{split}
\frac{1}{|K|}\int_K u({\bf x}) d {\bf x}
&= \sum \limits_{\mu = 1}^{Q}   \frac{ \Delta x_i \widehat \omega_1 \omega_\mu }{ \Delta x_i + \Delta y_\ell }   \left(  
u\big( x_i^{(\mu)},y_{\ell-\frac12} \big) 
+ u\big( x_i^{(\mu)},y_{\ell+\frac12} \big) 
\right)
\\
&
+  \sum \limits_{\mu = 1}^{Q} \frac{ \Delta y_\ell \widehat \omega_1 \omega_\mu }{ \Delta x_i + \Delta y_\ell }  \left( u \big(x_{i-\frac12},y_\ell^{(\mu)}\big) +
u\big(x_{i+\frac12},y_\ell^{(\mu)}\big) \right)
\\
& +  \sum \limits_{\nu = 2}^{{L}-1} \sum \limits_{\mu = 1}^{Q} 
\frac{ \widehat \omega_\nu \omega_\mu  }{\Delta x_i + \Delta y_\ell}
\left( \Delta x_i u\big(  x_i^{(\mu)},\widehat y_\ell^{(\nu)} \big) + \Delta y_\ell u\big(\widehat x_i^{(\nu)},y_\ell^{(\mu)}\big) \right),
\quad~ \forall u \in {\mathbb P}^k(K),
\end{split}
\end{equation}
where $\{\widehat w_\mu\}_{\mu=1}^{L}$ are the weights of the $L$-point Gauss--Lobatto quadrature. 
If labeling the bottom, right, top and left edges of $K$ as  
${\mathscr E }_1$, ${\mathscr E }_2$, ${\mathscr E }_3$ and ${\mathscr E }_4$, respectively, then the identity \eqref{eq:U2Dsplit} implies, for $1\le \mu \le N$, that
$
\varpi_{ {\mathscr E }_j  }^{(\mu)} = \frac{ \Delta x_i \widehat \omega_1 \omega_\mu}{ \Delta x_i + \Delta y_\ell },~j=1,3;~
\varpi_{ {\mathscr E }_j  }^{(\mu)} = \frac{ \Delta y_\ell \widehat \omega_1 \omega_\mu}{ \Delta x_i + \Delta y_\ell },~j=2,4.
$ 
According to Theorem \ref{thm:PP:2DRHD}, the CFL condition \eqref{eq:CFL:2DRHD} for our positivity-preserving DG schemes on Cartesian meshes becomes  
\begin{equation}\label{eq:CFL:2DCart}
\alpha \Delta t 
\left( \frac{1}{\Delta x_i} 
+ \frac{1}{\Delta y_\ell} \right)  
\le \widehat \omega_1  = \frac{1}{L(L-1)}.
\end{equation}
\end{remark}

\section{Numerical tests}\label{sec:examples}
In this section, we present 
numerical tests 
on several benchmark 
RHD problems to validate the accuracy and effectiveness of our IRP DG methods on 1D and 2D uniform 
Cartesian meshes.  
The third-order SSP-RK method \eqref{eq:RK31D} or SSP-MS method \eqref{eq:MS31D} will be employed for time discretization. 
Unless otherwise stated, we use the ideal EOS \eqref{eq:iEOS} with $\Gamma=5/3$, and set the CFL numbers as $0.3$, $0.15$, $0.1$, respectively, for the second-order ($P^1$-based), third-order ($P^2$-based), fourth-order ($P^3$-based) DG methods with the SSP-RK time discretization; the CFL numbers for the SSP-MS-DG methods are one-third of those for the SSP-RK-DG methods. 

For convenience, we refer to the 1D bound-preserving limiter \eqref{eq:IRP1}--\eqref{eq:IRP2} (cf.~\cite{QinShu2016}) as 
the {\em BP limiter}. Our IRP limiter \eqref{eq:IRP1}--\eqref{eq:IRP3} corresponds to a combination of 
the BP limiter and the {\em entropy limiter} \eqref{eq:IRP3}. 
The same names/abbreviations will be also used for those corresponding limiters in the 2D case. We will compare the results with the proposed IRP limiter and those with only the BP limiter.

\begin{table}[htb]
	\centering
	\caption{\small Example 1: 
		Errors at $t=0.2$ in the rest-mass density 
		for the proposed $P^k$-based DG methods ($k=1,2,3$), with the SSP-RK or SSP-MS time discretization,   
		at different spatial grid resolutions.
	}\label{tab:Ex1Dsmooth}
	\begin{tabular}{c|c|c|c|c|c|c|c|c|c}
		\hline
		&     & \multicolumn{4}{c|}{SSP-RK}             & \multicolumn{4}{c}{SSP-MS}             \\ \hline
		$k$                   & $N$   & $l^1$ error & order & $l^2$ error & order & $l^1$ error & order & $l^2$ error & order \\ \hline
		\multirow{6}{*}{1} 
		& 10  & 1.75e-2  & --    & 1.98e-2  & --    & 1.72e-2  & --    & 1.96e-2  & --    \\  
		& 20  & 3.16e-3  & 2.47  & 4.47e-3  & 2.14  & 3.13e-3  & 2.46  & 4.41e-3  & 2.15  \\ 
		& 40  & 8.19e-4  & 1.95  & 1.08e-3  & 2.05  & 8.19e-4  & 1.94  & 1.07e-3  & 2.05  \\ 
		& 80  & 1.93e-4  & 2.08  & 2.49e-4  & 2.11  & 1.92e-4  & 2.09  & 2.46e-4  & 2.12  \\ 
		& 160 & 4.63e-5  & 2.06  & 5.87e-5  & 2.08  & 4.61e-5  & 2.06  & 5.84e-5  & 2.08  \\ 
		& 320 & 1.13e-5  & 2.04  & 1.38e-5  & 2.09  & 1.12e-5  & 2.04  & 1.37e-5  & 2.09  \\ \hline
		\multirow{6}{*}{2} 
		& 10  & 1.24e-3  & --    & 1.47e-3  & --    & 7.76e-4  & 3.21  & 9.15e-4  & 3.31  \\ 
		& 20  & 1.83e-4  & 2.76  & 2.37e-4  & 2.63  & 8.40e-5  & 3.04  & 9.24e-5  & 3.02  \\ 
		& 40  & 2.75e-5  & 2.74  & 4.98e-5  & 2.25  & 1.02e-5  & 3.00  & 1.14e-5  & 3.00  \\ 
		& 80  & 4.06e-6  & 2.76  & 1.03e-5  & 2.28  & 1.27e-6  & 3.00  & 1.42e-6  & 3.00  \\ 
		& 160 & 5.90e-7  & 2.78  & 2.12e-6  & 2.28  & 1.59e-7  & 3.00  & 1.77e-7  & 3.00  \\ 
		& 320 & 9.16e-8  & 2.69  & 4.51e-7  & 2.23  & 1.99e-8  & 3.00  & 2.22e-8  & 3.00  \\ \hline
		\multirow{6}{*}{3} 
		& 10  & 6.12e-5  & --    & 8.32e-5  & --    & 1.92e-5  & --    & 2.24e-5  & --    \\ 
		& 20  & 4.84e-6  & 3.66  & 9.94e-6  & 3.07  & 1.29e-6  & 3.90  & 1.50e-6  & 3.90  \\ 
		& 40  & 3.00e-7  & 4.01  & 9.89e-7  & 3.33  & 7.86e-8  & 4.04  & 8.89e-8  & 4.07  \\ 
		& 80  & 2.71e-8  & 3.47  & 1.26e-7  & 2.97  & 4.85e-9  & 4.02  & 5.48e-9  & 4.02  \\ 
		& 160 & 2.30e-9  & 3.55  & 1.52e-8  & 3.05  & 3.04e-10 & 4.00  & 3.42e-10 & 4.00  \\ 
		& 320 & 2.08e-10 & 3.47  & 1.84e-9  & 3.04  & 1.90e-11 & 4.00  & 2.13e-11 & 4.01  \\ \hline
	\end{tabular}
\end{table}

\subsection{Example 1: 1D smooth problem}
To examine the accuracy of our 1D DG 
methods we first test   
a smooth problem similar to \cite{QinShu2016,WuTang2017ApJS,zhang2010b,zhang2012minimum}. 
The initial conditions are $\rho(x,0) = 1+0.99999 \sin \big(2 \pi x\big) $, $v(x,0) = 0.9$, $p(x,0)=1$. 
The computational domain is taken as $[0,1]$ with periodic boundary conditions, so that the exact solution is 
$\rho(x,t) = 1+0.99999 \sin \big(2 \pi (x-0.9t)\big) $, $v(x,t) = 0.9$, $p(x,t)=1$. 
In the computations, the domain is partitioned into $N$ uniform cells 
with $N \in \{ 10, 20,40,80,160,320 \}$. 
For the $P^3$-based DG method, we take (only in this accuracy test) the time step-sizes as $\Delta t = 0.1 \Delta x ^{\frac43}$ 
and $\Delta t = \frac{0.1}3 \Delta x ^{\frac43}$ for the third-order SSP-RK and SSP-MS time discretizations respectively, 
so as to 
match the fourth-order accuracy of spatial discretization. 

Table \ref{tab:Ex1Dsmooth} lists the numerical errors at $t=0.2$ in the rest-mass density and the corresponding convergence rates 
for the $P^k$-based IRP DG methods ($k=1,2,3$) 
at different grid resolutions. 
As observed in \cite{zhang2012minimum,QinShu2016}, 
the accuracy degenerates for SSP-RK and $k\ge 2$, which is due to the lower order
accuracy in the RK intermediate stages as explained in \cite{zhang2012minimum}. 
The desired full order of accuracy is observed for the SSP-MS time discretization, indicating that the IRP limiter itself does not destroy the accuracy for smooth solutions as expected from the analyses in \cite{zhang2010,ZHANG2017301,jiang2018invariant}.


\subsection{Example 2: Two 1D Riemann problems}
This example investigates the capability of the 1D IRP DG methods in resolving discontinuous solutions, by testing two 1D Riemann problems. The computational domain is taken as $[0,1]$. 

\begin{figure}[htbp]
	\centering
	\begin{subfigure}[b]{0.49\textwidth}
		\begin{center}
			\includegraphics[width=0.99\linewidth]{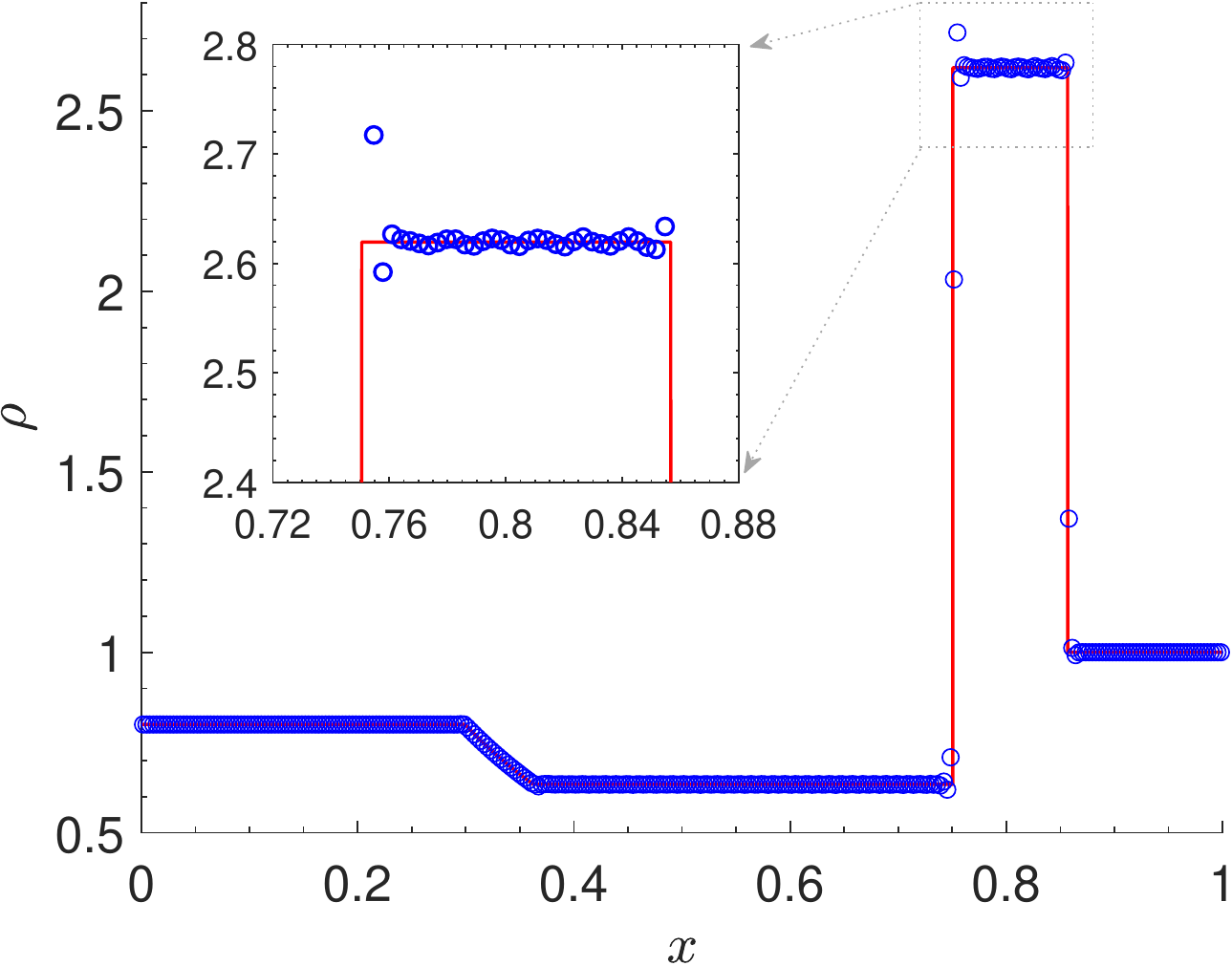}
		\end{center}
	\end{subfigure}
	\begin{subfigure}[b]{0.49\textwidth}
		\begin{center}
			\includegraphics[width=0.99\linewidth]{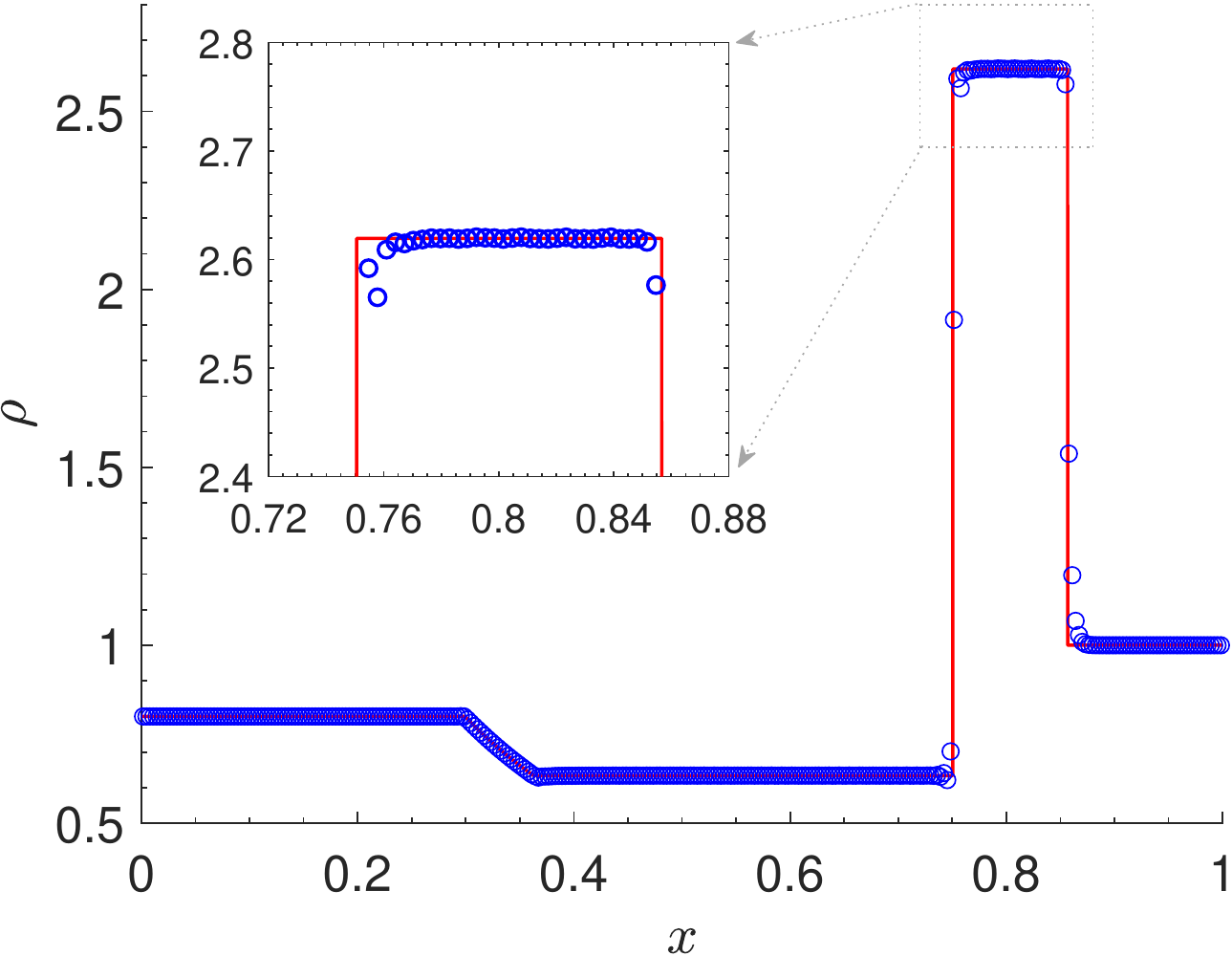}
		\end{center}
	\end{subfigure}
	\begin{subfigure}[b]{0.49\textwidth}
		\begin{center}
			\includegraphics[width=0.99\linewidth]{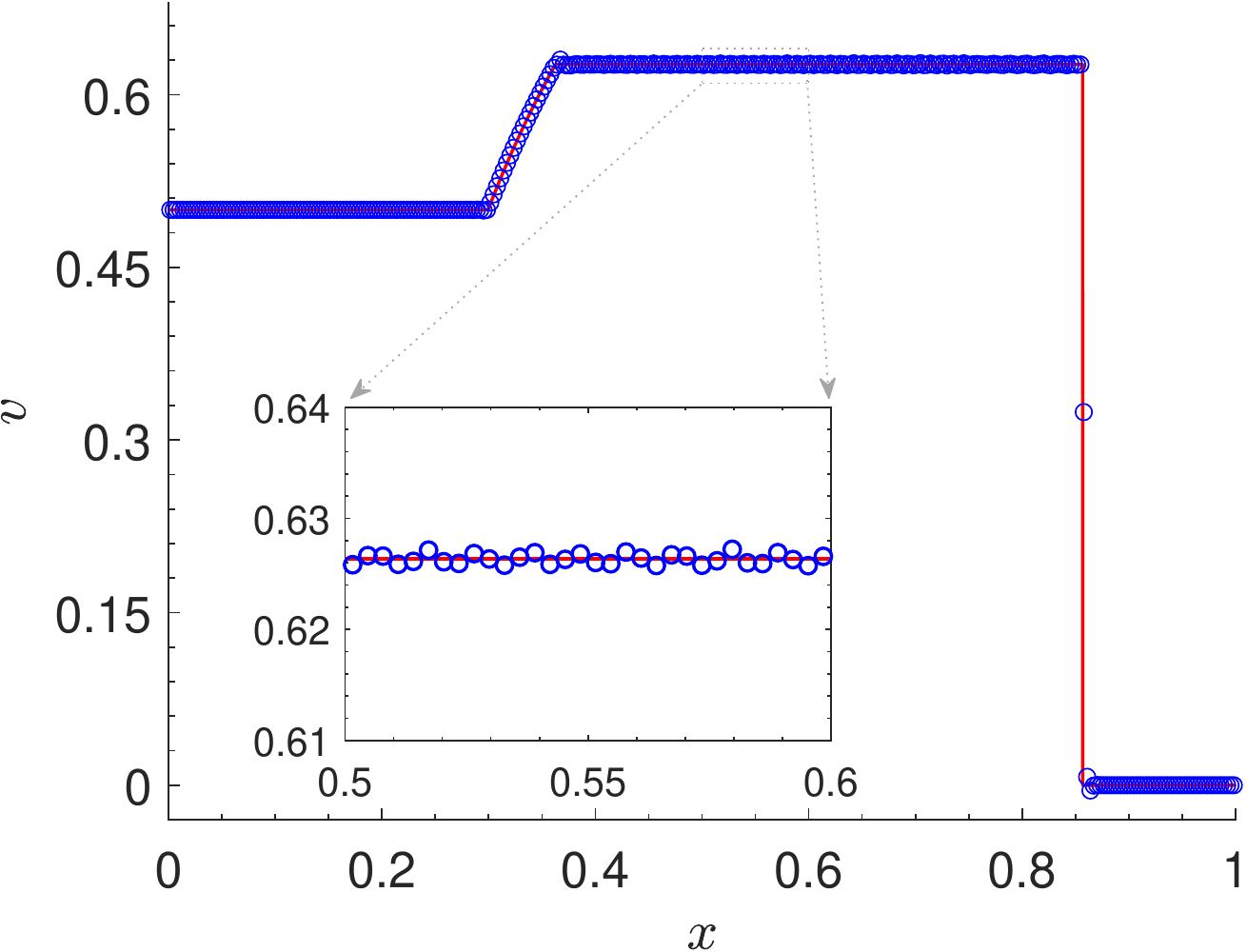}
		\end{center}
	\end{subfigure}
	\begin{subfigure}[b]{0.49\textwidth}
		\begin{center}
			\includegraphics[width=0.99\linewidth]{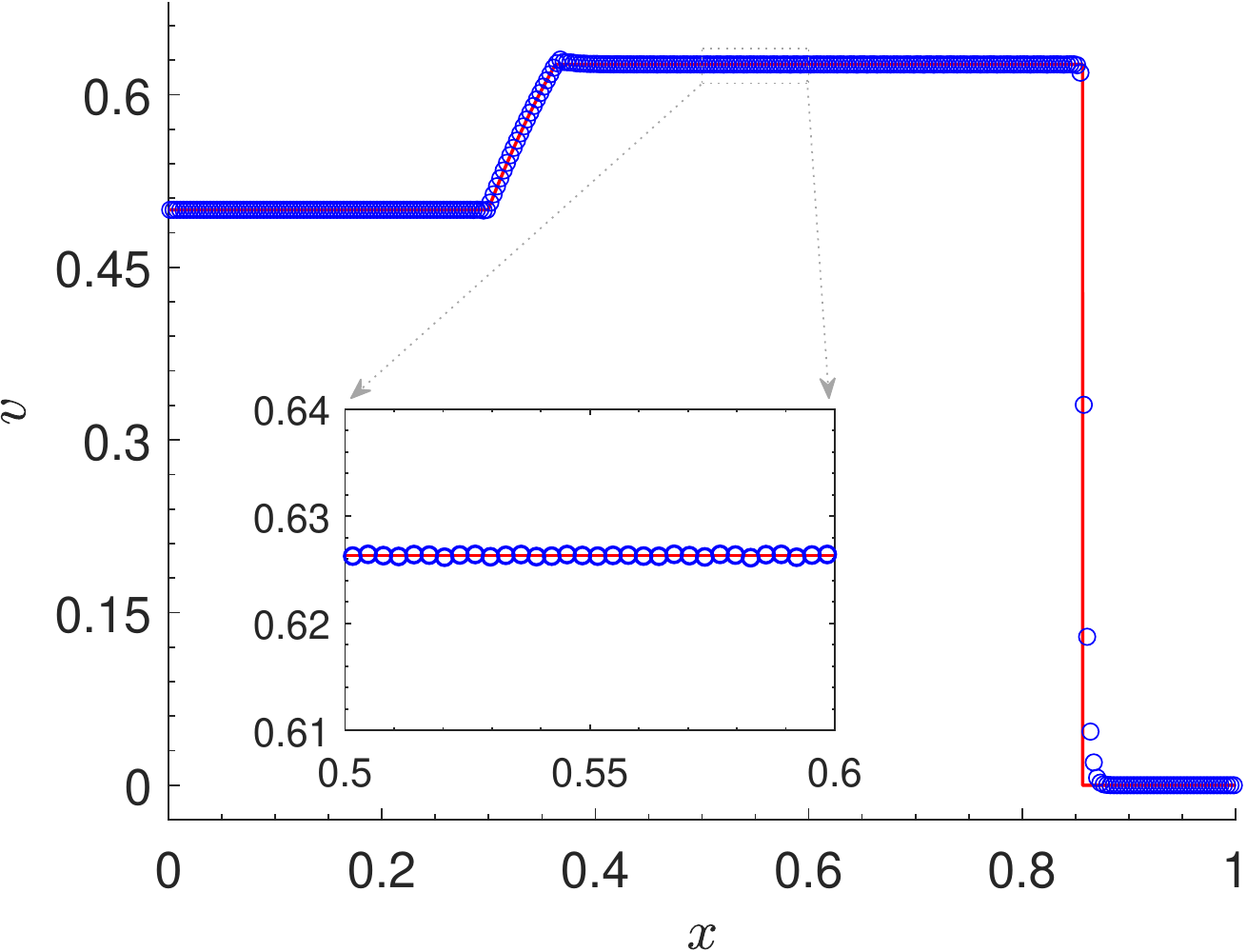}
		\end{center}
	\end{subfigure}
	\begin{subfigure}[b]{0.49\textwidth}
		\captionsetup{width=.8\linewidth}
		\begin{center}
			\includegraphics[width=0.99\linewidth]{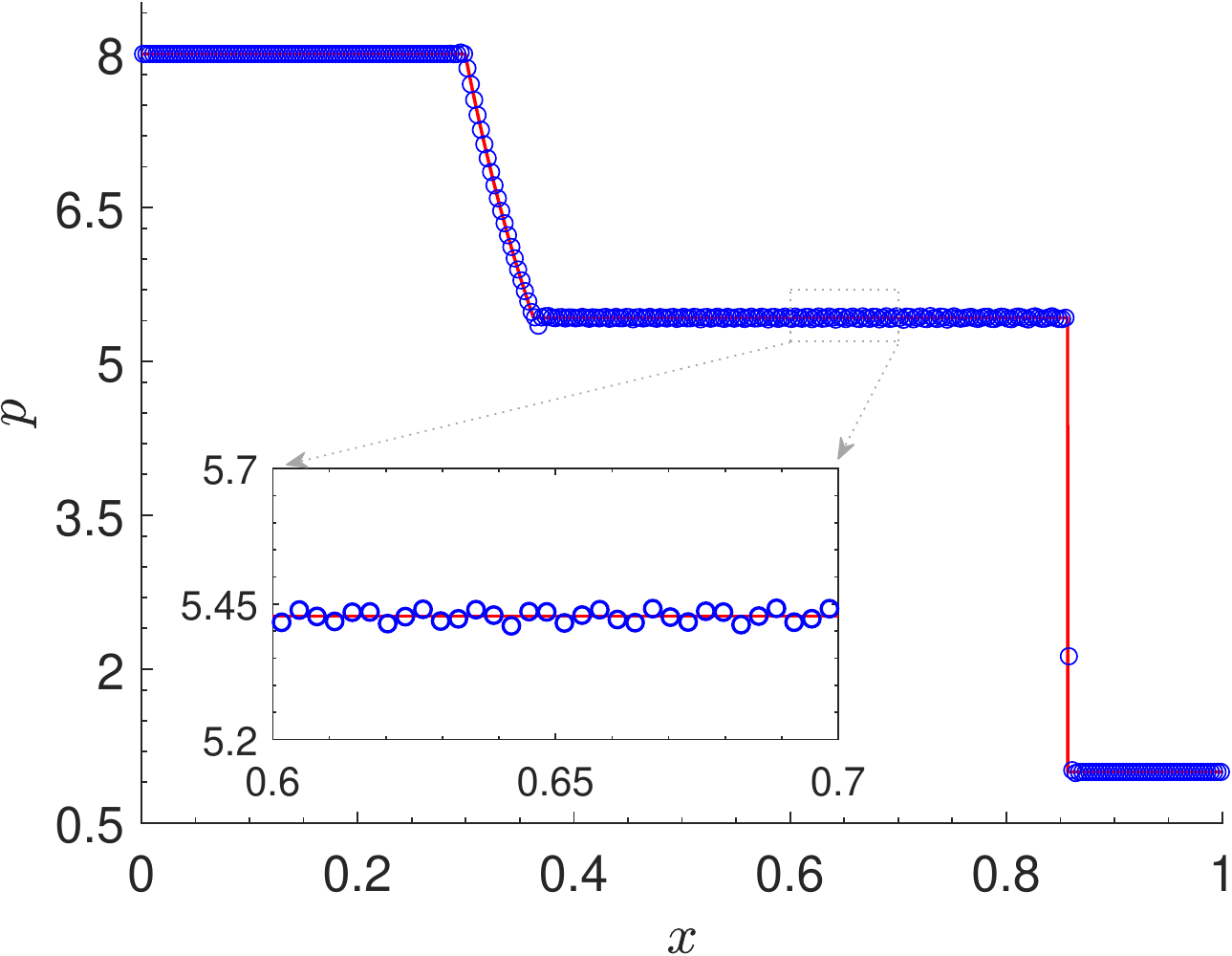}
		\end{center}
		\caption{\small With the BP limiter}
	\end{subfigure}
	\begin{subfigure}[b]{0.49\textwidth}
		\captionsetup{width=.8\linewidth}
		\begin{center}
			\includegraphics[width=0.99\linewidth]{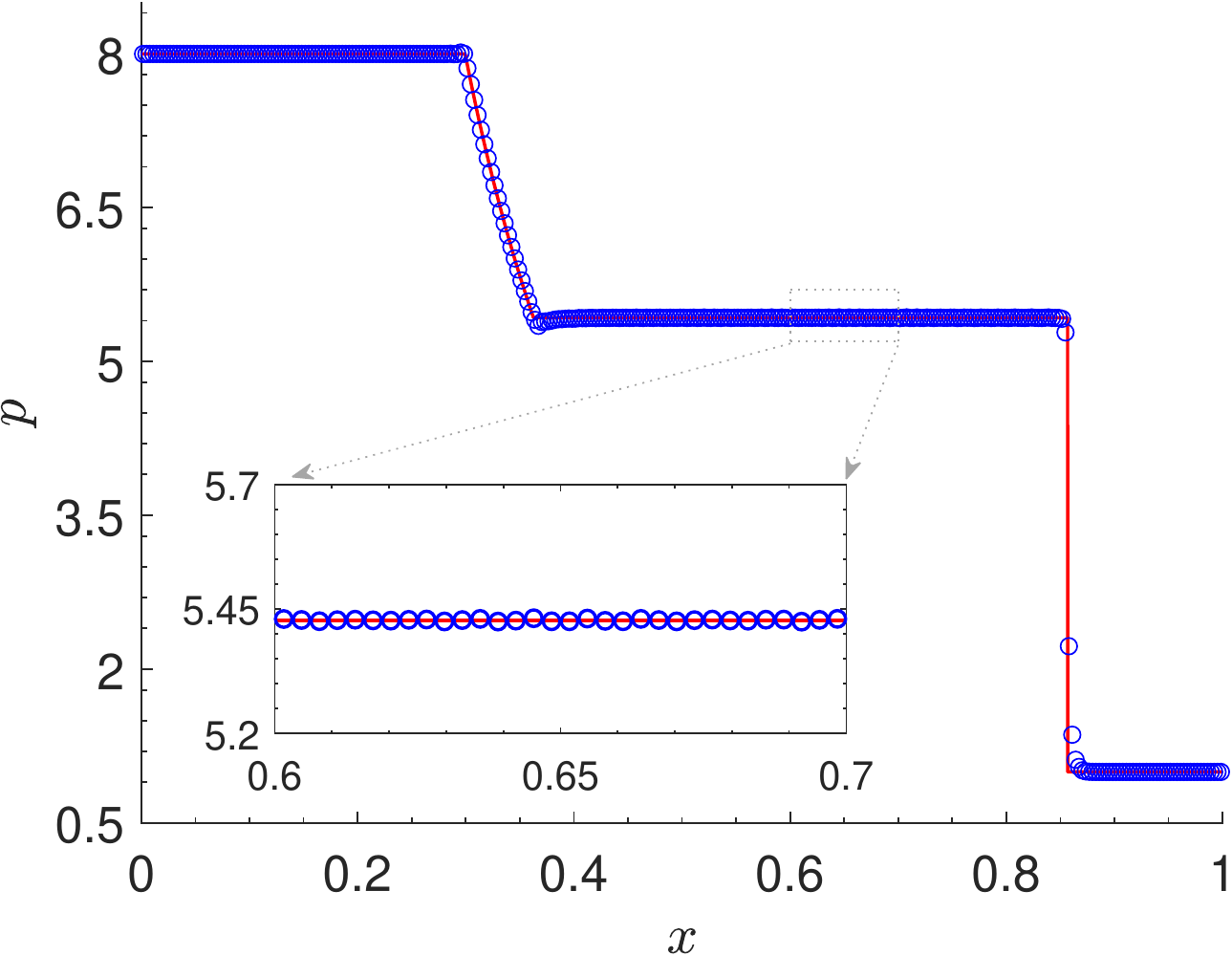}
		\end{center}
		\caption{\small With the IRP limiter}
	\end{subfigure}
	\caption{\small Example 2: Solutions (and their close-up) of the first 1D Riemann problem at $t=0.4$. The symbols ``$\circ$'' denote the numerical solutions obtained by 
		the fourth-order DG methods with the BP limiter (left) or with the IRP limiter (right) on the mesh of $320$ uniform cells, while the solid lines denote the exact solution. 
		(Here we do not use any other non-oscillatory limiters, e.g. TVD/TVB or WENO limiters.)} 
	\label{fig:RP1solu}
\end{figure}

The initial conditions of the first Riemann problem are 
\begin{equation*}
(\rho,v,p)(x,0)=
\begin{cases}
(0.8,0.5,8),  \quad &x<0.5,
\\
(1,0,1), \quad &x>0.5.
\end{cases}
\end{equation*}
The initial discontinuity will evolve as a left-moving rarefaction wave, a constant discontinuity, and a right-moving shock wave. 
Figure \ref{fig:RP1solu} displays the numerical solutions 
computed by the fourth-order DG methods with the BP or IRP limiter respectively 
 on a mesh of 320 uniform cells, against
 the exact solution. 
Note that, in this simulation, we do {\em not} use any other  
non-oscillatory limiters such as the TVD/TVB or WENO limiters. 
We can observe that the numerical results with only the BP limiter exhibit overshoot in the rest-mass density near the contact discontinuity
and some small oscillations. 
When the IRP limiter is applied (i.e., the entropy limiter \eqref{eq:IRP3} is added), 
the overshoot and oscillations in the DG solution are damped. 
This is consistent with the observation in  
\cite{khobalatte1994maximum,zhang2012minimum,jiang2018invariant} that enforcing the 
discrete minimum entropy principle could help to oppress numerical oscillations. 
Figure \ref{fig:RP1_MinS} shows the time evolution of the minimum
specific entropy values of the DG solutions. It is seen that 
the minimum remains the same for the DG scheme with the IRP limiter, which indicates that the 
minimum entropy principle is preserved, while the DG scheme with only the BP limiter fails to 
keep the principle.

\begin{figure}[htbp]
	\centering
	\begin{subfigure}[b]{0.49\textwidth}
		\begin{center}
			\includegraphics[width=0.99\linewidth]{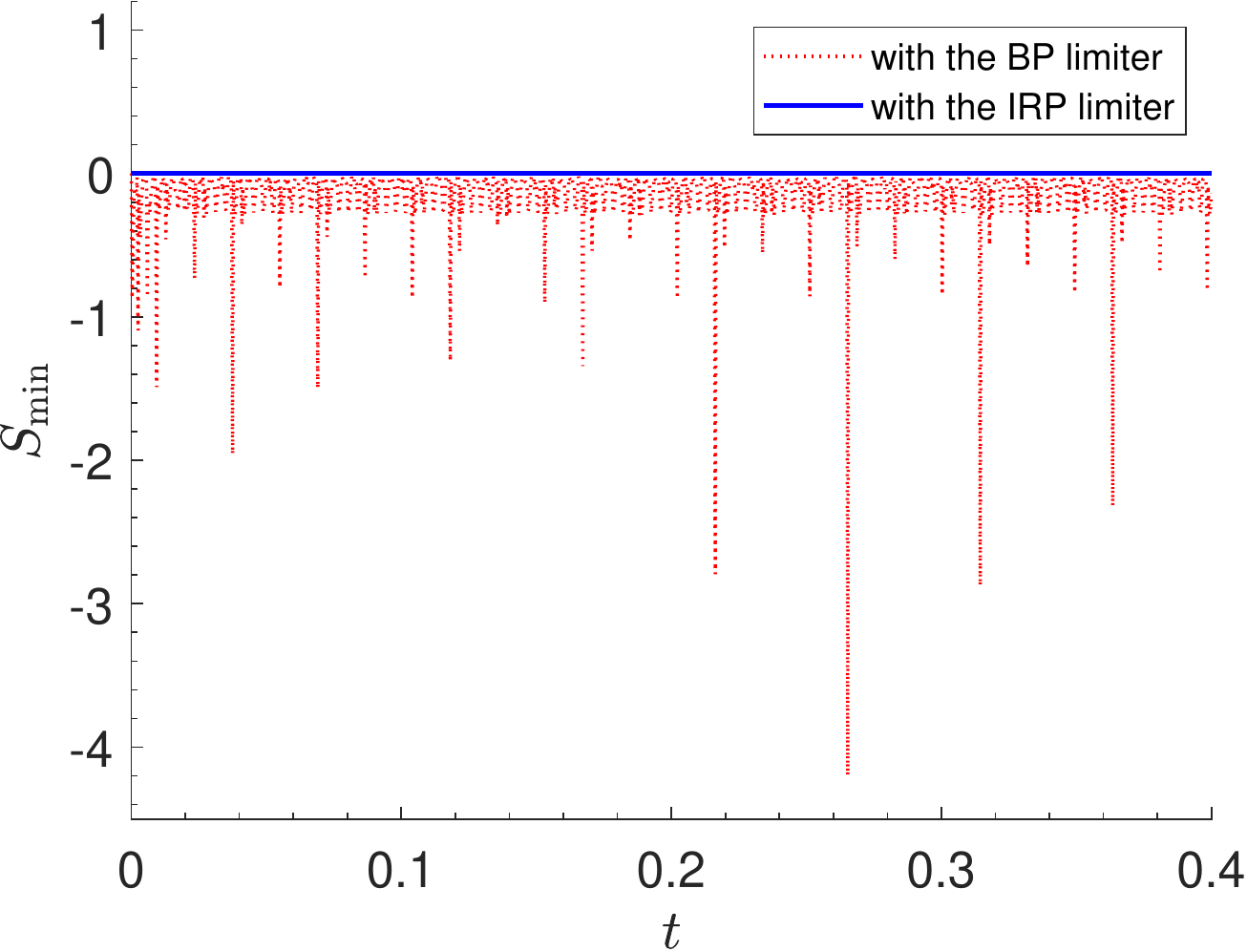}
		\end{center}
	\end{subfigure}
	\begin{subfigure}[b]{0.49\textwidth}
		\begin{center}
			\includegraphics[width=0.99\linewidth]{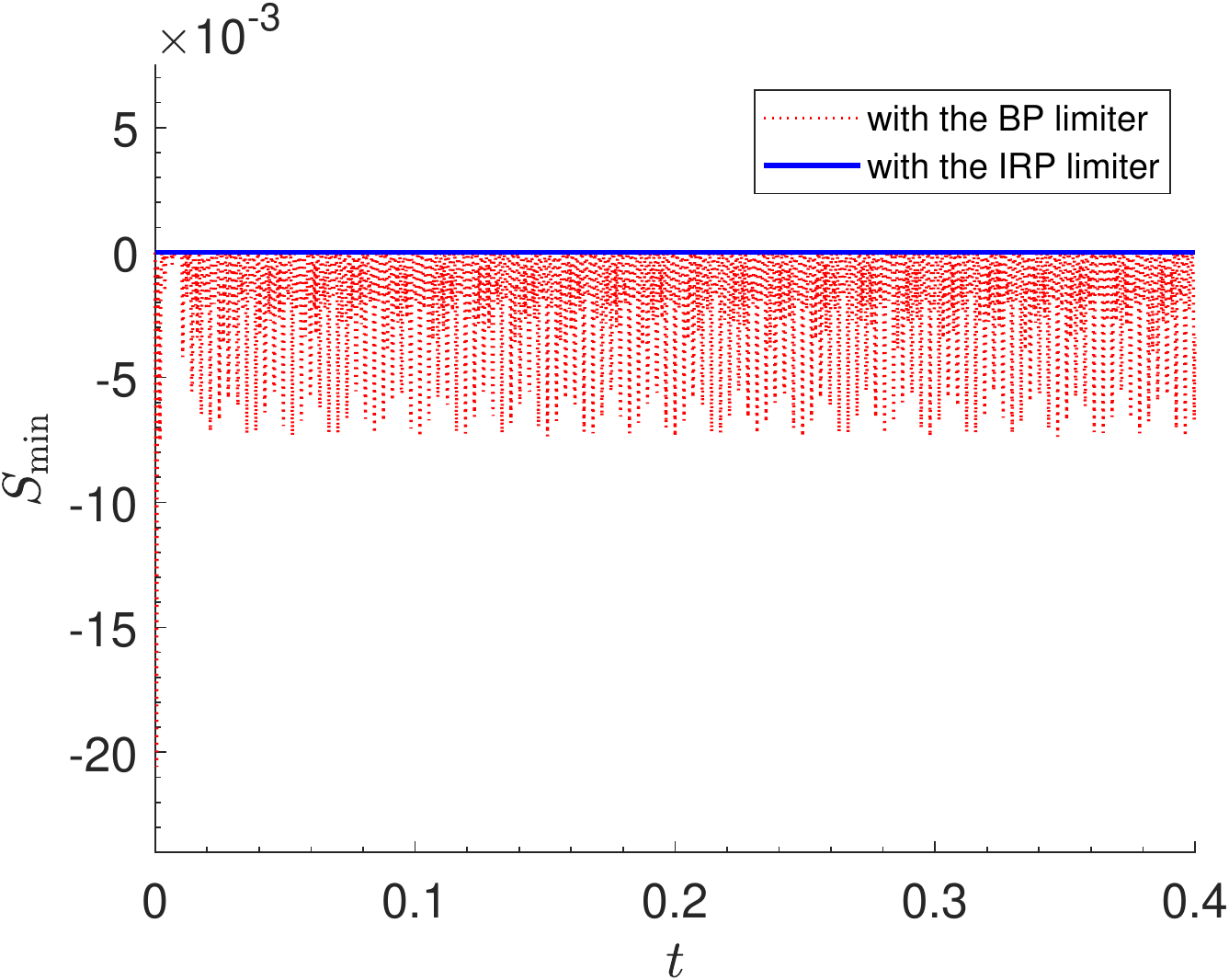}
		\end{center}
	\end{subfigure}
	\caption{\small The first Riemann problem of Example 2: Time evolution of $S_{\min}(t)$ for the DG solutions with the IRP limiter \eqref{eq:IRP1}--\eqref{eq:IRP3} or with the BP limiter \eqref{eq:IRP1}--\eqref{eq:IRP2}. Left: $S_{\min}(t) = \min_{j,\mu} S({\bf U}_h(\widehat x_j^{(\mu)},t))$; right: $S_{\min}(t) = \min_j S(\overline{\bf U}_j(t))$.}
	\label{fig:RP1_MinS}
\end{figure}


In order to demonstrate the robustness and resolution of the proposed IRP DG methods, we simulate a 
ultra-relativistic Riemann problem \cite{WuTang2015}. 
The initial conditions are
\begin{equation*}
(\rho,v,p)(x,0)=
\begin{cases}
(1,0,10^4),  \quad &x<0.5,
\\
(1,0,10^{-8}), \quad &x>0.5,
\end{cases}
\end{equation*}
which involve very low pressure and strong initial jump in pressure ($\Delta p := |p_R-p_L|/p_R \approx 10^{12} $), so that the simulation of this problem is challenging and the constraint-preserving or BP  techniques have to be used \cite{WuTang2015,QinShu2016,WuTang2017ApJS}. 
The initial discontinuity will evolve as a strong left-moving rarefaction wave, a quickly right-moving contact discontinuity, and a quickly right-moving shock wave. 
More precisely, the speeds of the contact discontinuity and the shock wave are about 0.986956 and 0.9963757 respectively, and are very close to the speed of light $c=1$. 
Figure \ref{fig:RP2solu} presents the numerical solutions 
computed by the fourth-order DG methods with the BP 
or IRP limiter respectively  
on a mesh of 400 uniform cells, against
the exact solution. 
Due to 
the ultra-relativistic effect, 
a clearly curved profile for
the rarefaction fan is yielded (see Figure \ref{fig:RP2solu}), as opposed to a linear one in the non-relativistic case. 
Again, here we do {\em not} use any other  
non-oscillatory limiters such as the TVD/TVB or WENO limiters. 
We see that both DG methods work very robustly and exhibit similar high resolution  (the numerical solutions are comparable to those obtained by 
the ninth-order bound-preserving finite difference WENO methods in \cite{WuTang2015}).  
This implies that the use of entropy limiter in the IRP DG method keeps 
the robustness and does not destroy the high resolution 
 of the scheme. However, without the entropy limiter (i.e., with only the BP limiter), the DG scheme would not preserve
the minimum entropy principle and thus is not IRP, as confirmed by the plots in Figure \ref{fig:RP2_MinS}. 
We also remark that if the BP or IRP limiter is not applied, the DG code would break down 
due to nonphysical numerical solutions exceeding the set ${\mathcal G}$.

\begin{figure}[htbp]
	\centering
	\begin{subfigure}[b]{0.49\textwidth}
		\begin{center}
			\includegraphics[width=0.99\linewidth]{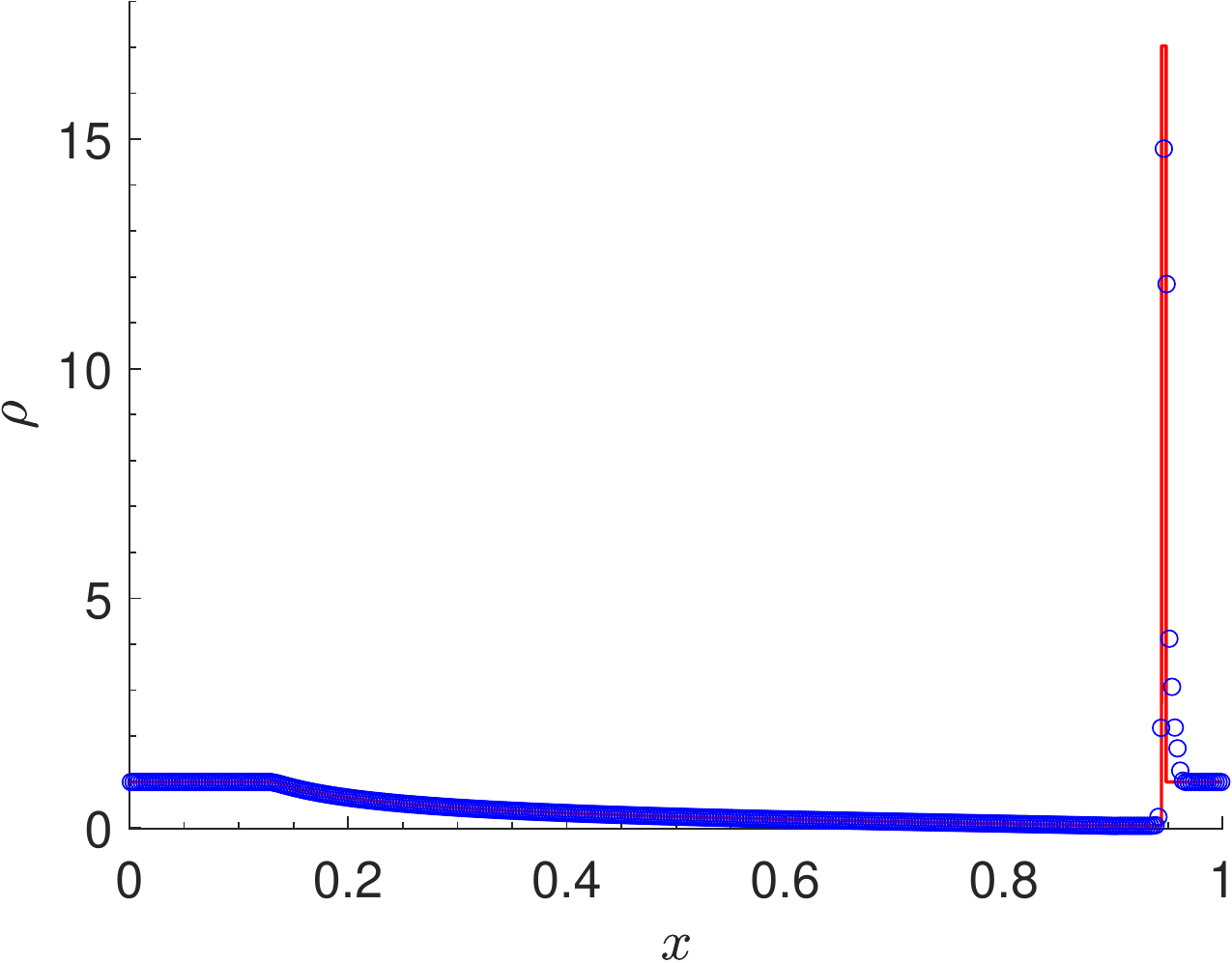}
		\end{center}
	\end{subfigure}
	\begin{subfigure}[b]{0.49\textwidth}
		\begin{center}
			\includegraphics[width=0.99\linewidth]{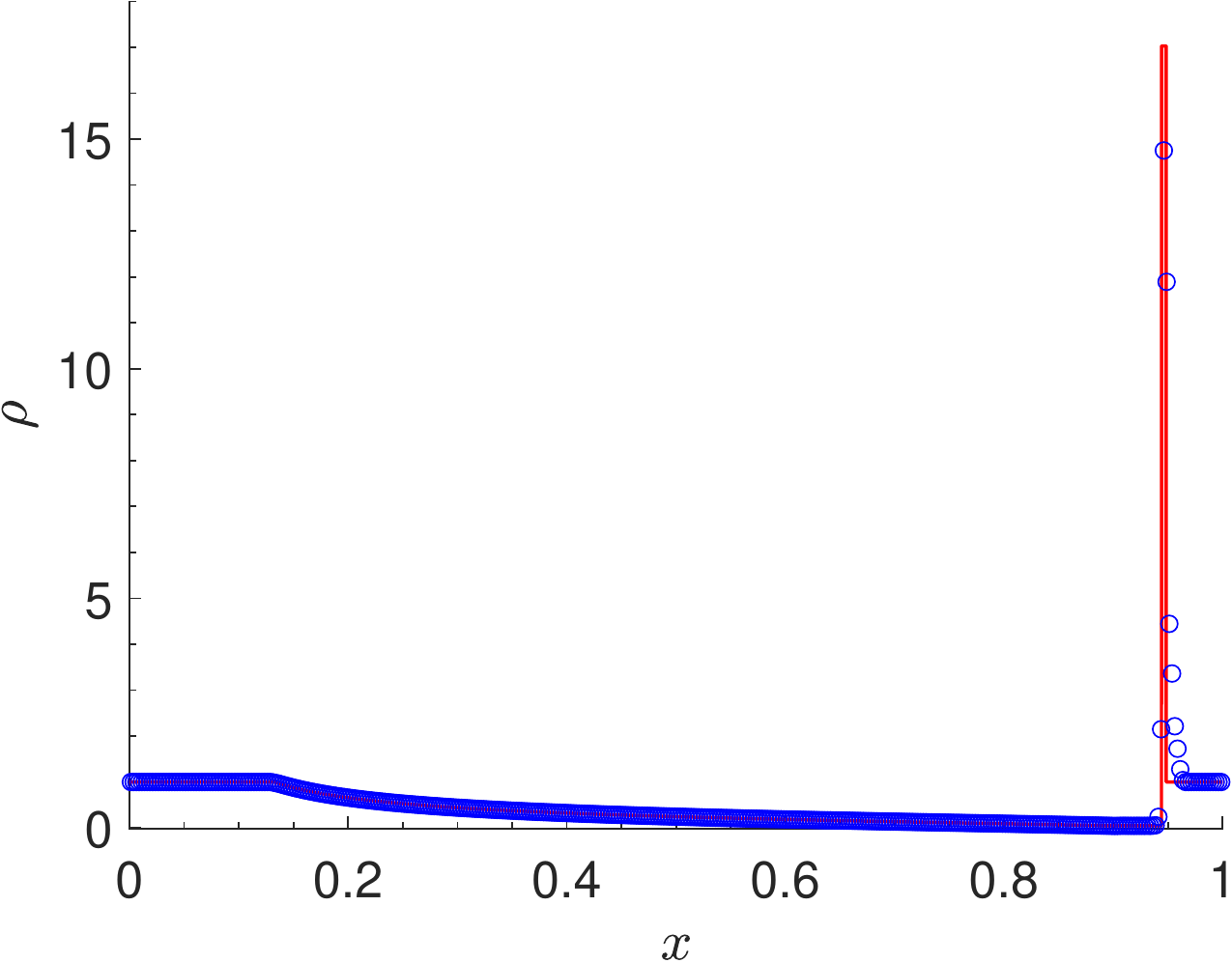}
		\end{center}
	\end{subfigure}
	\begin{subfigure}[b]{0.49\textwidth}
		\begin{center}
			\includegraphics[width=0.99\linewidth]{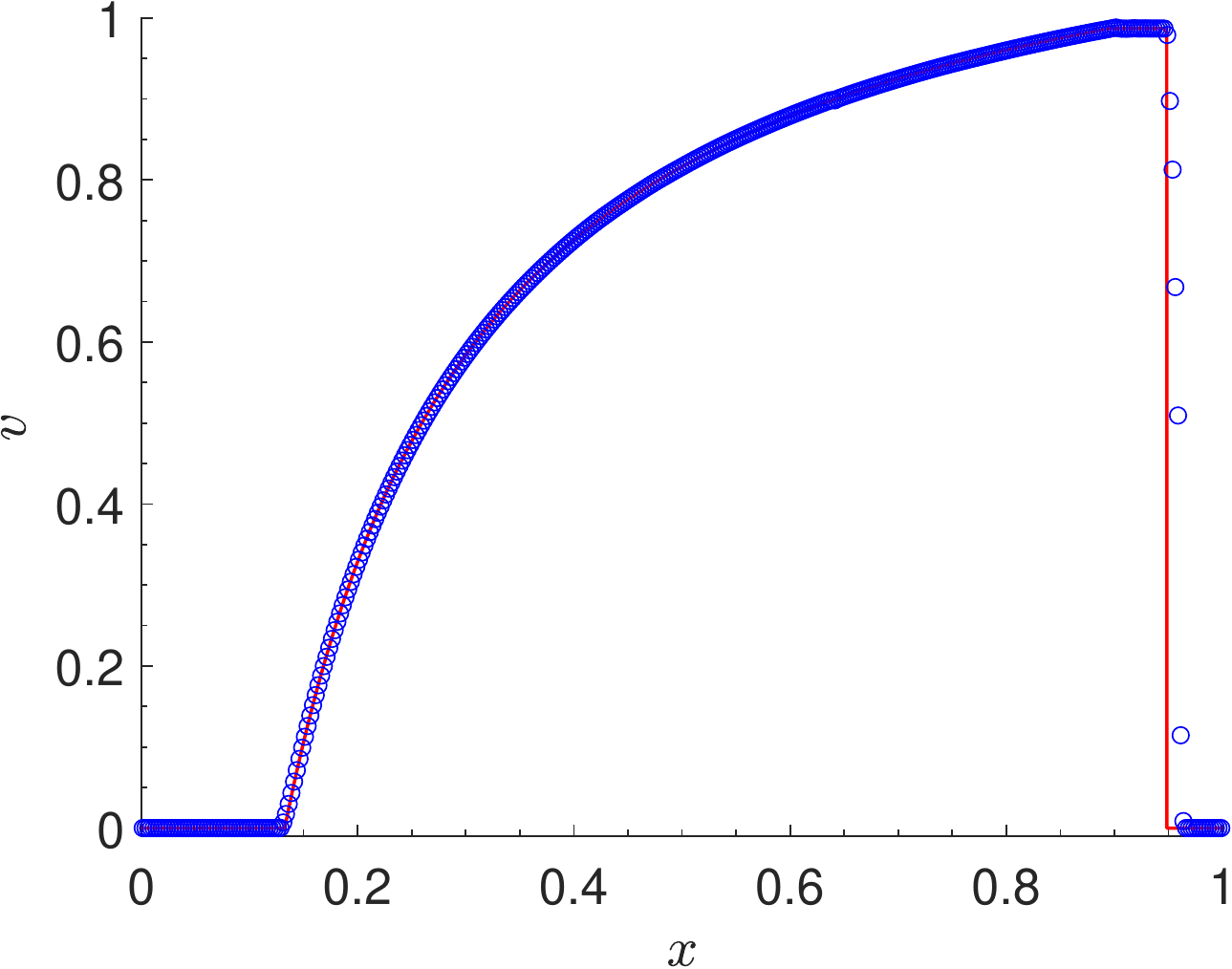}
		\end{center}
	\end{subfigure}
	\begin{subfigure}[b]{0.49\textwidth}
		\begin{center}
			\includegraphics[width=0.99\linewidth]{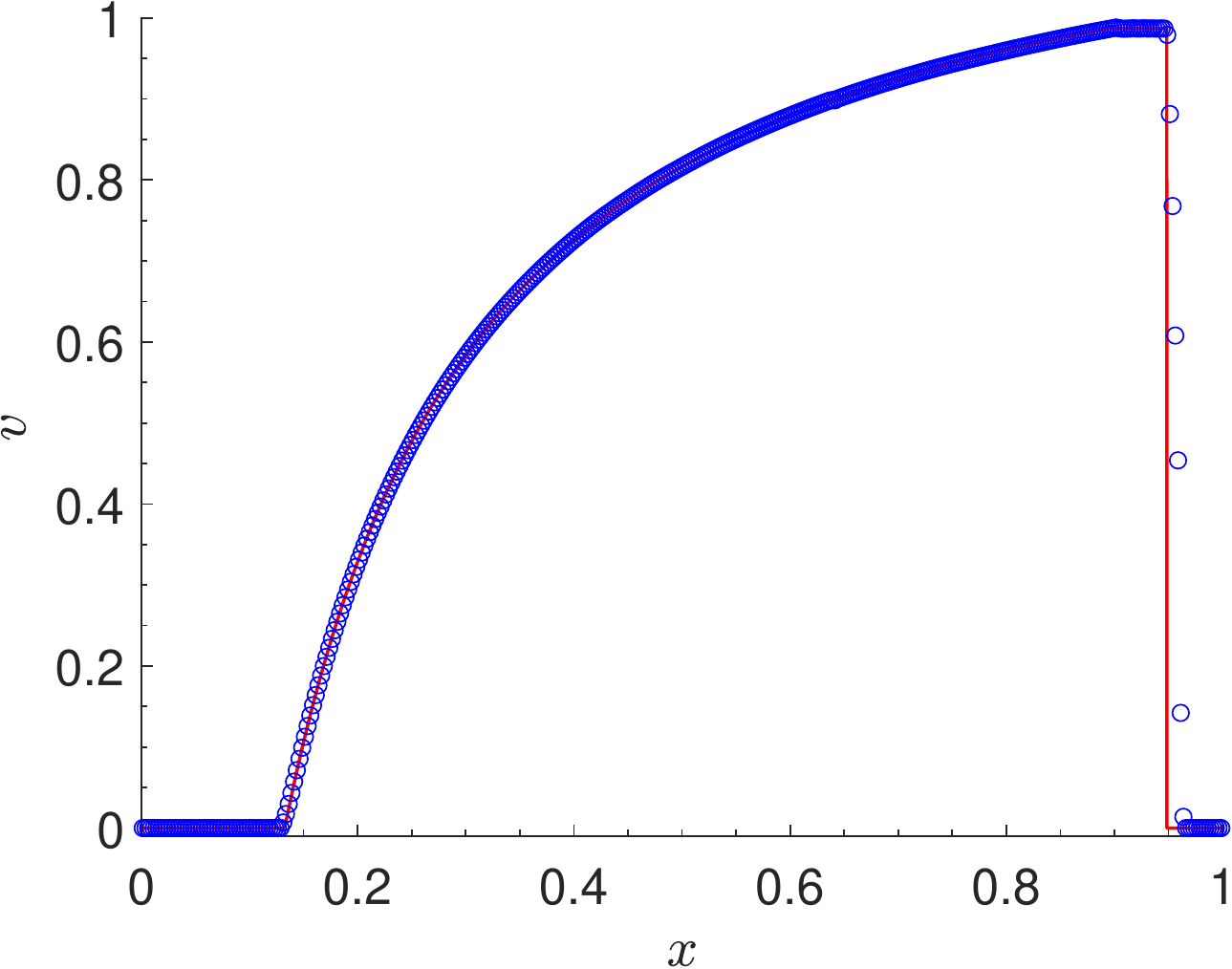}
		\end{center}
	\end{subfigure}
	\begin{subfigure}[b]{0.49\textwidth}
		\captionsetup{width=.8\linewidth}
		\begin{center}
			\includegraphics[width=0.99\linewidth]{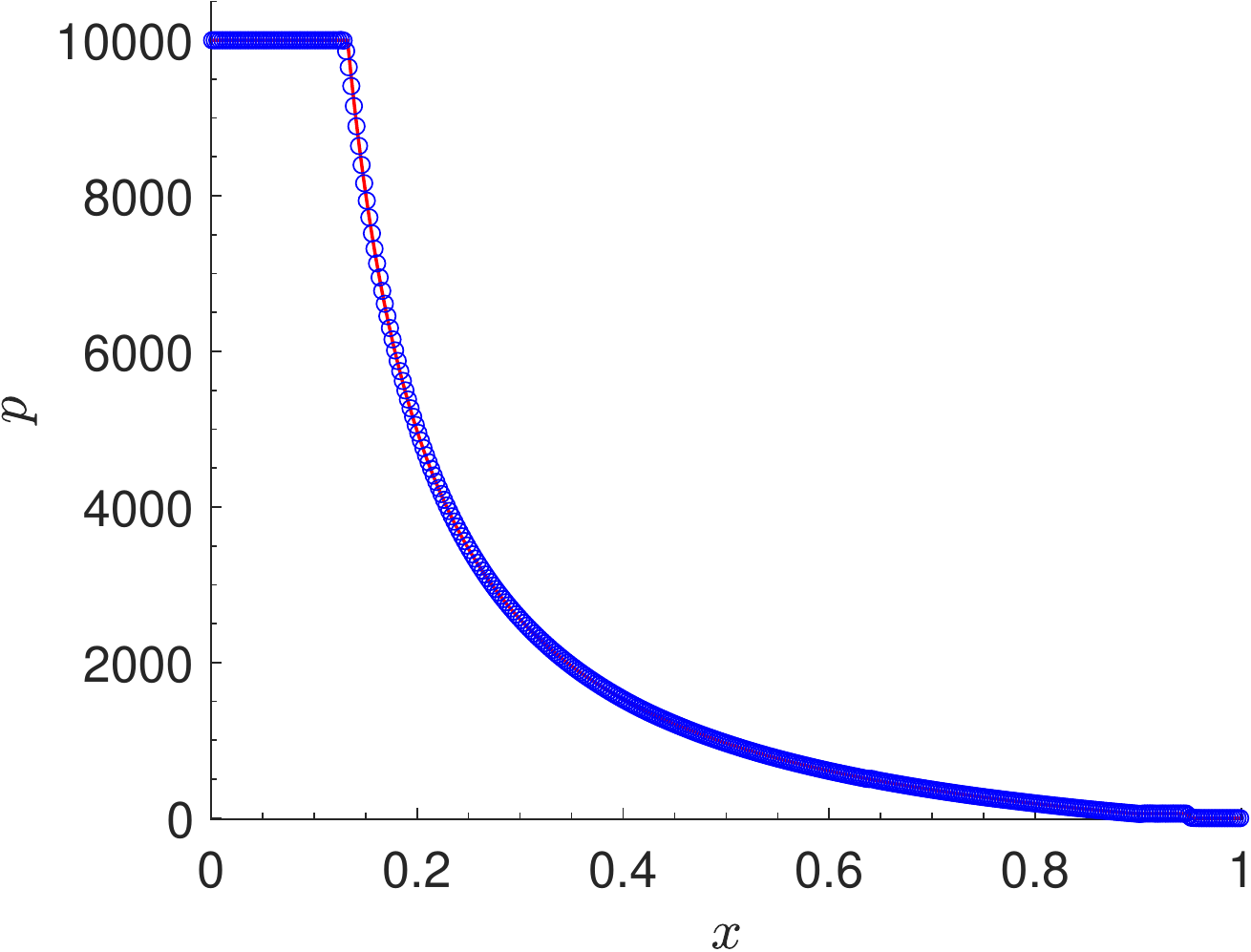}
		\end{center}
	\caption{\small With the BP limiter}
	\end{subfigure}
	\begin{subfigure}[b]{0.49\textwidth}
		\captionsetup{width=.8\linewidth}
		\begin{center}
			\includegraphics[width=0.99\linewidth]{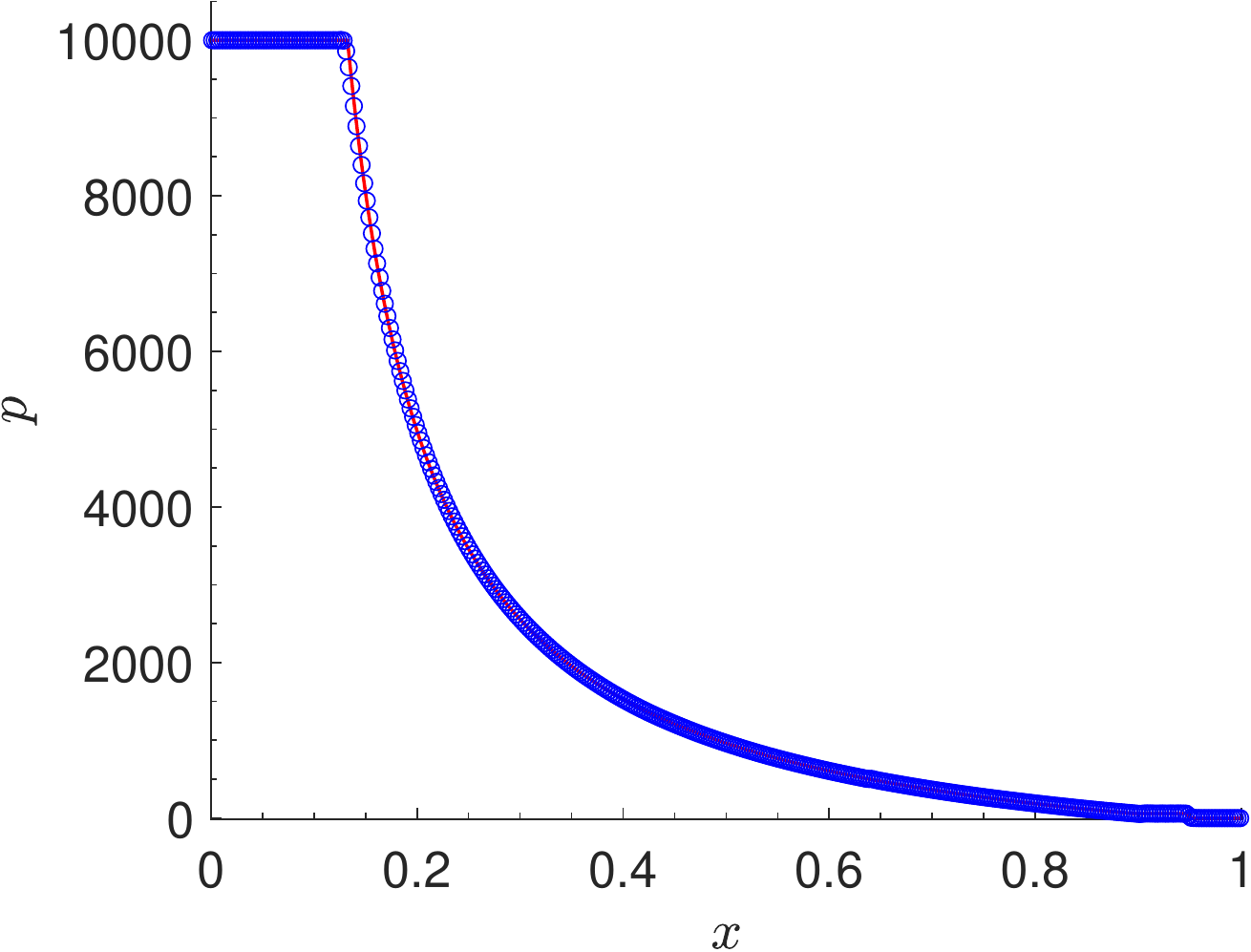}
		\end{center}
	\caption{\small With the IRP limiter}
	\end{subfigure}
	\caption{\small Example 2: Solutions of the second 1D Riemann problem at $t=0.45$. The symbols ``$\circ$'' denote the numerical solutions obtained by 
		the fourth-order DG methods with the BP limiter (left) or with the IRP limiter (right) on the mesh of $400$ uniform cells, while the solid lines denote the exact solution. (Here we do not use any other non-oscillatory limiters, e.g. TVD/TVB or WENO limiters.)} 
	\label{fig:RP2solu}
\end{figure}

\begin{figure}[htbp]
	\centering
	\begin{subfigure}[b]{0.49\textwidth}
		\begin{center}
			\includegraphics[width=0.99\linewidth]{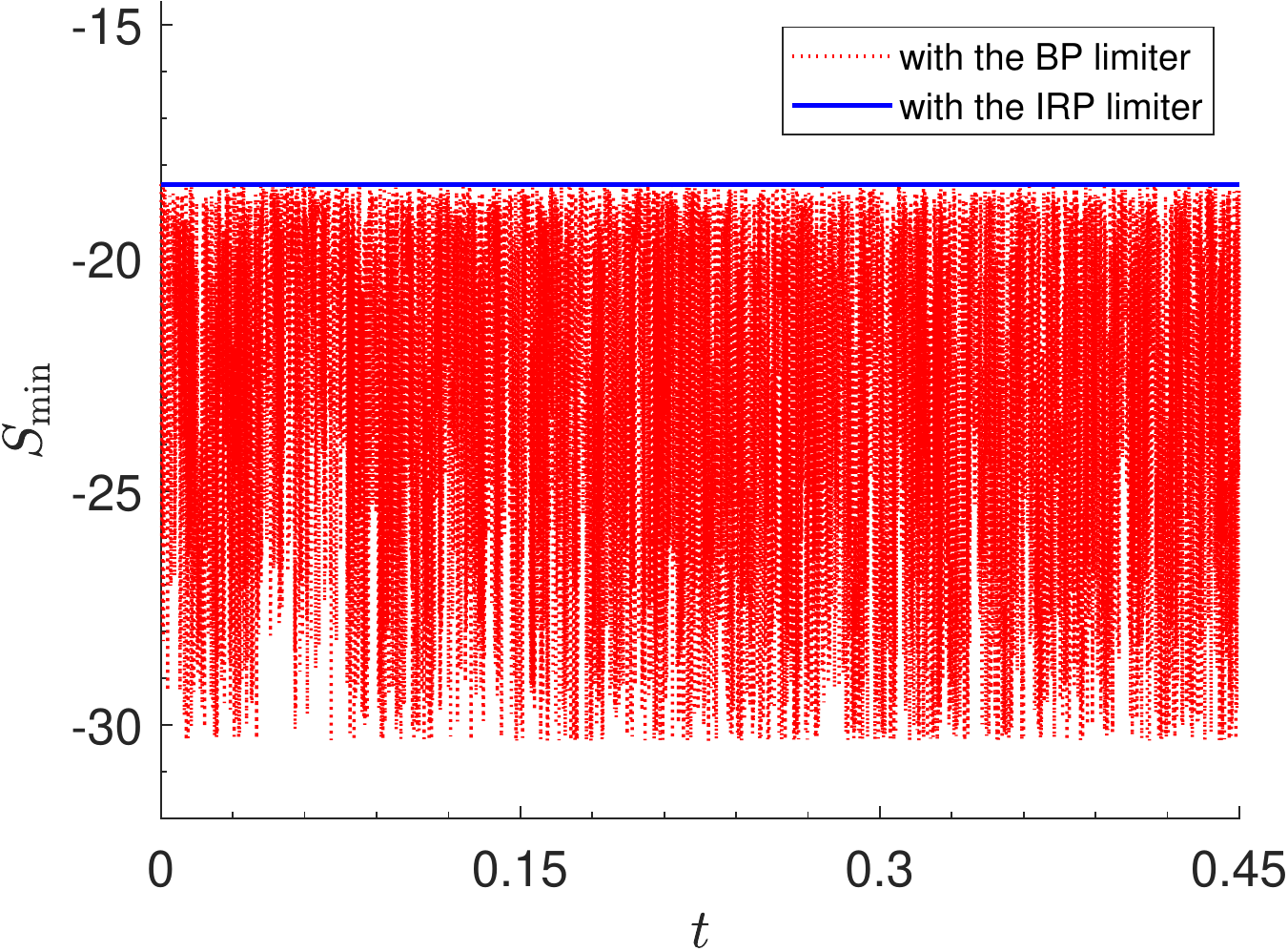}
		\end{center}
	\end{subfigure}
	\begin{subfigure}[b]{0.49\textwidth}
		\begin{center}
			\includegraphics[width=0.99\linewidth]{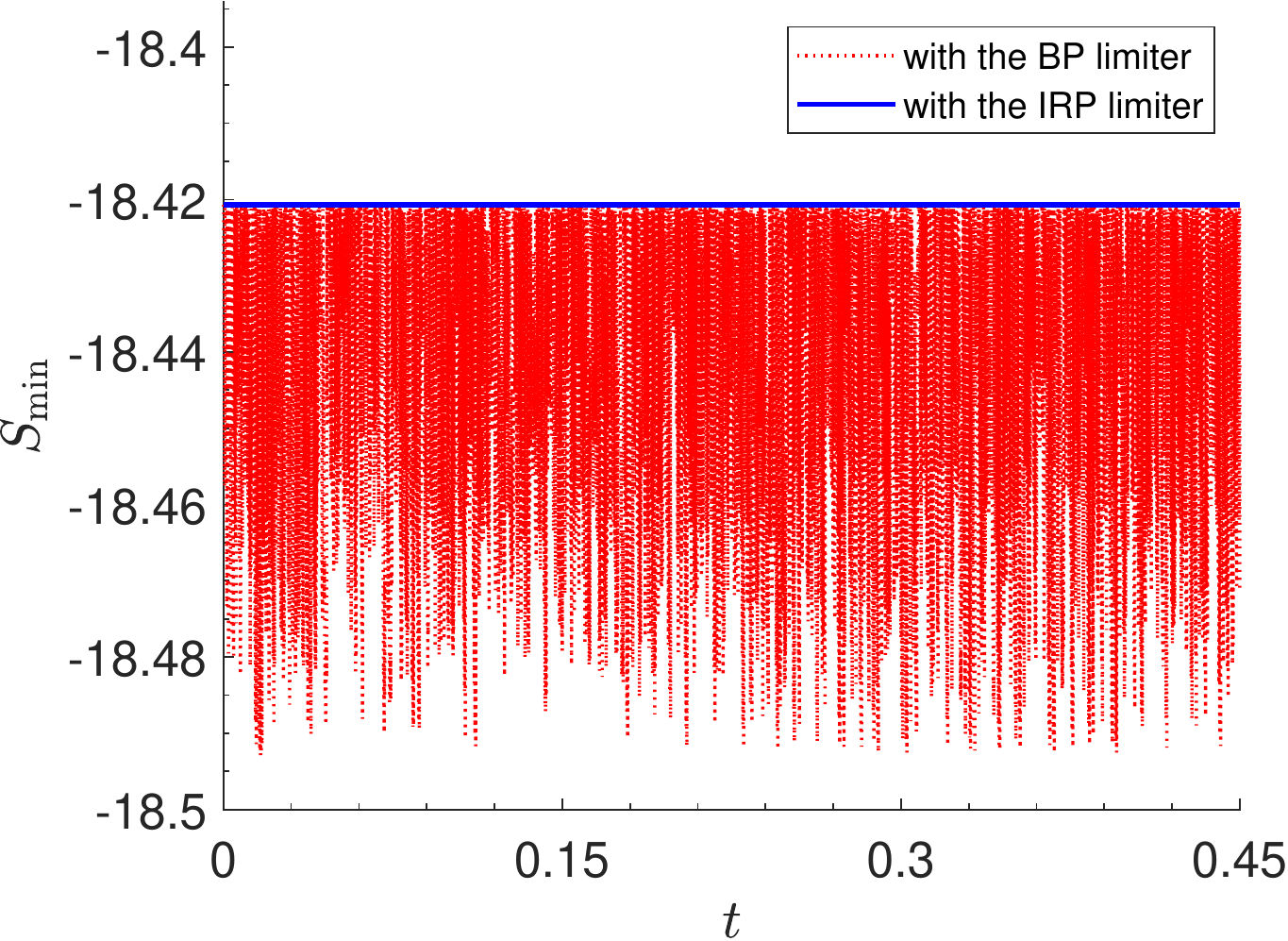}
		\end{center}
	\end{subfigure}
	\caption{\small The second Riemann problem of Example 2: Time evolution of $S_{\min}(t)$ for the DG solutions with the IRP limiter \eqref{eq:IRP1}--\eqref{eq:IRP3} or with the BP limiter \eqref{eq:IRP1}--\eqref{eq:IRP2}. Left: $S_{\min}(t) = \min_{j,\mu} S({\bf U}_h(\widehat x_j^{(\mu)},t))$; right: $S_{\min}(t) = \min_j S(\overline{\bf U}_j(t))$.}
	\label{fig:RP2_MinS}
\end{figure}


\subsection{Example 3: 2D smooth problem}
To check the accuracy of our 2D IRP DG
methods we simulate    
a smooth problem from \cite{WuTang2017ApJS}. 
The initial data are 
\begin{equation*}
(\rho,{\bm v},p) (x,y,0) = \big( 1+0.99999 \sin( 2 \pi (x+y) ),~0.99/\sqrt{2},~0.99/\sqrt{2},~10^{-2} \big). 
\end{equation*}
The computational domain is taken as $[0,1]^2$ with periodic boundary conditions, so that 
the exact solution is 
\begin{equation*}
(\rho,{\bm v},p) (x,y,t) = \big( 1+0.99999 \sin( 2 \pi (x+y-0.99\sqrt{2}t) ),~0.99/\sqrt{2},~0.99/\sqrt{2},~10^{-2} \big),  
\end{equation*}
which describes the propagation of an RHD sine wave with low density, low pressure, and large velocity,   
in the domain $[0,1]^2$ at an angle $45^{\circ}$ with the $x$-axis.

In the computations, the domain is partitioned into $N \times N$ uniform rectangular cells 
with $N \in \{ 10, 20,40,80,160 \}$. 
For the $P^3$-based DG method, we take (only in this accuracy test) the time step-sizes as $\Delta t = 0.1 \big( \frac{\Delta x}2 \big)^{\frac43}$ 
and $\Delta t = \frac{0.1}3 \big( \frac{\Delta x}2 \big)^{\frac43}$ for the third-order SSP-RK and SSP-MS time discretizations respectively,  
so as to 
match the fourth-order accuracy of spatial DG discretization. 
Table \ref{tab:Ex2Dsmooth} lists the numerical errors at $t=0.2$ in the rest-mass density and the corresponding convergence rates 
for the $P^k$-based IRP DG methods ($k=1,2,3$) 
at different grid resolutions. 
Similar to the 1D case and as also observed in \cite{zhang2012minimum,QinShu2016}, 
the accuracy slightly degenerates for SSP-RK and $k\ge 2$. 
The desired full order of accuracy is observed for the SSP-MS time discretization, confirming that the IRP limiter itself does not destroy the accuracy for smooth solutions as expected.

\begin{table}[htb]
	\centering
	\caption{\small Example 3: 
		Errors at $t=0.2$ in the rest-mass density 
		for the proposed $P^k$-based DG methods ($k=1,2,3$), with the SSP-RK or SSP-MS time discretization,   
		at different spatial grid resolutions.
	}\label{tab:Ex2Dsmooth}
	\begin{tabular}{c|c|c|c|c|c|c|c|c|c}
		\hline
		&     & \multicolumn{4}{c|}{SSP-RK}             & \multicolumn{4}{c}{SSP-MS}             \\ \hline
		$k$                 & $N$   & $l^1$ error & order & $l^2$ error & order & $l^1$ error & order & $l^2$ error & order \\ \hline
		\multirow{5}{*}{1} 
		& 10  & 3.95e-2  & --    & 4.80e-2  & --    & 4.02e-2  & --    & 4.80e-2  & --    \\ 
		& 20  & 7.62e-3  & 2.37  & 1.01e-2  & 2.24  & 7.54e-3  & 2.41  & 1.00e-2  & 2.26  \\ 
		& 40  & 1.65e-3  & 2.21  & 2.25e-3  & 2.17  & 1.65e-3  & 2.20  & 2.24e-3  & 2.15  \\ 
		& 80  & 3.85e-4  & 2.10  & 5.41e-4  & 2.05  & 3.84e-4  & 2.10  & 5.40e-4  & 2.05  \\ 
		& 160 & 9.49e-5  & 2.02  & 1.34e-4  & 2.01  & 9.48e-5  & 2.02  & 1.34e-4  & 2.01  \\ \hline
		\multirow{5}{*}{2} 
		& 10  & 1.14e-2  & --    & 1.46e-2  & --    & 1.06e-2  & --    & 1.34e-2  & --    \\ 
		& 20  & 3.90e-4  & 4.88  & 5.00e-4  & 4.87  & 3.80e-4  & 4.80  & 4.89e-4  & 4.78  \\ 
		& 40  & 4.89e-5  & 3.00  & 6.21e-5  & 3.01  & 4.11e-5  & 3.21  & 5.47e-5  & 3.16  \\ 
		& 80  & 6.55e-6  & 2.90  & 8.62e-6  & 2.85  & 4.90e-6  & 3.07  & 6.72e-6  & 3.03  \\ 
		& 160 & 7.65e-7  & 3.10  & 1.22e-6  & 2.83  & 6.08e-7  & 3.01  & 8.38e-7  & 3.00  \\ \hline
		\multirow{5}{*}{3} 
		& 10  & 2.51e-4  & --    & 3.75e-4  & --    & 2.47e-4  & --    & 3.70e-4  & --    \\ 
		& 20  & 1.82e-5  & 3.79  & 2.65e-5  & 3.82  & 1.81e-5  & 3.77  & 2.68e-5  & 3.79  \\ 
		& 40  & 9.60e-7  & 4.24  & 1.43e-6  & 4.21  & 9.02e-7  & 4.33  & 1.30e-6  & 4.37  \\ 
		& 80  & 6.55e-8  & 3.87  & 1.04e-6  & 3.78  & 5.56e-8  & 4.02  & 7.67e-8  & 4.08  \\ 
		& 160 & 4.51e-9  & 3.86  & 9.14e-9  & 3.51  & 3.46e-9  & 4.01  & 4.71e-9  & 4.03  \\ \hline
	\end{tabular}
\end{table}

\subsection{Example 4: Shock-bubble interaction}

This example simulates the interaction between a planar shock wave and a light bubble within the 
domain $[0,325]\times [-45,45]$. 
The setup is the same as in \cite{he2012adaptive,zhao2013runge}. 
Initially, a left-moving relativistic shock wave is located at $x=265$ 
with the left and right states given by 
\begin{equation*}
(\rho,{\bm v},p)(x,y,0)=
\begin{cases}
(1,~0,~0,~0.05), & \quad x < 265,
\\
(1.865225080631180,~-0.196781107378299,~0,~0.15), & \quad x>265.
\end{cases}
\end{equation*}
A light circular bubble with the radius of $25$ is initially centered at $(215,0)$ in front of the initial shock wave. The fluid state within the bubble is given by 
\begin{equation*}
(\rho,{\bm v},p)(x,y,0)= ( 0.1358,~0,~0,~0.05 ), \qquad \sqrt{ (x-215)^2+y^2 } \le 25.
\end{equation*}
The reflective conditions are specified at both the top and bottom boundaries $\{y=\pm45, 0 \le x \le 325 \}$, and the inflow (resp. outflow) boundary condition is enforced at the right (resp. left) boundary.

\begin{figure}[htbp]
	\centering
	\begin{subfigure}[b]{0.49\textwidth}
		\begin{center}
			\includegraphics[width=0.99\linewidth]{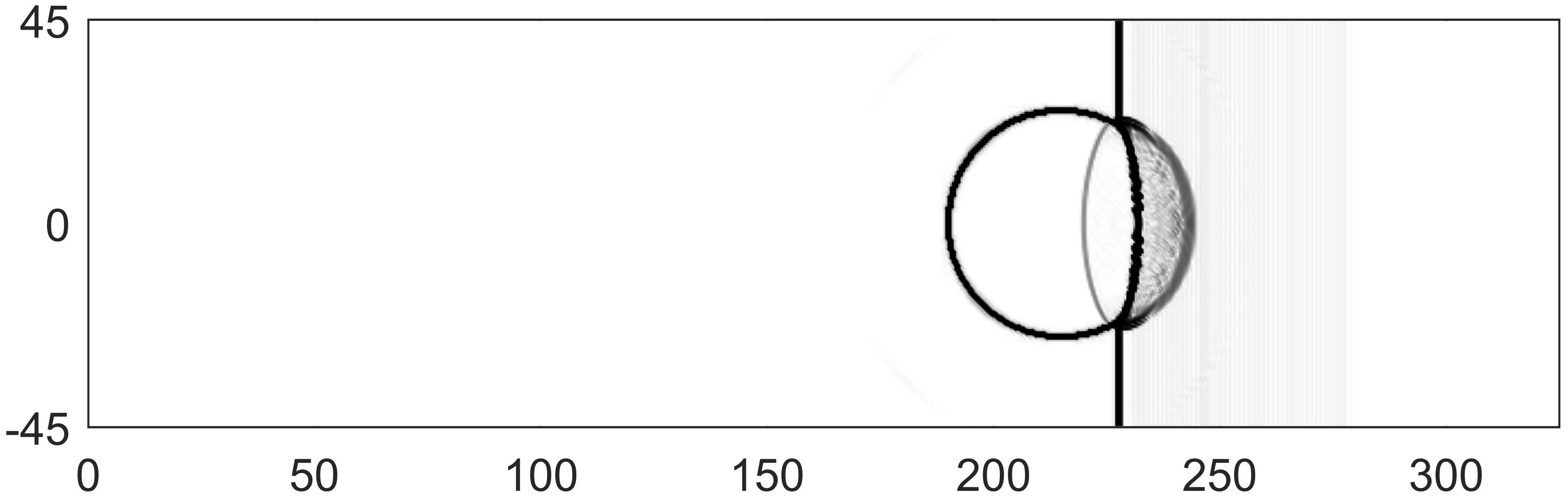}
		\end{center}
	\end{subfigure}
	\begin{subfigure}[b]{0.49\textwidth}
		\begin{center}
			\includegraphics[width=0.99\linewidth]{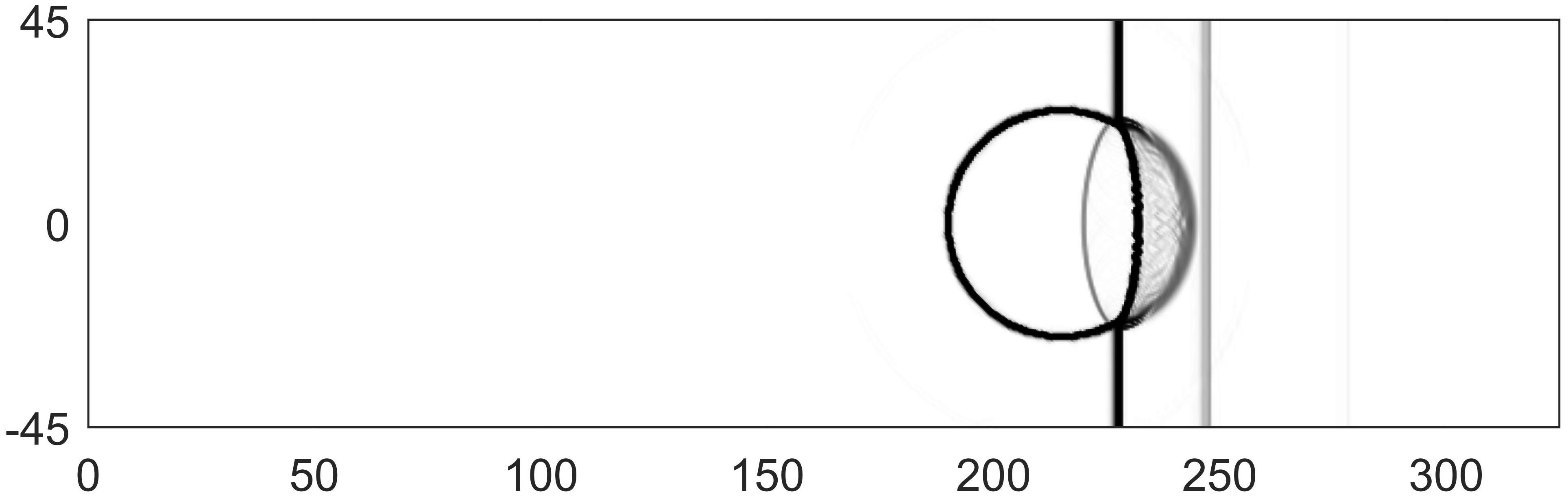}
		\end{center}
	\end{subfigure}
	\begin{subfigure}[b]{0.49\textwidth}
		\begin{center}
			\includegraphics[width=0.99\linewidth]{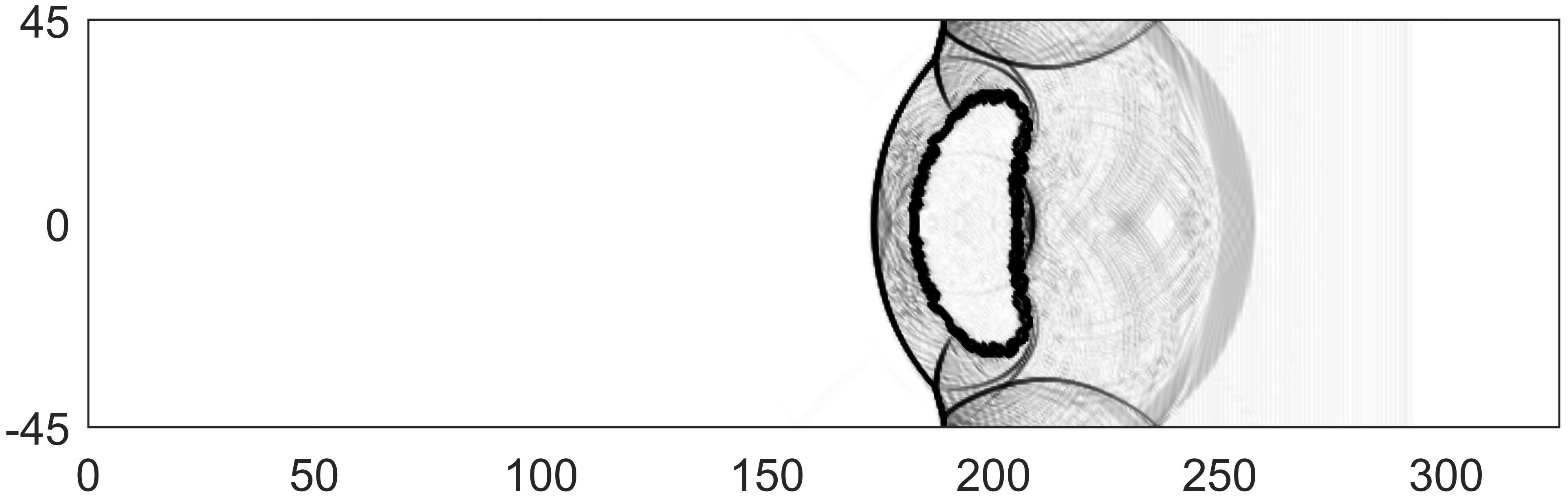}
		\end{center}
	\end{subfigure}
	\begin{subfigure}[b]{0.49\textwidth}
		\begin{center}
			\includegraphics[width=0.99\linewidth]{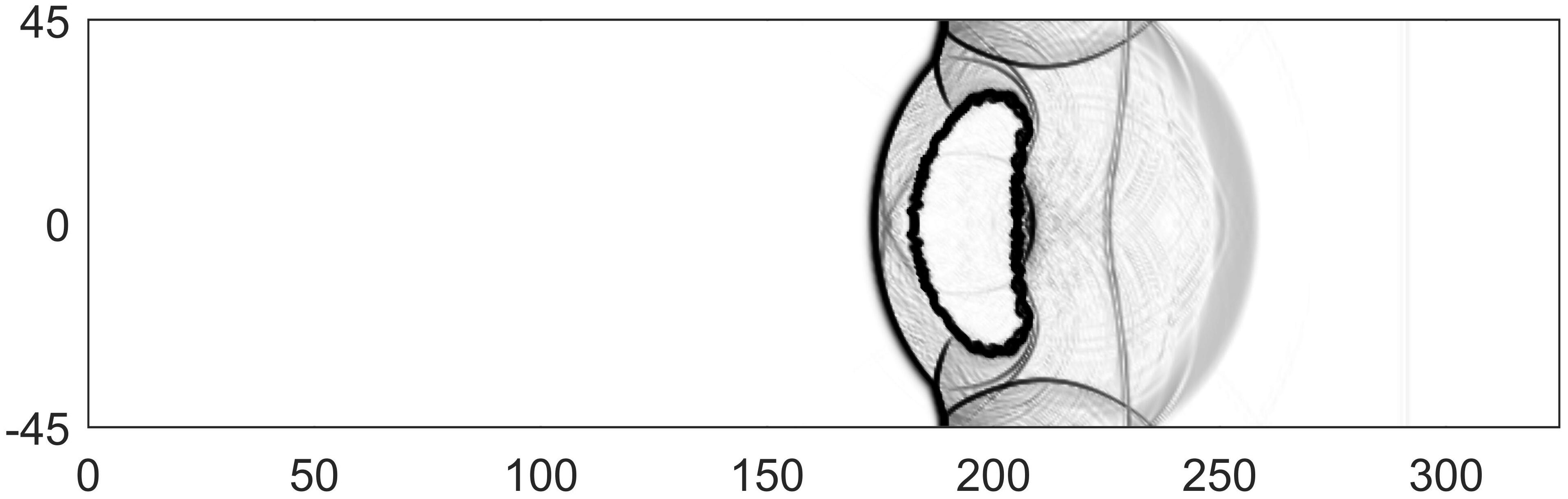}
		\end{center}
	\end{subfigure}
	\begin{subfigure}[b]{0.49\textwidth}
		\begin{center}
			\includegraphics[width=0.99\linewidth]{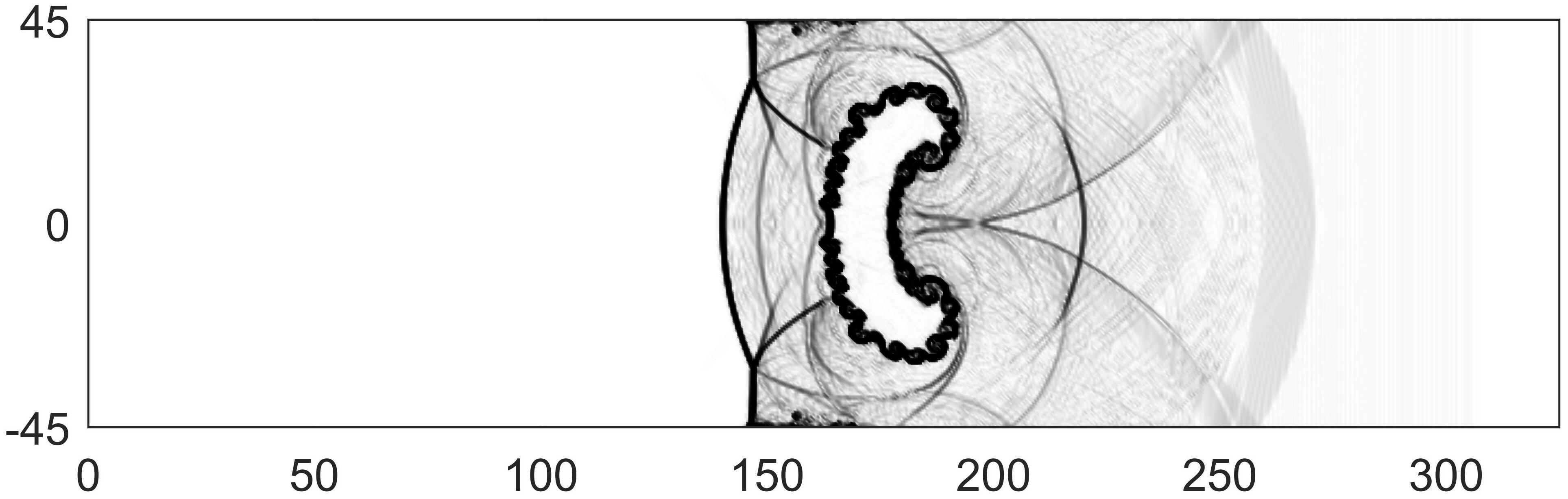}
		\end{center}
	\end{subfigure}
	\begin{subfigure}[b]{0.49\textwidth}
		\begin{center}
			\includegraphics[width=0.99\linewidth]{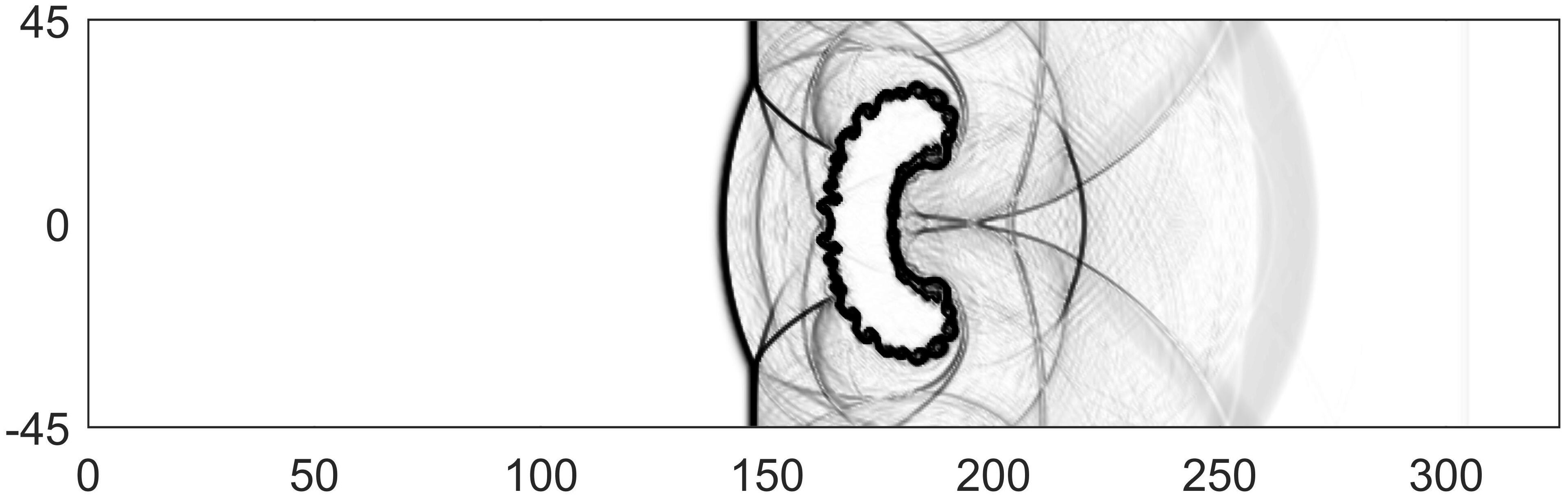}
		\end{center}
	\end{subfigure}
	\begin{subfigure}[b]{0.49\textwidth}
	\begin{center}
		\includegraphics[width=0.99\linewidth]{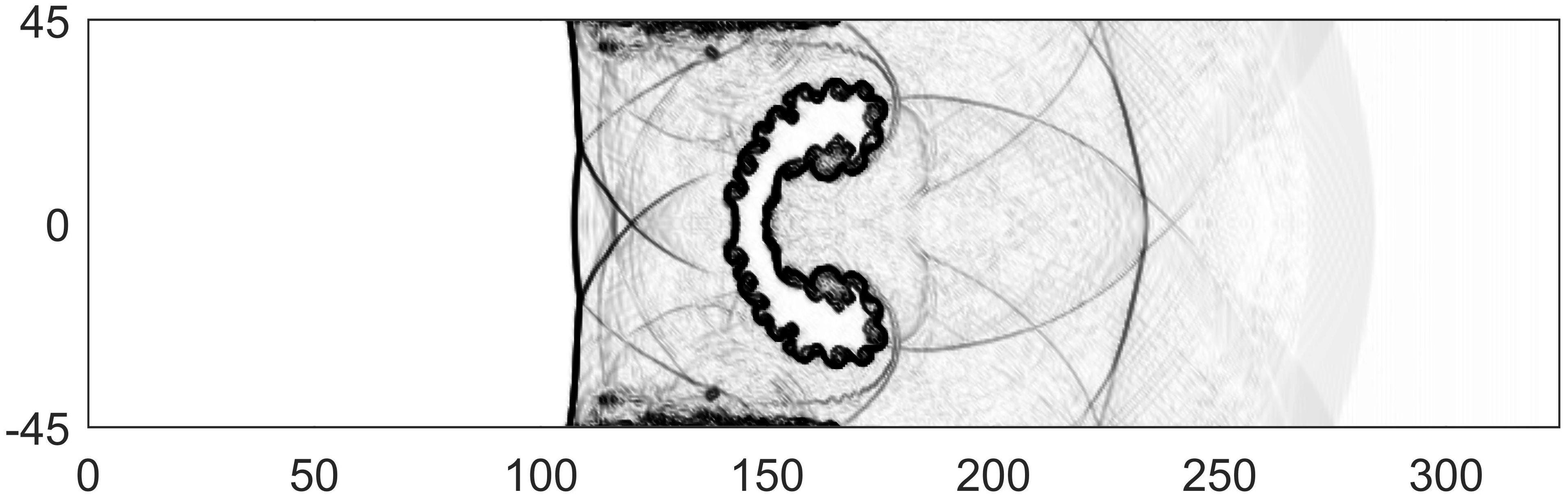}
	\end{center}
	\end{subfigure}
	\begin{subfigure}[b]{0.49\textwidth}
		\begin{center}
			\includegraphics[width=0.99\linewidth]{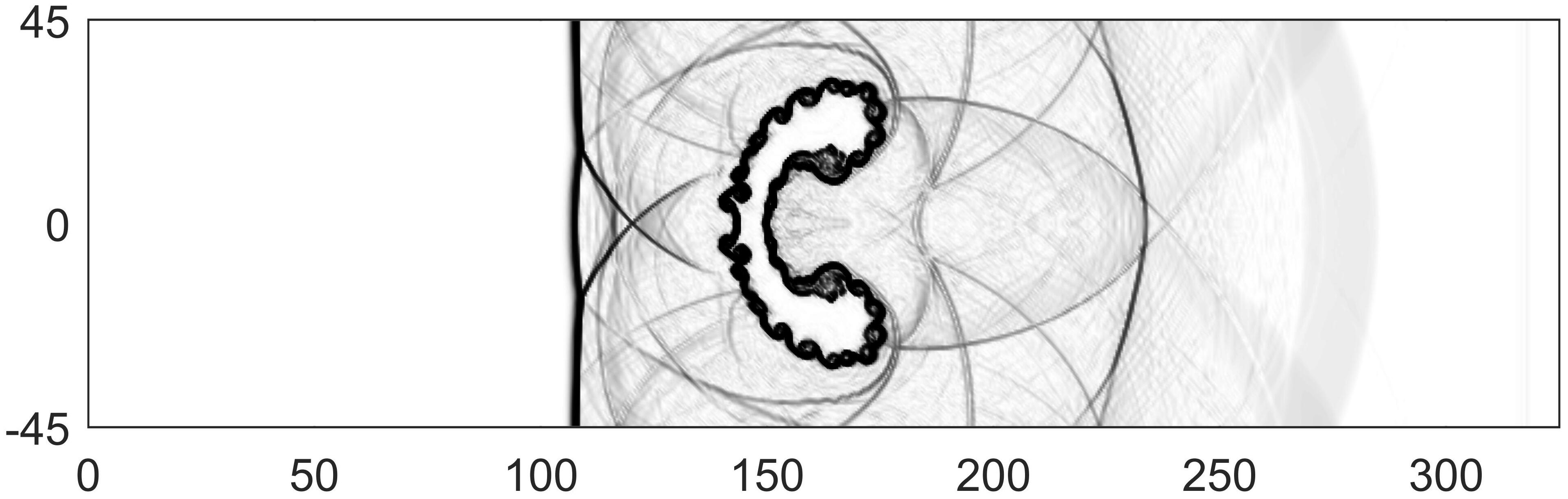}
		\end{center}
	\end{subfigure}
	\begin{subfigure}[b]{0.49\textwidth}
		\captionsetup{width=.8\linewidth}
	\begin{center}
		\includegraphics[width=0.99\linewidth]{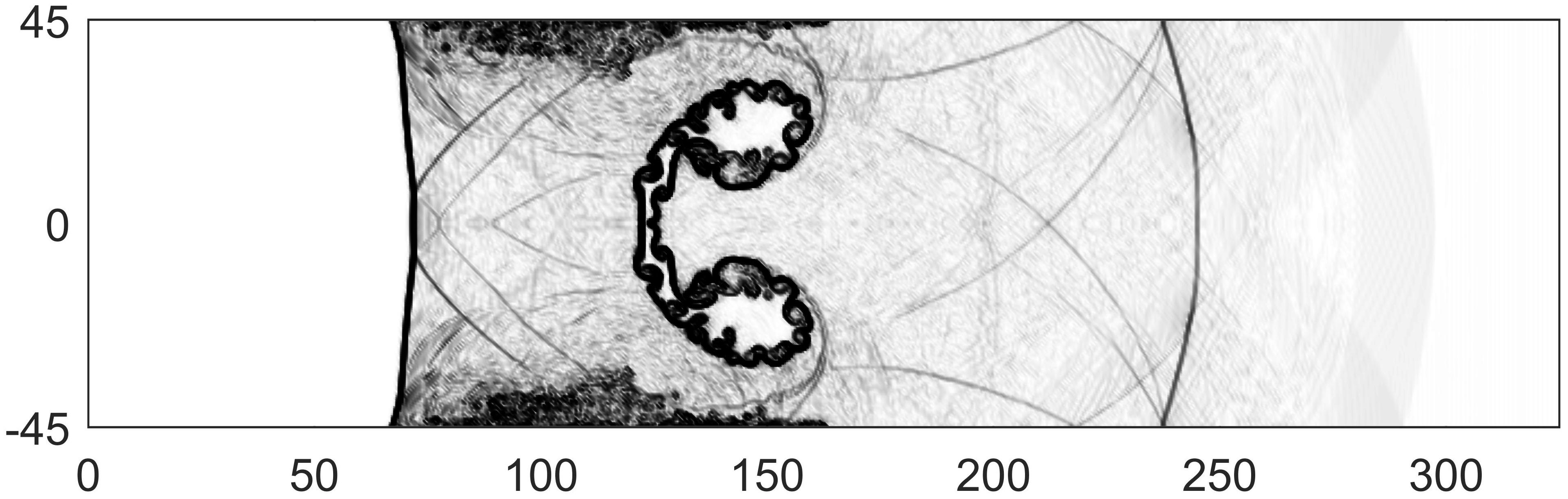}
	\end{center}
\caption{\small With the BP limiter}
	\end{subfigure}
	\begin{subfigure}[b]{0.49\textwidth}
		\captionsetup{width=.8\linewidth}
		\begin{center}
			\includegraphics[width=0.99\linewidth]{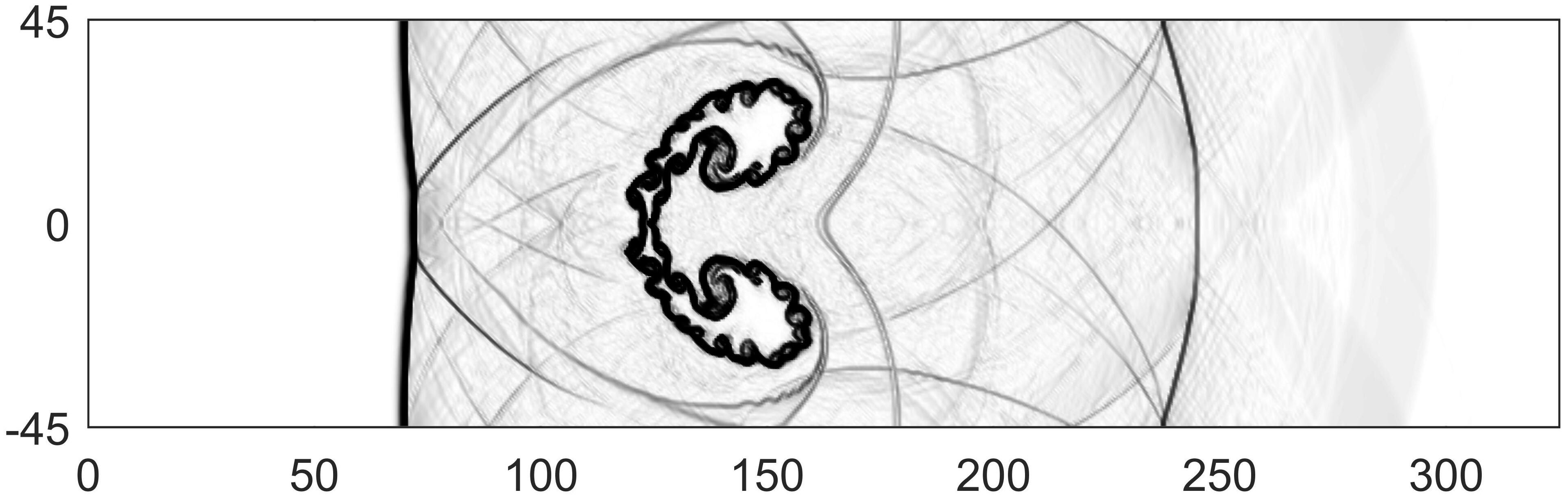}
		\end{center}
	\caption{\small With the IRP limiter}
	\end{subfigure}
	\caption{\small Example 4: Schlieren images of the density 
		 at $t=90$, $180$, $270$, $360$, and $450$ (from top to bottom) obtained by 
		the fourth-order DG methods on the mesh of $650 \times 180$ cells. (Here we do not use any other non-oscillatory limiters, e.g.~TVD/TVB or WENO limiters.)} 
	\label{fig:SB}
\end{figure}

In this simulation, we also do not use any other non-oscillatory limiters, e.g.~TVD/TVB or WENO limiters. 
Figure \ref{fig:SB} shows the numerical results 
obtained by the fourth-order DG methods, with the BP limiter or the IRP limiter respectively, on a mesh of $650 \times 180$ uniform cells. 
We observe serious oscillations developing near the top and bottom boundaries in 
the DG solution with only the BP limiter. 
However, if the IRP limiter is used, the undesirable oscillations are almost oppressed, and the discontinuities and some small wave structures including the curling of the bubble interface are well resolved. 
To check the preservation of minimum entropy principle, 
we plot the time evolution of the minimum
specific entropy values of the DG solutions in Figure \ref{fig:SB_MinS}. 
It shows that 
the minimum stays the same for the DG scheme with the IRP limiter, which indicates that the 
minimum entropy principle is maintained, while the DG scheme with only the BP limiter does not preserve the principle.

\begin{figure}[htbp]
		\begin{subfigure}[b]{0.49\textwidth}
		\begin{center}
			\includegraphics[width=0.99\linewidth]{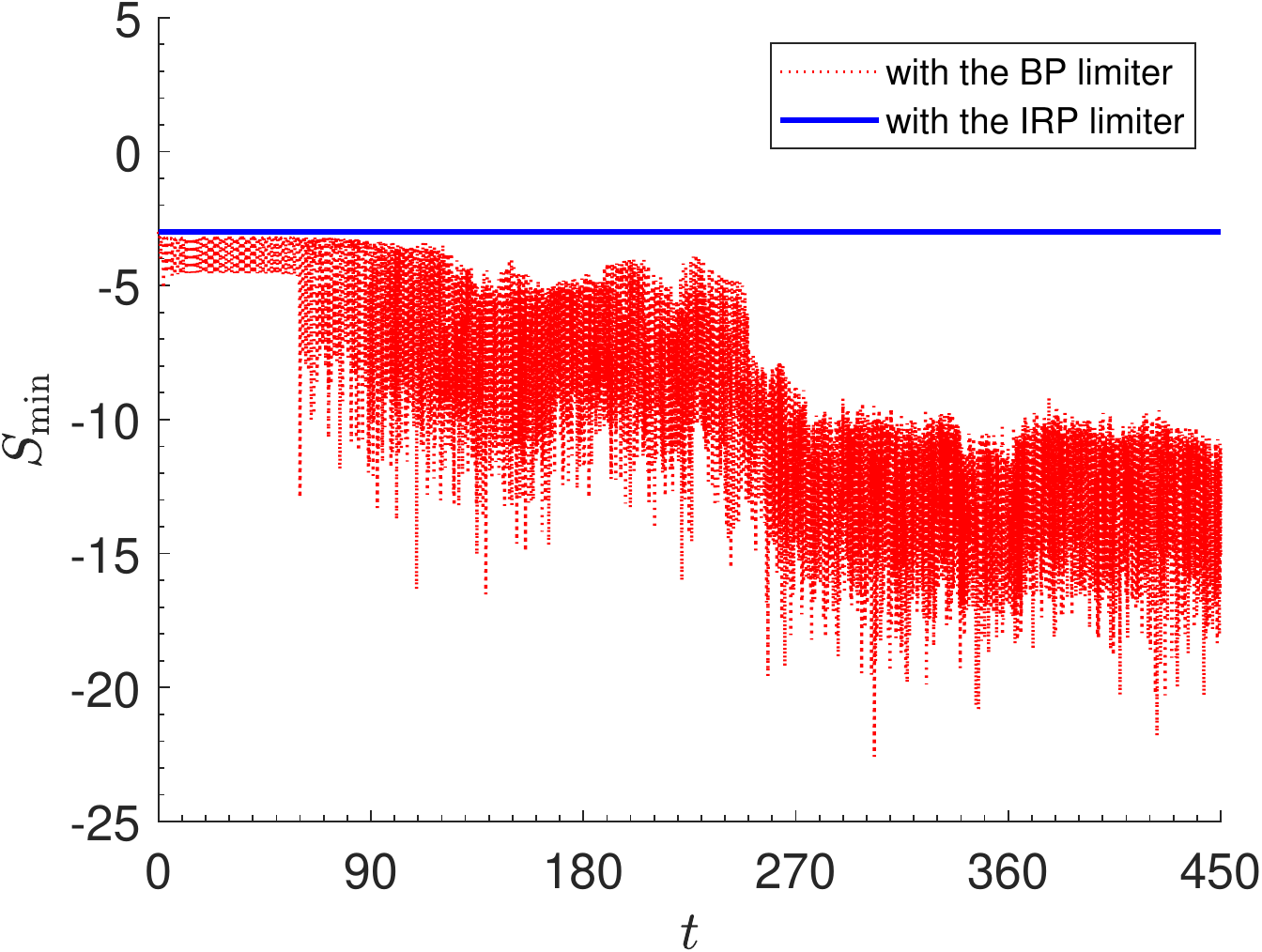}
		\end{center}
	\end{subfigure}
	\begin{subfigure}[b]{0.49\textwidth}
		\begin{center}
			\includegraphics[width=0.99\linewidth]{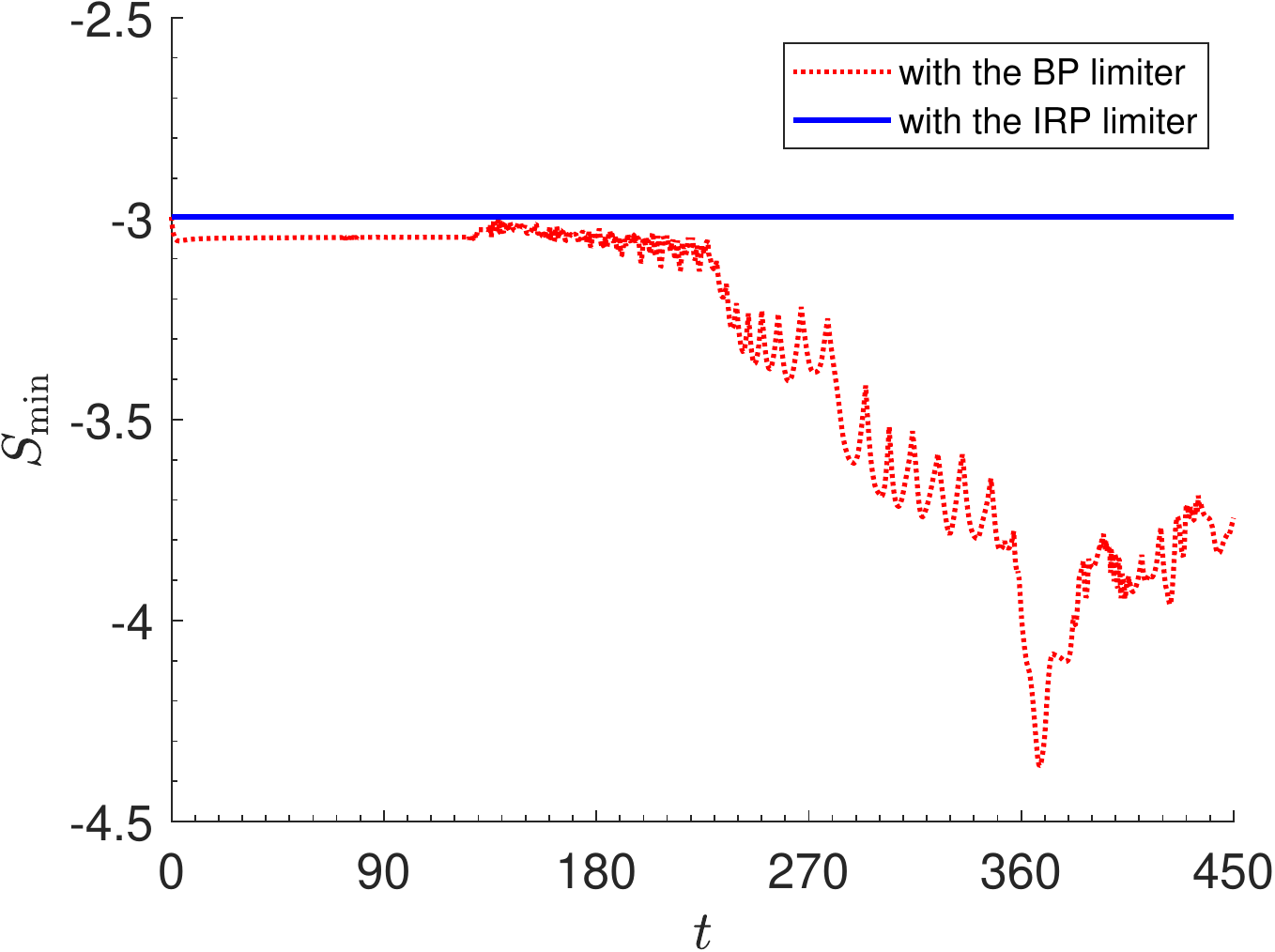}
		\end{center}
	\end{subfigure}
	\caption{\small Example 4: Time evolution of $S_{\min}(t) $ for the DG solutions with the IRP limiter or with the BP limiter. Left: $S_{\min}(t) = \min_{K \in {\mathcal T}_h} \min_{ {\bm x} \in {\mathbb X}_K } S({\bf U}_h({\bm x},t))$; right: $S_{\min}(t) = \min_{K \in {\mathcal T}_h} S(\overline{\bf U}_K(t))$.   }
		\label{fig:SB_MinS}
\end{figure}

\subsection{Example 5: Two 2D Riemann problems} 
In this test, we simulate two Riemann problems of the 2D RHD equations, 
which have become benchmark tests for checking the accuracy and resolution of 2D RHD codes (cf.~\cite{he2012adaptive,zhao2013runge,WuTang2015,WuTang2017ApJS,bhoriya2020entropy}).

\begin{figure}[htbp]
	\centering
		\begin{subfigure}[b]{0.49\textwidth}
		\captionsetup{width=.85\linewidth}
		\begin{center}
			\includegraphics[width=0.99\linewidth]{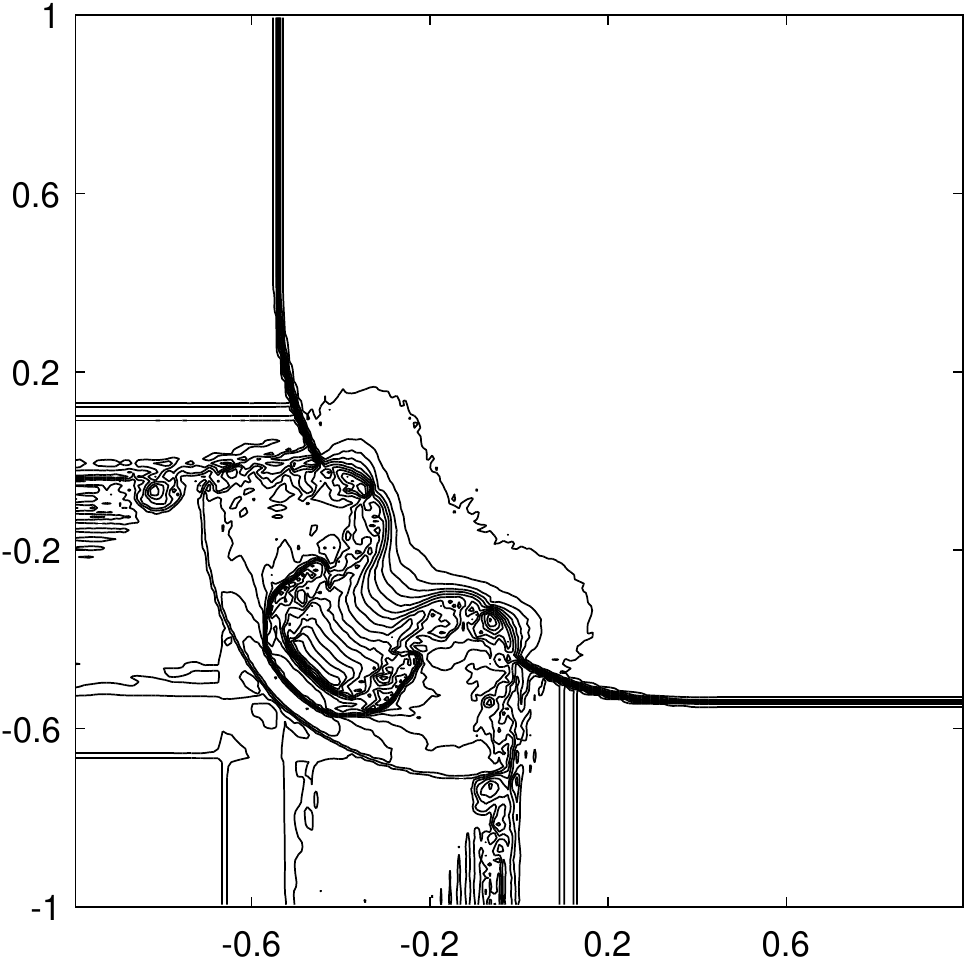}
		\end{center}
		\caption{With the BP limiter (without any other non-oscillatory limiters)}
	\end{subfigure}
	\begin{subfigure}[b]{0.49\textwidth}
		\captionsetup{width=.85\linewidth}
		\begin{center}
			\includegraphics[width=0.99\linewidth]{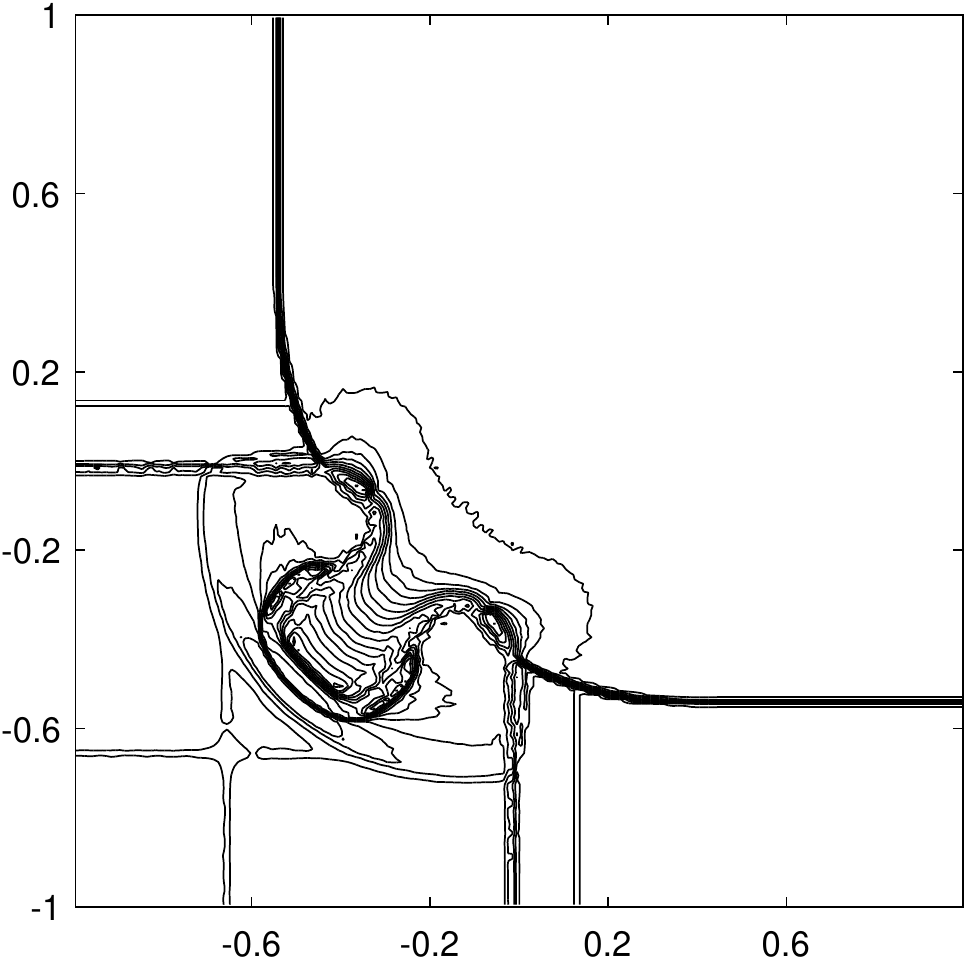}
		\end{center}
		\caption{With the IRP limiter (without any other non-oscillatory limiters)}
	\end{subfigure}
	\caption{\small The first Riemann problem of Example 5: 
		The contours of the rest-mass density logarithm at $t=0.8$ obtained by using 
		the fourth-order DG methods with the BP limiter (left) or with the proposed IRP limiter (right), on the mesh of $200 \times 200$ cells. (18 equally spaced contour lines from $-7.8981$ to $-2.5631$ are displayed.)} 
	\label{fig:2DRP2solu}
\end{figure}

\begin{figure}[htbp]
	\begin{subfigure}[b]{0.49\textwidth}
		\begin{center}
			\includegraphics[width=0.99\linewidth]{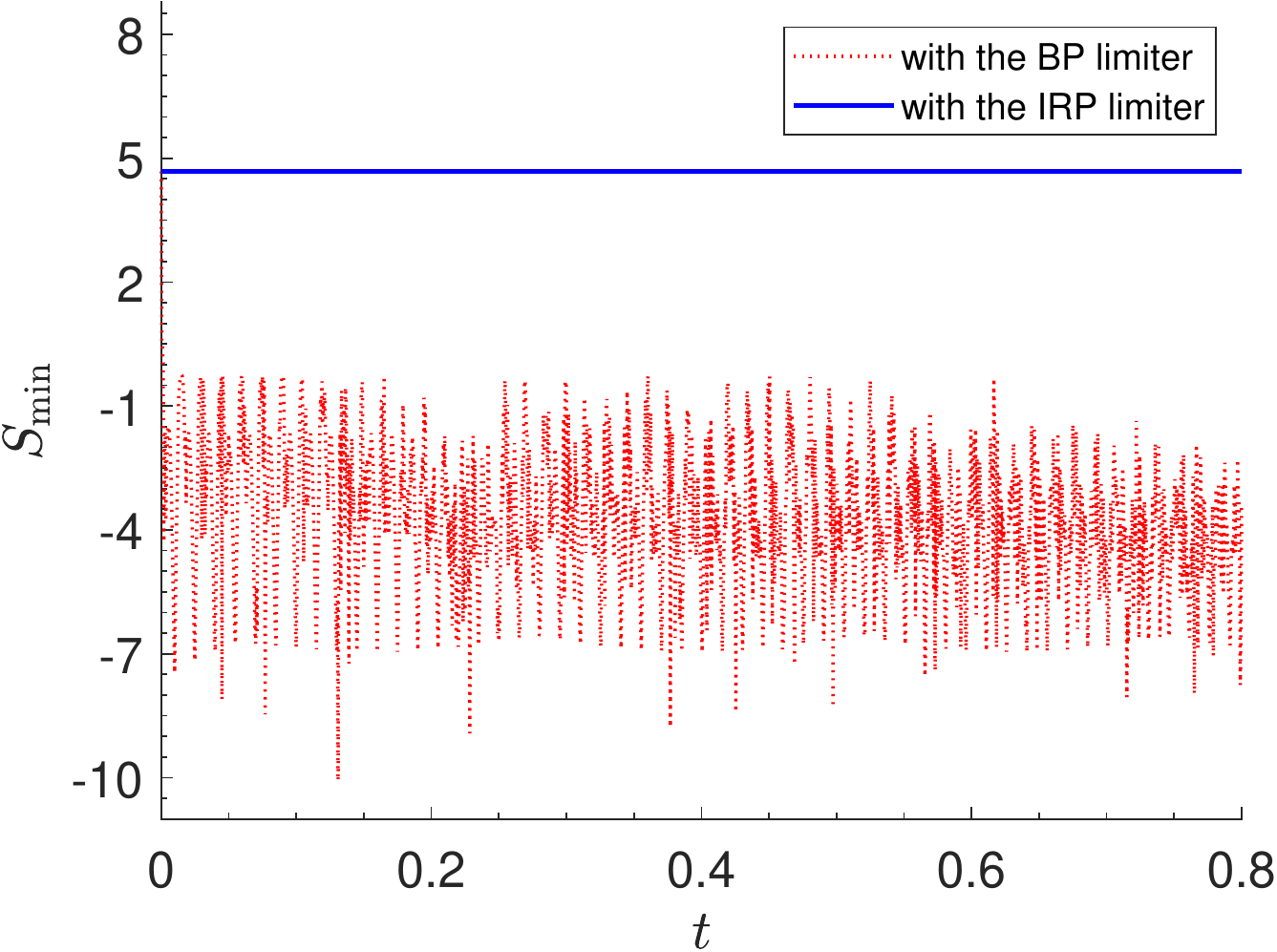}
		\end{center}
	\end{subfigure}
	\begin{subfigure}[b]{0.49\textwidth}
		\begin{center}
			\includegraphics[width=0.99\linewidth]{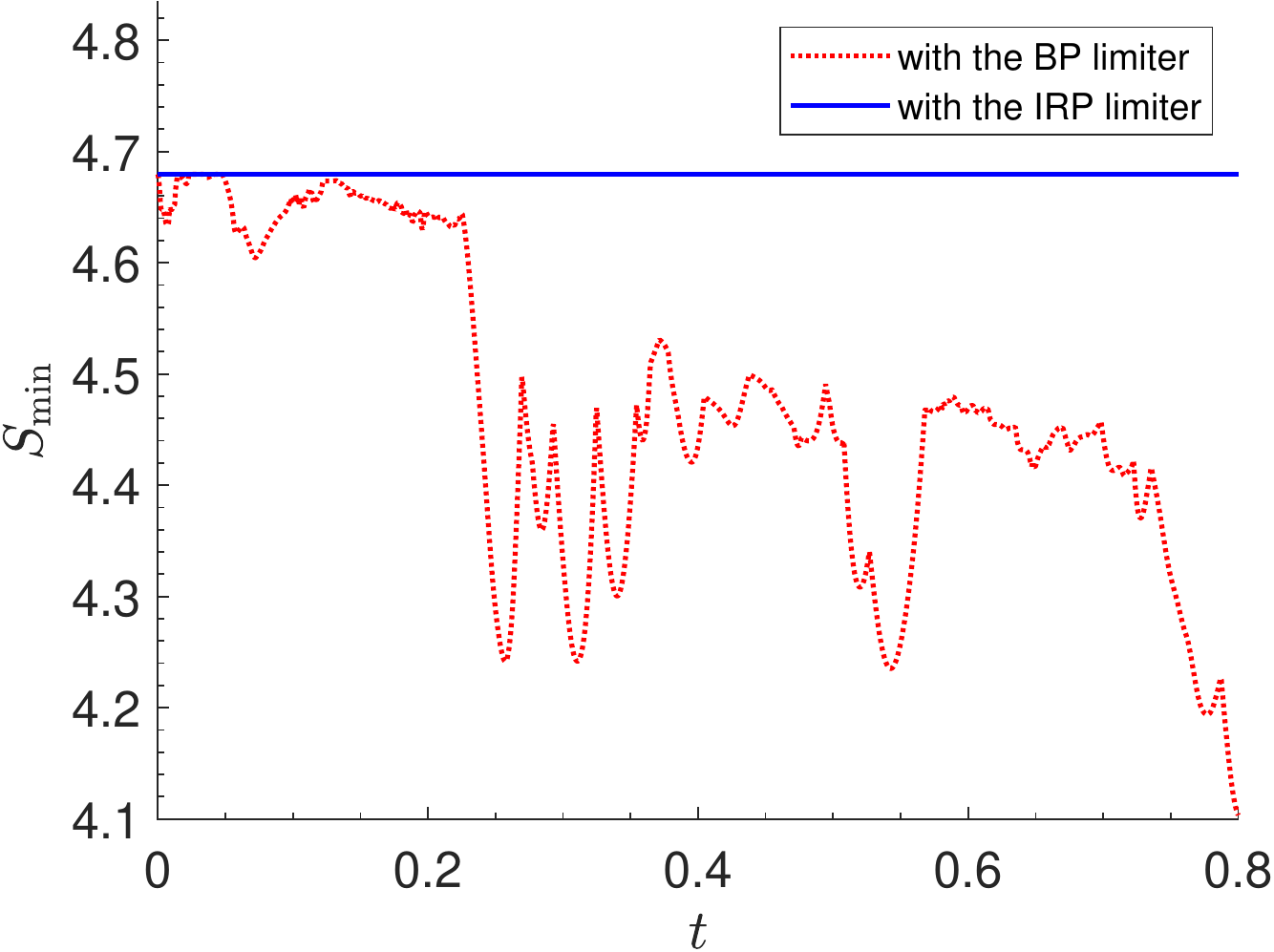}
		\end{center}
	\end{subfigure}
	\caption{\small The first Riemann problem of Example 5: $S_{\min}(t) $ for the DG solutions with the IRP limiter or with the BP limiter. Left: $S_{\min}(t) = \min_{K \in {\mathcal T}_h} \min_{ {\bm x} \in {\mathbb X}_K } S({\bf U}_h({\bm x},t))$; right: $S_{\min}(t) = \min_{K \in {\mathcal T}_h} S(\overline{\bf U}_K(t))$.   }
	\label{fig:2DRP2_MinS}
\end{figure}

The first Riemann problem was proposed in \cite{WuTang2015}, with the initial data given by  
\begin{equation*}
	(\rho,{\bm v},p)(x,y,0)	=
	\begin{cases}(0.1,0,0,20)^\top,& x>0,~y>0,\\
		(0.00414329639576,0.9946418833556542,0,0.05)^\top,&    x<0,~y>0,\\
		(0.01,0,0,0.05)^\top,&      x<0,~y<0,\\
		(0.00414329639576,0,0.9946418833556542, 0.05)^\top,&    x>0,~y<0,
	\end{cases}
\end{equation*}
where the left and lower initial discontinuities are contact discontinuities,
and the upper and right are shock waves. 
Because the maximal value of the fluid velocity is very close to the speed of light ($c=1$), nonphysical numerical solutions can be easily produced in the simulation, making this test challenging. 
For testing purpose, we do {\em not} use any other non-oscillatory limiters, e.g.~TVD/TVB or WENO limiters. 
We evolve the solution up to $t=0.8$ on the mesh of $200 \times 200$ cells within the domain $[-1,1]^2$. 
The contours of $\log(\rho)$ are displayed in  
Figure \ref{fig:2DRP2solu}, obtained by the fourth-order DG methods, with the BP or IRP limiter respectively. 
Serious oscillations are observed in the DG solution with only the BP limiter, while, if 
the IRP limiter is applied (i.e., the entropy limiter is added), the oscillations are much reduced. 
The minimum entropy principle of the DG solution with the IRP limiter 
is validated in Figure \ref{fig:2DRP2_MinS}. 
As the numerical solution is preserved in the set $\Omega_{S_0}$, 
the IRP DG scheme exhibits strong robustness in such ultra-relativistic flow simulation; the computed flow structures agree well with those reported in \cite{WuTang2015,bhoriya2020entropy}. 
We remark that if the BP or IRP limiter is turned off, the DG code would break down  
due to nonphysical numerical solutions violating the physical constraints \eqref{eq:PHYSconstraints}.

\begin{figure}[htbp]
	\centering
	\begin{subfigure}[b]{0.49\textwidth}
		\captionsetup{width=.88\linewidth}
		\begin{center}
			\includegraphics[width=0.99\linewidth]{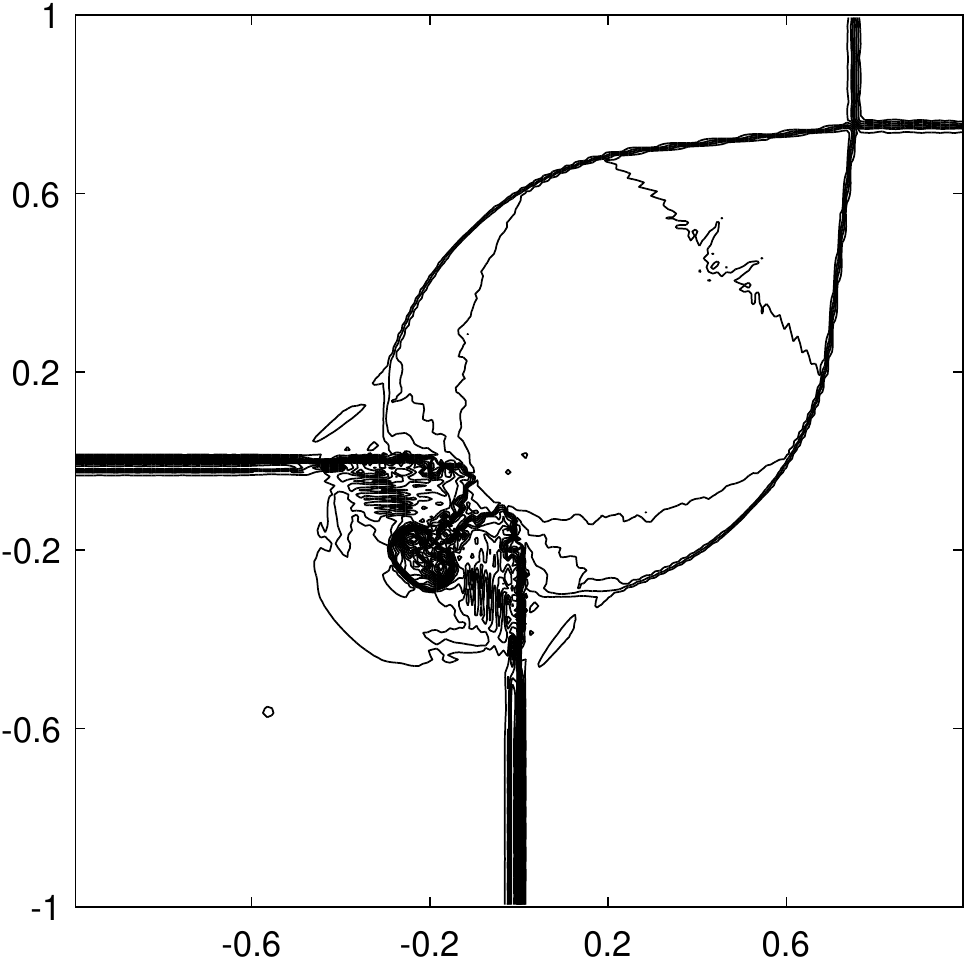}
		\end{center}
	\caption{With the BP limiter (without any other non-oscillatory limiters); $200\times 200$ cells}
	\end{subfigure}
	\begin{subfigure}[b]{0.49\textwidth}
	\captionsetup{width=.88\linewidth}
	\begin{center}
		\includegraphics[width=0.99\linewidth]{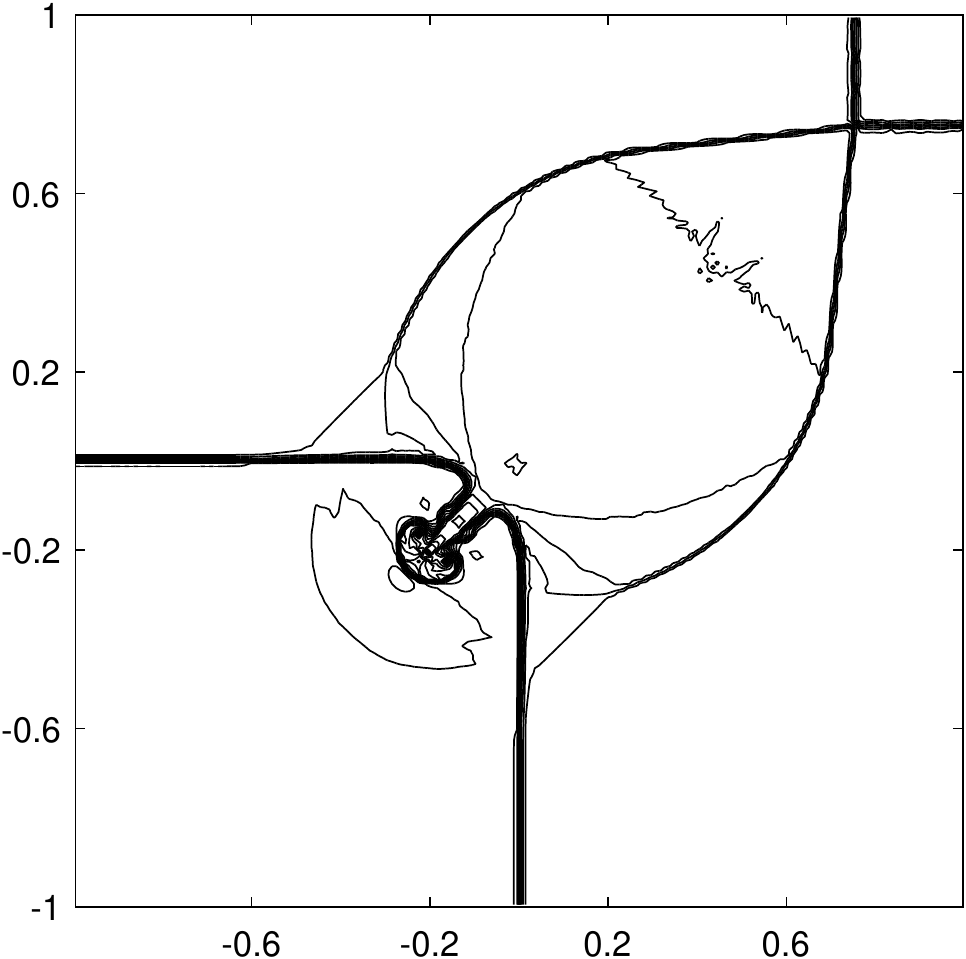}
	\end{center}
	\caption{With the IRP limiter (without any other non-oscillatory limiters); $200\times 200$ cells}
\end{subfigure}
\\
\vspace{4mm}
	\begin{subfigure}[b]{0.49\textwidth}
		\captionsetup{width=.88\linewidth}
		\begin{center}
			\includegraphics[width=0.99\linewidth]{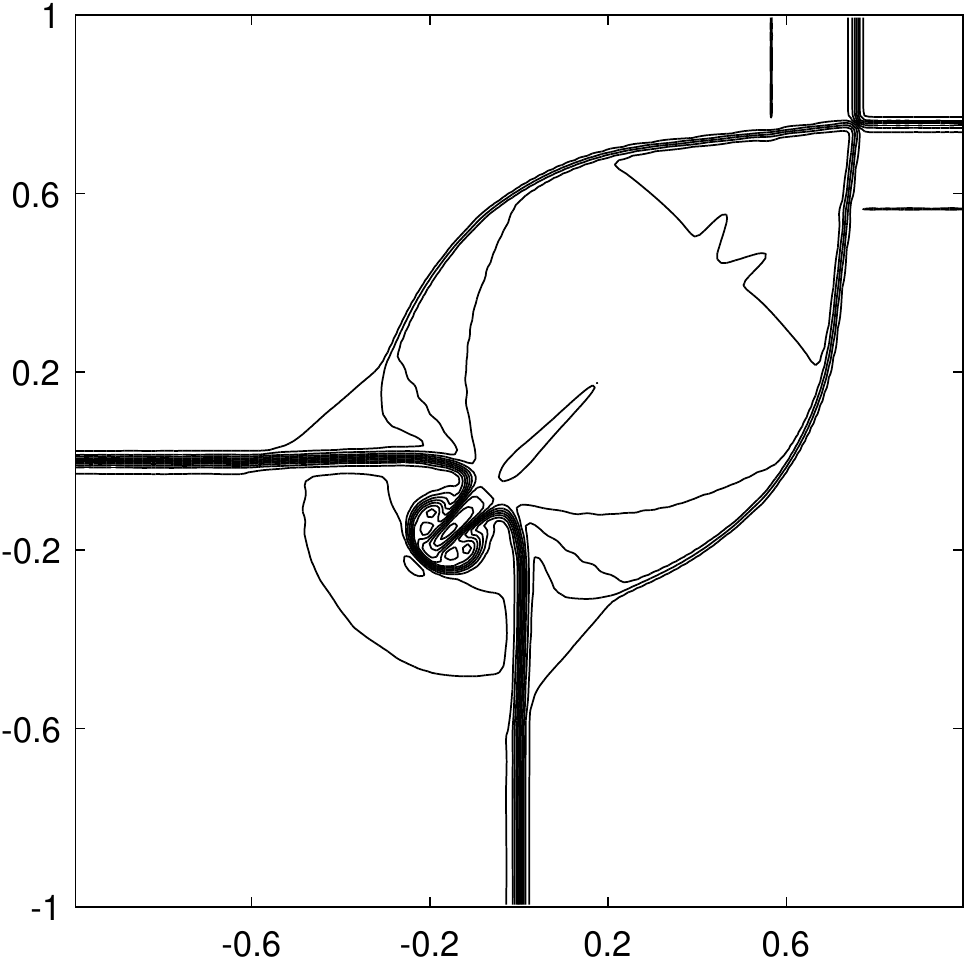}
		\end{center}
	\caption{With the WENO limiter; $200\times 200$ cells}
	\end{subfigure}
	\begin{subfigure}[b]{0.49\textwidth}
		\captionsetup{width=.88\linewidth}
		\begin{center}
			\includegraphics[width=0.99\linewidth]{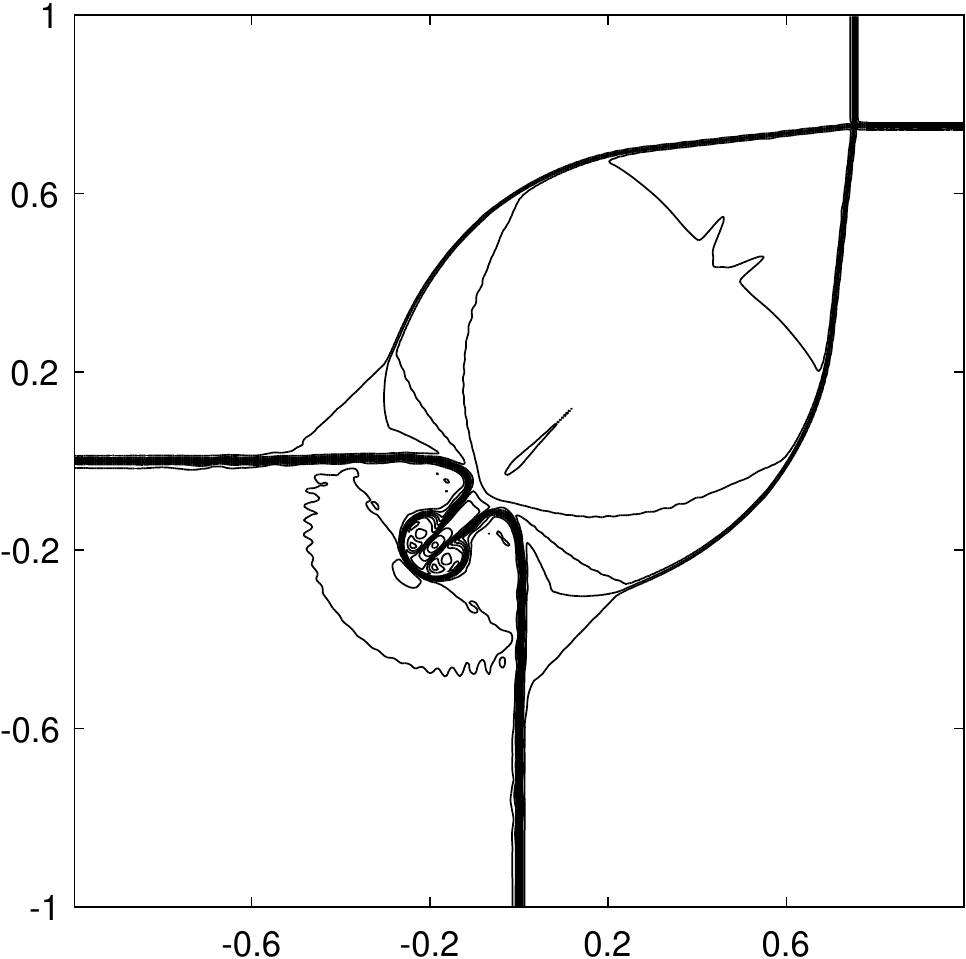}
		\end{center}
	\caption{With the WENO limiter; $400\times 400$ cells}
	\end{subfigure}
	\caption{\small The second Riemann problem of Example 5: 
		The contours of the rest-mass density logarithm at $t=0.8$ obtained by using 
		the fourth-order DG methods with different limiters. (20 equally spaced contour lines from $-3.2533$ to $-0.426$ are displayed.)} 
	\label{fig:2DRP1solu}
\end{figure}

\begin{figure}[htbp]
	\begin{subfigure}[b]{0.49\textwidth}
		\begin{center}
			\includegraphics[width=0.99\linewidth]{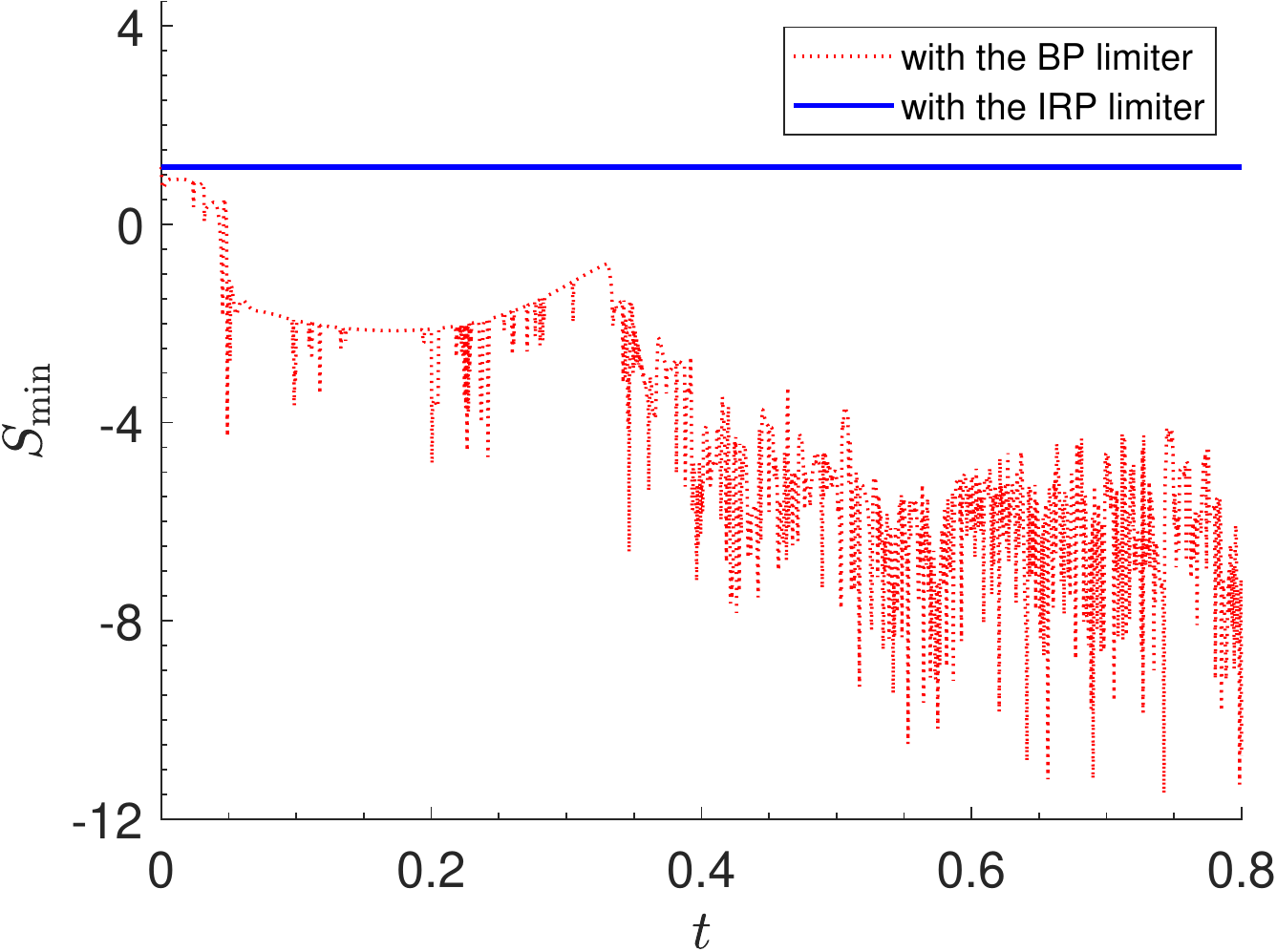}
		\end{center}
	\end{subfigure}
	\begin{subfigure}[b]{0.49\textwidth}
		\begin{center}
			\includegraphics[width=0.99\linewidth]{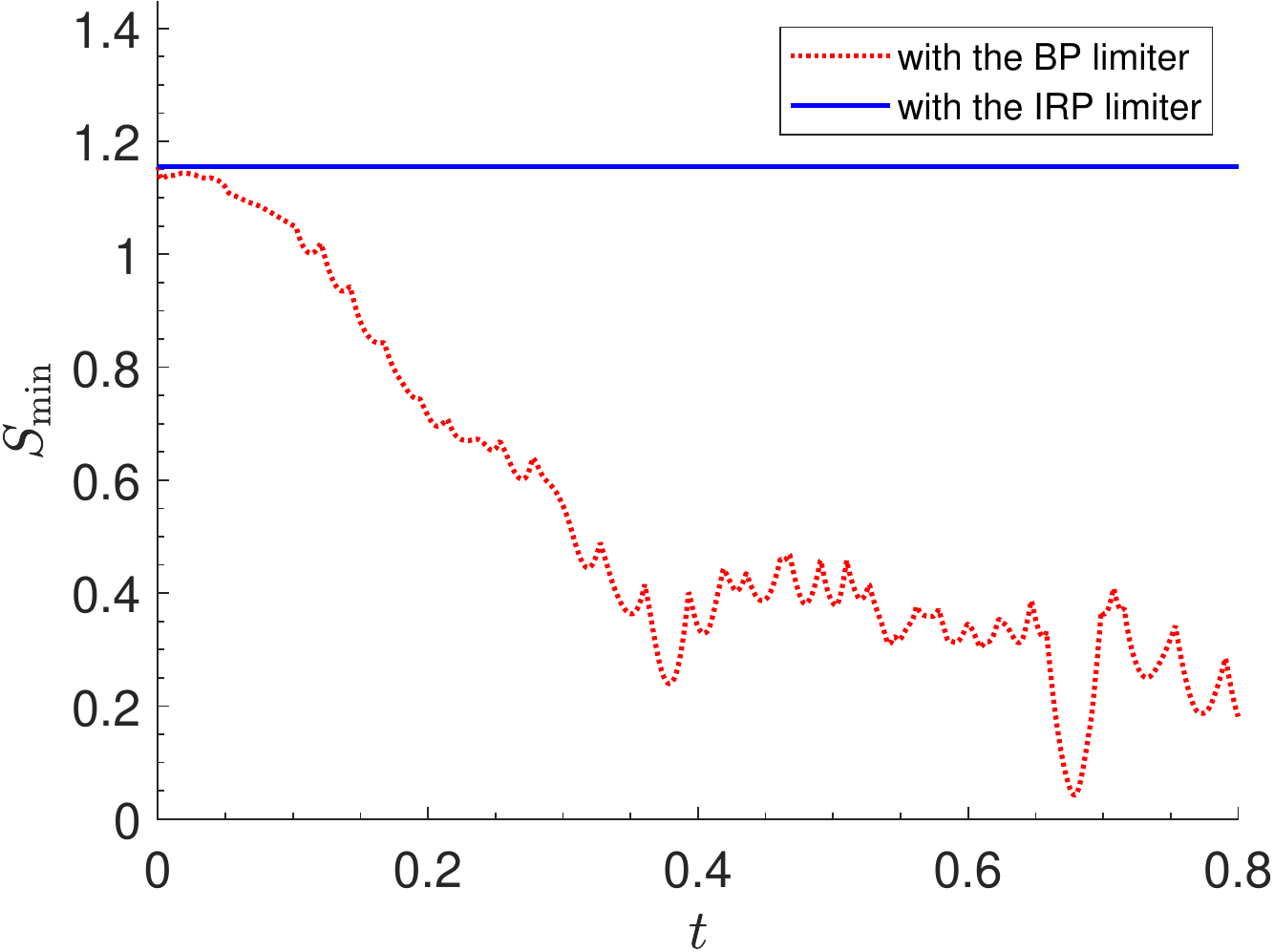}
		\end{center}
	\end{subfigure}
	\caption{\small The second Riemann problem of Example 5: $S_{\min}(t) $ for the DG solutions with the IRP limiter or with the BP limiter. Left: $S_{\min}(t) = \min_{K \in {\mathcal T}_h} \min_{ {\bm x} \in {\mathbb X}_K } S({\bf U}_h({\bm x},t))$; right: $S_{\min}(t) = \min_{K \in {\mathcal T}_h} S(\overline{\bf U}_K(t))$.   }
	\label{fig:2DRP1_MinS}
\end{figure}

The second Riemann problem was first proposed in \cite{he2012adaptive}, and its initial condition is given by  
\begin{equation*}
	(\rho,{\bm v},p)(x,y,0)	=
	\begin{cases}(0.035145216124503, 0, 0, 0.162931056509027)^\top,& x>0,~y>0,\\
		(0.1, 0.7, 0, 1)^\top,&    x<0,~y>0,\\
		(0.5, 0, 0, 1)^\top,&      x<0,~y<0,\\
		(0.1, 0, 0.7, 1)^\top,&    x>0,~y<0. 
	\end{cases}
\end{equation*}
Figures \ref{fig:2DRP1solu}(a) and \ref{fig:2DRP1solu}(b) display 
the contours of $\log(\rho)$ at $t=0.8$, obtained by the fourth-order DG methods, with the BP or IRP limiter respectively 
and without any non-oscillatory limiters, 
on the mesh of $200 \times 200$ cells within the domain $[-1,1]^2$. 
For comparison and reference, we also present in Figures \ref{fig:2DRP1solu}(c--d) the numerical results obtained by the fourth-order DG method with only the WENO limiter \cite{zhao2013runge} on two meshes of $200 \times 200$ cells and $400 \times 400$ cells, respectively. 
(The WENO limiter is 
applied only within some ``trouble'' cells adaptively identified  by the KXRCF indicator 
 \cite{Krivodonova}.) 
We can see that the result with only the BP limiter is oscillatory, while enforcing the minimum entropy principle by the IRP limiter helps to damp some of the oscillations.  
Although the WENO limiter may completely suppress the undesirable oscillations, 
the resulting numerical solution in Figure \ref{fig:2DRP1solu}(c) is much more dissipative than that computed with the IRP limiter in Figure \ref{fig:2DRP1solu}(b).
Figure \ref{fig:2DRP1_MinS} shows the time evolution of the minimum
specific entropy values of the DG solutions. One can see that 
the minimum remains the same for the DG scheme with the IRP limiter. This implies the preservation of 
minimum entropy principle, which, however, is not ensured by using either only the BP limiter or the WENO limiter.

From the above numerical results, we observe that the IRP limiter 
helps to preserve numerical solutions in the invariant region $\Omega_{S_0}$ 
and to damp some numerical oscillations while keeping the high resolution.  
However, as pointed out in \cite{jiang2018invariant}, the IRP limiter is still a mild limiter and may not completely suppress all the oscillations. 
For problems involving strong shocks, one may 
use the IRP limiter together with a non-oscillatory limiter (e.g., the WENO limiter), to sufficiently control all the undesirable oscillations while also preserving the invariant region $\Omega_{S_0}$.

\subsection{Example 6: Two astrophysical jets}
We now further examine the robustness and the IRP property of our DG method  
by simulating two relativistic jets. For high-speed jet problems,  
the internal energy is exceedingly 
small compared to the kinetic energy so that  
negative numerical pressure can be generated easily in the simulations.
Since shear waves, 
strong
relativistic shock waves, and 
ultra-relativistic regions are involved in this challenging test, we use the IRP or BP limiter 
together with the WENO limiter to preserve the physical constraints \eqref{eq:PHYSconstraints} 
and suppress all the undesirable oscillations.

\begin{figure}[htbp]
	\centering
	\begin{subfigure}[b]{0.32\textwidth}
		\begin{center}
			\includegraphics[width=1.0\linewidth]{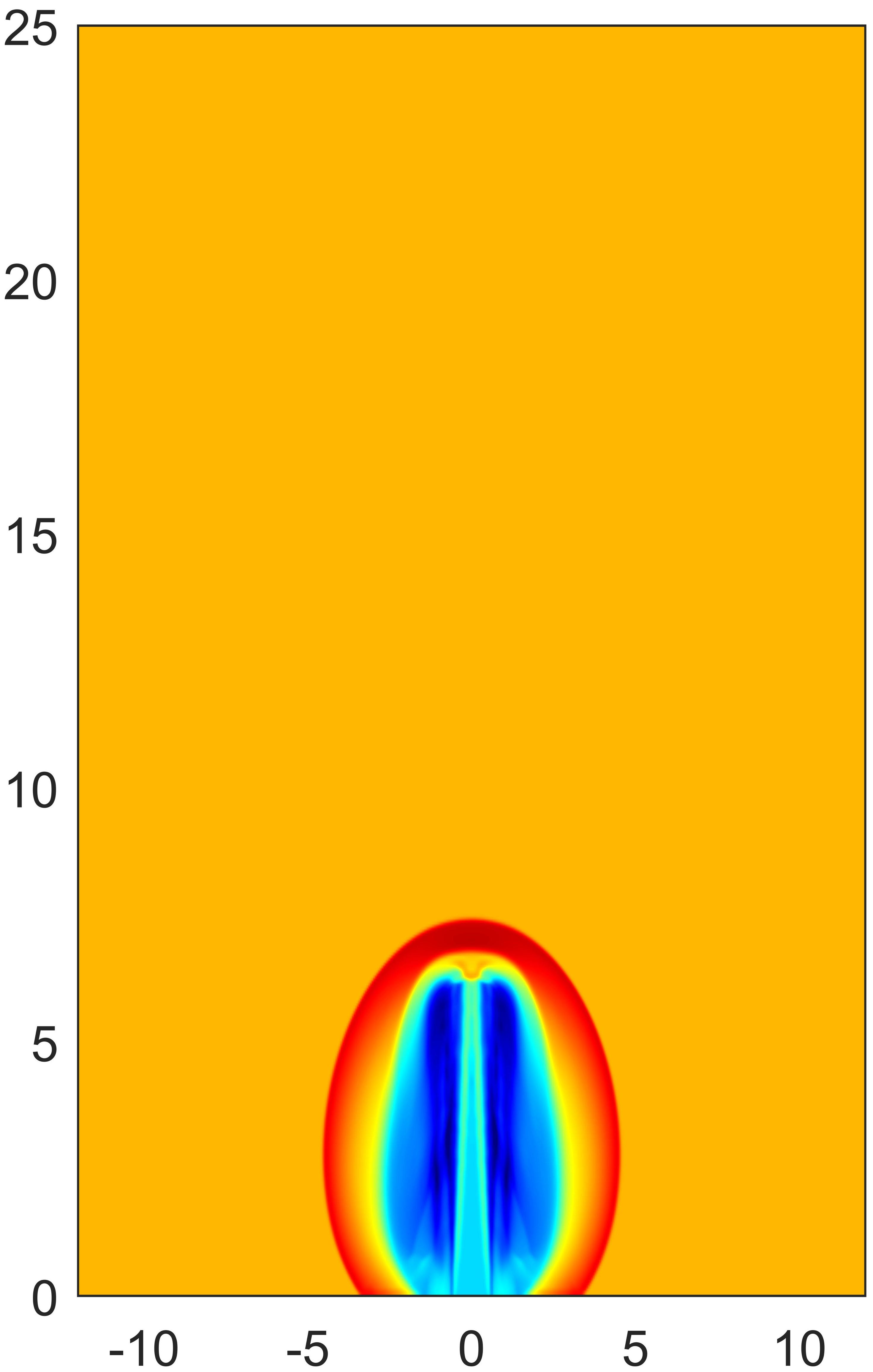}
		\end{center}
	\end{subfigure}
	\begin{subfigure}[b]{0.32\textwidth}
		\begin{center}
			\includegraphics[width=1.0\linewidth]{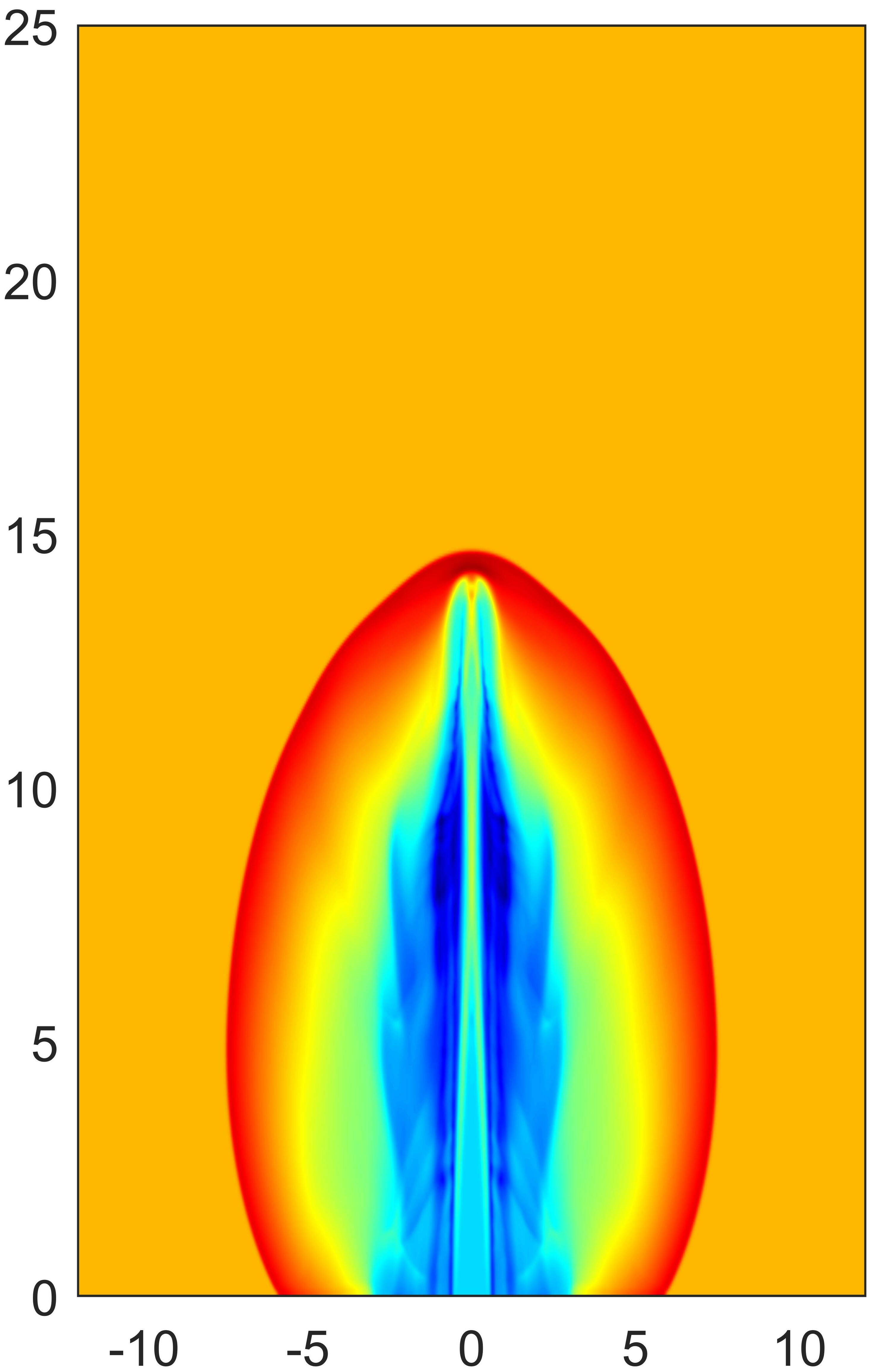}
		\end{center}
	\end{subfigure}
	\begin{subfigure}[b]{0.32\textwidth}
		\begin{center}
			\includegraphics[width=1.0\linewidth]{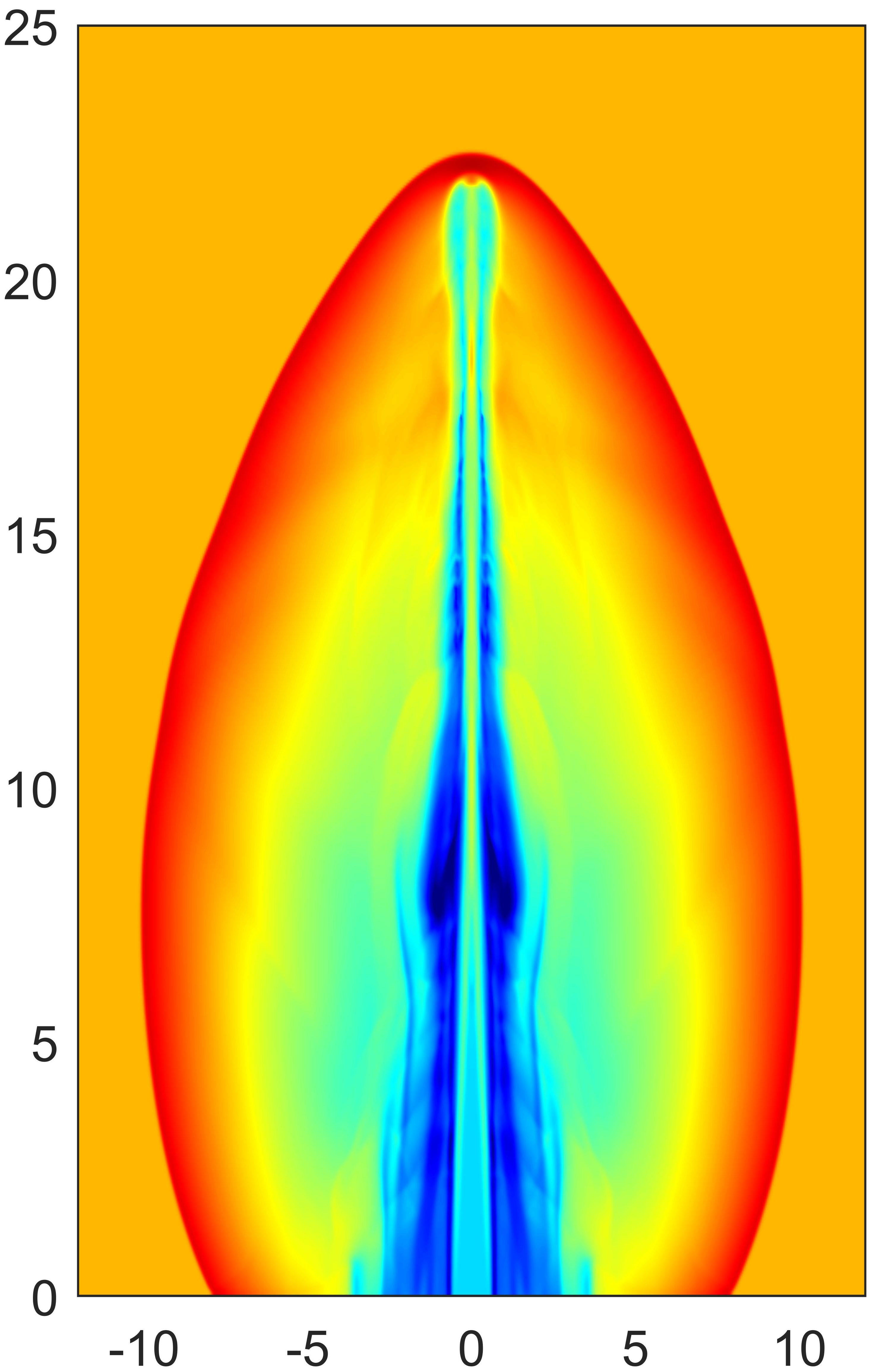}
		\end{center}
	\end{subfigure}
	\caption{\small Example 6: The pseudocolor plots 
		of $\log(\rho)$ for the cold relativistic jet at 
		$t=10,$ $20$, and $30$ (from left to right), respectively.}
	\label{fig:JetC}
\end{figure}

We first simulate a cold (highly supersonic) jet model from \cite{WuTang2017ApJS}. 
Initially, 
a RHD jet (density $\rho_b=0.1$; speed $v_b=0.99$; classical Mach number $M_b=50$) 
is injected along the $y$-direction into the domain $[-12,12]\times [0,25]$, which is initially filled with a static uniform medium of density $\rho_a =1$. 
The fixed inflow condition is enforced at the jet nozzle  $\{(x,y): |x| \le 0.5,y=0\}$  
on the bottom boundary, while the other boundary conditions are outflow. 
For this problem, the jet pressure matches that of the ambient medium, the corresponding initial 
Lorentz factor $W \approx 7.09$, and the relativistic Mach number is $M_r:=M_b W/W_s \approx 354.37$, where $W_s$ denotes the Lorentz factor of the acoustic wave speed. 
The exceedingly high Mach number and jet speed cause the simulation of this problem difficult, so that the BP or constraint-preserving technique is required for high-order schemes to keep the numerical solutions in the invariant region. 
We set the computational domain as $[0,12]\times [0,25]$ with the reflective boundary
condition at $x=0$, and uniformly divided the domain into $240 \times 500$ cells. 
The results, simulated by the fourth-order IRP DG method,  
are presented in Figure \ref{fig:JetC}. 
 Those plots clearly shows the jet evolution, and the flow structures at the final time $t=30$ are in good agreement with those computed in \cite{WuTang2017ApJS}.

\begin{figure}[htbp]
	\centering
	\begin{subfigure}[b]{0.32\textwidth}
		\begin{center}
			\includegraphics[width=1.0\linewidth]{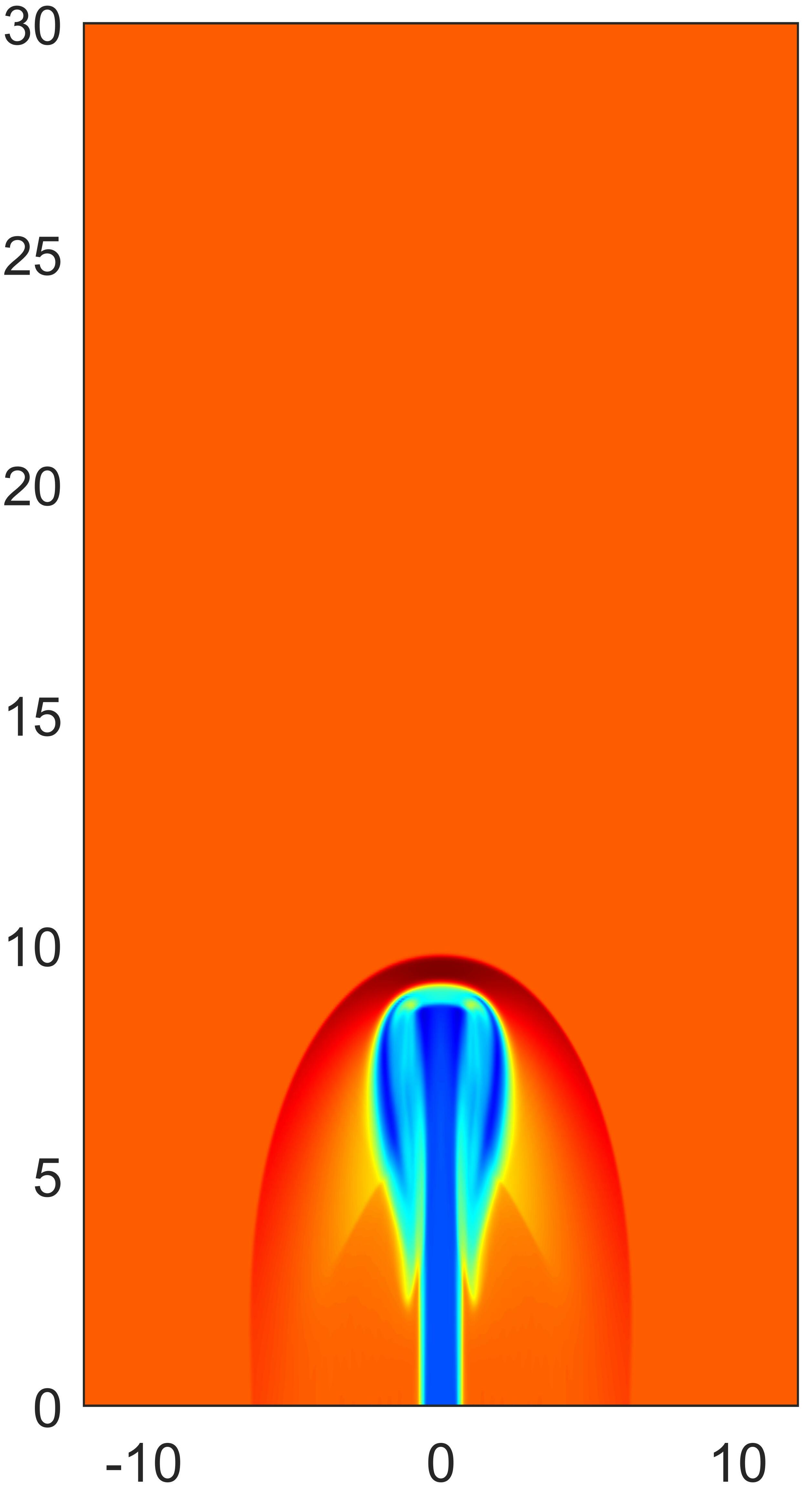}
		\end{center}
	\end{subfigure}
	\begin{subfigure}[b]{0.32\textwidth}
		\begin{center}
			\includegraphics[width=1.0\linewidth]{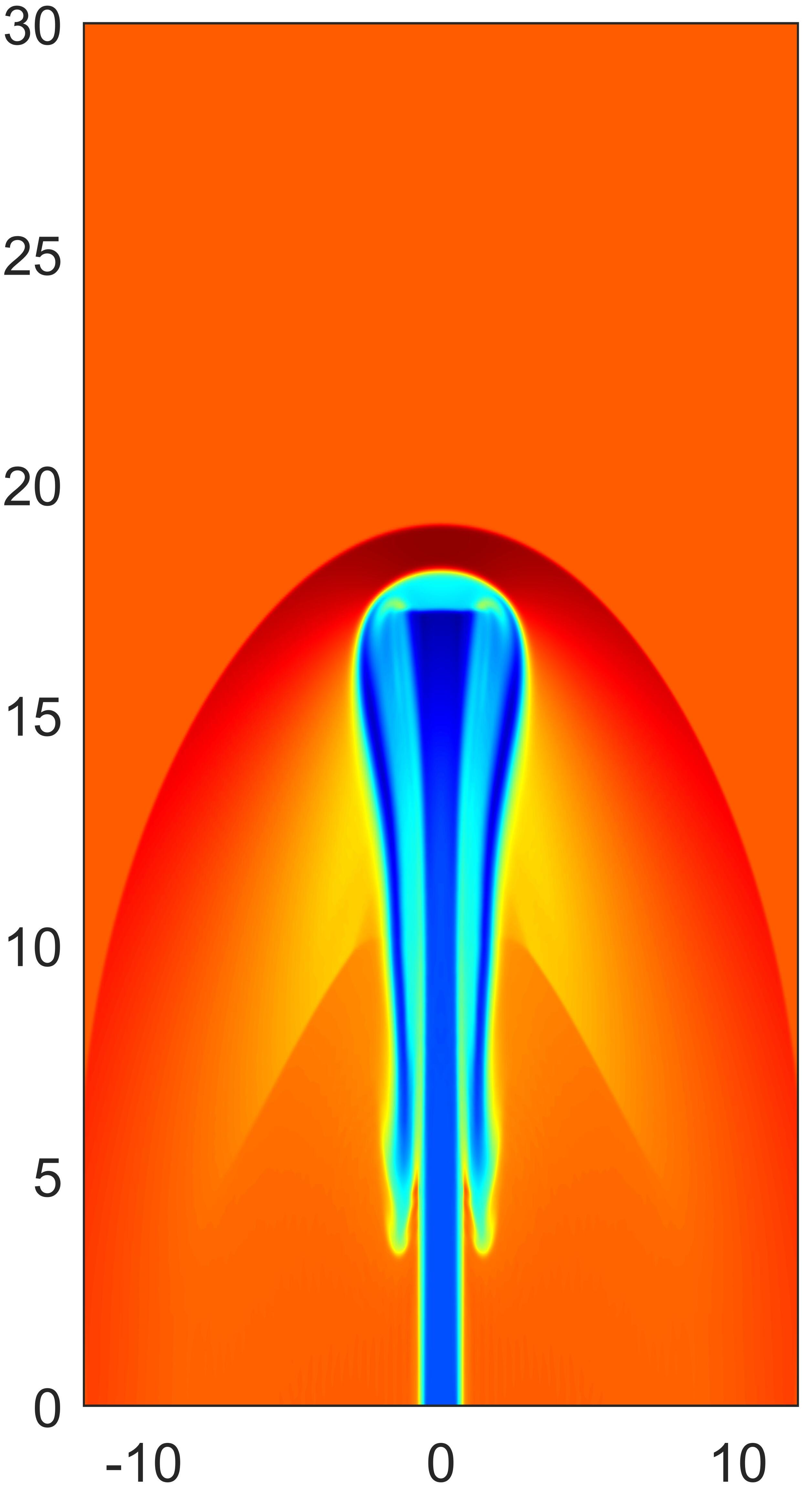}
		\end{center}
	\end{subfigure}
	\begin{subfigure}[b]{0.32\textwidth}
		\begin{center}
			\includegraphics[width=1.0\linewidth]{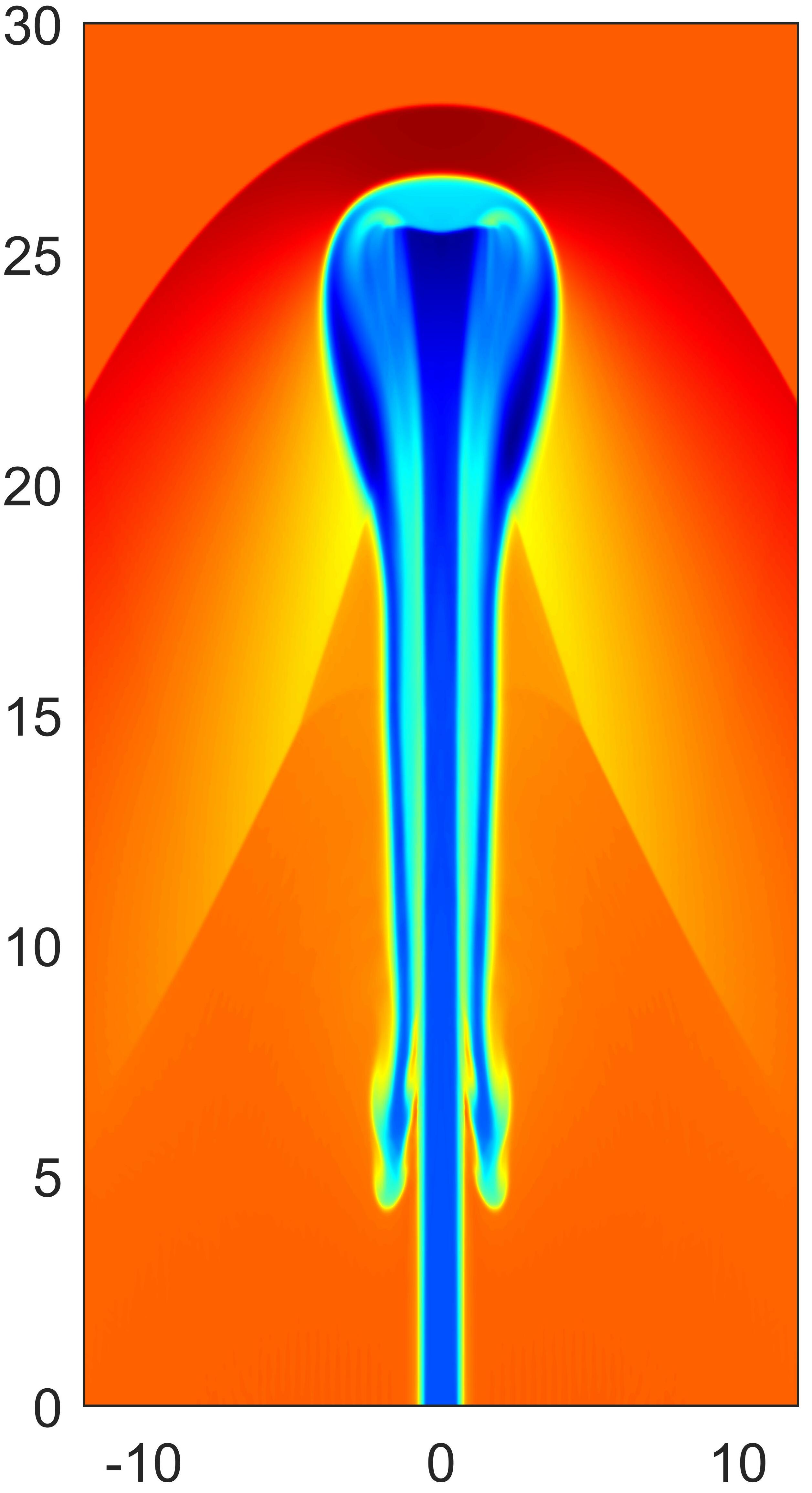}
		\end{center}
	\end{subfigure}
	\caption{\small Example 6: The pseudocolor plots 
		of $\log(\rho)$ for the hot relativistic jet at 
		$t=11,$ $22$, and $33$ (from left to right), respectively.}
	\label{fig:JetA}
\end{figure}

We then simulate a pressure-matched hot jet model. Different from \cite{WuTang2017ApJS}, the ideal EOS \eqref{eq:iEOS} with $\Gamma = \frac43$ is used here. The setups
are the same as the above clod jet model, except for that the
density of the inlet jet becomes $\rho_b =0.01$, and
the classical Mach number is set as $M_b = 1.72$ and near the minimum Mach number $M^{\min} = v_b/\sqrt{\Gamma -1}$ (such that the relativistic effects from large beam internal energies and large fluid velocity are comparable). 
The computational domain is
$[0, 12]\times[0, 30]$ and divided into $240 \times 600$ uniform cells. 
Figure \ref{fig:JetA} gives the numerical result computed the fourth-order IRP DG method. 
One can see that the 
flow structures are clearly resolved by our method, and the 
patterns are  different from those in the cold jet model.

\begin{figure}[htbp]
	\begin{subfigure}[b]{0.49\textwidth}
		\begin{center}
			\includegraphics[width=0.99\linewidth]{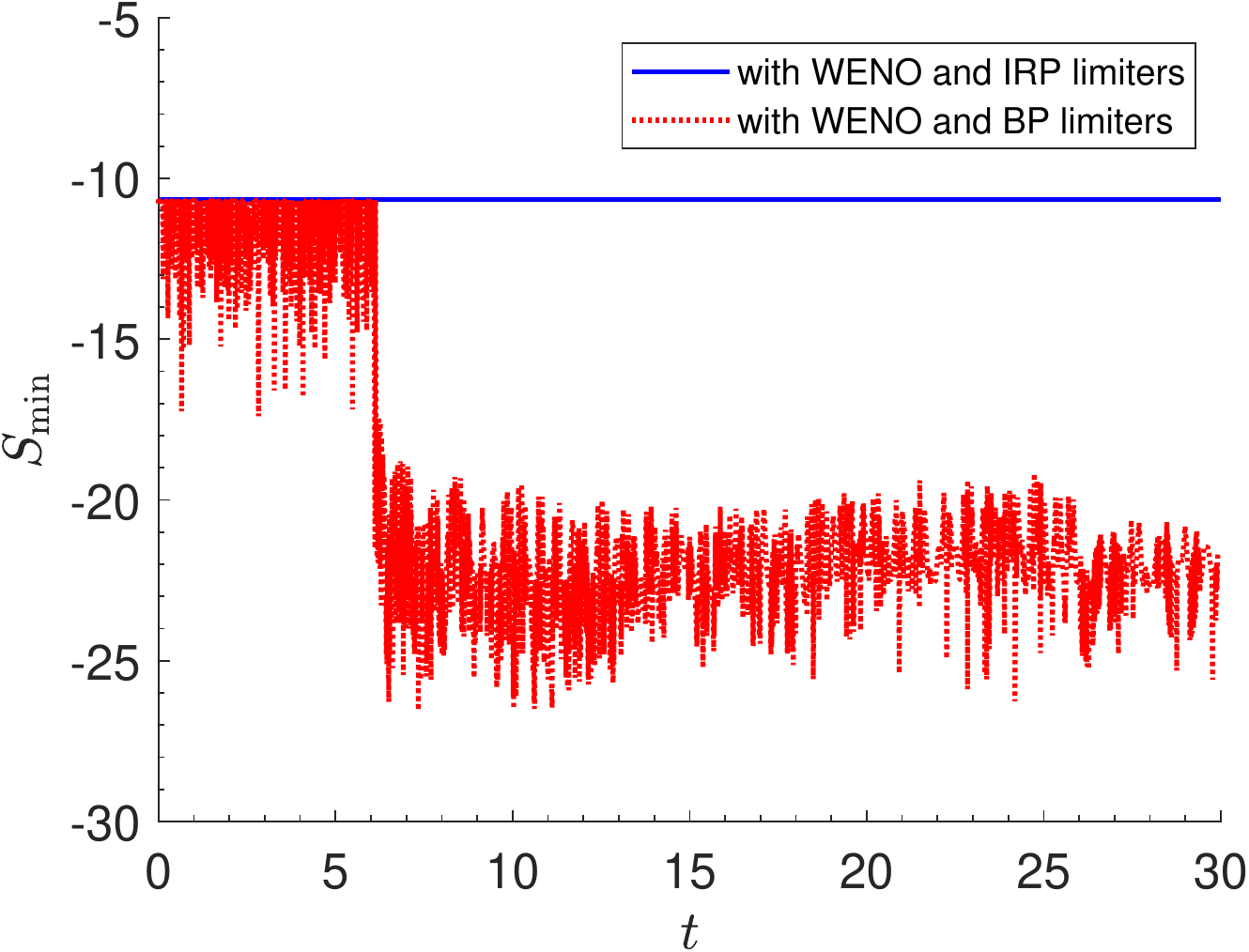}
		\end{center}
		\caption{for the cold jet}
	\end{subfigure}
	\begin{subfigure}[b]{0.49\textwidth}
		\begin{center}
			\includegraphics[width=0.99\linewidth]{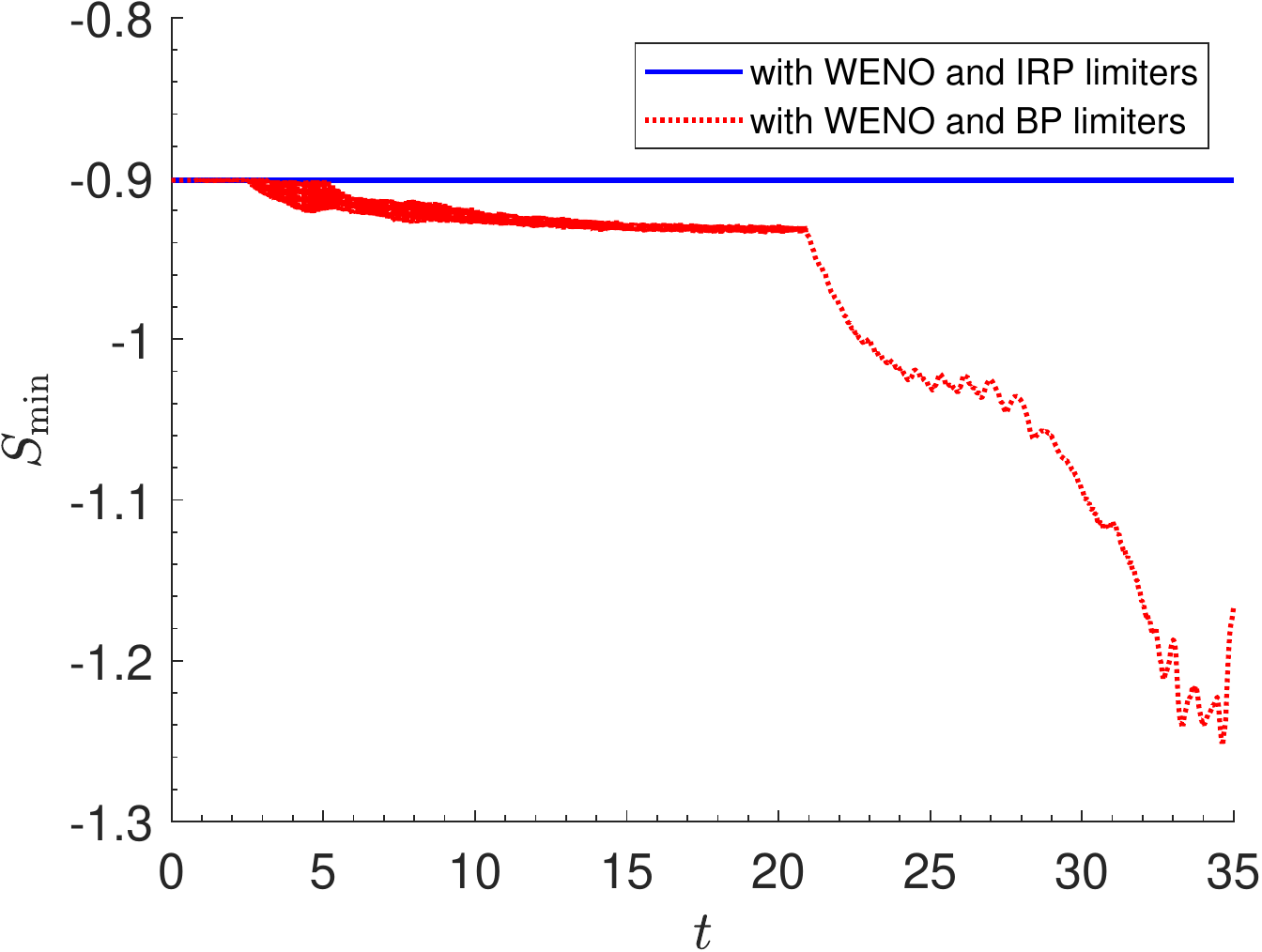}
		\end{center}
		\caption{for the hot jet}
	\end{subfigure}
	\caption{\small Example 6: Evolution of $S_{\min}(t) := \min_{K \in {\mathcal T}_h} \min_{ {\bm x} \in {\mathbb X}_K } S({\bf U}_h({\bm x},t))$ for the DG solutions with the IRP limiter or the BP limiter. The WENO limiter is applied right before the IRP or BP limiting procedure.}  
	\label{fig:Jet_MinS}
\end{figure}

As we have seen, the IRP DG method is very robust in such demanding jet simulations. 
To further verify the minimum entropy principle, we plot in 
Figure \ref{fig:Jet_MinS} the time evolution of the minimum
specific entropy values. It is observed that 
the minimum does not change with time 
for the DG scheme with the WENO and IRP limiters, which means that the 
minimum entropy principle is preserved, while using the WENO and BP limiters is unable to maintain the principle.

\section{Conclusions}\label{sec:con}
In this work, we showed that the Tadmor's minimum entropy principle \eqref{eq:minEntropy} holds for the RHD equations \eqref{eq:RHD} with the ideal EOS \eqref{eq:iEOS}, 
and then developed high-order accurate IRP DG
and finite volume schemes for RHD, which provably preserve a discrete minimum entropy principle as well as the intrinsic physical constraints \eqref{eq:PHYSconstraints}. 
It was the first time that such a minimum entropy principle was explored for  
RHD at the continuous and discrete levels. 
Due to the relativistic effects, 
the specific entropy is a highly nonlinear function of the conservative variables and cannot be explicitly expressed. This led to some essential difficulties in this work, 
which were not encountered in 
the non-relativistic case. 
In order to address the difficulties, we first proposed a novel equivalent form of the invariant region.  
As a notable feature,   
all the constraints in this novel form became explicit and linear with respect to the conservative variables. 
This provided a highly effective approach to  
theoretically analyze the IRP property of numerical RHD schemes. 
We showed the convexity of the invariant region and established the 
generalized Lax--Friedrichs splitting properties via technical estimates. 
We rigorously proved that the first-order Lax--Friedrichs type scheme for the RHD equations satisfies a local minimum entropy principle and is IRP under a CFL condition. 
We then developed and analyzed   
provably IRP high-order accurate DG and finite volume methods for the RHD. 
Numerical examples confirmed that 
enforcing the minimum entropy principle could help to damp some undesirable numerical oscillations 
and demonstrated the robustness 
of the proposed high-order IRP DG schemes.
The verified minimum entropy principle is an important property that 
can be incorporated into improving or designing other RHD schemes. 
Besides, the proposed novel analysis techniques 
can be useful for investigating or seeking other IRP schemes for the RHD or other physical systems. 


\bibliographystyle{siamplain}
\bibliography{references}

\begin{thebibliography}{10}

\bibitem{BalsaraKim2016}
{\sc D.~S. Balsara and J.~Kim}, {\em A subluminal relativistic
  magnetohydrodynamics scheme with {ADER-WENO} predictor and multidimensional
  {Riemann} solver-based corrector}, Journal of Computational Physics, 312
  (2016), pp.~357--384.

\bibitem{bhoriya2020entropy}
{\sc D.~Bhoriya and H.~Kumar}, {\em Entropy-stable schemes for relativistic
  hydrodynamics equations}, Zeitschrift f\"ur angewandte Mathematik und Physik,
  71 (2020), pp.~1--29.

\bibitem{duan2019high}
{\sc J.~Duan and H.~Tang}, {\em High-order accurate entropy stable finite
  difference schemes for one-and two-dimensional special relativistic
  hydrodynamics}, Advances in Applied Mathematics and Mechanics, 12 (2020),
  pp.~1--29.

\bibitem{font2008numerical}
{\sc J.~A. Font}, {\em Numerical hydrodynamics and magnetohydrodynamics in
  general relativity}, Living Reviews in Relativity, 11 (2008), p.~7.

\bibitem{GottliebShuTadmor2001}
{\sc S.~Gottlieb, C.-W. Shu, and E.~Tadmor}, {\em Strong stability-preserving
  high-order time discretization methods}, SIAM Review, 43 (2001), pp.~89--112.

\bibitem{gouasmi2020minimum}
{\sc A.~Gouasmi, K.~Duraisamy, S.~M. Murman, and E.~Tadmor}, {\em A minimum
  entropy principle in the compressible multicomponent {Euler} equations},
  ESAIM: Mathematical Modelling and Numerical Analysis, 54 (2020),
  pp.~373--389.

\bibitem{guercilena2017entropy}
{\sc F.~Guercilena, D.~Radice, and L.~Rezzolla}, {\em Entropy-limited
  hydrodynamics: a novel approach to relativistic hydrodynamics}, Computational
  Astrophysics and Cosmology, 4 (2017), p.~3.

\bibitem{guermond2018second}
{\sc J.-L. Guermond, M.~Nazarov, B.~Popov, and I.~Tomas}, {\em Second-order
  invariant domain preserving approximation of the {Euler} equations using
  convex limiting}, SIAM Journal on Scientific Computing, 40 (2018),
  pp.~A3211--A3239.

\bibitem{Guermond2014}
{\sc J.-L. Guermond and B.~Popov}, {\em Viscous regularization of the {Euler}
  equations and entropy principles}, SIAM J. Appl. Math., 74 (2014),
  pp.~284--305.

\bibitem{guermond2016invariant}
{\sc J.-L. Guermond and B.~Popov}, {\em Invariant domains and first-order
  continuous finite element approximation for hyperbolic systems}, SIAM Journal
  on Numerical Analysis, 54 (2016), pp.~2466--2489.

\bibitem{guermond2017invariant}
{\sc J.-L. Guermond and B.~Popov}, {\em Invariant domains and second-order
  continuous finite element approximation for scalar conservation equations},
  SIAM Journal on Numerical Analysis, 55 (2017), pp.~3120--3146.

\bibitem{guermond2017invariant2}
{\sc J.-L. Guermond, B.~Popov, L.~Saavedra, and Y.~Yang}, {\em Invariant
  domains preserving arbitrary {Lagrangian} {Eulerian} approximation of
  hyperbolic systems with continuous finite elements}, SIAM Journal on
  Scientific Computing, 39 (2017), pp.~A385--A414.

\bibitem{guermond2019invariant}
{\sc J.-L. Guermond, B.~Popov, and I.~Tomas}, {\em Invariant domain preserving
  discretization-independent schemes and convex limiting for hyperbolic
  systems}, Computer Methods in Applied Mechanics and Engineering, 347 (2019),
  pp.~143--175.

\bibitem{HARTEN1983151}
{\sc A.~Harten}, {\em On the symmetric form of systems of conservation laws
  with entropy}, Journal of Computational Physics, 49 (1983), pp.~151--164.

\bibitem{he2012adaptive}
{\sc P.~He and H.~Tang}, {\em An adaptive moving mesh method for
  two-dimensional relativistic hydrodynamics}, Communications in Computational
  Physics, 11 (2012), pp.~114--146.

\bibitem{Hu2013}
{\sc X.~Y. Hu, N.~A. Adams, and C.-W. Shu}, {\em Positivity-preserving method
  for high-order conservative schemes solving compressible {Euler} equations},
  Journal of Computational Physics, 242 (2013), pp.~169--180.

\bibitem{jiang2016invariant}
{\sc Y.~Jiang and H.~Liu}, {\em An invariant-region-preserving ({IRP}) limiter
  to {DG} methods for compressible {Euler} equations}, in XVI International
  Conference on Hyperbolic Problems: Theory, Numerics, Applications, Springer,
  2016, pp.~71--83.

\bibitem{jiang2018invariant}
{\sc Y.~Jiang and H.~Liu}, {\em Invariant-region-preserving {DG} methods for
  multi-dimensional hyperbolic conservation law systems, with an application to
  compressible {Euler} equations}, Journal of Computational Physics, 373
  (2018), pp.~385--409.

\bibitem{JIANG2018}
{\sc Y.~Jiang and H.~Liu}, {\em Invariant-region-preserving {DG} methods for
  multi-dimensional hyperbolic conservation law systems, with an application to
  compressible {Euler} equations}, Journal of Computational Physics, 373
  (2018), pp.~385--409.

\bibitem{jiang2019invariant}
{\sc Y.~Jiang and H.~Liu}, {\em An invariant-region-preserving limiter for {DG}
  schemes to isentropic {Euler} equations}, Numerical Methods for Partial
  Differential Equations, 35 (2019), pp.~5--33.

\bibitem{khobalatte1994maximum}
{\sc B.~Khobalatte and B.~Perthame}, {\em Maximum principle on the entropy and
  second-order kinetic schemes}, Mathematics of Computation, 62 (1994),
  pp.~119--131.

\bibitem{Krivodonova}
{\sc L.~Krivodonova, J.~Xin, J.-F. Remacle, N.~Chevaugeon, and J.~E. Flaherty},
  {\em Shock detection and limiting with discontinuous {Galerkin} methods for
  hyperbolic conservation laws}, Applied Numerical Mathematics, 48 (2004),
  pp.~323--338.

\bibitem{LV2015715}
{\sc Y.~Lv and M.~Ihme}, {\em Entropy-bounded discontinuous {Galerkin} scheme
  for {Euler} equations}, Journal of Computational Physics, 295 (2015),
  pp.~715--739.

\bibitem{marti2003numerical}
{\sc J.~M. Mart{\'\i} and E.~M{\"u}ller}, {\em Numerical hydrodynamics in
  special relativity}, Living Reviews in Relativity, 6 (2003), p.~7.

\bibitem{marti2015grid}
{\sc J.~M. Mart{\'\i} and E.~M{\"u}ller}, {\em Grid-based methods in
  relativistic hydrodynamics and magnetohydrodynamics}, Living Reviews in
  Computational Astrophysics, 1 (2015), p.~3.

\bibitem{mewes2020numerical}
{\sc V.~Mewes, Y.~Zlochower, M.~Campanelli, T.~W. Baumgarte, Z.~B. Etienne,
  F.~G.~L. Armengol, and F.~Cipolletta}, {\em Numerical relativity in spherical
  coordinates: {A} new dynamical spacetime and general relativistic {MHD}
  evolution framework for the {Einstein} {Toolkit}}, Physical Review D, 101
  (2020), p.~104007.

\bibitem{mignone2005hllc}
{\sc A.~Mignone and G.~Bodo}, {\em An {HLLC} {Riemann} solver for relativistic
  flows--{I}. {Hydrodynamics}}, Monthly Notices of the Royal Astronomical
  Society, 364 (2005), pp.~126--136.

\bibitem{QinShu2016}
{\sc T.~Qin, C.-W. Shu, and Y.~Yang}, {\em Bound-preserving discontinuous
  {Galerkin} methods for relativistic hydrodynamics}, Journal of Computational
  Physics, 315 (2016), pp.~323--347.

\bibitem{Radice2014}
{\sc D.~Radice, L.~Rezzolla, and F.~Galeazzi}, {\em High-order fully
  general-relativistic hydrodynamics: new approaches and tests}, Classical and
  Quantum Gravity, 31 (2014), p.~075012.

\bibitem{tadmor1984skew}
{\sc E.~Tadmor}, {\em Skew-selfadjoint form for systems of conservation laws},
  J. Math. Anal. Appl., 103 (1984), pp.~428--442.

\bibitem{TADMOR1986211}
{\sc E.~Tadmor}, {\em A minimum entropy principle in the gas dynamics
  equations}, Applied Numerical Mathematics, 2 (1986), pp.~211--219.

\bibitem{Wu2017}
{\sc K.~Wu}, {\em Design of provably physical-constraint-preserving methods for
  general relativistic hydrodynamics}, Physical Review D, 95 (2017), 103001.

\bibitem{Wu2017a}
{\sc K.~Wu}, {\em Positivity-preserving analysis of numerical schemes for ideal
  magnetohydrodynamics}, SIAM Journal on Numerical Analysis, 56 (2018),
  pp.~2124--2147.

\bibitem{WuShu2018}
{\sc K.~Wu and C.-W. Shu}, {\em A provably positive discontinuous {Galerkin}
  method for multidimensional ideal magnetohydrodynamics}, SIAM Journal on
  Scientific Computing, 40 (2018), pp.~B1302--B1329.

\bibitem{WuShu2019}
{\sc K.~Wu and C.-W. Shu}, {\em Provably positive high-order schemes for ideal
  magnetohydrodynamics: analysis on general meshes}, Numerische Mathematik, 142
  (2019), pp.~995--1047.

\bibitem{WuShu2019SISC}
{\sc K.~Wu and C.-W. Shu}, {\em Entropy symmetrization and high-order accurate
  entropy stable numerical schemes for relativistic {MHD} equations}, SIAM
  Journal on Scientific Computing, 42 (2020), pp.~A2230--A2261.

\bibitem{WuShu2020NumMath}
{\sc K.~Wu and C.-W. Shu}, {\em Provably physical-constraint-preserving
  discontinuous {Galerkin} methods for multidimensional relativistic {MHD}
  equations}, arXiv preprint arXiv:2002.03371,  (2020).

\bibitem{WuTang2015}
{\sc K.~Wu and H.~Tang}, {\em High-order accurate
  physical-constraints-preserving finite difference {WENO} schemes for special
  relativistic hydrodynamics}, Journal of Computational Physics, 298 (2015),
  pp.~539--564.

\bibitem{wu2016direct}
{\sc K.~Wu and H.~Tang}, {\em A direct {E}ulerian {GRP} scheme for spherically
  symmetric general relativistic hydrodynamics}, SIAM Journal on Scientific
  Computing, 38 (2016), pp.~B458--B489.

\bibitem{WuTangM3AS}
{\sc K.~Wu and H.~Tang}, {\em Admissible states and
  physical-constraints-preserving schemes for relativistic magnetohydrodynamic
  equations}, Mathematical Models and Methods in Applied Sciences, 27 (2017),
  pp.~1871--1928.

\bibitem{WuTang2017ApJS}
{\sc K.~Wu and H.~Tang}, {\em Physical-constraint-preserving central
  discontinuous {Galerkin} methods for special relativistic hydrodynamics with
  a general equation of state}, The Astrophysical Journal Supplement Series,
  228 (2017), 3.

\bibitem{Xu2014}
{\sc Z.~Xu}, {\em Parametrized maximum principle preserving flux limiters for
  high order schemes solving hyperbolic conservation laws: one-dimensional
  scalar problem}, Mathematics of Computation, 83 (2014), pp.~2213--2238.

\bibitem{ZHANG2017301}
{\sc X.~Zhang}, {\em On positivity-preserving high order discontinuous
  {Galerkin} schemes for compressible {Navier-Stokes} equations}, Journal of
  Computational Physics, 328 (2017), pp.~301--343.

\bibitem{zhang2010}
{\sc X.~Zhang and C.-W. Shu}, {\em On maximum-principle-satisfying high order
  schemes for scalar conservation laws}, Journal of Computational Physics, 229
  (2010), pp.~3091--3120.

\bibitem{zhang2010b}
{\sc X.~Zhang and C.-W. Shu}, {\em On positivity-preserving high order
  discontinuous {Galerkin} schemes for compressible {Euler} equations on
  rectangular meshes}, Journal of Computational Physics, 229 (2010),
  pp.~8918--8934.

\bibitem{zhang2012minimum}
{\sc X.~Zhang and C.-W. Shu}, {\em A minimum entropy principle of high order
  schemes for gas dynamics equations}, Numerische Mathematik, 121 (2012),
  pp.~545--563.

\bibitem{zhang2012maximum}
{\sc X.~Zhang, Y.~Xia, and C.-W. Shu}, {\em Maximum-principle-satisfying and
  positivity-preserving high order discontinuous {Galerkin} schemes for
  conservation laws on triangular meshes}, Journal of Scientific Computing, 50
  (2012), pp.~29--62.

\bibitem{zhao2013runge}
{\sc J.~Zhao and H.~Tang}, {\em {Runge--Kutta} discontinuous {Galerkin} methods
  with {WENO} limiter for the special relativistic hydrodynamics}, Journal of
  Computational Physics, 242 (2013), pp.~138--168.

\end{thebibliography}

\end{document}